\def\sqr#1#2{{\vcenter{\hrule height.#2pt
        \hbox{\vrule width.#2pt height#1pt \kern#1pt
                \vrule width.#2pt}
        \hrule height.#2pt}}}
\numberwithin{equation}{section}
\newtheorem{theorem}{theorem}[section]
\newtheorem{lemma}[theorem]{Lemma}
\newtheorem{open Problem}[theorem]{Open Problem}
\newcommand{\be}{\begin{equation*}}
\newcommand{\ee}{\end{equation*}}
\newcommand{\bee}{\begin{equation}}
\newcommand{\eee}{\end{equation}}
\definecolor{lighterorange}{cmyk}{0,0.42,0.66,0.0}
\title[On odd deficient-perfect numbers with four distinct prime divisors]{On odd deficient-perfect numbers with four distinct prime divisors}
\author{Cui-Fang SUN and Zhao-Cheng HE}
\begin{document}

\date{2019-8-15\\E-mail:  cuifangsun@163.com,\;\; zhaochenghe6565@163.com}

\maketitle

\begin{abstract}
For a positive integer $n$, let $\sigma(n)$ denote the sum of the positive divisors of $n$. Let $d$ be a proper divisor of $n$. We call $n$ a deficient-perfect number if $\sigma(n)=2n-d$. In this paper, we show that the only odd deficient-perfect number with four distinct prime divisors is $3^{2}\cdot 7^{2}\cdot 11^{2}\cdot 13^{2}$.

\noindent{{\bf Keywords:}\hspace{2mm} deficient-perfect number, deficient divisor, multiplicative arithmetic function.}
\end{abstract}

\maketitle

\section{Introduction}
For a positive integer $n$, let arithmetic functions $\sigma(n)$ and $\omega(n)$ denote the sum of the positive divisors of $n$ and the number of distinct prime divisors of $n$, respectively. Let $d$ be a proper divisor of $n$. We call $n$ a deficient-perfect number with deficient divisor $d$ if $\sigma(n)=2n-d$. If $d=1$, then such a deficient-perfect number is called an almost perfect number. In 1978, Kishore \cite{K} proved that if $n$ is an odd almost
perfect number, then $\omega(n)\geqslant 6$. In 2013, Tang, Ren and Li \cite{TRL} determined all deficient-perfect numbers with at
most two distinct prime factors. In a similar vein, Tang and Feng \cite{TF} showed that no odd deficient-perfect number exists with three distinct prime factors. For related problems, see [1-3, 5, 6, 8].

In this paper, we obtain the following result:

\begin{theorem}\label{thm1} The only odd deficient-perfect number with four distinct prime divisors is $3^{2}\cdot 7^{2}\cdot 11^{2}\cdot 13^{2}$.
\end{theorem}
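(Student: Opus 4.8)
The plan is to write $n = \prod_{i=1}^{4} p_i^{a_i}$ with $p_1 < p_2 < p_3 < p_4$ odd primes, and to translate the deficient-perfect condition $\sigma(n) = 2n - d$ into an inequality on the product $\prod \sigma(p_i^{a_i})/p_i^{a_i}$. Let me work out the key facts I'd use.
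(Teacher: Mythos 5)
Your proposal stops before the proof begins: all you have written down is the intention to set $n=\prod_{i=1}^4 p_i^{\alpha_i}$ and rewrite $\sigma(n)=2n-d$ as a statement about $\prod_i\sigma(p_i^{\alpha_i})/p_i^{\alpha_i}$. That reduction is indeed the paper's starting point (its equation (1.1), equivalently $2=\sigma(n)/n+1/D$ with $D=n/d$), but it is only the first line. Nothing in your text identifies which primes can occur, bounds any exponent, or explains how the single surviving example $3^{2}\cdot 7^{2}\cdot 11^{2}\cdot 13^{2}$ is isolated, so as it stands there is no argument to check.

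More importantly, the method you announce cannot succeed on its own. Since an actual solution exists, pure size inequalities on $\sigma(n)/n$ can never exclude all candidates; and for fixed primes $p_i$ the ratio $\sigma(p_i^{\alpha_i})/p_i^{\alpha_i}$ only creeps up toward $p_i/(p_i-1)$ as $\alpha_i$ grows, so inequalities narrow the primes and exponents to a finite but large region and then stall, leaving infinitely many exponent patterns per prime quadruple untouched. The paper's proof spends nearly all of its length past this point: it imports from the literature that $d>1$, that every $\alpha_i$ is even, and that $p_1=3$ with $p_2\in\{5,7,11,13,17\}$; it then runs a long case analysis on $p_2$, $p_3$ and $D$ in which the decisive tools are arithmetic, not analytic --- divisibility constraints such as $13\mid(2D-1)$ forced by (1.1), multiplicative orders $\mathrm{ord}_m(a)$ combined with cyclotomic divisibilities (e.g.\ $3221\mid 11^{5}-1$, $71\mid 5^{5}-1$) to show certain exponents are impossible, and Legendre symbol identities to kill residual cases. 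Without some mechanism of this kind --- congruences and orders layered on top of the inequality bookkeeping --- the approach you sketch cannot reach the conclusion, so the proposal has a genuine and essential gap rather than merely missing details.
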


For convenience, let $(\frac{\cdot}{p})$ denote the Legendre symbol. Let $m$ be a positive integer and $a$ be any integer relatively prime to $m$. If $h$ is the least positive integer such that $a^{h}\equiv 1\pmod {m}$, then $h$ is called the order of $a$ modulo $m$, denoted by ${\rm{ord}}_{m}(a)$. We take $n=p_{1}^{\alpha_{1}}p_{2}^{\alpha_{2}}p_{3}^{\alpha_{3}}p_{4}^{\alpha_{4}}$ be an odd deficient-perfect number with deficient divisor $d=p_{1}^{\beta_{1}}p_{2}^{\beta_{2}}p_{3}^{\beta_{3}}p_{4}^{\beta_{4}}$, where $p_{1}<p_{2}<p_{3}<p_{4}$ are primes, $\alpha_{i}$'s are positive integers and $\beta_{i}$'s are nonnegative integers with $\beta_{i}\leqslant \alpha_{i}$ and $\sum\limits_{i=1}^{4}\beta_{i}<\sum\limits_{i=1}^{4}\alpha_{i}$.
By \cite{K} and \cite{TF}, we have $d>1$ and $\alpha_{i}$'s are all even. Let $D=p_{1}^{\alpha_{1}-\beta_{1}}p_{2}^{\alpha_{2}-\beta_{2}}p_{3}^{\alpha_{3}-\beta_{3}}p_{4}^{\alpha_{4}-\beta_{4}}$.
Then
\begin{equation}\label{Eq1}
\frac{p_{1}^{\alpha_{1}+1}-1}{p_{1}-1}\cdot \frac{p_{2}^{\alpha_{2}+1}-1}{p_{2}-1}\cdot\frac{p_{3}^{\alpha_{3}+1}-1}{p_{3}-1}\cdot \frac{p_{4}^{\alpha_{4}+1}-1}{p_{4}-1}=\sigma(n)=2n-d=(2D-1)d
\end{equation}
or equally
$$2=\frac{\sigma(n)}{n}+\frac{d}{n}=\frac{\sigma(n)}{n}+\frac{1}{D}.$$

\section{The case of $p_{2}=5$}

In this section, we consider the case of $n=p_{1}^{\alpha_{1}}p_{2}^{\alpha_{2}}p_{3}^{\alpha_{3}}p_{4}^{\alpha_{4}}$ with $p_{1}=3$ and $p_{2}=5$.

\begin{lemma}\label{lem2.1}
There is no odd deficient-perfect number of the form $n=3^{\alpha_{1}}5^{\alpha_{2}}11^{\alpha_{3}}p_{4}^{\alpha_{4}}$.
\end{lemma}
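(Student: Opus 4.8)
The plan is to play the very tight size window forced by the deficiency condition against divisibility constraints read off from the prime factorizations of the individual factors $\sigma(p_i^{\alpha_i})$. Write $T=\prod_{i=1}^{3}\frac{\sigma(p_i^{\alpha_i})}{p_i^{\alpha_i}}$ for the contribution of the three fixed primes. Each factor $\frac{\sigma(p^{\alpha})}{p^{\alpha}}$ increases with $\alpha$ toward its limit $\frac{p}{p-1}$, so $T$ is minimized over even exponents at $\alpha_1=\alpha_2=\alpha_3=2$ and bounded above by $\frac{3}{2}\cdot\frac{5}{4}\cdot\frac{11}{10}=\frac{33}{16}$. My first step is to pin down $\alpha_1$: already the choice $(\alpha_1,\alpha_2,\alpha_3)=(4,2,2)$ gives $T=\frac{31\cdot 133}{81\cdot 25}>2$, so any $\alpha_1\geq 4$ forces $\frac{\sigma(n)}{n}>2$, contradicting $\sigma(n)=2n-d<2n$. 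Hence $\alpha_1=2$ and $\sigma(3^2)=13$ is an exact factor of $\sigma(n)$.

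The engine of the proof is that $\gcd(\sigma(p^{\alpha}),p)=1$, so every prime divisor of any $\sigma(p_i^{\alpha_i})$ lying outside $\{3,5,11,p_4\}$ must divide $2D-1$ (it divides $\sigma(n)=(2D-1)d$ but cannot divide $d$). An order computation, $\mathrm{ord}_{13}(5)=4$ and $\mathrm{ord}_{13}(11)=12$, neither dividing the odd number $\alpha_i+1$, shows $13\nmid\sigma(5^{\alpha_2})$ and $13\nmid\sigma(11^{\alpha_3})$; since the abundance bound excludes $p_4=13$, we get $13\mid 2D-1$ in all cases. The cleanest case to treat first is $\alpha_2=\alpha_3=2$: then $\sigma(5^2)=31$ and $\sigma(11^2)=7\cdot 19$, so $7\cdot 13\cdot 19\cdot 31\mid 2D-1$, whence $2D-1\geq 53599$ and $\frac{\sigma(n)}{n}=2-\frac1D>1.9999$. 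But this exponent choice gives $T=\frac{53599}{27225}$, and deficiency forces $\frac{\sigma(p_4^{\alpha_4})}{p_4^{\alpha_4}}<\frac{2}{T}$, hence $p_4\geq 67$, so $\frac{\sigma(n)}{n}<T\cdot\frac{67}{66}<1.9986$ — a contradiction.

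For the general case the key difficulty is that $\alpha_2$ and $\alpha_3$ cannot be bounded by size: with $\alpha_1=2$ one has $T<\frac{13}{9}\cdot\frac{5}{4}\cdot\frac{11}{10}=\frac{143}{72}<2$ for all even $\alpha_2,\alpha_3$, so the three fixed primes never force abundance by themselves. I would therefore argue through prime factorizations: enlarging $\alpha_2$ or $\alpha_3$ introduces new, and by a primitive-prime-divisor analysis of $5^{\alpha_2+1}-1$ and $11^{\alpha_3+1}-1$ necessarily larger, prime divisors of $\sigma(5^{\alpha_2})$ and $\sigma(11^{\alpha_3})$. Each such prime $q$ either equals $p_4$ or divides $2D-1$; the Legendre symbol enters precisely to decide the former, since whether $q$ can coincide with $p_4$ is controlled by the residue of $p_4$ modulo $q$, and quadratic reciprocity rules out the coincidences that would otherwise let $d$ absorb these primes. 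The aim is to show that $2D-1$ is again forced large enough that $\frac{\sigma(n)}{n}=2-\frac1D$ overshoots the deficiency ceiling $T\cdot\frac{p_4}{p_4-1}$, exactly as in the base case.

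The main obstacle I anticipate is this last step. Unlike $\alpha_1$, the exponents $\alpha_2,\alpha_3$ survive every size argument, so the proof must track the factorizations of $\sigma(5^{\alpha_2})$ and $\sigma(11^{\alpha_3})$ across all residue classes modulo the relevant orders and verify, uniformly, that the forced lower bound on $2D-1$ outruns the narrow gap $2-T\cdot\frac{p_4}{p_4-1}$ that deficiency leaves open. Organizing the order computations and the Legendre-symbol case distinctions so that no residue class escapes the size contradiction is where the real work lies.
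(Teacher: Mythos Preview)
Your opening is sound: the bound $\alpha_1=2$ and the divisibility $13\mid 2D-1$ are exactly right, and your disposal of the case $\alpha_2=\alpha_3=2$ via $7\cdot 13\cdot 19\cdot 31\mid 2D-1$ is clean and matches the paper's treatment of $p_4=67$.

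The gap is in the general case, which you explicitly leave undone. Your plan is to let $\alpha_2,\alpha_3$ range freely and track primitive prime divisors $q$ of $\sigma(5^{\alpha_2})$ and $\sigma(11^{\alpha_3})$, forcing $2D-1$ large. But any such $q$ might equal $p_4$, in which case it is absorbed into $d$ and contributes nothing to $2D-1$. Your remark that ``whether $q$ can coincide with $p_4$ is controlled by the residue of $p_4$ modulo $q$'' is not coherent as stated (if $q=p_4$ the residue is $0$), and there is no uniform quadratic-reciprocity obstruction ruling out $q=p_4$ across all $\alpha_2,\alpha_3$ simultaneously. So the scheme does not reduce to a finite check, and you have no mechanism to close it.

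The paper's organization is different and avoids this trap. From $13\mid 2D-1$ one writes $D=26k+7$ and handles small $k$ directly: each such $D$ factors over $\{3,5,11,p_4\}$ only for a specific $p_4$, which is then tested. For $k\geq 23$ (so $D\geq 605$) the identity $\sigma(n)/n+1/D=2$ and the upper bound $\frac{13}{9}\cdot\frac{5}{4}\cdot\frac{11}{10}\cdot\frac{p_4}{p_4-1}$ force $p_4\leq 163$. This traps $p_4$ in a \emph{finite} list, and for each fixed $p_4$ the size equation then bounds $\alpha_2,\alpha_3$ as well, after which concrete order computations (e.g.\ $\mathrm{ord}_{25}(11)=5$ together with $3221\mid 11^5-1$) finish each case. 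No Legendre symbols are used in this lemma.

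The missing idea, then, is to bound $p_4$ first and do the order arguments with $p_4$ known, rather than attempting to control infinitely many $(\alpha_2,\alpha_3)$ at once.
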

\begin{proof}
Assume that $n=3^{\alpha_{1}}5^{\alpha_{2}}11^{\alpha_{3}}p_{4}^{\alpha_{4}}$ is an odd deficient-perfect number with deficient divisor $d=3^{\beta_{1}}5^{\beta_{2}}11^{\beta_{3}}p_{4}^{\beta_{4}}$. If $\alpha_{1}\geqslant 4$, then
$$2=\frac{\sigma(n)}{n}+\frac{d}{n}>\frac{3^{5}-1}{2\cdot 3^{4}}\cdot \frac{5^{3}-1}{4\cdot 5^{2}}\cdot \frac{11^{3}-1}{10\cdot 11^{2}}>2,$$
which is clearly false. Thus $\alpha_{1}=2$. If $p_{4}\leqslant 61$, then
$$2=\frac{\sigma(n)}{n}+\frac{d}{n}>\frac{3^{3}-1}{2\cdot 3^{2}}\cdot \frac{5^{3}-1}{4\cdot 5^{2}}\cdot \frac{11^{3}-1}{10\cdot 11^{2}}
\cdot \frac{61^{3}-1}{60\cdot 61^{2}}>2,$$
which is a contradiction. Thus $p_{4}\geqslant 67$. By (\ref{Eq1}), we have
\begin{equation}\label{2.1}
13\cdot \frac{5^{\alpha_{2}+1}-1}{4}\cdot \frac{11^{\alpha_{3}+1}-1}{10}\cdot \frac{p_{4}^{\alpha_{4}+1}-1}{p_{4}-1}=2\cdot 3^{2}\cdot 5^{\alpha_{2}} 11^{\alpha_{3}}p_{4}^{\alpha_{4}}-3^{\beta_{1}}5^{\beta_{2}}11^{\beta_{3}} p_{4}^{\beta_{4}}
\end{equation}
and $13\mid (2D-1)$. Thus $D=26k+7$ for some positive integer $k$.

If $k=1$, then $D=33$. If $\alpha_{2}\geqslant 4$, then
$$2=\frac{\sigma(n)}{n}+\frac{d}{n}>\frac{3^{3}-1}{2\cdot 3^{2}}\cdot\frac{5^{5}-1}{4\cdot 5^{4}}\cdot \frac{11^{3}-1}{10\cdot 11^{2}}+\frac{1}{33}>2,$$
which is clearly false. Thus $\alpha_{2}=2$ and $p_{4}=31$, which contradicts with $p_{4}\geqslant 67$.

If $k\in\{2, 3, 4, 7, 8, 12, 13, 14, 16, 18, 20, 21\}$, then $p_{4}<61$, this is impossible.

If $k\in\{5, 6, 10, 11, 15, 17, 19, 22\}$, then $p_{4}\in \{137, 163, 89, 293, 397, 449, 167, 193\}$. Noting that $\rm{ord}_{25}(11)=5$ and ${\rm{ord}}_{5}(p_{4})$ are even, we have $5\mid (\alpha_{3}+1)$ and $(11^{5}-1)\mid (11^{\alpha_{3}+1}-1)$. However, $3221\mid (11^{5}-1)$, a contradiction.

If $k=9$, then $D=p_{4}=241$. Noting that $\rm{ord}_{11}(5)=5$ and $\rm{ord}_{11}(241)=2$, we have $5\mid (\alpha_{3}+1)$ and $(5^{5}-1)\mid (5^{\alpha_{3}+1}-1)$. However, $71\mid (5^{5}-1)$, a contradiction.

If $k\geqslant 23$, then $D\geqslant 605$. If $p_{4}\geqslant 167$, then
$$2=\frac{\sigma(n)}{n}+\frac{d}{n}<\frac{3^{3}-1}{2\cdot 3^{2}}\cdot\frac{5}{4}\cdot\frac{11}{10}\cdot\frac{167}{166}+\frac{1}{605}<2,$$
which is false. Thus $67\leqslant p_{4}\leqslant 163$. If $67\leqslant p_{4}\leqslant 113$, then $\alpha_{2}=2$ and $(13\cdot 31)\mid (2D-1)$. Otherwise, if $\alpha_{2}\geqslant 4$, then
$$2=\frac{\sigma(n)}{n}+\frac{d}{n}>\frac{3^{3}-1}{2\cdot 3^{2}}\cdot \frac{5^{5}-1}{4\cdot 5^{4}}\cdot \frac{11^{3}-1}{10\cdot 11^{2}}\cdot \frac{113^{3}-1}{112\cdot 113^{2}}>2,$$
which is impossible. Now we divide into the following four cases according to $p_{4}$.

{\bf Case 1.} $p_{4}=67$. If $\alpha_{3}\geqslant 4$, then
$$2=\frac{\sigma(n)}{n}+\frac{d}{n}>\frac{3^{3}-1}{2\cdot 3^{2}}\cdot \frac{5^{3}-1}{4\cdot 5^{2}}\cdot \frac{11^{5}-1}{10\cdot 11^{4}}\cdot \frac{67^{3}-1}{66\cdot 67^{2}}>2,$$
which is impossible. Thus $\alpha_{3}=2$ and $(13\cdot 31\cdot 7\cdot 19)\mid (2D-1)$. Thus $D>26800$ and
$$2=\frac{\sigma(n)}{n}+\frac{d}{n}<\frac{3^{3}-1}{2\cdot 3^{2}}\cdot \frac{5^{3}-1}{4\cdot 5^{2}}\cdot \frac{11^{3}-1}{10\cdot 11^{2}}\cdot \frac{67}{66}+\frac{1}{D}<2,$$
which is absurd.

{\bf Case 2.} $p_{4}=71$. Since $\rm{ord}_{3}(11)=\rm{ord}_{3}(71)=2$, we have $\beta_{1}=0$ and $9\mid D$. Thus $D>615$ and
$$2=\frac{\sigma(n)}{n}+\frac{d}{n}<\frac{3^{3}-1}{2\cdot 3^{2}}\cdot \frac{5^{3}-1}{4\cdot 5^{2}}\cdot \frac{11}{10}\cdot \frac{71}{70}+\frac{1}{D}<2,$$
which is clearly false.

{\bf Case 3.} $73 \leqslant p_{4}\leqslant 113$. By $\alpha_{2}=2$, we have
$$2=\frac{\sigma(n)}{n}+\frac{d}{n}<\frac{3^{3}-1}{2\cdot 3^{2}}\cdot \frac{5^{3}-1}{4\cdot 5^{2}}\cdot \frac{11}{10}\cdot \frac{73}{72}+\frac{1}{605}<2,$$
which is impossible.

{\bf Case 4.} $127 \leqslant p_{4}\leqslant 163$. If $\alpha_{2}=2$, then
$$2=\frac{\sigma(n)}{n}+\frac{d}{n}<\frac{3^{3}-1}{2\cdot 3^{2}}\cdot \frac{5^{3}-1}{4\cdot 5^{2}}\cdot \frac{11}{10}\cdot \frac{127}{126}+\frac{1}{605}<2,$$
which is absurd. Thus $\alpha_{2}\geqslant 4$.

{\bf Subcase 4.1 } $p_{4}=127$. If $\alpha_{2}\geqslant 6$, then
$$2=\frac{\sigma(n)}{n}+\frac{d}{n}>\frac{3^{3}-1}{2\cdot 3^{2}}\cdot \frac{5^{7}-1}{4\cdot 5^{6}}\cdot \frac{11^{3}-1}{10\cdot 11^{2}}\cdot \frac{127^{3}-1}{126\cdot 127^{2}}>2,$$
which is clearly false. Thus $\alpha_{2}=4$. If $\alpha_{3}\geqslant 4$, then
$$2=\frac{\sigma(n)}{n}+\frac{d}{n}>\frac{3^{3}-1}{2\cdot 3^{2}}\cdot \frac{5^{5}-1}{4\cdot 5^{4}}\cdot \frac{11^{5}-1}{10\cdot 11^{4}}\cdot \frac{127^{3}-1}{126\cdot 127^{2}}>2,$$
which is absurd.  Thus $\alpha_{3}=2$. Since $\rm{ord}_{5}(127)=4, \rm{ord}_{11}(127)=10$, we have $\beta_{2}=\beta_{4}=0, \beta_{3}=1, D\geqslant 5^{4}\cdot 11\cdot 127^{2}$ and
$$2=\frac{\sigma(n)}{n}+\frac{d}{n}<\frac{3^{3}-1}{2\cdot 3^{2}}\cdot \frac{5^{5}-1}{4\cdot 5^{4}}\cdot \frac{11^{3}-1}{10\cdot 11^{2}}\cdot \frac{127}{126}+\frac{1}{5^{4}\cdot 11\cdot 127^{2}}<2,$$
which is a contradiction.

{\bf Subcase 4.2 } $p_{4}\in\{131, 137\}$. If $\alpha_{3}\geqslant 4$, then
$$2=\frac{\sigma(n)}{n}+\frac{d}{n}>\frac{3^{3}-1}{2\cdot 3^{2}}\cdot \frac{5^{5}-1}{4\cdot 5^{4}}\cdot \frac{11^{5}-1}{10\cdot 11^{4}}\cdot \frac{137^{3}-1}{136\cdot 137^{2}}>2,$$
which is absurd. Thus $\alpha_{3}=2$ and $(7\cdot 19\cdot 13)\mid (2D-1)$. Since $\rm{ord}_{3}(5)=\rm{ord}_{3}(131)=\rm{ord}_{3}(137)=2$, we have $\beta_{1}=0$ and $9\mid D$. Thus $D>8715$ and
$$2=\frac{\sigma(n)}{n}+\frac{d}{n}<\frac{3^{3}-1}{2\cdot 3^{2}}\cdot \frac{5}{4}\cdot \frac{11^{3}-1}{10\cdot 11^{2}}\cdot \frac{131}{130}+\frac{1}{8715}<2,$$
which is a contradiction.

{\bf Subcase 4.3 } $p_{4}=139$. If $\alpha_{2}=4$, then $(71\cdot 13)\mid (2D-1)$. Thus $D>7303$ and
$$2=\frac{\sigma(n)}{n}+\frac{d}{n}<\frac{3^{3}-1}{2\cdot 3^{2}}\cdot \frac{5^{5}-1}{4\cdot 5^{4}}\cdot \frac{11}{10}\cdot \frac{139}{138}+\frac{1}{7303}<2,$$
which is absurd. Thus $\alpha_{2}\geqslant 6$. If $\alpha_{3}\geqslant 4$, then
$$2=\frac{\sigma(n)}{n}+\frac{d}{n}>\frac{3^{3}-1}{2\cdot 3^{2}}\cdot \frac{5^{7}-1}{4\cdot 5^{6}}\cdot \frac{11^{5}-1}{10\cdot 11^{4}}\cdot \frac{139^{3}-1}{138\cdot 139^{2}}>2,$$
which is a contradiction. Thus $\alpha_{3}=2$ and $(7\cdot 19\cdot 13)\mid (2D-1)$. Thus $D>1001$ and
$$2=\frac{\sigma(n)}{n}+\frac{d}{n}<\frac{3^{3}-1}{2\cdot 3^{2}}\cdot \frac{5}{4}\cdot \frac{11^{3}-1}{10\cdot 11^{2}}\cdot \frac{139}{138}+\frac{1}{1001}<2,$$
which is impossible.

{\bf Subcase 4.4 } $p_{4}=149$. Since $\rm{ord}_{3}(5)=\rm{ord}_{3}(11)=\rm{ord}_{3}(149)=2$, we have $\beta_{1}=0$ and $9\mid D$. Thus $D\geqslant 1125$.
If $D\geqslant 2133$, then
$$2=\frac{\sigma(n)}{n}+\frac{d}{n}<\frac{3^{3}-1}{2\cdot 3^{2}}\cdot \frac{5}{4}\cdot \frac{11}{10}\cdot \frac{149}{148}+\frac{1}{2133}<2,$$
which is clearly false. Thus $D=1125$. Since $\rm{ord}_{5}(149)=2, \rm{ord}_{25}(11)=5$, we have $5\mid(\alpha_{3}+1)$ and $(11^{5}-1)\mid(11^{\alpha_{3}+1}-1)$. However, $3221\mid(11^{5}-1)$, a contradiction.

{\bf Subcase 4.5 } $p_{4}=151$. If $D\geqslant 1543$, then
$$2=\frac{\sigma(n)}{n}+\frac{d}{n}<\frac{3^{3}-1}{2\cdot 3^{2}}\cdot \frac{5}{4}\cdot \frac{11}{10}\cdot \frac{151}{150}+\frac{1}{1543}<2,$$
which is clearly false. Thus $D\in\{605, 1125, 1359\}$ and $\beta_{4}\geqslant 1$. Noting that $\rm{ord}_{151}(5)=\rm{ord}_{151}(11)=75$, we have $75\mid(\alpha_{2}+1)$ and $(5^{75}-1)\mid(5^{\alpha_{2}+1}-1)$ or $75\mid(\alpha_{3}+1)$ and $(11^{75}-1)\mid(11^{\alpha_{3}+1}-1)$. However, $71\mid(5^{75}-1)$ and $7\mid(11^{75}-1)$,
 a contradiction.

{\bf Subcase 4.6 } $p_{4}\in\{157, 163\}$. If $D\geqslant 865$, then
$$2=\frac{\sigma(n)}{n}+\frac{d}{n}<\frac{3^{3}-1}{2\cdot 3^{2}}\cdot \frac{5}{4}\cdot \frac{11}{10}\cdot \frac{157}{156}+\frac{1}{865}<2,$$
which is false. Thus $D=605$ and $\beta_{2}\geqslant 1$. Since $\rm{ord}_{25}(11)=5$ and ${\rm{ord}}_{5}(p_{4})=4$, we have $5\mid(\alpha_{3}+1)$ and $(11^{5}-1)\mid(11^{\alpha_{3}+1}-1)$. However, $3221\mid(11^{5}-1)$,  a contradiction.

This completes the proof of Lemma \ref{lem2.1}.
\end{proof}

\begin{lemma}\label{lem2.2}
There is no odd deficient-perfect number of the form $n=3^{\alpha_{1}}5^{\alpha_{2}}13^{\alpha_{3}}p_{4}^{\alpha_{4}}$.
\end{lemma}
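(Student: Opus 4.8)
The plan is to run the same architecture as in Lemma~\ref{lem2.1}, with one essential modification forced by the coincidence $\sigma(3^{2})=13=p_{3}$. First I would fix $\alpha_{1}$. Since each $\alpha_{i}$ is even and $\sigma(n)/n<2$, the choice $\alpha_{1}\geqslant 4$ already gives
$$2=\frac{\sigma(n)}{n}+\frac{d}{n}>\frac{3^{5}-1}{2\cdot 3^{4}}\cdot\frac{5^{3}-1}{4\cdot 5^{2}}\cdot\frac{13^{3}-1}{12\cdot 13^{2}}>2,$$
a contradiction, so $\alpha_{1}=2$. With $\alpha_{1}=2$ fixed, the same estimate with the minimal choices $\alpha_{2}=\alpha_{3}=2$ and the $p_{4}$-factor replaced by its lower bound $1+p_{4}^{-1}+p_{4}^{-2}$ shows that every prime $p_{4}\leqslant 31$ forces the product past $2$; hence $p_{4}\geqslant 37$, and in particular $31<p_{4}$.

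The decisive step is a $13$-adic reading of (\ref{Eq1}). Because $\sigma(3^{2})=13$, while $\ord_{13}(5)=4\nmid(\alpha_{2}+1)$ gives $13\nmid\sigma(5^{\alpha_{2}})$ and $\sigma(13^{\alpha_{3}})\equiv 1\pmod{13}$, we get $13\mid\sigma(n)$. On the right of (\ref{Eq1}) we have $(2D-1)d$ with $d=3^{\beta_{1}}5^{\beta_{2}}13^{\beta_{3}}p_{4}^{\beta_{4}}$. If $\beta_{3}=0$ then $13\nmid d$, while $13^{\alpha_{3}}\mid D$ (with $\alpha_{3}\geqslant 2$) gives $2D-1\equiv-1\pmod{13}$, so $13\nmid(2D-1)$ as well; this contradicts $13\mid\sigma(n)$. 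Hence $\beta_{3}\geqslant 1$, i.e.\ $13\mid d$. This replaces the clean congruence $D\equiv 7\pmod{26}$ that drove Lemma~\ref{lem2.1}, which is unavailable here precisely because $13$ is now one of the prime divisors of $n$.

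From here I would branch on $\alpha_{2}$ and $\alpha_{3}$ and exploit the genuinely new primes produced by the remaining $\sigma$-factors. When $\alpha_{2}=2$, $\sigma(5^{2})=31$ is a prime larger than $p_{3}$ and, since $p_{4}\geqslant 37$, different from $p_{4}$; thus $31\nmid d$ and $31\mid(2D-1)$, which confines $D$ to a single residue class modulo $62$ and launches a case analysis on $D$ parallel to the $D=26k+7$ analysis of Lemma~\ref{lem2.1}. Likewise $\sigma(13^{2})=3\cdot 61$ supplies the new prime $61$ when $\alpha_{3}=2$. In each surviving branch the engine is the order argument: a forced divisibility $h\mid(\alpha_{i}+1)$, read off from an order such as $\ord_{13}(5)=4$ or $\ord_{5}(13)=4$ (or $\ord_{p_{4}}(5)$, $\ord_{p_{4}}(13)$ in the cases where $p_{4}$ is pinned down), makes $a^{h}-1$ divide the corresponding $\sigma(a^{\alpha_{i}})$; a large prime factor of $a^{h}-1$ (for instance $71\mid 5^{5}-1$ or $17\mid 13^{4}-1$) must then divide $(2D-1)d$, which is impossible for a prime that is neither among $\{3,5,13,p_{4}\}$ nor small enough to divide $2D-1$ under the prevailing size bounds. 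The finitely many pairs $(p_{4},D)$ that survive are then eliminated one at a time by the sharp identity $\sigma(n)/n+1/D=2$.

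The main obstacle is exactly this collision $\sigma(3^{2})=13=p_{3}$: it removes the single congruence that organized Lemma~\ref{lem2.1}, so the bookkeeping must instead be driven by the secondary new primes $31$ and $61$, and hence branched on whether $\alpha_{2}$ and $\alpha_{3}$ equal $2$. A further complication is that neither $\alpha_{2}$ nor $\alpha_{3}$ can be bounded by the ratio inequality alone, since the $5$-factor saturates near $5/4$ and the $13$-factor near $13/12$; consequently $\alpha_{2}$ and $\alpha_{3}$ must be controlled jointly with $p_{4}$ through the order arguments rather than separately, which makes the case analysis longer and more entangled than in Lemma~\ref{lem2.1}.
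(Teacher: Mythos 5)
Your skeleton coincides with the paper's: force $\alpha_{1}=2$ and $p_{4}\geqslant 37$ by the ratio inequality, then finish with a finite case analysis mixing size bounds and multiplicative-order arguments. Your individual deductions are also correct: $\beta_{3}\geqslant 1$ does follow (if $\beta_{3}=0$ then $13\mid D$ and $13\nmid d$, so $13\nmid (2D-1)d=\sigma(n)$, contradicting $\sigma(3^{2})=13\mid\sigma(n)$), and the step ``$\alpha_{2}=2$ forces $31\mid(2D-1)$'' is literally the paper's move in its Subcase 8.1. The organizational difference is that the paper never uses, and never needs, $\beta_{3}\geqslant 1$: it derives $D\geqslant 25$ from the ratio inequality and then enumerates the admissible values of $D$. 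Since $\alpha_{1}=2$ caps the $3$-part of $D$ at $9$, the values below $101$ are just $25,39,45,65,75$ together with the primes $37,\dots,97$ (a prime value of $D$ must equal $p_{4}$), and the leftover case $D\geqslant 101$ forces $p_{4}\leqslant 53$; each case is then killed by bounds plus order arguments. That enumeration of $D$, not a congruence extracted from $31$ or $61$, is what actually drives the paper's proof.

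There are two concrete places where your plan, as written, does not close. First, the claim that $\sigma(13^{2})=3\cdot 61$ supplies a ``new'' prime when $\alpha_{3}=2$ fails exactly when $p_{4}=61$: then $61\mid d$ is possible and no congruence on $D$ is forced. This case is not vacuous; the paper treats $D=p_{4}=61$ separately (its Case 6), where $\beta_{2}=\alpha_{2}\geqslant 2$ forces $5\mid\sigma(61^{\alpha_{4}})$, hence $5\mid(\alpha_{4}+1)$, hence $131\mid(61^{5}-1)\mid\sigma(n)$, which is impossible because $2D-1=121$ and $131\nmid d$. Second, and more seriously, your organizing primes $31$ and $61$ say nothing in the branch $\alpha_{2}\geqslant 4$ and $\alpha_{3}\geqslant 4$, and that branch is not peripheral: the paper's Case 1 ($D=25$) lives there with $\alpha_{2}\geqslant 8$, $\alpha_{3}\geqslant 4$, as do $D\in\{65,75\}$ with $\alpha_{2}\geqslant 6$ (Case 7) and the $D=p_{4}$ cases with $\alpha_{2},\alpha_{3}$ unrestricted (Cases 2, 5, 6). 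For this region you offer only the assertion that orders ``control $\alpha_{2}$ and $\alpha_{3}$ jointly with $p_{4}$''; no mechanism is given, and of the two sample factorizations you quote, $17\mid(13^{4}-1)$ can never be relevant, since $4\mid(\alpha_{3}+1)$ is impossible for even $\alpha_{3}$. So the direction is right and consistent with the paper, but the decisive content --- the enumeration of $D$ and the per-case eliminations --- is exactly the part left unexecuted.
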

\begin{proof}
Assume that $n=3^{\alpha_{1}}5^{\alpha_{2}}13^{\alpha_{3}}p_{4}^{\alpha_{4}}$ is an odd deficient-perfect number with deficient divisor $d=3^{\beta_{1}}5^{\beta_{2}}13^{\beta_{3}}p_{4}^{\beta_{4}}$. If $\alpha_{1}\geqslant 4$, then
$$2=\frac{\sigma(n)}{n}+\frac{d}{n}>\frac{3^{5}-1}{2\cdot 3^{4}}\cdot \frac{5^{3}-1}{4\cdot 5^{2}}\cdot \frac{13^{3}-1}{12\cdot 13^{2}}>2,$$
which is false. Thus $\alpha_{1}=2$. If $p_{4}\leqslant 31$, then
$$2=\frac{\sigma(n)}{n}+\frac{d}{n}>\frac{3^{3}-1}{2\cdot 3^{2}}\cdot \frac{5^{3}-1}{4\cdot 5^{2}}\cdot \frac{13^{3}-1}{12\cdot 13^{2}} \cdot \frac{31^{3}-1}{30\cdot 31^{2}}>2,$$
which is impossible. Thus $p_{4}\geqslant 37$. If $D\leqslant 15$, then
$$2=\frac{\sigma(n)}{n}+\frac{d}{n}>\frac{3^{3}-1}{2\cdot 3^{2}}\cdot \frac{5^{3}-1}{4\cdot 5^{2}}\cdot \frac{13^{3}-1}{12\cdot 13^{2}}+\frac{1}{15}>2,$$
which is clearly false. Thus $D\geqslant 25$. By (\ref{Eq1}), we have
\begin{equation}\label{2.2}
13\cdot \frac{5^{\alpha_{2}+1}-1}{4}\cdot \frac{13^{\alpha_{3}+1}-1}{12}\cdot \frac{p_{4}^{\alpha_{4}+1}-1}{p_{4}-1}=2\cdot 3^{2}\cdot 5^{\alpha_{2}} 13^{\alpha_{3}}p_{4}^{\alpha_{4}}-3^{\beta_{1}}5^{\beta_{2}}13^{\beta_{3}} p_{4}^{\beta_{4}}.
\end{equation}
Now we divide into the following eight cases according to $D$.

{\bf Case 1.} $D=25$. If $p_{4}\geqslant 499$, then
$$2=\frac{\sigma(n)}{n}+\frac{d}{n}<\frac{3^{3}-1}{2\cdot 3^{2}}\cdot \frac{5}{4}\cdot \frac{13}{12}\cdot \frac{499}{498}+\frac{1}{25}<2,$$
which is false. If $p_{4}\leqslant 89$, then
$$2=\frac{\sigma(n)}{n}+\frac{d}{n}\geqslant\frac{3^{3}-1}{2\cdot 3^{2}}\cdot \frac{5^{3}-1}{4\cdot 5^{2}}\cdot \frac{13^{3}-1}{12\cdot 13^{2}}\cdot \frac{89^{3}-1}{88\cdot 89^{2}}+\frac{1}{25}>2,$$
which is also false. Thus $97\leqslant p_{4}\leqslant 491$. It implies that $\alpha_{2}\geqslant 8, \alpha_{3}\geqslant 4$ and
$$2=\frac{\sigma(n)}{n}+\frac{d}{n}\geqslant\frac{3^{3}-1}{2\cdot 3^{2}}\cdot \frac{5^{9}-1}{4\cdot 5^{8}}\cdot \frac{13^{5}-1}{12\cdot 13^{4}}\cdot \frac{491^{3}-1}{490\cdot 491^{2}}+\frac{1}{25}>2,$$
which is impossible.

{\bf Case 2.} $D\in\{37, 41, 43, 47\}$. Then $p_{4}=D$ and
$$2=\frac{\sigma(n)}{n}+\frac{d}{n}\geqslant\frac{3^{3}-1}{2\cdot 3^{2}}\cdot \frac{5^{3}-1}{4\cdot 5^{2}}\cdot \frac{13^{3}-1}{12\cdot 13^{2}}\cdot \frac{47^{3}-1}{46\cdot 47^{2}}+\frac{1}{47}>2,$$
which is false.

{\bf Case 3.} $D=39$. If $p_{4}\geqslant 109$, then
$$2=\frac{\sigma(n)}{n}+\frac{d}{n}<\frac{3^{3}-1}{2\cdot 3^{2}}\cdot \frac{5}{4}\cdot \frac{13}{12}\cdot \frac{109}{108}+\frac{1}{39}<2,$$
which is a contradiction. If $p_{4}\leqslant 53$, then
$$2=\frac{\sigma(n)}{n}+\frac{d}{n}\geqslant\frac{3^{3}-1}{2\cdot 3^{2}}\cdot \frac{5^{3}-1}{4\cdot 5^{2}}\cdot \frac{13^{3}-1}{12\cdot 13^{2}}\cdot \frac{53^{3}-1}{52\cdot 53^{2}}+\frac{1}{39}>2,$$
which is also a contradiction. Thus $59\leqslant p_{4}\leqslant 107$ and $\alpha_{2}\geqslant 4$. Since $\rm{ord}_{5}(13)=4$, $\rm{ord}_{7}(5)=6$ and $\rm{ord}_{7}(13)=2$, we have $p_{4}\leqslant 71$ and
$$2=\frac{\sigma(n)}{n}+\frac{d}{n}\geqslant\frac{3^{3}-1}{2\cdot 3^{2}}\cdot \frac{5^{5}-1}{4\cdot 5^{4}}\cdot \frac{13^{3}-1}{12\cdot 13^{2}}\cdot \frac{71^{3}-1}{70\cdot 71^{2}}+\frac{1}{39}>2,$$
which is impossible.

{\bf Case 4.} $D=45$. If $p_{4}\geqslant 97$, then
$$2=\frac{\sigma(n)}{n}+\frac{d}{n}<\frac{3^{3}-1}{2\cdot 3^{2}}\cdot \frac{5}{4}\cdot \frac{13}{12}\cdot \frac{97}{96}+\frac{1}{45}<2,$$
which is impossible. If $p_{4}\leqslant 47$, then
$$2=\frac{\sigma(n)}{n}+\frac{d}{n}\geqslant\frac{3^{3}-1}{2\cdot 3^{2}}\cdot \frac{5^{3}-1}{4\cdot 5^{2}}\cdot \frac{13^{3}-1}{12\cdot 13^{2}}\cdot \frac{47^{3}-1}{46\cdot 47^{2}}+\frac{1}{45}>2,$$
which is false. Thus $53\leqslant p_{4}\leqslant 89$ and $\alpha_{2}\geqslant 4$. Since $\rm{ord}_{5}(13)=4$, we have $p_{4}\leqslant 71$ and
$$2=\frac{\sigma(n)}{n}+\frac{d}{n}\geqslant\frac{3^{3}-1}{2\cdot 3^{2}}\cdot \frac{5^{5}-1}{4\cdot 5^{4}}\cdot \frac{13^{3}-1}{12\cdot 13^{2}}\cdot \frac{71^{3}-1}{70\cdot 71^{2}}+\frac{1}{45}>2,$$
which is impossible.

{\bf Case 5.} $D\in\{53, 59, 67, 73, 79, 83, 89, 97\}$.  Then $p_{4}=D$ and $\beta_{2}\geqslant 2$.
Noting that $\rm{ord}_{5}(13)=4$ and ${\rm{ord}}_{5}(p_{4})$ are all even, we deduce that the equality (\ref{2.2}) can not hold.

{\bf Case 6.} $D\in\{61, 71\}$. Then $p_{4}=D$ and $\beta_{2}\geqslant 2$.
Since $\rm{ord}_{5}(13)=4$ and ${\rm{ord}}_{25}(p_{4})=5$, we have $5\mid (\alpha_{4}+1)$ and $(p_{4}^{5}-1)\mid (p_{4}^{\alpha_{4}+1}-1)$. However, $131\mid (61^{5}-1)$ and $11\mid (71^{5}-1)$, a contradiction.

{\bf Case 7.} $D\in\{65, 75\}$. If $p_{4}\geqslant 71$, then
$$2=\frac{\sigma(n)}{n}+\frac{d}{n}<\frac{3^{3}-1}{2\cdot 3^{2}}\cdot \frac{5}{4}\cdot \frac{13}{12}\cdot \frac{71}{70}+\frac{1}{65}<2,$$
which is false. Thus $p_{4}\leqslant 67$ and $\alpha_{2}\geqslant 6$.  Since $\rm{ord}_{5}(13)=4$, we have $p_{4}\leqslant 61$ and
$$2=\frac{\sigma(n)}{n}+\frac{d}{n}\geqslant \frac{3^{3}-1}{2\cdot 3^{2}}\cdot \frac{5^{7}-1}{4\cdot 5^{6}}\cdot \frac{13^{3}-1}{12\cdot 13^{2}}\cdot \frac{61^{3}-1}{60\cdot 61^{2}}+\frac{1}{75}>2,$$
which is impossible.

{\bf Case 8.} $D\geqslant 101$. If $p_{4}\geqslant 59$, then
$$2=\frac{\sigma(n)}{n}+\frac{d}{n}<\frac{3^{3}-1}{2\cdot 3^{2}}\cdot \frac{5}{4}\cdot \frac{13}{12}\cdot \frac{59}{58}+\frac{1}{101}<2,$$
which is false. Thus $p_{4}\leqslant 53$.

{\bf Subcase 8.1 } $p_{4}\in\{37, 41, 43\}$. If $\alpha_{2}\geqslant 4$, then
$$2=\frac{\sigma(n)}{n}+\frac{d}{n}>\frac{3^{3}-1}{2\cdot 3^{2}}\cdot \frac{5^{5}-1}{4\cdot 5^{4}}\cdot \frac{13^{3}-1}{12\cdot 13^{2}}\cdot \frac{43^{3}-1}{42\cdot 43^{2}}>2,$$
which is absurd. Thus $\alpha_{2}=2$ and $31\mid (2D-1)$. Thus $D>175$ and
$$2=\frac{\sigma(n)}{n}+\frac{d}{n}<\frac{3^{3}-1}{2\cdot 3^{2}}\cdot \frac{5^{3}-1}{4\cdot 5^{2}}\cdot \frac{13}{12}\cdot \frac{37}{36}+\frac{1}{175}<2,$$
which is impossible.

{\bf Subcase 8.2 } $p_{4}=47$. Since $\rm{ord}_{5}(13)=\rm{ord}_{5}(47)=2$ and $\rm{ord}_{47}(5)=\rm{ord}_{47}(13)=46$, we have $\beta_{2}=\beta_{4}=0, D\geqslant 5^{2}\cdot 47^{2}$ and
$$2=\frac{\sigma(n)}{n}+\frac{d}{n}<\frac{3^{3}-1}{2\cdot 3^{2}}\cdot \frac{5}{4}\cdot \frac{13}{12}\cdot \frac{47}{46}+\frac{1}{5^{2}\cdot 47^{2}}<2,$$
which is impossible.

{\bf Subcase 8.3 } $p_{4}=53$. If $D\geqslant 159$, then
$$2=\frac{\sigma(n)}{n}+\frac{d}{n}<\frac{3^{3}-1}{2\cdot 3^{2}}\cdot \frac{5}{4}\cdot \frac{13}{12}\cdot \frac{53}{52}+\frac{1}{159}<2,$$
which is false. Thus $D\in\{117, 125\}$. Since $\rm{ord}_{5}(13)=\rm{ord}_{5}(53)=4$,  we deduce that the equality (\ref{2.2}) can not hold.

This completes the proof of Lemma \ref{lem2.2}.
\end{proof}

\begin{lemma}\label{lem2.3}
There is no odd deficient-perfect number of the form $n=3^{\alpha_{1}}5^{\alpha_{2}}17^{\alpha_{3}}p_{4}^{\alpha_{4}}$.
\end{lemma}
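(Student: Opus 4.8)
The plan is to argue by contradiction exactly in the manner of Lemmas \ref{lem2.1} and \ref{lem2.2}. Write $n=3^{\alpha_1}5^{\alpha_2}17^{\alpha_3}p_4^{\alpha_4}$ with $p_4>17$, all $\alpha_i$ even and $d>1$, and exploit the two identities
$$2=\frac{\sigma(n)}{n}+\frac1D,\qquad \frac{3^{\alpha_1+1}-1}{2}\cdot\frac{5^{\alpha_2+1}-1}{4}\cdot\frac{17^{\alpha_3+1}-1}{16}\cdot\frac{p_4^{\alpha_4+1}-1}{p_4-1}=(2D-1)d.$$
The engine I would set up first is the rigidity of the $17$-part: since $\alpha_3$ is even, $\sigma(17^{\alpha_3})\equiv1\pmod 3$ and $\sigma(17^{\alpha_3})\equiv 2^{\alpha_3+1}-1\not\equiv0\pmod 5$ (as $\alpha_3+1$ is odd), so $\sigma(17^{\alpha_3})$ is coprime to $3,5$ and $17$. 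Consequently, up to the power of $p_4$ it may contain, $\sigma(17^{\alpha_3})$ divides $2D-1$; and since $\sigma(17^{\alpha_3})\geqslant\sigma(17^2)=307$, this plants a factor of size at least $307$ in $2D-1$, forcing $D\geqslant154$.

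The step with no counterpart in the cases $p_3\in\{11,13\}$ is the reduction $\alpha_1=2$: there the three smallest Euler factors already exceed $2$ for $\alpha_1\geqslant4$, whereas here $\frac{3^5-1}{2\cdot3^4}\cdot\frac{5^3-1}{4\cdot5^2}\cdot\frac{17^3-1}{16\cdot17^2}\approx1.968<2$. I would treat $\alpha_1\geqslant4$ by noting that then $\sigma(n)/n<2$ already forces $p_4\geqslant67$, while $\frac{3^{\alpha_1+1}-1}{2}$ is coprime to $3,5,17$ (because $\mathrm{ord}_5(3)=4$ and $\mathrm{ord}_{17}(3)=16$ are even and the odd number $\alpha_1+1$ is a multiple of neither) and, being $\geqslant\frac{3^5-1}{2}=121$, injects a further large factor into $2D-1$. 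Combined with the $\geqslant307$ factor above this makes $D$ enormous, so $\sigma(n)/n=2-1/D$, squeezed against $\sigma(n)/n<\frac{3^{\alpha_1+1}-1}{2\cdot3^{\alpha_1}}\cdot\frac54\cdot\frac{17}{16}\cdot\frac{p_4}{p_4-1}$, leaves only finitely many $p_4$, each of which is eliminated by an order argument of the type used below. Hence $\alpha_1=2$, and then $\frac{3^3-1}{2}=13$ gives $13\mid(2D-1)$.

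With $\alpha_1=2$ the upper bound sharpens to $\sigma(n)/n<\frac{13}{9}\cdot\frac54\cdot\frac{17}{16}\cdot\frac{p_4}{p_4-1}\approx1.9184\cdot\frac{p_4}{p_4-1}$, which is what pins $p_4$ down. The value $p_4=19$ is excluded because the minimal value $\frac{13}{9}\cdot\frac{124}{100}\cdot\frac{307}{289}\cdot\frac{19^3-1}{18\cdot19^2}\approx2.008$ already exceeds $2$; and $p_4\geqslant257$ is excluded because then $\sigma(n)/n<1.926$, forcing $D\leqslant13$, whereupon $13\mid(2D-1)$ gives $D=7$, impossible since $7\nmid n$. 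For every remaining prime $29\leqslant p_4\leqslant251$ the engine yields $D\geqslant154$, hence $\sigma(n)/n\geqslant2-\frac1{154}$, and the sharpened upper bound then forces $p_4\leqslant26$, a contradiction. Thus only $p_4=23$ survives.

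For $p_4=23$ I would finish by forcing $d=1$, contrary to $d>1$. Because every $\alpha_i+1$ is odd, each relevant multiplicative order is even or else fails to divide $\alpha_i+1$: one obtains $\sigma(n)\equiv1\pmod 3$ (all four bases are $\equiv\pm1$); $5\nmid\sigma(n)$ (via $\mathrm{ord}_5(3)=\mathrm{ord}_5(23)=4$ and $\sigma(17^{\alpha_3})$ coprime to $5$); $17\nmid\sigma(n)$ (via $\sigma(17^{\alpha_3})\equiv1\pmod{17}$ and $\mathrm{ord}_{17}(3)=\mathrm{ord}_{17}(5)=\mathrm{ord}_{17}(23)=16$); and $23\nmid\sigma(n)$ (via $\mathrm{ord}_{23}(5)=\mathrm{ord}_{23}(17)=22$ together with $23\nmid\sigma(3^2)=13$). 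Since $p_i\mid n$ but $p_i\nmid\sigma(n)=2n-d$ for each $i$, every $\beta_i=0$, i.e. $d=1$; this contradiction completes the case and the lemma. The main obstacle is exactly the reduction $\alpha_1=2$: unlike for $p_3\in\{11,13\}$ it does not drop out of the three smallest Euler factors, so the inequalities must first be fed the divisibility facts (the factor $121$ from $\sigma(3^{\alpha_1})$, the $13$, and the $\geqslant307$ from $\sigma(17^{\alpha_3})$) before they confine $p_4$ to a finite range; marshalling that finite range, rather than any single deep idea, is where the real work sits.
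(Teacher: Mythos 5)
Your proposal is a plan rather than a proof, and its central device fails at exactly the points the paper has to work hardest. The ``engine'' asserts that since the $p_4$-free part of $\sigma(17^{\alpha_3})$ divides $2D-1$ and $\sigma(17^{\alpha_3})\geqslant 307$, one gets $D\geqslant 154$ (and, for $\alpha_1\geqslant 4$, that the extra factor $\sigma(3^{\alpha_1})\geqslant 121$ makes $D$ ``enormous''). But the size conclusion does not follow from the divisibility statement: the planted factor can be swallowed entirely by the power of $p_4$. Concretely, if $\alpha_3=2$ then $\sigma(17^2)=307$ is prime, so for $p_4=307$ the $p_4$-free part is $1$ and nothing at all is planted in $2D-1$ --- this is precisely why the paper needs a separate Subcase for $p_4=307$; likewise $\sigma(3^6)=1093$ is prime, so for $\alpha_1=6$, $p_4=1093$ your second factor also evaporates (the paper's Subcases with Legendre symbols modulo $1093$ exist for this reason). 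You state the caveat ``up to the power of $p_4$'' and then ignore it in every quantitative deduction. Even in the branch $\alpha_1=2$, $29\leqslant p_4\leqslant 251$, the claim $D\geqslant 154$ requires verifying that no such $p_4$ divides $\sigma(17^{\alpha_3})$ with cofactor below $307$; primes with odd order do occur in this range (e.g. ${\rm ord}_{43}(17)=21$, ${\rm ord}_{47}(17)=23$, ${\rm ord}_{59}(17)=29$, ${\rm ord}_{67}(17)=33$), so this needs an actual argument (it can be patched, but you never notice it must be).

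The larger gap is structural: after reducing to $\alpha_1\geqslant 4$ you dismiss the entire case with ``leaves only finitely many $p_4$, each of which is eliminated by an order argument of the type used below.'' That case is where essentially the whole of the paper's proof lives: with $\alpha_1\geqslant 4$ one only gets $p_4\geqslant 67$ and (after real work) $67\leqslant p_4\leqslant 389$, spread over unboundedly many exponent configurations, and the paper disposes of them through nine cases on $D$ and fifteen subcases on $p_4$, several of which require quadratic-residue computations (Legendre symbols modulo $11$, $71$, $1093$, $1103$) rather than the $d=1$-forcing order argument you exhibit for $p_4=23$. That final argument (all four $\beta_i=0$, contradicting $d>1$) is correct and matches the paper's own treatment of $\alpha_1=2$, but nothing ``of the same type'' handles, say, $p_4\in\{211,223,227,229,241\}$ with $\alpha_1\in\{4,6,8\}$. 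So the easy branch of your proposal is sound and close to the paper's, while the hard branch --- which you yourself identify as where ``the real work sits'' --- is simply not carried out.
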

\begin{proof}
Assume that $n=3^{\alpha_{1}}5^{\alpha_{2}}17^{\alpha_{3}}p_{4}^{\alpha_{4}}$ is an odd deficient-perfect number with deficient divisor $d=3^{\beta_{1}}5^{\beta_{2}}17^{\beta_{3}}p_{4}^{\beta_{4}}$. If $p_{4}=19$, then
$$2=\frac{\sigma(n)}{n}+\frac{d}{n}>\frac{3^{3}-1}{2\cdot 3^{2}}\cdot \frac{5^{3}-1}{4\cdot 5^{2}}\cdot \frac{17^{3}-1}{16\cdot 17^{2}}\cdot \frac{19^{3}-1}{18\cdot 19^{2}}>2,$$
which is false. Thus $p_{4}\geqslant 23$. If $\alpha_{1}=2$, then $13\mid(2D-1)$. Thus $D\geqslant 59$. If $p_{4}\geqslant 31$, then
$$2=\frac{\sigma(n)}{n}+\frac{d}{n}<\frac{3^{3}-1}{2\cdot 3^{2}}\cdot \frac{5}{4}\cdot \frac{17}{16}\cdot \frac{31}{30}+\frac{1}{59}<2,$$
which is false. Thus $p_{4}\in\{23, 29\}$ and $d=1$, a contradiction. Thus $\alpha_{1}\geqslant 4$. If $p_{4}\leqslant 61$, then
$$2=\frac{\sigma(n)}{n}+\frac{d}{n}>\frac{3^{5}-1}{2\cdot 3^{4}}\cdot \frac{5^{3}-1}{4\cdot 5^{2}}\cdot \frac{17^{3}-1}{16\cdot 17^{2}}\cdot \frac{61^{3}-1}{60\cdot 61^{2}}>2,$$
which is clearly false. Thus $p_{4}\geqslant 67$. If $D\leqslant 37$, then $\alpha_{1}\geqslant 6$ and
$$2=\frac{\sigma(n)}{n}+\frac{d}{n}>\frac{3^{7}-1}{2\cdot 3^{6}}\cdot \frac{5^{3}-1}{4\cdot 5^{2}}\cdot \frac{17^{3}-1}{16\cdot 17^{2}}+\frac{1}{37}>2,$$
which is absurd. Thus $D\geqslant 45$. By (\ref{Eq1}), we have
\begin{equation}\label{2.3}
\frac{3^{\alpha_{1}+1}-1}{2}\cdot \frac{5^{\alpha_{2}+1}-1}{4}\cdot \frac{17^{\alpha_{3}+1}-1}{16}\cdot \frac{p_{4}^{\alpha_{4}+1}-1}{p_{4}-1}=2\cdot 3^{\alpha_{1}}5^{\alpha_{2}} 17^{\alpha_{3}}p_{4}^{\alpha_{4}}-3^{\beta_{1}}5^{\beta_{2}}17^{\beta_{3}} p_{4}^{\beta_{4}}.
\end{equation}
Now we divide into the following nine cases according to $D$.

{\bf Case 1.} $D\in\{45, 51, 67, 71, 73, 75, 79, 81, 85\}$. By (\ref{2.3}), we have $\alpha_{1}\geqslant 6, \alpha_{2}\geqslant 4$ and
$$2=\frac{\sigma(n)}{n}+\frac{d}{n}>\frac{3^{7}-1}{2\cdot 3^{6}}\cdot \frac{5^{5}-1}{4\cdot 5^{4}}\cdot \frac{17^{3}-1}{16\cdot 17^{2}}+\frac{1}{85}>2,$$
which is impossible.

{\bf Case 2.} $D\in\{83, 89, 97, 101, 103, 107, 109, 113\}$. Then $p_{4}=D, \alpha_{1}\geqslant 6$ and
$$2=\frac{\sigma(n)}{n}+\frac{d}{n}\geqslant\frac{3^{7}-1}{2\cdot 3^{6}}\cdot \frac{5^{3}-1}{4\cdot 5^{2}}\cdot \frac{17^{3}-1}{16\cdot 17^{2}}\cdot \frac{113^{3}-1}{112\cdot 113^{2}}+\frac{1}{113}>2,$$
which is impossible.

{\bf Case 3.} $D\in\{125, 135, 153, 243, 255\}$. Noting that $\rm{ord}_{3}(5)=\rm{ord}_{3}(17)=2, \rm{ord}_{5}(3)=\rm{ord}_{5}(17)=4$ and $\rm{ord}_{17}(3)=\rm{ord}_{17}(5)=16$, we have $p_{4}\equiv 1\pmod {255}$. Thus $\alpha_{1}\geqslant 12$, $\alpha_{2}\geqslant 8, \alpha_{3}\geqslant 4$ and  $p_{4}\geqslant 1021$.

If $D=125$, then
$$2=\frac{\sigma(n)}{n}+\frac{d}{n}>\frac{3^{13}-1}{2\cdot 3^{12}}\cdot \frac{5^{9}-1}{4\cdot 5^{8}}\cdot \frac{17^{5}-1}{16\cdot 17^{4}}+\frac{1}{125}>2,$$
which is a contradiction.

If $D=135$, then $p_{4}\leqslant 4591$. Otherwise, if $p_{4}\geqslant 4919$, then
$$2=\frac{\sigma(n)}{n}+\frac{d}{n}<\frac{3}{2}\cdot \frac{5}{4}\cdot \frac{17}{16}\cdot \frac{4919}{4918}+\frac{1}{135}<2,$$
which is a contradiction. However,
$$2=\frac{\sigma(n)}{n}+\frac{d}{n}\geqslant\frac{3^{13}-1}{2\cdot 3^{12}}\cdot \frac{5^{9}-1}{4\cdot 5^{8}}\cdot \frac{17^{5}-1}{16\cdot 17^{4}}\cdot \frac{4591^{3}-1}{4590\cdot 4591^{2}}+\frac{1}{135}>2,$$
which is also a contradiction.

If $D=153$, then $p_{4}\leqslant 1531$. Otherwise, if $p_{4}\geqslant 1567$, then
$$2=\frac{\sigma(n)}{n}+\frac{d}{n}<\frac{3}{2}\cdot \frac{5}{4}\cdot \frac{17}{16}\cdot \frac{1567}{1566}+\frac{1}{153}<2,$$
which is a contradiction. However,
$$2=\frac{\sigma(n)}{n}+\frac{d}{n}\geqslant\frac{3^{13}-1}{2\cdot 3^{12}}\cdot \frac{5^{9}-1}{4\cdot 5^{8}}\cdot \frac{17^{5}-1}{16\cdot 17^{4}}\cdot \frac{1531^{3}-1}{1530\cdot 1531^{2}}+\frac{1}{153}>2,$$
which is also a contradiction.

If $D\in\{243, 255\}$, then
$$2=\frac{\sigma(n)}{n}+\frac{d}{n}<\frac{3}{2}\cdot \frac{5}{4}\cdot \frac{17}{16}\cdot \frac{1021}{1020}+\frac{1}{243}<2,$$
which is a contradiction.

{\bf Case 4.} $127\leqslant D=p_{4}\leqslant 359$. Then $\alpha_{1}\geqslant 8, \alpha_{2}\geqslant 6$  and
$$2=\frac{\sigma(n)}{n}+\frac{d}{n}\geqslant\frac{3^{9}-1}{2\cdot 3^{8}}\cdot \frac{5^{7}-1}{4\cdot 5^{6}}\cdot \frac{17^{3}-1}{16\cdot 17^{2}}\cdot \frac{359^{3}-1}{358\cdot 359^{2}}+\frac{1}{359}>2,$$
which is impossible.

{\bf Case 5.} $D=225$. Since $\rm{ord}_{3}(17)=\rm{ord}_{3}(5)=2$ and $\rm{ord}_{17}(3)=\rm{ord}_{17}(5)=16$, we have $p_{4}\equiv 1\pmod {51}$.
If $p_{4}\geqslant 593$, then
$$2=\frac{\sigma(n)}{n}+\frac{d}{n}<\frac{3}{2}\cdot \frac{5}{4}\cdot \frac{17}{16}\cdot \frac{593}{592}+\frac{1}{225}<2,$$
which is false. Thus $p_{4}\in\{103, 307, 409\}$. Since $\rm{ord}_{5}(3)=\rm{ord}_{5}(17)=4$ and ${\rm{ord}}_{5}(p_{4})$ are all even, we have $\alpha_{2}=2$.  Sine $31\mid (5^{3}-1)$, we deduce that the equality (\ref{2.3}) can not hold.

{\bf Case 6.} $D\in\{201, 213, 219, 237, 249, 267, 291, 303, 309, 321, 327, 339\}$. Then $p_{4}\in\{67, 71, 73, 79, 83, 89, 97, 101, 103, 107, 109, 113\}, \alpha_{2}\geqslant 4$ and
$$2=\frac{\sigma(n)}{n}+\frac{d}{n}\geqslant\frac{3^{5}-1}{2\cdot 3^{4}}\cdot \frac{5^{5}-1}{4\cdot 5^{4}}\cdot \frac{17^{3}-1}{16\cdot 17^{2}}\cdot \frac{113^{3}-1}{112\cdot 113^{2}}+\frac{1}{339}>2,$$
which is impossible.

{\bf Case 7.} $D=289$. If $p_{4}\geqslant 461$, then
$$2=\frac{\sigma(n)}{n}+\frac{d}{n}<\frac{3}{2}\cdot \frac{5}{4}\cdot \frac{17}{16}\cdot \frac{461}{460}+\frac{1}{289}<2,$$
which is clearly false. Thus $p_{4}\leqslant 457$. Since $\rm{ord}_{3}(17)=\rm{ord}_{3}(5)=2, \rm{ord}_{5}(3)=\rm{ord}_{5}(17)=4$, we have $p_{4}\equiv 1\pmod {15}$. Thus $p_{4}\leqslant 421, \alpha_{1}\geqslant 8, \alpha_{2}\geqslant 6, \alpha_{3}\geqslant 4$ and
$$2=\frac{\sigma(n)}{n}+\frac{d}{n}\geqslant\frac{3^{9}-1}{2\cdot 3^{8}}\cdot \frac{5^{7}-1}{4\cdot 5^{6}}\cdot \frac{17^{5}-1}{16\cdot 17^{4}}\cdot \frac{421^{3}-1}{420\cdot 421^{2}}+\frac{1}{289}>2,$$
which is impossible.

{\bf Case 8.} $D\in\{335, 355, 365\}$. Then $p_{4}\in\{67, 71, 73\}, \alpha_{1}\geqslant 6$  and
$$2=\frac{\sigma(n)}{n}+\frac{d}{n}\geqslant\frac{3^{7}-1}{2\cdot 3^{6}}\cdot \frac{5^{3}-1}{4\cdot 5^{2}}\cdot \frac{17^{3}-1}{16\cdot 17^{2}}\cdot \frac{73^{3}-1}{72\cdot 73^{2}}+\frac{1}{365}>2,$$
which is impossible.

{\bf Case 9.} $D\geqslant 367$. If $p_{4}\geqslant 397$, then
$$2=\frac{\sigma(n)}{n}+\frac{d}{n}<\frac{3}{2}\cdot \frac{5}{4}\cdot \frac{17}{16}\cdot \frac{397}{396}+\frac{1}{367}<2,$$
which is clearly false. Thus $p_{4}\leqslant 389$.

If $67\leqslant p_{4}\leqslant 113$, then $\alpha_{2}=2$ and $31\mid (2D-1)$. Otherwise, if $\alpha_{2}\geqslant 4$, then
$$2=\frac{\sigma(n)}{n}+\frac{d}{n}>\frac{3^{5}-1}{2\cdot 3^{4}}\cdot \frac{5^{5}-1}{4\cdot 5^{4}}\cdot \frac{17^{3}-1}{16\cdot 17^{2}}\cdot \frac{113^{3}-1}{112\cdot 113^{2}}>2,$$
which is clearly false. If $97\leqslant p_{4}\leqslant 113$, then
$$2=\frac{\sigma(n)}{n}+\frac{d}{n}<\frac{3}{2}\cdot \frac{5^{3}-1}{4\cdot 5^{2}}\cdot \frac{17}{16}\cdot \frac{97}{96}+\frac{1}{367}<2,$$
which is false. Thus $67\leqslant p_{4}\leqslant 89$.

If $127\leqslant p_{4}\leqslant 389$, then $\alpha_{2}\geqslant 4$. Otherwise, if $\alpha_{2}=2$, then
$$2=\frac{\sigma(n)}{n}+\frac{d}{n}<\frac{3}{2}\cdot \frac{5^{3}-1}{4\cdot 5^{2}}\cdot \frac{17}{16}\cdot \frac{127}{126}+\frac{1}{367}<2,$$
which is absurd. If $127\leqslant p_{4}\leqslant 199$, then $\alpha_{1}=4$. Otherwise, if $\alpha_{1}\geqslant 6$, then
$$2=\frac{\sigma(n)}{n}+\frac{d}{n}>\frac{3^{7}-1}{2\cdot 3^{6}}\cdot \frac{5^{5}-1}{4\cdot 5^{4}}\cdot \frac{17^{3}-1}{16\cdot 17^{2}}\cdot \frac{199^{3}-1}{198\cdot 199^{2}}>2,$$
which is impossible. If $151\leqslant p_{4}\leqslant 389$, then $\alpha_{1}\geqslant 6$. Otherwise, if $\alpha_{1}=4$, then
$$2=\frac{\sigma(n)}{n}+\frac{d}{n}<\frac{3^{5}-1}{2\cdot 3^{4}}\cdot \frac{5}{4}\cdot \frac{17}{16}\cdot \frac{151}{150}+\frac{1}{367}<2,$$
which is impossible. Thus $127\leqslant p_{4}\leqslant 149$ or $211\leqslant p_{4}\leqslant 389$.

Now we divide into the following fifteen subcases according to $p_{4}$.

{\bf Subcase 9.1 } $67\leqslant p_{4}\leqslant 89$. If $67\leqslant p_{4}\leqslant 79$, then $\alpha_{1}=4$ and $11\mid(2D-1)$. Otherwise, if $\alpha_{1}\geqslant 6$, then
 $$2=\frac{\sigma(n)}{n}+\frac{d}{n}>\frac{3^{7}-1}{2\cdot 3^{6}}\cdot \frac{5^{3}-1}{4\cdot 5^{2}}\cdot \frac{17^{3}-1}{16\cdot 17^{2}}\cdot \frac{79^{3}-1}{78\cdot 79^{2}}>2,$$
which is clearly false. It follows that $D>485$ and
$$2=\frac{\sigma(n)}{n}+\frac{d}{n}<\frac{3^{5}-1}{2\cdot 3^{4}}\cdot \frac{5^{3}-1}{4\cdot 5^{2}}\cdot \frac{17}{16}\cdot \frac{67}{66}+\frac{1}{485}<2,$$
which is impossible.

If $p_{4}=83$, then $\beta_{1}=\beta_{2}=\beta_{3}=0$ by $\rm{ord}_{3}(17)=\rm{ord}_{3}(83)=2, \rm{ord}_{5}(3)=\rm{ord}_{5}(17)=\rm{ord}_{5}(83)=4$, $\rm{ord}_{17}(3)=16$ and $\rm{ord}_{17}(83)=8$. Thus $D\geqslant 3^{4}\cdot 5^{2}\cdot 17^{2}$. If $\alpha_{1}\leqslant 6$, then
$$2=\frac{\sigma(n)}{n}+\frac{d}{n}<\frac{3^{7}-1}{2\cdot 3^{6}}\cdot \frac{5^{3}-1}{4\cdot 5^{2}}\cdot \frac{17}{16}\cdot \frac{83}{82}+\frac{1}{3^{4}\cdot 5^{2}\cdot 17^{2}}<2,$$
which is clearly false. Thus $\alpha_{1}\geqslant 8$.  If $\alpha_{3}=2$, then
$$2=\frac{\sigma(n)}{n}+\frac{d}{n}<\frac{3}{2}\cdot \frac{5^{3}-1}{4\cdot 5^{2}}\cdot \frac{17^{3}-1}{16\cdot 17^{2}}\cdot \frac{83}{82}+\frac{1}{3^{4}\cdot 5^{2}\cdot 17^{2}}<2,$$
which is false. Thus $\alpha_{3}\geqslant 4$ and
$$2=\frac{\sigma(n)}{n}+\frac{d}{n}>\frac{3^{9}-1}{2\cdot 3^{8}}\cdot \frac{5^{3}-1}{4\cdot 5^{2}}\cdot \frac{17^{5}-1}{16\cdot 17^{4}}\cdot \frac{83^{3}-1}{82\cdot 83^{2}}>2,$$
which is impossible.

If $p_{4}=89$, then $\beta_{1}=\beta_{2}=\beta_{3}=0$ by $\rm{ord}_{3}(17)=\rm{ord}_{3}(89)=2, \rm{ord}_{5}(3)=\rm{ord}_{5}(17)=4, \rm{ord}_{5}(89)=2$, $\rm{ord}_{17}(3)=16$ and $\rm{ord}_{17}(89)=4$. Thus $D\geqslant 3^{4}\cdot 5^{2}\cdot 17^{2}$ and
$$2=\frac{\sigma(n)}{n}+\frac{d}{n}<\frac{3}{2}\cdot \frac{5^{3}-1}{4\cdot 5^{2}}\cdot \frac{17}{16}\cdot \frac{89}{88}+\frac{1}{3^{4}\cdot 5^{2}\cdot 17^{2}}<2,$$
which is clearly false.

{\bf Subcase 9.2 } $p_{4}\in\{127, 139, 149\}$. Since $\rm{ord}_{5}(3)=\rm{ord}_{5}(17)=4$, $\rm{ord}_{17}(3)=\rm{ord}_{17}(5)=16$,  ${\rm{ord}}_{5}(p_{4})$ and ${\rm{ord}}_{17}(p_{4})$ are all even, we have $\beta_{2}=\beta_{3}=0, D\geqslant 5^{4}\cdot 17^{2}$ and
$$2=\frac{\sigma(n)}{n}+\frac{d}{n}<\frac{3^{5}-1}{2\cdot 3^{4}}\cdot \frac{5}{4}\cdot \frac{17}{16}\cdot \frac{127}{126}+\frac{1}{5^{4}\cdot 17^{2}}<2,$$
which is clearly false.

{\bf Subcase 9.3 } $p_{4}=131$. Since $\rm{ord}_{3}(5)=\rm{ord}_{3}(17)=\rm{ord}_{3}(131)=2$ and $\rm{ord}_{17}(3)=\rm{ord}_{17}(5)=\rm{ord}_{17}(131)=16$, we have $\beta_{1}=\beta_{3}=0, D\geqslant 3^{4}\cdot 17^{2}$ and
$$2=\frac{\sigma(n)}{n}+\frac{d}{n}<\frac{3^{5}-1}{2\cdot 3^{4}}\cdot \frac{5}{4}\cdot \frac{17}{16}\cdot \frac{131}{130}+\frac{1}{3^{4}\cdot 17^{2}}<2,$$
which is clearly false.

{\bf Subcase 9.4 } $p_{4}=137$. Since $\rm{ord}_{3}(5)=\rm{ord}_{3}(17)=\rm{ord}_{3}(137)=2$ and $\rm{ord}_{5}(3)=\rm{ord}_{5}(17)=\rm{ord}_{5}(137)=4$, we have $\beta_{1}=\beta_{2}=0, D\geqslant 3^{4}\cdot 5^{4}$ and
$$2=\frac{\sigma(n)}{n}+\frac{d}{n}<\frac{3^{5}-1}{2\cdot 3^{4}}\cdot \frac{5}{4}\cdot \frac{17}{16}\cdot \frac{137}{136}+\frac{1}{3^{4}\cdot 5^{4}}<2,$$
which is clearly false.

{\bf Subcase 9.5 } $p_{4}=211$. If $\alpha_{1}\geqslant 8$, then
$$2=\frac{\sigma(n)}{n}+\frac{d}{n}>\frac{3^{9}-1}{2\cdot 3^{8}}\cdot \frac{5^{5}-1}{4\cdot 5^{4}}\cdot \frac{17^{3}-1}{16\cdot 17^{2}}\cdot \frac{211^{3}-1}{210\cdot 211^{2}}>2,$$
which is clearly false. Thus $\alpha_{1}=6$. If $\alpha_{2}\geqslant 6$, then
$$2=\frac{\sigma(n)}{n}+\frac{d}{n}>\frac{3^{7}-1}{2\cdot 3^{6}}\cdot \frac{5^{7}-1}{4\cdot 5^{6}}\cdot \frac{17^{3}-1}{16\cdot 17^{2}}\cdot \frac{211^{3}-1}{210\cdot 211^{2}}>2,$$
which is false. Thus $\alpha_{2}=4$. If $\alpha_{3}\geqslant 4$, then
$$2=\frac{\sigma(n)}{n}+\frac{d}{n}>\frac{3^{7}-1}{2\cdot 3^{6}}\cdot \frac{5^{5}-1}{4\cdot 5^{4}}\cdot \frac{17^{5}-1}{16\cdot 17^{4}}\cdot \frac{211^{3}-1}{210\cdot 211^{2}}>2,$$
which is absurd. Thus $\alpha_{3}=2$.  By (\ref{2.3}), we have $\beta_{3}=\beta_{4}=0$ and
$$-1=\left(\frac{2}{11}\right)=\left(\frac{2\cdot 3^{6}\cdot 5^{4}\cdot 17^{2}\cdot 223^{\alpha_{4}}}{11}\right)=\left(\frac{3^{\beta_{1}}5^{\beta_{2}}}{11}\right)=1,$$
which is a contradiction.

{\bf Subcase 9.6 } $p_{4}=223$. Since $\rm{ord}_{5}(3)=\rm{ord}_{5}(17)=\rm{ord}_{5}(223)=4, \rm{ord}_{17}(3)=\rm{ord}_{17}(5)=16$
and $\rm{ord}_{17}(223)=8$, we have $\beta_{2}=\beta_{3}=0$. If $\alpha_{1}=6$, then
$$-1=\left(\frac{2}{1093}\right)=\left(\frac{2\cdot 3^{6}\cdot 5^{\alpha_{2}}17^{\alpha_{3}}223^{\alpha_{4}}}{1093}\right)=\left(\frac{3^{\beta_{1}} 223^{\beta_{4}}}{1093}\right)=1,$$
which is a contradiction. If $\alpha_{1}\geqslant 8$, then
$$2=\frac{\sigma(n)}{n}+\frac{d}{n}>\frac{3^{9}-1}{2\cdot 3^{8}}\cdot \frac{5^{5}-1}{4\cdot 5^{4}}\cdot \frac{17^{3}-1}{16\cdot 17^{2}}\cdot \frac{223^{3}-1}{222\cdot 223^{2}}>2,$$
which is also a contradiction.

{\bf Subcase 9.7 } $p_{4}=227$. Since $\rm{ord}_{17}(3)=\rm{ord}_{17}(5)=\rm{ord}_{17}(227)=16$ and $\rm{ord}_{5}(3)=\rm{ord}_{5}(17)=\rm{ord}_{5}(227)=4$, we have $\beta_{2}=\beta_{3}=0$. If $\alpha_{2}=4$, then
$$-1=\left(\frac{2}{11}\right)=\left(\frac{2\cdot 3^{\alpha_{1}}\cdot5^{4}\cdot17^{\alpha_{3}}227^{\alpha_{4}}}{11}\right)=\left(\frac{3^{\beta_{1}} 227^{\beta_{4}}}{11}\right)=(-1)^{\beta_{4}}$$
and
$$1=\left(\frac{2}{71}\right)=\left(\frac{2\cdot 3^{\alpha_{1}}\cdot5^{4}\cdot17^{\alpha_{3}}227^{\alpha_{4}}}{71}\right)=\left(\frac{3^{\beta_{1}} 227^{\beta_{4}}}{71}\right)=(-1)^{\beta_{4}}$$
which is false. Thus $\alpha_{2}\geqslant 6$. If $\alpha_{1}=6$, then
$$-1=\left(\frac{2}{1093}\right)=\left(\frac{2\cdot 3^{6}\cdot5^{\alpha_{2}}17^{\alpha_{3}}227^{\alpha_{4}}}{1093}\right)=\left(\frac{3^{\beta_{1}} 227^{\beta_{4}}}{1093}\right)=1,$$
which is a contradiction. If $\alpha_{1}\geqslant 8$, then
$$2=\frac{\sigma(n)}{n}+\frac{d}{n}>\frac{3^{9}-1}{2\cdot 3^{8}}\cdot \frac{5^{7}-1}{4\cdot 5^{6}}\cdot \frac{17^{3}-1}{16\cdot 17^{2}}\cdot \frac{227^{3}-1}{226\cdot 227^{2}}>2,$$
which is also a contradiction.

{\bf Subcase 9.8} $p_{4}=229$. Since $\rm{ord}_{17}(3)=\rm{ord}_{17}(5)=16, \rm{ord}_{17}(229)=8$ and $\rm{ord}_{5}(3)=\rm{ord}_{5}(17)=4, \rm{ord}_{5}(229)=2$, we have $\beta_{2}=\beta_{3}=0$. If $\alpha_{2}=4$, then
$$-1=\left(\frac{2}{11}\right)=\left(\frac{2\cdot 3^{\alpha_{1}}\cdot 5^{4}\cdot 17^{\alpha_{3}}229^{\alpha_{4}}}{11}\right)=\left(\frac{3^{\beta_{1}} 229^{\beta_{4}}}{11}\right)=1,$$
which is false. Thus $\alpha_{2}\geqslant 6$. If $\alpha_{1}=6$, then
$$-1=\left(\frac{2}{1093}\right)=\left(\frac{2\cdot 3^{6}\cdot 5^{\alpha_{2}}17^{\alpha_{3}}229^{\alpha_{4}}}{1093}\right)=\left(\frac{3^{\beta_{1}} 229^{\beta_{4}}}{1093}\right)=(-1)^{\beta_{4}}.$$
Thus $2\nmid\beta_{4}$. Noting that $\rm{ord}_{229}(17)=19, \rm{ord}_{229}(5)=114$, we have
$$1=\left(\frac{2}{1103}\right)
=\left(\frac{2\cdot 3^{6}\cdot 5^{\alpha_{2}}17^{\alpha_{3}}229^{\alpha_{4}}}{1103}\right)
=\left(\frac{3^{\beta_{1}} 229^{\beta_{4}}}{1103}\right)=-1.$$
which is a contradiction. If $\alpha_{1}\geqslant 8$, then
$$2=\frac{\sigma(n)}{n}+\frac{d}{n}>\frac{3^{9}-1}{2\cdot 3^{8}}\cdot \frac{5^{7}-1}{4\cdot 5^{6}}\cdot \frac{17^{3}-1}{16\cdot 17^{2}}\cdot \frac{229^{3}-1}{228\cdot 229^{2}}>2,$$
which is also a contradiction.

{\bf Subcase 9.9} $p_{4}\in\{233, 257\}$. Since ${\rm{ord}}_{p_{i}}(p_{j})$ are all even for $1\leqslant i\neq j\leqslant 4$, we have $\beta_{1}=\beta_{2}=\beta_{3}=\beta_{4}=0$ and $d=1$, a contradiction.

{\bf Subcase 9.10 } $p_{4}=239$. Since $\rm{ord}_{3}(5)=\rm{ord}_{3}(17)=\rm{ord}_{3}(239)=2$ and $\rm{ord}_{5}(3)=\rm{ord}_{5}(17)=4, \rm{ord}_{5}(239)=2$, we have $\beta_{1}=\beta_{2}=0$ and $D\geqslant 3^{6}\cdot 5^{4}$. If $\alpha_{2}=4$, then
$$2=\frac{\sigma(n)}{n}+\frac{d}{n}<\frac{3}{2}\cdot \frac{5^{5}-1}{4\cdot 5^{4}}\cdot \frac{17}{16}\cdot \frac{239}{238}+\frac{1}{3^{6}\cdot 5^{4}}<2,$$
which is false. Thus $\alpha_{2}\geqslant 6$. If $\alpha_{1}=6$, then
$$2=\frac{\sigma(n)}{n}+\frac{d}{n}<\frac{3^{7}-2}{2\cdot 3^{6}}\cdot \frac{5}{4}\cdot \frac{17}{16}\cdot \frac{239}{238}+\frac{1}{3^{6}\cdot 5^{4}}<2,$$
which is a contradiction. If $\alpha_{1}\geqslant 8$, then
$$2=\frac{\sigma(n)}{n}+\frac{d}{n}>\frac{3^{9}-1}{2\cdot 3^{8}}\cdot \frac{5^{7}-1}{4\cdot 5^{6}}\cdot \frac{17^{3}-1}{16\cdot 17^{2}}\cdot \frac{239^{3}-1}{238\cdot 239^{2}}>2,$$
which is also a contradiction.

{\bf Subcase 9.11 } $p_{4}=241$. Since $\rm{ord}_{17}(3)=\rm{ord}_{17}(5)=\rm{ord}_{17}(241)=16, \rm{ord}_{241}(3)=120, \rm{ord}_{241}(5)=40, \rm{ord}_{241}(17)=80$, we have $\beta_{3}=\beta_{4}=0$ and $D\geqslant 17^{2}\cdot 241^{2}$. If $\alpha_{2}=4$, then
$$2=\frac{\sigma(n)}{n}+\frac{d}{n}<\frac{3}{2}\cdot \frac{5^{5}-1}{4\cdot 5^{4}}\cdot \frac{17}{16}\cdot \frac{241}{240}+\frac{1}{17^{2}\cdot 241^{2}}<2,$$
which is false. Thus $\alpha_{2}\geqslant 6$. If $\alpha_{1}=6$, then
$$2=\frac{\sigma(n)}{n}+\frac{d}{n}<\frac{3^{7}-1}{2\cdot 3^{6}}\cdot \frac{5}{4}\cdot \frac{17}{16}\cdot \frac{241}{240}+\frac{1}{17^{2}\cdot 241^{2}}<2,$$
which is impossible. If $\alpha_{1}\geqslant 10$, then
$$2=\frac{\sigma(n)}{n}+\frac{d}{n}>\frac{3^{11}-1}{2\cdot 3^{10}}\cdot \frac{5^{7}-1}{4\cdot 5^{6}}\cdot \frac{17^{3}-1}{16\cdot 17^{3}}\cdot \frac{241^{3}-1}{240\cdot 241^{2}}>2,$$
which is absurd. Thus $\alpha_{1}=8$. If $\alpha_{3}\geqslant 4$, then
$$2=\frac{\sigma(n)}{n}+\frac{d}{n}>\frac{3^{9}-1}{2\cdot 3^{8}}\cdot \frac{5^{7}-1}{4\cdot 5^{6}}\cdot \frac{17^{5}-1}{16\cdot 17^{4}}\cdot \frac{241^{3}-1}{240\cdot 241^{2}}>2,$$
which is a contradiction. Thus $\alpha_{3}=2$. However,
$$2=\frac{\sigma(n)}{n}+\frac{d}{n}<\frac{3^{9}-1}{2\cdot 3^{8}}\cdot \frac{5}{4}\cdot \frac{17^{3}-1}{16\cdot 17^{3}}\cdot \frac{241}{240}+\frac{1}{17^{2}\cdot 241^{2}}<2,$$
which is also a contradiction.

{\bf Subcase 9.12 } $p_{4}=251$. Since $\rm{ord}_{3}(5)=\rm{ord}_{3}(17)=\rm{ord}_{3}(251)=2$ and $\rm{ord}_{17}(3)=\rm{ord}_{17}(5)=16, \rm{ord}_{17}(251)=4$, we have $\beta_{1}=\beta_{3}=0$ and $D\geqslant 3^{6}\cdot 17^{2}$. If $\alpha_{2}=4$, then
$$2=\frac{\sigma(n)}{n}+\frac{d}{n}<\frac{3}{2}\cdot \frac{5^{5}-1}{4\cdot 5^{4}}\cdot \frac{17}{16}\cdot \frac{251}{250}+\frac{1}{3^{6}\cdot 17^{2}}<2,$$
which is false. Thus $\alpha_{2}\geqslant 6$. If $\alpha_{1}=6$, then
$$2=\frac{\sigma(n)}{n}+\frac{d}{n}<\frac{3^{7}-1}{2\cdot 3^{6}}\cdot \frac{5}{4}\cdot \frac{17}{16}\cdot \frac{251}{250}+\frac{1}{3^{6}\cdot 17^{2}}<2,$$
which is impossible. Thus $\alpha_{1}\geqslant 8$. If $\alpha_{3}=2$, then
$$2=\frac{\sigma(n)}{n}+\frac{d}{n}<\frac{3}{2}\cdot \frac{5}{4}\cdot \frac{17^{3}-1}{16\cdot 17^{2}}\cdot \frac{251}{250}+\frac{1}{3^{6}\cdot 17^{2}}<2,$$
which is absurd. Thus $\alpha_{3}\geqslant 4$ and
$$2=\frac{\sigma(n)}{n}+\frac{d}{n}>\frac{3^{9}-1}{2\cdot 3^{8}}\cdot \frac{5^{7}-1}{4\cdot 5^{6}}\cdot \frac{17^{5}-1}{16\cdot 17^{4}}\cdot \frac{251^{3}-1}{250\cdot 251^{2}}>2,$$
which is a contradiction.

{\bf Subcase 9.13 } $p_{4}\in\{263, 269, 277, 283, 293, 313, 317, 337, 347, 349, 353, 359, 367, 373,\\
379, 383, 389\}$. Since $\rm{ord}_{5}(3)=\rm{ord}_{5}(17)=4, \rm{ord}_{17}(3)=\rm{ord}_{17}(5)=16$, ${\rm{ord}}_{5}(p_{4})$ and ${\rm{ord}}_{17}(p_{4})$ are all even, we have $\beta_{2}=\beta_{3}=0, D\geqslant 5^{4}\cdot 17^{2}$ and
$$2=\frac{\sigma(n)}{n}+\frac{d}{n}<\frac{3}{2}\cdot \frac{5}{4}\cdot \frac{17}{16}\cdot \frac{263}{262}+\frac{1}{5^{4}\cdot 17^{2}}<2,$$
which is impossible.

{\bf Subcase 9.14 } $p_{4}\in\{271, 281, 311, 331\}$. If $D\geqslant 2305$, then
$$2=\frac{\sigma(n)}{n}+\frac{d}{n}<\frac{3}{2}\cdot \frac{5}{4}\cdot \frac{17}{16}\cdot \frac{271}{270}+\frac{1}{2305}<2,$$
which is impossible. Since $\rm{ord}_{17}(3)=\rm{ord}_{17}(5)=16$ and ${\rm{ord}}_{17}(p_{4})$ are all even, we have $\beta_{3}=0$. Thus $D\in\{867, 1445\}$ and $\alpha_{3}=2$. However, $307\mid (17^{3}-1)$, a contradiction.

{\bf Subcase 9.15 } $p_{4}=307$. If $D\geqslant 769$, then
$$2=\frac{\sigma(n)}{n}+\frac{d}{n}<\frac{3}{2}\cdot \frac{5}{4}\cdot \frac{17}{16}\cdot \frac{307}{306}+\frac{1}{769}<2,$$
which is impossible. Since $\rm{ord}_{5}(3)=\rm{ord}_{5}(17)=\rm{ord}_{5}(307)=4$, we have $\beta_{2}=0, D=625$ and $\alpha_{2}=4$. However, $11\mid (5^{5}-1)$, a contradiction.

This completes the proof of Lemma \ref{lem2.3}.
\end{proof}

\begin{lemma}\label{lem2.4}
There is no odd deficient-perfect number of the form $n=3^{\alpha_{1}}5^{\alpha_{2}}19^{\alpha_{3}}p_{4}^{\alpha_{4}}$ with $D\geqslant 19$.
\end{lemma}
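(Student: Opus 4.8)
The plan is to follow the template of Lemmas \ref{lem2.1}--\ref{lem2.3}. With $p_1=3$, $p_2=5$, $p_3=19$, identity (\ref{Eq1}) becomes
\begin{equation*}
\frac{3^{\alpha_1+1}-1}{2}\cdot\frac{5^{\alpha_2+1}-1}{4}\cdot\frac{19^{\alpha_3+1}-1}{18}\cdot\frac{p_4^{\alpha_4+1}-1}{p_4-1}=(2D-1)d,
\end{equation*}
and I will repeatedly use $2=\frac{\sigma(n)}{n}+\frac1D$. The hypothesis $D\geqslant 19$ gives the lower bound $\frac{\sigma(n)}{n}\geqslant\frac{37}{19}$, while deficiency gives $\frac{\sigma(n)}{n}<2$; together with the monotonicity estimates $\frac{p^2+p+1}{p^2}\leqslant\frac{\sigma(p^{\alpha})}{p^{\alpha}}<\frac{p}{p-1}$ for even $\alpha\geqslant2$ these are the workhorses.

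First I would read divisibility conditions on $2D-1$ off the displayed identity: whenever $\alpha_i=2$ the integer $1+p_i+p_i^2$ divides $(2D-1)d$, and since its prime factors divide $d$ only when they coincide with $p_4$, one gets $13\mid 2D-1$ if $\alpha_1=2$, $31\mid 2D-1$ if $\alpha_2=2$ and $p_4\neq31$, and (from $1+19+19^2=3\cdot127$) $127\mid 2D-1$ if $\alpha_3=2$ and $p_4\neq127$. Since $D$ is odd, the first condition pins $D$ to the class $D\equiv7\pmod{26}$, so $D\geqslant33$. Combining these with $\frac{3}{2}\cdot\frac{5}{4}\cdot\frac{19}{18}=\frac{285}{144}<2$, a short computation separates two regimes: $\alpha_1=2$ forces $p_4\in\{23,29,31\}$, whereas $\alpha_1\geqslant4$ forces $p_4\geqslant47$ (any smaller $p_4$ already pushes $\frac{\sigma(n)}{n}$ above $2$).

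The core is then a finite case analysis organized by the value of $D$ (equivalently by $p_4$ and the exponents $\alpha_i$), each case closed by one of three devices, exactly as in the earlier lemmas. First, accumulating the forced prime factors of $2D-1$ gives a lower bound on $D$; feeding the resulting upper bound on $\frac1D$ into $\frac{\sigma(n)}{n}=2-\frac1D$ and comparing with the monotonicity estimates contradicts the inequality in most cases. Second, order arguments: when $\ord_q(p_i)\mid(\alpha_i+1)$ is forced --- typically because $\ord_{p_j}(p_i)$ is even while $\alpha_i+1$ is odd, or via a $q$-adic valuation of $\sigma(p_i^{\alpha_i})$ --- the factor $p_i^{\ord_q(p_i)}-1$ divides $\sigma(n)$ and contributes a prime too large to divide $(2D-1)d$; the relevant data here include $\ord_{19}(3)=18$, $\ord_{19}(5)=9$, $\ord_5(19)=2$, $\ord_5(3)=4$ and $\ord_3(5)=2$. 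Third, Legendre-symbol arguments: because every $\alpha_i$ is even, $n$ is a perfect square, so for any prime $q\mid\sigma(n)$ the congruence $d\equiv 2n\pmod q$ yields
\begin{equation*}
\left(\frac{2}{q}\right)=\left(\frac{3}{q}\right)^{\beta_1}\left(\frac{5}{q}\right)^{\beta_2}\left(\frac{19}{q}\right)^{\beta_3}\left(\frac{p_4}{q}\right)^{\beta_4},
\end{equation*}
and choosing $q$ with $\left(\frac{2}{q}\right)=-1$ while the remaining symbols are controlled either contradicts this identity outright or fixes the parities of the $\beta_i$, as in Subcases 9.5--9.8 of Lemma \ref{lem2.3}.

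I expect the principal difficulty to be organizational rather than conceptual: after the coarse bounds there remain finitely many triples $(D,p_4,\alpha_i)$, and each must be individually closed. The delicate cases are the near-extremal ones --- small admissible $D$ together with small $p_4$, where $\frac{\sigma(n)}{n}$ is forced very close to $2$ and the size inequalities alone are too weak --- which require the order and Legendre-symbol arguments, with special care at $p_4\in\{31,127\}$, where a prime factor of some $1+p_i+p_i^2$ may be absorbed into $d$ rather than into $2D-1$. The complementary range $D<19$ is to be treated separately.
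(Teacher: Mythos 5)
Your setup and coarse reductions are correct and coincide exactly with the paper's: the same specialization of (\ref{Eq1}), the same split into $\alpha_{1}=2$ (giving $13\mid 2D-1$) versus $\alpha_{1}\geqslant 4$ (giving $p_{4}\geqslant 47$), and the same three closing devices with the right order data. One refinement you miss: since the prime support of $D$ lies in $\{3,5,19,p_{4}\}$ with $p_{4}>19$, the value $D=33=3\cdot 11$ is impossible, so $13\mid 2D-1$ together with $D\geqslant 19$ in fact forces $D\geqslant 59$ (this is the paper's bound), which cuts your $\alpha_{1}=2$ regime down to $p_{4}=23$ alone; even that single case still needs an order argument ($\ord_{5}(3)=\ord_{5}(23)=4$, $\ord_{5}(19)=2$, $\ord_{23}(5)=\ord_{23}(19)=22$, forcing $\beta_{2}=\beta_{4}=0$ and $D\geqslant 5^{2}\cdot 23^{2}$) before the size inequality closes it.

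The genuine gap is that the case analysis is never executed, and that analysis is the entire content of the lemma. Note also that finiteness is not available a priori: for a fixed $D$ the prime $p_{4}$ and the exponents are unbounded until the size and order arguments are run (and conversely large $p_{4}$ bounds $D$), so ``finitely many triples remain'' is an output of the analysis, not an input to it. After the coarse reductions the paper works through nine cases in $D$ with numerous subcases, and each closure rests on specific numerical facts that your outline neither produces nor guarantees exist: for $D=57$ one needs $\ord_{19}(5)=9$ together with $31\mid 5^{9}-1$; for $D=p_{4}\in\{61,71,101,131\}$ one needs $5\mid(\alpha_{4}+1)$ and then a prime factor of $p_{4}^{5}-1$ that cannot be absorbed into $(2D-1)d$, namely $131\mid 61^{5}-1$, $11\mid 71^{5}-1$, $31\mid 101^{5}-1$, $61\mid 131^{5}-1$; for $p_{4}=101$ one needs $491\mid 101^{5}-1$; and the near-extremal subcases require Legendre computations at specifically chosen moduli, $q=11$ for $p_{4}=67,89$ and $q=1093$ (a factor of $\sigma(3^{6})$) for $p_{4}=53$ with $\alpha_{1}=6$. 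Whether every residual configuration actually dies under one of your three devices is precisely what a proof must verify; asserting that it will, ``as in the earlier lemmas,'' leaves the proof unwritten.
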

\begin{proof}
Assume that $n=3^{\alpha_{1}}5^{\alpha_{2}}19^{\alpha_{3}}p_{4}^{\alpha_{4}}$ is an odd deficient-perfect number with deficient divisor $d=3^{\beta_{1}}5^{\beta_{2}}19^{\beta_{3}}p_{4}^{\beta_{4}}$. By (\ref{Eq1}), we have
\begin{equation}\label{2.4}
\frac{3^{\alpha_{1}+1}-1}{2}\cdot \frac{5^{\alpha_{2}+1}-1}{4}\cdot \frac{19^{\alpha_{3}+1}-1}{18}\cdot \frac{p_{4}^{\alpha_{4}+1}-1}{p_{4}-1}=2\cdot 3^{\alpha_{1}}5^{\alpha_{2}} 19^{\alpha_{3}}p_{4}^{\alpha_{4}}-3^{\beta_{1}}5^{\beta_{2}}19^{\beta_{3}} p_{4}^{\beta_{4}}.
\end{equation}
If $\alpha_{1}=2$, then $13\mid (2D-1)$. Thus $D\geqslant 59$. If $p_{4}\geqslant 29$, then
$$2=\frac{\sigma(n)}{n}+\frac{d}{n}<\frac{3^{3}-1}{2\cdot 3^{2}}\cdot \frac{5}{4}\cdot \frac{19}{18}\cdot \frac{29}{28}+\frac{1}{59}<2,$$
which is false. Thus $p_{4}=23$. Since $\rm{ord}_{5}(3)=\rm{ord}_{5}(23)=4, \rm{ord}_{5}(19)=2, \rm{ord}_{23}(5)=\rm{ord}_{23}(19)=22$, we have $\beta_{2}=\beta_{4}=0$, $D\geqslant 5^{2}\cdot 23^{2}$ and
$$2=\frac{\sigma(n)}{n}+\frac{d}{n}<\frac{3^{3}-1}{2\cdot 3^{2}}\cdot \frac{5}{4}\cdot \frac{19}{18}\cdot \frac{23}{22}+\frac{1}{5^{2}\cdot 23^{2}}<2,$$
which is impossible. Thus $\alpha_{1}\geqslant 4$. If $p_{4}\leqslant 43$, then
$$2=\frac{\sigma(n)}{n}+\frac{d}{n}>\frac{3^{5}-1}{2\cdot 3^{4}}\cdot \frac{5^{3}-1}{4\cdot 5^{2}}\cdot \frac{19^{3}-1}{18\cdot 19^{2}}\cdot \frac{43^{3}-1}{42\cdot 43^{2}}>2,$$
which is absurd. Thus $p_{4}\geqslant 47$. If $D\leqslant 21$, then
$$2=\frac{\sigma(n)}{n}+\frac{d}{n}>\frac{3^{5}-1}{2\cdot 3^{4}}\cdot \frac{5^{3}-1}{4\cdot 5^{2}}\cdot \frac{19^{3}-1}{18\cdot 19^{2}}+\frac{1}{21}>2,$$
which is false. Thus $D\geqslant 25$. Now we divide into the following nine cases according to $D$.

{\bf Case 1.} $D\in\{25, 27, 45\}$. By (\ref{2.4}), we have $\alpha_{1}\geqslant 6, \alpha_{2}\geqslant 6$ and
$$2=\frac{\sigma(n)}{n}+\frac{d}{n}>\frac{3^{7}-1}{2\cdot 3^{6}}\cdot \frac{5^{7}-1}{4\cdot 5^{6}}\cdot \frac{19^{3}-1}{18\cdot 19^{2}}+\frac{1}{45}>2,$$
which is impossible.

{\bf Case 2.} $D\in\{47, 53, 59, 67, 73, 79, 83, 89, 97, 103, 107, 109, 113, 127,137, 139\}$. Then $p_{4}=D$.
Noting that $\rm{ord}_{5}(3)=4, \rm{ord}_{5}(19)=2$ and ${\rm{ord}}_{5}(p_{4})$ are all even, we deduce that the equality (\ref{2.4}) can not hold.

{\bf Case 3.} $D=57$. If $p_{4}\geqslant 607$
$$2=\frac{\sigma(n)}{n}+\frac{d}{n}<\frac{3}{2}\cdot \frac{5}{4}\cdot \frac{19}{18}\cdot \frac{607}{606}+\frac{1}{57}<2,$$
which is impossible. If $19\mid (5^{\alpha_{2}+1}-1)$, then $9\mid (\alpha_{2}+1)$ and $(5^{9}-1)\mid (5^{\alpha_{2}+1}-1)$. However, $31\mid (5^{9}-1)$, a contradiction. Since $\rm{ord}_{19}(3)=18$ and $\rm{ord}_{5}(3)=4, \rm{ord}_{5}(19)=2$, we have $p_{4}\leqslant 571$ and $5\mid (\alpha_{4}+1)$. Thus $\alpha_{1}\geqslant 10, \alpha_{2}\geqslant 6, \alpha_{3}\geqslant 4$ and
$$2=\frac{\sigma(n)}{n}+\frac{d}{n}\geqslant \frac{3^{11}-1}{2\cdot 3^{10}}\cdot \frac{5^{7}-1}{4\cdot 5^{6}}\cdot \frac{19^{5}-1}{18\cdot 19^{4}}\cdot \frac{571^{5}-1}{570\cdot 571^{4}}+\frac{1}{57}>2,$$
which is impossible.

{\bf Case 4.} $D\in\{61, 71, 101, 131\}$. Then $p_{4}=D$. Noting that $\rm{ord}_{5}(3)=4, \rm{ord}_{5}(19)=2$, we have $5\mid (\alpha_{4}+1)$ and $(p_{4}^{5}-1)\mid (p_{4}^{\alpha_{4}+1}-1)$. However, $131\mid (61^{5}-1), 11\mid (71^{5}-1), 31\mid (101^{5}-1)$ and $61\mid (131^{5}-1)$, a contradiction.

{\bf Case 5.} $D=75$. If $p_{4}\geqslant 269$
$$2=\frac{\sigma(n)}{n}+\frac{d}{n}<\frac{3}{2}\cdot \frac{5}{4}\cdot \frac{19}{18}\cdot \frac{269}{268}+\frac{1}{75}<2,$$
which is clearly false. By (\ref{2.4}), we have $\alpha_{1}\geqslant 10, \alpha_{2}\geqslant 6, p_{4}\leqslant 251$ and
$$2=\frac{\sigma(n)}{n}+\frac{d}{n}\geqslant \frac{3^{11}-1}{2\cdot 3^{10}}\cdot \frac{5^{7}-1}{4\cdot 5^{6}}\cdot \frac{19^{3}-1}{18\cdot 19^{2}}\cdot \frac{251^{3}-1}{250\cdot 251^{3}}+\frac{1}{75}>2,$$
which is impossible.

{\bf Case 6.} $D\in\{81, 95\}$. If $p_{4}\geqslant 239$
$$2=\frac{\sigma(n)}{n}+\frac{d}{n}<\frac{3}{2}\cdot \frac{5}{4}\cdot \frac{19}{18}\cdot \frac{239}{238}+\frac{1}{81}<2,$$
which is false. Thus $p_{4}\leqslant 233$. If $19\mid (5^{\alpha_{2}+1}-1)$, then $9\mid (\alpha_{2}+1)$ and $(5^{9}-1)\mid (5^{\alpha_{2}+1}-1)$. However, $31\mid (5^{9}-1)$, a contradiction. Since $\rm{ord}_{19}(3)=18$ and $\rm{ord}_{5}(3)=4, \rm{ord}_{5}(19)=2$, we have $p_{4}\leqslant 191$ and $5\mid (\alpha_{4}+1)$. Thus $\alpha_{1}\geqslant 10, \alpha_{2}\geqslant 6, \alpha_{3}\geqslant 4$ and
$$2=\frac{\sigma(n)}{n}+\frac{d}{n}\geqslant\frac{3^{11}-1}{2\cdot 3^{10}}\cdot \frac{5^{7}-1}{4\cdot 5^{6}}\cdot \frac{19^{5}-1}{18\cdot 19^{4}}\cdot \frac{191^{5}-1}{190\cdot 191^{4}}+\frac{1}{95}>2,$$
which is impossible.

{\bf Case 7.} $D=125$. If $p_{4}\geqslant 157$, then
$$2=\frac{\sigma(n)}{n}+\frac{d}{n}<\frac{3}{2}\cdot \frac{5}{4}\cdot \frac{19}{18}\cdot \frac{157}{156}+\frac{1}{125}<2,$$
which is false. Thus $p_{4}\leqslant 151$. By (\ref{2.4}), we have $\alpha_{1}\geqslant 12, \alpha_{2}\geqslant 6$ and
$$2=\frac{\sigma(n)}{n}+\frac{d}{n}\geqslant\frac{3^{13}-1}{2\cdot 3^{12}}\cdot \frac{5^{7}-1}{4\cdot 5^{6}}\cdot \frac{19^{3}-1}{18\cdot 19^{2}}\cdot \frac{151^{3}-1}{150\cdot 151^{2}}+\frac{1}{125}>2,$$
which is impossible.

{\bf Case 8.} $D=135$. If $p_{4}\geqslant 149$, then
$$2=\frac{\sigma(n)}{n}+\frac{d}{n}<\frac{3}{2}\cdot \frac{5}{4}\cdot \frac{19}{18}\cdot \frac{149}{148}+\frac{1}{135}<2,$$
which is false. Thus $p_{4}\leqslant 139$. By (\ref{2.4}), we have $\alpha_{1}\geqslant 8, \alpha_{2}\geqslant 6$ and
$$2=\frac{\sigma(n)}{n}+\frac{d}{n}\geqslant\frac{3^{9}-1}{2\cdot 3^{8}}\cdot \frac{5^{7}-1}{4\cdot 5^{6}}\cdot \frac{19^{3}-1}{18\cdot 19^{2}}\cdot \frac{139^{3}-1}{138\cdot 139^{2}}+\frac{1}{135}>2,$$
which is impossible.

{\bf Case 9.} $D\geqslant 141$. If $p_{4}\geqslant 149$, then
$$2=\frac{\sigma(n)}{n}+\frac{d}{n}<\frac{3}{2}\cdot \frac{5}{4}\cdot \frac{19}{18}\cdot \frac{149}{148}+\frac{1}{141}<2,$$
which is false. Thus $p_{4}\leqslant 139$.

If $47\leqslant p_{4}\leqslant 61$, then $\alpha_{2}=2, 31\mid (2D-1)$ and $D\geqslant 171$. Otherwise, if $\alpha_{2}\geqslant 4$, then
$$2=\frac{\sigma(n)}{n}+\frac{d}{n}>\frac{3^{5}-1}{2\cdot 3^{4}}\cdot \frac{5^{5}-1}{4\cdot 5^{4}}\cdot \frac{19^{3}-1}{18\cdot 19^{2}}\cdot \frac{61^{3}-1}{60\cdot 61^{2}}>2,$$
which is impossible.

If $67\leqslant p_{4}\leqslant 139$, then $\alpha_{2}\geqslant 4$. Otherwise, if $\alpha_{2}=2$, then $D\geqslant 171$ and
$$2=\frac{\sigma(n)}{n}+\frac{d}{n}<\frac{3}{2}\cdot \frac{5^{3}-1}{4\cdot 5^{2}}\cdot \frac{19}{18}\cdot \frac{67}{66}+\frac{1}{171}<2,$$
which is absurd.

{\bf Subcase 9.1 } $47\leqslant p_{4}\leqslant 61$. If $p_{4}=47$, then $\alpha_{1}=4$. Otherwise, if $\alpha_{1}\geqslant 6$, then
$$2=\frac{\sigma(n)}{n}+\frac{d}{n}>\frac{3^{7}-1}{2\cdot 3^{6}}\cdot \frac{5^{3}-1}{4\cdot 5^{2}}\cdot \frac{19^{3}-1}{18\cdot 19^{2}}\cdot \frac{47^{3}-1}{46\cdot 47^{2}}>2,$$
which is false. Since $\rm{ord}_{5}(3)=\rm{ord}_{5}(47)=4$ and $\rm{ord}_{5}(19)=2$, we have $\beta_{2}=0, D>1225$ and
$$2=\frac{\sigma(n)}{n}+\frac{d}{n}<\frac{3^{5}-1}{2\cdot 3^{4}}\cdot \frac{5^{3}-1}{4\cdot 5^{2}}\cdot \frac{19}{18}\cdot \frac{47}{46}+\frac{1}{1225}<2,$$
which is impossible.

If $p_{4}=53$, then $\alpha_{1}\leqslant 6$. Otherwise, if $\alpha_{1}\geqslant 8$, then
$$2=\frac{\sigma(n)}{n}+\frac{d}{n}>\frac{3^{9}-1}{2\cdot 3^{8}}\cdot \frac{5^{3}-1}{4\cdot 5^{2}}\cdot \frac{19^{3}-1}{18\cdot 19^{2}}\cdot \frac{53^{3}-1}{52\cdot 53^{2}}>2,$$
which is impossible. If $\alpha_{1}=4$, then
$$2=\frac{\sigma(n)}{n}+\frac{d}{n}<\frac{3^{5}-1}{2\cdot 3^{4}}\cdot \frac{5^{3}-1}{4\cdot 5^{2}}\cdot \frac{19}{18}\cdot \frac{53}{52}+\frac{1}{171}<2,$$
which is impossible. Thus $\alpha_{1}=6$. Since $\rm{ord}_{5}(3)=\rm{ord}_{5}(53)=4, \rm{ord}_{5}(19)=2, \rm{ord}_{53}(19)=52$ and $ \rm{ord}_{19}(53)=18$, we have $\beta_{2}=\beta_{3}=\beta_{4}=0$ and
$$-1=\left(\frac{2\cdot 3^{6}\cdot 5^{2}\cdot 19^{\alpha_{3}}53^{\alpha_{4}}}{1093}\right)=\left(\frac{3^{\beta_{1}}}{1093}\right)=1,$$
which is a contradiction.

If $p_{4}=59$, then $\beta_{2}=0$ by $\rm{ord}_{5}(3)=4$ and $\rm{ord}_{5}(19)=\rm{ord}_{5}(59)=2$. Thus $D>1225$ and
$$2=\frac{\sigma(n)}{n}+\frac{d}{n}<\frac{3}{2}\cdot \frac{5^{3}-1}{4\cdot 5^{2}}\cdot \frac{19}{18}\cdot \frac{59}{58}+\frac{1}{1225}<2,$$
which is impossible.

If $p_{4}=61$, then $D\leqslant 253$. Otherwise, if $D\geqslant 255$, then
$$2=\frac{\sigma(n)}{n}+\frac{d}{n}<\frac{3}{2}\cdot \frac{5^{3}-1}{4\cdot 5^{2}}\cdot \frac{19}{18}\cdot \frac{61}{60}+\frac{1}{255}<2,$$
which is impossible. Thus $D=171$. Since $\rm{ord}_{5}(3)=4, \rm{ord}_{5}(19)=2$ and $\rm{ord}_{25}(61)=5$, we have $5\mid (\alpha_{4}+1)$ and $(61^{5}-1)\mid (61^{\alpha_{4}+1}-1)$. However, $131\mid (61^{5}-1)$, a contradiction.

{\bf Subcase 9.2 } $p_{4}=67$. By $\rm{ord}_{19}(3)=\rm{ord}_{19}(67)=18$, we have $\beta_{3}=0$. If $\alpha_{2}\geqslant 6$, then
$$2=\frac{\sigma(n)}{n}+\frac{d}{n}>\frac{3^{5}-1}{2\cdot 3^{4}}\cdot \frac{5^{7}-1}{4\cdot 5^{6}}\cdot \frac{19^{3}-1}{18\cdot 19^{2}}\cdot \frac{67^{3}-1}{66\cdot 67^{2}}>2,$$
which is a contradiction. If $\alpha_{2}=4$, then
 $$-1=\left(\frac{2}{11}\right)=\left(\frac{2\cdot 3^{\alpha_{1}}5^{4}19^{\alpha_{3}}67^{\alpha_{4}}}{11}\right)=\left(\frac{3^{\beta_{1}}5^{\beta_{2}}67^{\beta_{4}}}{11}\right)=1,$$
which is also a contradiction.

{\bf Subcase 9.3 } $p_{4}=71$. If $\alpha_{1}\geqslant 6$, then
$$2=\frac{\sigma(n)}{n}+\frac{d}{n}>\frac{3^{7}-1}{2\cdot 3^{6}}\cdot \frac{5^{5}-1}{4\cdot 5^{4}}\cdot \frac{19^{3}-1}{18\cdot 19^{2}}\cdot \frac{71^{3}-1}{70\cdot 71^{2}}>2,$$
which is false. Thus $\alpha_{1}=4$ and $121|(2D-1)$. Thus $D>1271$ and
$$2=\frac{\sigma(n)}{n}+\frac{d}{n}<\frac{3^{5}-1}{2\cdot 3^{4}}\cdot \frac{5}{4}\cdot \frac{19}{18}\cdot \frac{71}{70}+\frac{1}{1271}<2,$$
which is impossible.

{\bf Subcase 9.4 } $p_{4}\in\{73, 79, 83\}$. Since $\rm{ord}_{5}(3)=4, \rm{ord}_{5}(19)=2$ and ${\rm{ord}}_{5}(p_{4})$ are all even, we have $\beta_{2}=0$ and $D\geqslant 625$. If $\alpha_{1}=4$, then
$$2=\frac{\sigma(n)}{n}+\frac{d}{n}<\frac{3^{5}-1}{2\cdot 3^{4}}\cdot \frac{5}{4}\cdot \frac{19}{18}\cdot \frac{73}{72}+\frac{1}{625}<2,$$
which is a contradiction. If $\alpha_{1}\geqslant 6$, then
$$2=\frac{\sigma(n)}{n}+\frac{d}{n}>\frac{3^{7}-1}{2\cdot 3^{6}}\cdot \frac{5^{5}-1}{4\cdot 5^{4}}\cdot \frac{19^{3}-1}{18\cdot 19^{2}}\cdot \frac{83^{3}-1}{82\cdot 83^{2}}>2,$$
which is also a contradiction.

{\bf Subcase 9.5 } $p_{4}=89$. Since $\rm{ord}_{5}(3)=4, \rm{ord}_{5}(19)=\rm{ord}_{5}(89)=2$, we have $\beta_{2}=0$ and $D\geqslant 625$. If $\alpha_{1}\geqslant 8$, then
$$2=\frac{\sigma(n)}{n}+\frac{d}{n}>\frac{3^{9}-1}{2\cdot 3^{8}}\cdot \frac{5^{5}-1}{4\cdot 5^{4}}\cdot \frac{19^{3}-1}{18\cdot 19^{2}}\cdot \frac{89^{3}-1}{88\cdot 89^{2}}>2,$$
which is a contradiction. If $\alpha_{1}=4$, then
$$2=\frac{\sigma(n)}{n}+\frac{d}{n}<\frac{3^{5}-1}{2\cdot 3^{4}}\cdot \frac{5}{4}\cdot \frac{19}{18}\cdot \frac{89}{88}+\frac{1}{625}<2,$$
which is also a contradiction. Thus $\alpha_{1}=6$. If $\alpha_{2}\geqslant 6$, then
$$2=\frac{\sigma(n)}{n}+\frac{d}{n}>\frac{3^{7}-1}{2\cdot 3^{6}}\cdot \frac{5^{7}-1}{4\cdot 5^{6}}\cdot \frac{19^{3}-1}{18\cdot 19^{2}}\cdot \frac{89^{3}-1}{88\cdot 89^{2}}>2,$$
which is impossible. Thus $\alpha_{2}=4$. If $\alpha_{3}\geqslant 4$, then
$$2=\frac{\sigma(n)}{n}+\frac{d}{n}>\frac{3^{7}-1}{2\cdot 3^{6}}\cdot \frac{5^{5}-1}{4\cdot 5^{4}}\cdot \frac{19^{5}-1}{18\cdot 19^{4}}\cdot \frac{89^{3}-1}{88\cdot 89^{2}}>2,$$
which is false. Thus $\alpha_{3}=2$. By (\ref{2.4}) and $\rm{ord}_{19}(89)=18$, we have
$\beta_{1}=1, \beta_{3}=\beta_{4}=0$ and
$$-1=\left(\frac{2}{11}\right)=\left(\frac{2\cdot 3^{6}\cdot 5^{4}\cdot 19^{2}\cdot 89^{\alpha_{4}}}{11}\right)=\left(\frac{3}{11}\right)=1,$$
which is absurd.

{\bf Subcase 9.6 } $p_{4}=97$. Since $\rm{ord}_{5}(3)=\rm{ord}_{5}(97)=4, \rm{ord}_{5}(19)=2, \rm{ord}_{97}(3)=48, \rm{ord}_{97}(5)=\rm{ord}_{97}(19)=96$,
 we have $\beta_{2}=\beta_{3}=0, D\geqslant 5^{2}\cdot 97^{2}$ and
 $$2=\frac{\sigma(n)}{n}+\frac{d}{n}<\frac{3}{2}\cdot \frac{5}{4}\cdot \frac{19}{18}\cdot \frac{97}{96}+\frac{1}{5^{2}\cdot 97^{2}}<2,$$
which is impossible.

{\bf Subcase 9.7 } $p_{4}=101$. If $\beta_{2}\geqslant 1$, then $5\mid (\alpha_{4}+1)$ and $(101^{5}-1)\mid(101^{\alpha_{4}+1}-1)$. Since $491\mid (101^{5}-1)$, we have $491\mid (2D-1)$. Thus $D>1719$ and
$$2=\frac{\sigma(n)}{n}+\frac{d}{n}<\frac{3}{2}\cdot \frac{5}{4}\cdot \frac{19}{18}\cdot \frac{101}{100}+\frac{1}{D}<2,$$
which is impossible. Thus $\beta_{2}=0$ and $5^{4}\mid D$. If $D\geqslant 961$, then
$$2=\frac{\sigma(n)}{n}+\frac{d}{n}<\frac{3}{2}\cdot \frac{5}{4}\cdot \frac{19}{18}\cdot \frac{101}{100}+\frac{1}{961}<2,$$
which is false. Thus $D=5^{4}$ and $\alpha_{2}=4$. However, we deduce that the equality (\ref{2.4}) cannot hold.

{\bf Subcase 9.8 } $p_{4}\in\{103, 107, 109, 113, 127, 137, 139\}$. Since $\rm{ord}_{5}(3)=4, \rm{ord}_{5}(19)=2$ and ${\rm{ord}}_{5}(p_{4})$ are all even, we have $\beta_{2}=0$. If $D\geqslant 701$, then
$$2=\frac{\sigma(n)}{n}+\frac{d}{n}<\frac{3}{2}\cdot \frac{5}{4}\cdot \frac{19}{18}\cdot \frac{103}{102}+\frac{1}{701}<2,$$
which is false. Thus $D=5^{4}$ and $\alpha_{2}=4$. However, we deduce that the equality (\ref{2.4}) cannot hold.

{\bf Subcase 9.9 } $p_{4}=131$. If $D\geqslant 179$, then
$$2=\frac{\sigma(n)}{n}+\frac{d}{n}<\frac{3}{2}\cdot \frac{5}{4}\cdot \frac{19}{18}\cdot \frac{131}{130}+\frac{1}{179}<2,$$
which is false. Thus $D=171$ and $\beta_{2}\geqslant 2$. Since $\rm{ord}_{5}(3)=4, \rm{ord}_{5}(19)=2, \rm{ord}_{25}(131)=5$, we have $(131^{5}-1)\mid(131^{\alpha_{4}+1}-1)$. However, $61\mid (131^{5}-1)$, a contradiction.

This completes the proof of Lemma \ref{lem2.4}.
\end{proof}

\begin{lemma}\label{lem2.5}
There is no odd deficient-perfect number of the form $n=3^{\alpha_{1}}5^{\alpha_{2}}23^{\alpha_{3}}p_{4}^{\alpha_{4}}$ with $D\geqslant 23$.
\end{lemma}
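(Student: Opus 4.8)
The plan is to assume for contradiction that $n=3^{\alpha_{1}}5^{\alpha_{2}}23^{\alpha_{3}}p_{4}^{\alpha_{4}}$, with deficient divisor $d=3^{\beta_{1}}5^{\beta_{2}}23^{\beta_{3}}p_{4}^{\beta_{4}}$ and $D\geqslant 23$, is a deficient-perfect number, and to close the argument with the same three tools as in Lemmas \ref{lem2.1}--\ref{lem2.4}: two-sided estimates on $\sigma(n)/n=2-1/D$, divisibility (order) conditions read off from $2D-1\mid\sigma(n)$, and quadratic-residue obstructions. Specialising (\ref{Eq1}) to $p_{3}=23$ gives the working identity
$$\frac{3^{\alpha_{1}+1}-1}{2}\cdot\frac{5^{\alpha_{2}+1}-1}{4}\cdot\frac{23^{\alpha_{3}+1}-1}{22}\cdot\frac{p_{4}^{\alpha_{4}+1}-1}{p_{4}-1}=2\cdot 3^{\alpha_{1}}5^{\alpha_{2}}23^{\alpha_{3}}p_{4}^{\alpha_{4}}-3^{\beta_{1}}5^{\beta_{2}}23^{\beta_{3}}p_{4}^{\beta_{4}},$$
which I will use throughout in the role that (\ref{2.4}) played for $p_{3}=19$.

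First I would fix $\alpha_{1}$. Since $p_{4}>23$ forces $p_{4}\geqslant 29$ and $D\geqslant 23$ gives $d/n=1/D\leqslant 1/23$, the value $\alpha_{1}=2$ is impossible, because
$$2=\frac{\sigma(n)}{n}+\frac{d}{n}<\frac{3^{3}-1}{2\cdot 3^{2}}\cdot\frac{5}{4}\cdot\frac{23}{22}\cdot\frac{29}{28}+\frac{1}{23}<2;$$
hence $\alpha_{1}\geqslant 4$. With $\alpha_{1}\geqslant 4$ the minimal-exponent product already overshoots $2$ for the two smallest admissible fourth primes, since for $p_{4}\in\{29,31\}$
$$2=\frac{\sigma(n)}{n}+\frac{d}{n}>\frac{3^{5}-1}{2\cdot 3^{4}}\cdot\frac{5^{3}-1}{4\cdot 5^{2}}\cdot\frac{23^{3}-1}{22\cdot 23^{2}}\cdot\frac{p_{4}^{3}-1}{(p_{4}-1)p_{4}^{2}}>2,$$
so $p_{4}\geqslant 37$.

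The decisive step is to reduce the surviving possibilities to a finite list, exactly as in Lemma \ref{lem2.4}. Because the extreme product $\frac{3}{2}\cdot\frac{5}{4}\cdot\frac{23}{22}\cdot\frac{p_{4}}{p_{4}-1}$ only barely exceeds $2$, any sufficiently large $D$ forces $\frac{p_{4}}{p_{4}-1}$ to stay above a fixed constant and hence bounds $p_{4}$ by an explicit constant, leaving finitely many primes $p_{4}$; each remaining small value of $D$, necessarily of the shape $3^{a}5^{b}23^{c}p_{4}^{e}$, can then be enumerated directly. In either regime I would close a case by one of two mechanisms. First, whenever a prime $q\mid 2D-1$ we have $q\mid\sigma(n)$, so $q\mid\sigma(p^{\alpha_{p}})$ for some base $p\in\{3,5,23,p_{4}\}$ and therefore ${\rm ord}_{q}(p)\mid(\alpha_{p}+1)$; this forces the corresponding exponent up, and reinserting the enlarged exponent into the minimal-product lower bound gives $\sigma(n)/n>2$, a contradiction. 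Second, when the inequalities instead pin the exponents down (typically $\alpha_{3}=2$ or $\alpha_{2}\in\{2,4\}$), I would use that $n$ is a perfect square, since every $\alpha_{i}$ is even: picking a prime $q$ with $q\mid\sigma(p^{\alpha})$ for a pinned base, the relation $\sigma(n)=2n-d$ gives $2n\equiv d\pmod q$, whence $\left(\frac{2}{q}\right)=\left(\frac{2n}{q}\right)=\left(\frac{d}{q}\right)$, and choosing $q$ with $\left(\frac{2}{q}\right)=-1$ while the right-hand side forces $+1$ produces the contradiction $-1=1$, just as in Subcases 9.5--9.12 of Lemma \ref{lem2.3}.

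The main obstacle is this final case analysis rather than any individual inequality: the careful tabulation of the orders ${\rm ord}_{q}(p)$ for $p\in\{3,5,23,p_{4}\}$ and the selection of the right auxiliary primes $q$ for each Legendre obstruction. Because $23$ has comparatively large multiplicative orders modulo the small primes, more pairs $(p_{4},D)$ survive the crude numerical bounds than in Lemmas \ref{lem2.1}--\ref{lem2.3}, and each must be eliminated individually; the hypothesis $D\geqslant 23$ is exactly what keeps $d/n\leqslant 1/23$, making the two-sided estimates sharp enough to leave only these finitely many configurations.
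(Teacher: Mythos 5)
Your opening reductions are correct and coincide with the paper's: the upper-bound estimate with $\frac{13}{9}\cdot\frac{5}{4}\cdot\frac{23}{22}\cdot\frac{29}{28}+\frac{1}{23}<2$ rules out $\alpha_{1}=2$, and the lower-bound estimate at $p_{4}\leqslant 31$ with $\alpha_{1}\geqslant 4$ forces $p_{4}\geqslant 37$. But from that point on you have written a plan, not a proof, and the missing part is the entire mathematical content of the lemma. You never enumerate the admissible values of $D$: since $D$ is composed of the primes $3,5,23,p_{4}$ with $p_{4}\geqslant 37$, the range $23\leqslant D<67$ contains exactly $D\in\{23,25,27,45\}$ and $D=p_{4}\in\{37,41,43,47,53,59,61\}$, and the regime $D\geqslant 67$ forces $p_{4}\leqslant 79$; each of these configurations then has to be killed by a specific computation (which orders modulo which primes, which factor such as $131\mid 61^{5}-1$ obstructs), and none of that is carried out. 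For a statement of this type, "each must be eliminated individually" is precisely the proof, so leaving it undone is a genuine gap, not a stylistic omission.

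There is also a mismatch between the closing mechanisms you propose and what the cases actually require. Your mechanism (a) is "a prime $q\mid 2D-1$ forces an exponent up, and the enlarged exponent pushes the lower bound above $2$"; your mechanism (b) is the Legendre-symbol trick. In the paper's treatment of this lemma, most cases close differently: order conditions (e.g.\ ${\rm ord}_{3}(5)={\rm ord}_{3}(23)={\rm ord}_{3}(p_{4})=2$ or ${\rm ord}_{5}(3)={\rm ord}_{5}(23)=4$) force $\beta_{1}=0$ or $\beta_{2}=0$, hence $D\geqslant 3^{\alpha_{1}}$ or $D\geqslant 3^{\alpha_1}\cdot 5^{\alpha_2}$-type lower bounds, and then the \emph{upper} bound $\sigma(n)/n+1/D<2$ gives the contradiction; in other cases (e.g.\ $p_{4}=61$, $D\in\{69,75,81,115,125,135\}$) a forced divisibility $5\mid(\alpha_{4}+1)$ implies $131\mid(61^{5}-1)\mid\sigma(n)=(2D-1)d$, which is impossible because $131$ divides neither $d$ nor any of the listed $2D-1$. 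Neither of these is an instance of your (a) or (b), and indeed the paper's proof of this particular lemma uses no Legendre symbols at all. So even granting your framework, you would still need to discover these arguments case by case; the proposal as written does not establish the lemma.
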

\begin{proof}
Assume that $n=3^{\alpha_{1}}5^{\alpha_{2}}23^{\alpha_{3}}p_{4}^{\alpha_{4}}$ is an odd deficient-perfect number with deficient divisor $d=3^{\beta_{1}}5^{\beta_{2}}23^{\beta_{3}}p_{4}^{\beta_{4}}$. By (\ref{Eq1}), we have
\begin{equation}\label{2.5}
\frac{3^{\alpha_{1}+1}-1}{2}\cdot \frac{5^{\alpha_{2}+1}-1}{4}\cdot \frac{23^{\alpha_{3}+1}-1}{22}\cdot \frac{p_{4}^{\alpha_{4}+1}-1}{p_{4}-1}=2\cdot 3^{\alpha_{1}}5^{\alpha_{2}}23^{\alpha_{3}}p_{4}^{\alpha_{4}}-3^{\beta_{1}}5^{\beta_{2}}23^{\beta_{3}}p_{4}^{\beta_{4}}.
\end{equation}
If $\alpha_{1}=2$, then
$$2=\frac{\sigma(n)}{n}+\frac{d}{n}<\frac{3^{3}-1}{2\cdot 3^{2}}\cdot \frac{5}{4}\cdot \frac{23}{22}\cdot \frac{29}{28}+\frac{1}{23}<2,$$
which is false. Thus $\alpha_{1}\geqslant 4$. If $p_{4}\leqslant 31$, then
$$2=\frac{\sigma(n)}{n}+\frac{d}{n}>\frac{3^{5}-1}{2\cdot 3^{4}}\cdot \frac{5^{3}-1}{4\cdot 5^{2}}\cdot \frac{23^{3}-1}{22\cdot 23^{2}}\cdot \frac{31^{3}-1}{30\cdot 31^{2}}>2,$$
which is false. Thus $p_{4}\geqslant 37$. Now we divide into the following five cases according to $D$.

{\bf Case 1.} $D\in\{23, 25\}$. By (\ref{2.5}), we have $p_{4}\equiv 1\pmod 5, \alpha_{1}\geqslant 10, \alpha_{2}\geqslant 6$ and
$$2=\frac{\sigma(n)}{n}+\frac{d}{n}>\frac{3^{11}-1}{2\cdot 3^{10}}\cdot \frac{5^{7}-1}{4\cdot 5^{6}}\cdot \frac{23^{3}-1}{22\cdot 23^{2}}+\frac{1}{25}>2,$$
which is impossible.

{\bf Case 2.} $D=27$. If $p_{4}\geqslant 719$, then
$$2=\frac{\sigma(n)}{n}+\frac{d}{n}<\frac{3}{2}\cdot \frac{5}{4}\cdot \frac{23}{22}\cdot \frac{719}{718}+\frac{1}{27}<2,$$
which is impossible. Since $\rm{ord}_{5}(3)=\rm{ord}_{5}(23)=4, \rm{ord}_{3}(5)=\rm{ord}_{3}(23)=2$, we have $p_{4}\equiv 1\pmod {15}$. Thus $p_{4}\leqslant 661$. By (\ref{2.5}), we have $\alpha_{1}\geqslant 10, \alpha_{2}\geqslant 6$ and
$$2=\frac{\sigma(n)}{n}+\frac{d}{n}\geqslant \frac{3^{11}-1}{2\cdot 3^{10}}\cdot \frac{5^{7}-1}{4\cdot 5^{6}}\cdot \frac{23^{3}-1}{22\cdot 23^{2}}\cdot \frac{661^{3}-1}{660\cdot 661^{2}}+\frac{1}{27}>2,$$
which is false.

{\bf Case 3.} $D\in\{37, 41, 43, 47, 53, 59, 61\}$. Then $p_{4}=D$. By (\ref{2.5}), we have $\alpha_{2}\geqslant 4$ and
$$2=\frac{\sigma(n)}{n}+\frac{d}{n}\geqslant\frac{3^{5}-1}{2\cdot 3^{4}}\cdot \frac{5^{5}-1}{4\cdot 5^{4}}\cdot \frac{23^{3}-1}{22\cdot 23^{2}}\cdot \frac{61^{3}-1}{60\cdot 61^{2}}+\frac{1}{61}>2,$$
which is impossible.

{\bf Case 4.} $D=45$. If $p_{4}\geqslant 113$, then
$$2=\frac{\sigma(n)}{n}+\frac{d}{n}<\frac{3}{2}\cdot \frac{5}{4}\cdot \frac{23}{22}\cdot \frac{113}{112}+\frac{1}{45}<2,$$
which is false. Since $\rm{ord}_{5}(3)=\rm{ord}_{5}(23)=4, \rm{ord}_{3}(5)=\rm{ord}_{3}(23)=2$, we have $p_{4}\equiv 1\pmod {15}$. Thus $p_{4}\leqslant 61$. By (\ref{2.5}), we have $\alpha_{2}\geqslant 4$ and
$$2=\frac{\sigma(n)}{n}+\frac{d}{n}\geqslant\frac{3^{5}-1}{2\cdot 3^{4}}\cdot \frac{5^{5}-1}{4\cdot 5^{4}}\cdot \frac{23^{3}-1}{22\cdot 23^{2}}\cdot \frac{61^{3}-1}{60\cdot 61^{2}}+\frac{1}{45}>2,$$
which is impossible.

{\bf Case 5.} $D\geqslant 67$. If $p_{4}\geqslant 83$, then
$$2=\frac{\sigma(n)}{n}+\frac{d}{n}<\frac{3}{2}\cdot \frac{5}{4}\cdot \frac{23}{22}\cdot \frac{83}{82}+\frac{1}{67}<2,$$
which is impossible. Thus $p_{4}\leqslant 79$.

{\bf Subcase 5.1 } $p_{4}=37$. If $\alpha_{2}\geqslant 4$, then
$$2=\frac{\sigma(n)}{n}+\frac{d}{n}>\frac{3^{5}-1}{2\cdot 3^{4}}\cdot \frac{5^{5}-1}{4\cdot 5^{4}}\cdot \frac{23^{3}-1}{22\cdot 23^{2}}\cdot \frac{37^{3}-1}{36\cdot 37^{2}}>2,$$
which is false. Thus $\alpha_{2}=2$ and $31\mid (2D-1)$. Since $\rm{ord}_{5}(3)=\rm{ord}_{5}(23)=\rm{ord}_{5}(37)=2$, we have $\beta_{2}=0$. Thus $D\geqslant 1225$ and
$$2=\frac{\sigma(n)}{n}+\frac{d}{n}<\frac{3}{2}\cdot \frac{5^{3}-1}{4\cdot 5^{2}}\cdot \frac{23}{22}\cdot \frac{37}{36}+\frac{1}{1225}<2,$$
which is impossible.

{\bf Subcase 5.2 } $p_{4}=41$. Since $\rm{ord}_{3}(5)=\rm{ord}_{3}(23)=\rm{ord}_{3}(41)=2$, we have
$\beta_{1}=0$ and $81\mid D$. If $\alpha_{2}\geqslant 4$, then
$$2=\frac{\sigma(n)}{n}+\frac{d}{n}>\frac{3^{5}-1}{2\cdot 3^{4}}\cdot \frac{5^{5}-1}{4\cdot 5^{4}}\cdot \frac{23^{3}-1}{22\cdot 23^{2}}\cdot \frac{41^{3}-1}{40\cdot 41^{2}}>2,$$
which is false. Thus $\alpha_{2}=2$ and $31\mid (2D-1)$. Thus $D\geqslant 729$ and
$$2=\frac{\sigma(n)}{n}+\frac{d}{n}<\frac{3}{2}\cdot \frac{5^{3}-1}{4\cdot 5^{2}}\cdot \frac{23}{22}\cdot \frac{41}{40}+\frac{1}{729}<2,$$
which is impossible.

{\bf Subcase 5.3 } $p_{4}=47$. Since $\rm{ord}_{3}(5)=\rm{ord}_{3}(23)=\rm{ord}_{3}(47)=2, \rm{ord}_{5}(3)=\rm{ord}_{5}(23)=\rm{ord}_{5}(47)=4$, we have $\beta_{1}=\beta_{2}=0$ and $D\geqslant 3^{4}\cdot 5^{2}$. If $\alpha_{2}=2$, then
$$2=\frac{\sigma(n)}{n}+\frac{d}{n}<\frac{3}{2}\cdot \frac{5^{3}-1}{4\cdot 5^{2}}\cdot \frac{23}{22}\cdot \frac{47}{46}+\frac{1}{3^{4}\cdot 5^{2}}<2,$$
which is impossible. Thus $\alpha_{2}\geqslant 4$. If $\alpha_{1}\geqslant 6$, then
$$2=\frac{\sigma(n)}{n}+\frac{d}{n}>\frac{3^{7}-1}{2\cdot 3^{6}}\cdot \frac{5^{5}-1}{4\cdot 5^{4}}\cdot \frac{23^{3}-1}{22\cdot 23^{2}}\cdot \frac{47^{3}-1}{46\cdot 47^{2}}>2,$$
which is a contradiction. If $\alpha_{1}=4$, then
$$2=\frac{\sigma(n)}{n}+\frac{d}{n}<\frac{3^{5}-1}{2\cdot 3^{4}}\cdot \frac{5}{4}\cdot \frac{23}{22}\cdot \frac{47}{46}+\frac{1}{3^{4}\cdot 5^{2}}<2,$$
which is also a contradiction.

{\bf Subcase 5.4 } $p_{4}\in\{53, 59\}$. Since $\rm{ord}_{3}(5)=\rm{ord}_{3}(23)=$$ {\rm{ord}_{3}}(p_{4})=2, \rm{ord}_{5}(3)=\rm{ord}_{5}(23)=4$ and ${\rm{ord}}_{5}(p_{4})$ are all even, we have $\beta_{1}=\beta_{2}=0, D\geqslant 3^{4}\cdot 5^{2}$ and
$$2=\frac{\sigma(n)}{n}+\frac{d}{n}<\frac{3}{2}\cdot \frac{5}{4}\cdot \frac{23}{22}\cdot \frac{53}{52}+\frac{1}{3^{4}\cdot 5^{2}}<2,$$
which is impossible.

{\bf Subcase 5.5 } $p_{4}=61$. If $\alpha_{2}=2$, then
$$2=\frac{\sigma(n)}{n}+\frac{d}{n}<\frac{3}{2}\cdot \frac{5^{3}-1}{4\cdot 5^{2}}\cdot \frac{23}{22}\cdot \frac{61}{60}+\frac{1}{67}<2,$$
which is impossible. Thus $\alpha_{2}\geqslant 4$. If $D\geqslant 141$, then
$$2=\frac{\sigma(n)}{n}+\frac{d}{n}<\frac{3}{2}\cdot \frac{5}{4}\cdot \frac{23}{22}\cdot \frac{61}{60}+\frac{1}{141}<2,$$
which is impossible. Thus $D\in\{69, 75, 81, 115, 125, 135\}$. Since $\rm{ord}_{5}(3)=\rm{ord}_{5}(23)=4$ and $\rm{ord}_{25}(61)=5$, we have $5|(\alpha_{4}+1)$ and $(61^{5}-1)|(61^{\alpha_{4}+1}-1)$. Noting that $131\mid (61^{5}-1)$, we have $131\mid (2D-1)$, a contradiction.

{\bf Subcase 5.6 } $p_{4}=71$. Since $\rm{ord}_{3}(5)=\rm{ord}_{3}(23)=\rm{ord}_{3}(71)=2$, we have $\beta_{1}=0$. If $\alpha_{1}=4$, then
$$2=\frac{\sigma(n)}{n}+\frac{d}{n}<\frac{3^{5}-1}{2\cdot 3^{4}}\cdot \frac{5}{4}\cdot \frac{23}{22}\cdot \frac{71}{70}+\frac{1}{67}<2,$$
which is impossible. If $\alpha_{1}\geqslant 6$, then $D\geqslant 3^{6}$ and
$$2=\frac{\sigma(n)}{n}+\frac{d}{n}<\frac{3}{2}\cdot \frac{5}{4}\cdot \frac{23}{22}\cdot \frac{71}{70}+\frac{1}{3^{6}}<2,$$
which is false.

{\bf Subcase 5.7 } $p_{4}\in\{67, 73, 79\}$. Since $\rm{ord}_{5}(3)=\rm{ord}_{5}(23)=4$ and ${\rm{ord}}_{5}(p_{4})$ are all even, we have $\beta_{2}=0$. If $\alpha_{2}=2$, then
$$2=\frac{\sigma(n)}{n}+\frac{d}{n}<\frac{3}{2}\cdot \frac{5^{3}-1}{4\cdot 5^{2}}\cdot \frac{23}{22}\cdot \frac{67}{66}+\frac{1}{67}<2,$$
which is impossible. If $\alpha_{2}\geqslant 4$, then $D\geqslant 5^{4}$ and
$$2=\frac{\sigma(n)}{n}+\frac{d}{n}<\frac{3}{2}\cdot \frac{5}{4}\cdot \frac{23}{22}\cdot \frac{67}{66}+\frac{1}{5^{4}}<2,$$
which is false.

This completes the proof of Lemma \ref{lem2.5}.
\end{proof}

\begin{lemma}\label{lem2.6}
There is no odd deficient-perfect number of the form $n=3^{\alpha_{1}}5^{\alpha_{2}}29^{\alpha_{3}}p_{4}^{\alpha_{4}}$ with $D\geqslant 25$.
\end{lemma}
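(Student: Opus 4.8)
The plan is to follow the same size--squeeze plus arithmetic scheme used in Lemmas \ref{lem2.4} and \ref{lem2.5}. Specializing (\ref{Eq1}) to $n=3^{\alpha_1}5^{\alpha_2}29^{\alpha_3}p_4^{\alpha_4}$ gives
\[
\frac{3^{\alpha_1+1}-1}{2}\cdot\frac{5^{\alpha_2+1}-1}{4}\cdot\frac{29^{\alpha_3+1}-1}{28}\cdot\frac{p_4^{\alpha_4+1}-1}{p_4-1}=2\cdot 3^{\alpha_1}5^{\alpha_2}29^{\alpha_3}p_4^{\alpha_4}-3^{\beta_1}5^{\beta_2}29^{\beta_3}p_4^{\beta_4},
\]
and I will use throughout that $\frac{\sigma(n)}{n}=2-\frac1D$, that each local factor $\frac{\sigma(p^{\alpha})}{p^{\alpha}}=\frac{p^{\alpha+1}-1}{(p-1)p^{\alpha}}$ is strictly increasing in $\alpha$ with supremum $\frac{p}{p-1}$, and that the hypothesis $D\geqslant 25$ forces $\frac{\sigma(n)}{n}\geqslant 2-\frac1{25}=\frac{49}{25}$.

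First I would pin down the small parameters. If $\alpha_1=2$ then, since $p_4\geqslant 31$,
\[
\frac{\sigma(n)}{n}<\frac{13}{9}\cdot\frac54\cdot\frac{29}{28}\cdot\frac{31}{30}=\frac{58435}{30240}<\frac{49}{25},
\]
which contradicts $D\geqslant 25$; hence $\alpha_1\geqslant 4$. Next, from $\frac{\sigma(n)}{n}<\frac32\cdot\frac54\cdot\frac{29}{28}\cdot\frac{p_4}{p_4-1}=\frac{435}{224}\cdot\frac{p_4}{p_4-1}$ together with $\frac{\sigma(n)}{n}\geqslant\frac{49}{25}$ I obtain $\frac{p_4}{p_4-1}>\frac{49}{25}\cdot\frac{224}{435}$, which yields $p_4\leqslant 107$. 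This leaves the finite list $31\leqslant p_4\leqslant 107$ to dispose of, organized (as in the proofs above) into cases by $D$ and, inside each, by $p_4$.

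In the cases where the size estimates suffice I would either lower-bound $\frac{\sigma(n)}{n}$ by the minimal admissible even exponents and overshoot $2$, or upper-bound it below $2$ once $D$ is known to be large. In the remaining cases I would extract arithmetic information from $\sigma(n)=(2D-1)d$. Since every $\alpha_i$ is even, each $\alpha_i+1$ is odd, so whenever $\rm{ord}_q(p_i)$ is even for all $i$ the prime $q$ divides none of the factors $\frac{p_i^{\alpha_i+1}-1}{p_i-1}$; hence $q\nmid\sigma(n)$ and the corresponding exponent in $d$ vanishes. Using $\rm{ord}_3(5)=\rm{ord}_3(29)=2$, $\rm{ord}_5(3)=4$, $\rm{ord}_5(29)=2$, $\rm{ord}_{29}(3)=28$, $\rm{ord}_{29}(5)=14$ together with the parities of $\rm{ord}_3(p_4)$, $\rm{ord}_5(p_4)$ and $\rm{ord}_{29}(p_4)$, I can force several $\beta_i=0$; this makes $D$ divisible by a large prime power such as $3^{\alpha_1}$, $5^{\alpha_2}$ or $29^2$, after which the upper estimate for $\frac{\sigma(n)}{n}$ falls below $2-\frac1D$ and the case dies. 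When no such clean size contradiction is available I would imitate Subcases 9.5--9.8 of Lemma \ref{lem2.3}: choose a prime $\ell\mid p^{\rm{ord}}-1$ dividing one $\sigma$-factor and derive an impossible Legendre-symbol identity of the shape $\left(\frac{2}{\ell}\right)=1$ versus $-1$.

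The main obstacle is the intermediate range of $p_4$ (roughly $37\leqslant p_4\leqslant 107$), where neither the lower nor the upper size estimate closes immediately. There one must compute, for each such $p_4$, the orders $\rm{ord}_{p_4}(3)$, $\rm{ord}_{p_4}(5)$, $\rm{ord}_{p_4}(29)$, track the factorizations of the relevant $p^{\rm{ord}}-1$ to exhibit a prime that cannot divide $2D-1$ under the constraint $D\geqslant 25$, and verify that every residual $(D,p_4)$ pair is eliminated. Keeping this bookkeeping complete across all admissible $p_4$ --- rather than any single inequality --- is the real work.
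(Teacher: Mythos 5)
Your two opening reductions are correct and coincide with the paper's own first step: the exclusion of $\alpha_1=2$ is exactly the paper's opening estimate, and your global bound $p_4\leqslant 107$ is a sound (if coarser) consequence of $D\geqslant 25$ --- the paper instead derives sharper bounds inside each $D$-case ($p_4\leqslant 61$ for $D\in\{25,27,29\}$, $p_4\leqslant 53$ for $D\geqslant 45$). Your announced toolkit is also the paper's toolkit: minimal-even-exponent lower bounds that overshoot $2$, upper bounds once $D$ is known to be large, the parity-of-order argument forcing some $\beta_i=0$ so that $D$ acquires a factor $3^{\alpha_1}$, $5^{\alpha_2}$ or $29^{\alpha_3}$, and a Legendre-symbol contradiction (the paper needs exactly one, modulo $11$, in the residual case $p_4=31$, $\alpha_1=4$).

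The genuine gap is that the proposal stops where the proof has to begin: after the two reductions, not a single admissible pair $(D,p_4)$ is actually eliminated, and your closing paragraph concedes that this case-by-case work is still to be done. This is not a routine omission, because the eliminations require case-specific arithmetic inputs that the outline never identifies. For $D\in\{25,27,29\}$ one first needs $\alpha_1\neq 4$ and $\alpha_2\neq 4$ (since $\sigma(3^4)=121=11^2$ and $\sigma(5^4)=781=11\cdot 71$ while $11\nmid (2D-1)d$), and separately $\sigma(5^2)=31$ forces $p_4=31$ when $\alpha_2=2$; only with $\alpha_1\geqslant 6$ and the congruence $p_4\equiv 1\pmod{15}$ do the size estimates close. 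For the prime values $D\in\{31,37,41,43\}$ one needs $p_4=D$ together with the same $11$- and $31$-divisibility tricks to raise $\alpha_1$ (and $\alpha_2$) before the lower bound exceeds $2$. For $D\geqslant 45$ one needs $\ord_{29}(3)=\ord_{29}(p_4)=28$ and $\ord_{29}(5)=14$ to get $\beta_3=0$, hence $D\geqslant 29^2$, for $31\leqslant p_4\leqslant 47$; the Legendre computation modulo $11$ when $p_4=31$, $\alpha_1=4$; and $\beta_1=0$ (orders modulo $3$) when $p_4=53$. None of these steps, nor any substitute for them, appears in the proposal, so as written it establishes only the preliminary reductions and does not prove the lemma.
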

\begin{proof}
Assume that $n=3^{\alpha_{1}}5^{\alpha_{2}}29^{\alpha_{3}}p_{4}^{\alpha_{4}}$ is an odd deficient-perfect number with deficient divisor $d=3^{\beta_{1}}5^{\beta_{2}}29^{\beta_{3}}p_{4}^{\beta_{4}}$. By (\ref{Eq1}), we have
\begin{equation}\label{2.6}
\frac{3^{\alpha_{1}+1}-1}{2}\cdot \frac{5^{\alpha_{2}+1}-1}{4}\cdot \frac{29^{\alpha_{3}+1}-1}{28}\cdot \frac{p_{4}^{\alpha_{4}+1}-1}{p_{4}-1}=2\cdot 3^{\alpha_{1}}5^{\alpha_{2}}29^{\alpha_{3}}p_{4}^{\alpha_{4}}-3^{\beta_{1}}5^{\beta_{2}}29^{\beta_{3}}p_{4}^{\beta_{4}}.
\end{equation}
If $\alpha_{1}=2$, then
$$2=\frac{\sigma(n)}{n}+\frac{d}{n}<\frac{3^{3}-1}{2\cdot 3^{2}}\cdot \frac{5}{4}\cdot \frac{29}{28}\cdot \frac{31}{30}+\frac{1}{25}<2,$$
which is false. Thus $\alpha_{1}\geqslant 4$. Now we divide into the following four cases according to $D$.

{\bf Case 1.} $D\in\{25, 27, 29\}$. By (\ref{2.6}), we have $\alpha_{1}\geqslant 6$ and $\alpha_{2}\neq 4$. Since $\rm{ord}_{3}(5)=\rm{ord}_{3}(29)=2, \rm{ord}_{5}(3)=4$ and $\rm{ord}_{5}(29)=2$, we have $p_{4}\equiv 1\pmod {15}$. If $p_{4}\geqslant 109$, then
$$2=\frac{\sigma(n)}{n}+\frac{d}{n}<\frac{3}{2}\cdot \frac{5}{4}\cdot \frac{29}{28}\cdot \frac{109}{108}+\frac{1}{25}<2,$$
which is impossible. Thus $p_{4}\leqslant 61$. If $\alpha_{2}=2$, then $p_{4}=31$ and
$$2=\frac{\sigma(n)}{n}+\frac{d}{n}\geqslant\frac{3^{7}-1}{2\cdot 3^{6}}\cdot \frac{5^{3}-1}{4\cdot 5^{2}}\cdot \frac{29^{3}-1}{28\cdot 29^{2}}\cdot \frac{31^{3}-1}{30\cdot 31^{2}}+\frac{1}{29}>2,$$
which is false. If $\alpha_{2}\geqslant 6$, then
$$2=\frac{\sigma(n)}{n}+\frac{d}{n}\geqslant\frac{3^{7}-1}{2\cdot 3^{6}}\cdot \frac{5^{7}-1}{4\cdot 5^{6}}\cdot \frac{29^{3}-1}{28\cdot 29^{2}}\cdot \frac{61^{3}-1}{60\cdot 61^{2}}+\frac{1}{29}>2,$$
which is impossible.

{\bf Case 2.} $D\in\{31, 37\}$. Then $p_{4}=D, \alpha_{1}\geqslant 6$ and
$$2=\frac{\sigma(n)}{n}+\frac{d}{n}\geqslant \frac{3^{7}-1}{2\cdot 3^{6}}\cdot \frac{5^{3}-1}{4\cdot 5^{2}}\cdot\frac{29^{3}-1}{28\cdot 29^{2}}\cdot \frac{37^{3}-1}{36\cdot 37^{2}}+\frac{1}{37}>2,$$
which is impossible.

{\bf Case 3.} $D=43$. Then $p_{4}=43, \alpha_{1}\geqslant 6, \alpha_{2}\geqslant 4$ and
$$2=\frac{\sigma(n)}{n}+\frac{d}{n}\geqslant \frac{3^{7}-1}{2\cdot 3^{6}}\cdot \frac{5^{5}-1}{4\cdot 5^{4}}\cdot \frac{29^{3}-1}{28\cdot 29^{2}}\cdot \frac{43^{3}-1}{42\cdot 43^{2}}+\frac{1}{43}>2,$$
which is impossible.

{\bf Case 4.} $D\geqslant 45$. If $p_{4}\geqslant 59$
$$2=\frac{\sigma(n)}{n}+\frac{d}{n}<\frac{3}{2}\cdot \frac{5}{4}\cdot \frac{29}{28}\cdot \frac{59}{58}+\frac{1}{45}<2,$$
which is impossible. Thus $p_{4}\leqslant 53$.

{\bf Subcase 4.1 } $p_{4}=31$. Since $\rm{ord}_{29}(3)=\rm{ord}_{29}(31)=28, \rm{ord}_{29}(5)=14$, we have $\beta_{3}=0$ and $D\geqslant 29^{2}$. If $\alpha_{1}=4$, then
$$-1=\left(\frac{2}{11}\right)=\left(\frac{2\cdot 3^{\alpha_{1}}5^{\alpha_{2}}29^{\alpha_{3}}31^{\alpha_{4}}}{11}\right)=\left(\frac{3^{\beta_{1}}5^{\beta_{2}}31^{\beta_{4}}}{11}\right)=1,$$
which is false. Thus $\alpha_{1}\geqslant 6$. If $\alpha_{2}=2$, then
$$2=\frac{\sigma(n)}{n}+\frac{d}{n}<\frac{3}{2}\cdot \frac{5^{3}-1}{4\cdot 5^{2}}\cdot \frac{29}{28}\cdot \frac{31}{30}+\frac{1}{29^{2}}<2,$$
which is a contradiction. If $\alpha_{2}\geqslant 4$, then
$$2=\frac{\sigma(n)}{n}+\frac{d}{n}>\frac{3^{7}-1}{2\cdot 3^{6}}\cdot \frac{5^{5}-1}{4\cdot 5^{4}}\cdot \frac{29^{3}-1}{28\cdot 29^{2}}\cdot \frac{31^{3}-1}{30\cdot 31^{2}}>2,$$
which is also a contradiction.

{\bf Subcase 4.2 } $p_{4}\in\{37, 41, 43, 47\}$. Since $\rm{ord}_{29}(3)=$${\rm{ord}}_{29}(p_{4})=28, \rm{ord}_{29}(5)=14$, we have $\beta_{3}=0, D\geqslant 29^{2}$ and
$$2=\frac{\sigma(n)}{n}+\frac{d}{n}<\frac{3}{2}\cdot \frac{5}{4}\cdot \frac{29}{28}\cdot \frac{37}{36}+\frac{1}{29^{2}}<2,$$
which is impossible.

{\bf Subcase 4.3 } $p_{4}=53$. By $\rm{ord}_{3}(5)=\rm{ord}_{3}(29)=\rm{ord}_{3}(53)=2$, we have $\beta_{1}=0, D\geqslant 3^{4}$ and
$$2=\frac{\sigma(n)}{n}+\frac{d}{n}<\frac{3}{2}\cdot \frac{5}{4}\cdot \frac{29}{28}\cdot \frac{53}{52}+\frac{1}{3^{4}}<2,$$
which is impossible.

This completes the proof of Lemma \ref{lem2.6}.
\end{proof}

\begin{lemma}\label{lem2.7}
There is no odd deficient-perfect number of the form $n=3^{\alpha_{1}}5^{\alpha_{2}}31^{\alpha_{3}}p_{4}^{\alpha_{4}}$ with $D\geqslant 15$.
\end{lemma}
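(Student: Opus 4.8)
The plan is to follow the template of Lemmas \ref{lem2.4}, \ref{lem2.5} and \ref{lem2.6}. First I would specialize (\ref{Eq1}) to
$$\frac{3^{\alpha_{1}+1}-1}{2}\cdot\frac{5^{\alpha_{2}+1}-1}{4}\cdot\frac{31^{\alpha_{3}+1}-1}{30}\cdot\frac{p_{4}^{\alpha_{4}+1}-1}{p_{4}-1}=2\cdot 3^{\alpha_{1}}5^{\alpha_{2}}31^{\alpha_{3}}p_{4}^{\alpha_{4}}-3^{\beta_{1}}5^{\beta_{2}}31^{\beta_{3}}p_{4}^{\beta_{4}},$$
and work throughout with $2=\frac{\sigma(n)}{n}+\frac{1}{D}$. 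Since $p_{4}>31$ forces $p_{4}\geqslant 37$, the value $\alpha_{1}=2$ is eliminated at once: using $D\geqslant 15$,
$$2=\frac{\sigma(n)}{n}+\frac{1}{D}<\frac{3^{3}-1}{2\cdot 3^{2}}\cdot\frac{5}{4}\cdot\frac{31}{30}\cdot\frac{37}{36}+\frac{1}{15}<2,$$
a contradiction, so $\alpha_{1}\geqslant 4$.

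I would then partition the argument according to the value of $D$, exactly as before. For each candidate $D$ the two workhorses are (i) two-sided estimates of $\sigma(n)/n$, which confine $p_{4}$ to a short finite list and pin each exponent $\alpha_{i}$ between explicit bounds; and (ii) divisibility information read off from $\sigma(n)=(2D-1)d$: any prime $q\notin\{3,5,31,p_{4}\}$ dividing some $\sigma(p_{i}^{\alpha_{i}})$ must divide $2D-1$, driving $D$ upward until it clashes with the upper estimate. The factorizations I expect to lean on are $\frac{3^{5}-1}{2}=11^{2}$, $\frac{3^{7}-1}{2}=1093$, $\frac{5^{3}-1}{4}=31$, $\frac{5^{5}-1}{4}=11\cdot 71$ and $\sigma(31^{2})=\frac{31^{3}-1}{30}=3\cdot 331$, together with the orders $\ord_{5}(3)=4$, $\ord_{31}(3)=30$ and $\ord_{31}(5)=3$. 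To force individual $\beta_{j}=0$ I would reduce the identity modulo $p_{j}$: since $p_{j}\mid n$ this gives $d\equiv-\sigma(n)\pmod{p_{j}}$, so $\beta_{j}=0$ whenever $p_{j}\nmid\sigma(n)$, which holds as soon as $\ord_{p_{j}}(p_{i})\nmid\alpha_{i}+1$ for every $i\neq j$.

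For the stubborn residual configurations—typically $D=p_{4}$ with all small exponents already forced—I would finish with the quadratic-residue trick of Lemma \ref{lem2.3}: choose a prime $q\mid\sigma(n)$ with $\left(\frac{2}{q}\right)=-1$ (good candidates being $q=11,\,331,\,1093$, each $\equiv 3$ or $5\pmod 8$), note that $q\mid\sigma(n)$ forces $d\equiv 2n\pmod q$, and compare Legendre symbols; since the even exponents $\alpha_{i}$ contribute squares and the surviving prime factors of $d$ are quadratic residues mod $q$, one side of the resulting equation reads $-1$ and the other $+1$. The main obstacle, and what makes $31$ genuinely harder than $19,23,29$, is that $31\equiv 1\pmod{15}$: this gives $\ord_{3}(31)=\ord_{5}(31)=1$, so $3\mid\sigma(31^{\alpha_{3}})$ exactly when $\alpha_{3}\equiv 2\pmod 3$ and $5\mid\sigma(31^{\alpha_{3}})$ exactly when $\alpha_{3}\equiv 4\pmod 5$, while $\ord_{31}(5)=3$ creates the coincidence $\sigma(5^{2})=31$. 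These congruence-dependent divisibilities spoil the clean ``$\beta_{1}=0$'' and ``$\beta_{2}=0$'' deductions and compel me to track the exact powers of $3$, $5$ and $31$ dividing $\sigma(n)$ according to $\alpha_{3}\bmod 3$ and $\alpha_{3}\bmod 5$; this bookkeeping is where the real work lies.
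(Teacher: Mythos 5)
Your outline does follow the paper's strategy --- the same specialization of (\ref{Eq1}), the identical elimination of $\alpha_{1}=2$, the restriction of $D$ to a finite list by two-sided estimates, and order-parity/cyclotomic arguments for the survivors --- but it remains an outline. The content of this lemma \emph{is} the execution: the estimates pin $D$ to the thirteen values $\{15, 25, 27, 31, 37, 41, 43, 45, 47, 75, 81, 93, 111\}$, and each needs its own argument (the purely analytic cases $D\in\{15,25\}$ via forced exponent lower bounds and size contradictions, the remaining ones via arithmetic); none of this is carried out, and you yourself defer it as ``bookkeeping''.

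More seriously, the one concrete finishing move you commit to for the hard cases $D=p_{4}$ --- the quadratic-residue trick --- is the wrong tool here and demonstrably fails. Since $D\leqslant 111$ in this lemma, $2D-1\leqslant 221$ is never divisible by $11$, $331$ or $1093$; so the moment one of your candidate primes $q$ is shown to divide $\sigma(n)=(2D-1)d$, you already have a contradiction (as $q$ divides neither factor), and Legendre symbols are superfluous. Conversely, when the comparison is run anyway it need not produce a sign clash: for $D=p_{4}=41$ we have $d=n/41$, so $\beta_{1},\beta_{2},\beta_{3}$ are even, $\beta_{4}=\alpha_{4}-1$ is odd, and
$$\left(\frac{d}{11}\right)=\left(\frac{41}{11}\right)^{\beta_{4}}=\left(\frac{8}{11}\right)=-1=\left(\frac{2}{11}\right),$$
so both sides agree; here $41\equiv 8\pmod{11}$ is a non-residue, contrary to your blanket claim that the surviving prime factors of $d$ are residues mod $q$. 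What actually closes these cases in the paper is a chain you never assemble: the shape of $D$ forces $\beta_{1}\geqslant 1$ or $\beta_{2}\geqslant 1$, hence $3\mid\sigma(n)$ or $5\mid\sigma(n)$ by $\sigma(n)=2n-d$; choosing the modulus so that the orders of all the other primes are even ($3$ for $p_{4}\in\{41,47\}$, $5$ for $p_{4}\in\{37,43\}$, and so on), the odd numbers $\alpha_{i}+1$ rule out every source except $\sigma(31^{\alpha_{3}})$, forcing $3\mid(\alpha_{3}+1)$ or $5\mid(\alpha_{3}+1)$; and then $331\mid\sigma(31^{2})$ or $11\mid(31^{5}-1)$ lands a prime in $(2D-1)d$ that cannot be there. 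You list exactly the needed factorizations and congruence criteria, but you frame $31\equiv 1\pmod{15}$ as the obstacle that ``spoils'' the method, when it is in fact the decisive weapon: it is precisely what funnels the forced divisibility into $\sigma(31^{\alpha_{3}})$ and brings $11$ and $331$ into play.
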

\begin{proof}
Assume that $n=3^{\alpha_{1}}5^{\alpha_{2}}31^{\alpha_{3}}p_{4}^{\alpha_{4}}$ is an odd deficient-perfect number with deficient divisor $d=3^{\beta_{1}}5^{\beta_{2}}31^{\beta_{3}}p_{4}^{\beta_{4}}$. By (\ref{Eq1}), we have
\begin{equation}\label{2.7}
\frac{3^{\alpha_{1}+1}-1}{2}\cdot \frac{5^{\alpha_{2}+1}-1}{4}\cdot \frac{31^{\alpha_{3}+1}-1}{30}\cdot \frac{p_{4}^{\alpha_{4}+1}-1}{p_{4}-1}=2\cdot 3^{\alpha_{1}}5^{\alpha_{2}}31^{\alpha_{3}}p_{4}^{\alpha_{4}}-3^{\beta_{1}}5^{\beta_{2}}31^{\beta_{3}}p_{4}^{\beta_{4}}.
\end{equation}
If $\alpha_{1}=2$, then
$$2=\frac{\sigma(n)}{n}+\frac{d}{n}<\frac{3^{3}-1}{2\cdot 3^{2}}\cdot \frac{5}{4}\cdot \frac{31}{30}\cdot \frac{37}{36}+\frac{1}{15}<2,$$
which is clearly false. Thus $\alpha_{1}\geqslant 4$. If $D\geqslant 116$, then
$$2=\frac{\sigma(n)}{n}+\frac{d}{n}<\frac{3}{2}\cdot \frac{5}{4}\cdot \frac{31}{30}\cdot \frac{37}{36}+\frac{1}{116}<2,$$
which is false. If $53\leqslant D=p_{4}\leqslant 113$, then
$$2=\frac{\sigma(n)}{n}+\frac{d}{n}<\frac{3}{2}\cdot \frac{5}{4}\cdot \frac{31}{30}\cdot \frac{53}{52}+\frac{1}{53}<2,$$
which is impossible. Thus $D\in\{15, 25, 27, 31, 37, 41, 43, 45, 47, 75, 81, 93, 111\}$.

{\bf Case 1.} $D=15$. By (\ref{2.7}), we have $\alpha_{1}\geqslant 6$. If $\alpha_{2}\geqslant 4$, then
$$2=\frac{\sigma(n)}{n}+\frac{d}{n}>\frac{3^{7}-1}{2\cdot 3^{6}}\cdot \frac{5^{5}-1}{4\cdot 5^{4}}\cdot \frac{31^{3}-1}{30\cdot 31^{2}} +\frac{1}{15}>2,$$
which is impossible. Thus $\alpha_{2}=2$. If $p_{4}\geqslant 173$, then
$$2=\frac{\sigma(n)}{n}+\frac{d}{n}<\frac{3}{2}\cdot \frac{5^{3}-1}{4\cdot 5^{2}}\cdot \frac{31}{30}\cdot \frac{173}{172}+\frac{1}{15}<2,$$
which is clearly false. Thus $p_{4}\leqslant 167$. It follows that $\alpha_{1}\geqslant 8, \alpha_{3}\geqslant 4$ and
$$2=\frac{\sigma(n)}{n}+\frac{d}{n}\geqslant\frac{3^{9}-1}{2\cdot 3^{8}}\cdot \frac{5^{3}-1}{4\cdot 5^{2}}\cdot \frac{31^{5}-1}{30\cdot 31^{2}}\cdot \frac{167^{3}-1}{166\cdot 167^{2}}+\frac{1}{15}>2,$$
which is impossible.

{\bf Case 2.} $D=25$. If $p_{4}\geqslant 89$, then
$$2=\frac{\sigma(n)}{n}+\frac{d}{n}<\frac{3}{2}\cdot \frac{5}{4}\cdot \frac{31}{30}\cdot \frac{89}{88}+\frac{1}{25}<2,$$
which is clearly false. Thus $p_{4}\leqslant 83$. It follows that $\alpha_{1}\geqslant 12, \alpha_{3}\geqslant 4$. If $\alpha_{2}\geqslant 4$, then
$$2=\frac{\sigma(n)}{n}+\frac{d}{n}\geqslant\frac{3^{13}-1}{2\cdot 3^{12}}\cdot \frac{5^{5}-1}{4\cdot 5^{4}}\cdot \frac{31^{5}-1}{30\cdot 31^{4}}\cdot \frac{83^{3}-1}{82\cdot 83^{2}}+\frac{1}{25}>2,$$
which is impossible. Thus $\alpha_{2}=2$. If $p_{4}\geqslant 53$, then
$$2=\frac{\sigma(n)}{n}+\frac{d}{n}<\frac{3}{2}\cdot \frac{5^{3}-1}{4\cdot 5^{2}}\cdot \frac{31}{30}\cdot \frac{53}{52}+\frac{1}{25}<2,$$
which is a contradiction. If $p_{4}\leqslant 47$, then
$$2=\frac{\sigma(n)}{n}+\frac{d}{n}\geqslant\frac{3^{13}-1}{2\cdot 3^{12}}\cdot \frac{5^{3}-1}{4\cdot 5^{2}}\cdot \frac{31^{5}-1}{30\cdot 31^{2}}\cdot \frac{47^{3}-1}{46\cdot 47^{2}}+\frac{1}{25}>2,$$
which is also a contradiction.

{\bf Case 3.} $D\in\{27, 31, 45\}$. If $p_{4}\geqslant 79$, then
$$2=\frac{\sigma(n)}{n}+\frac{d}{n}<\frac{3}{2}\cdot \frac{5}{4}\cdot \frac{31}{30}\cdot \frac{79}{78}+\frac{1}{27}<2,$$
which is false. Thus $p_{4}\in\{37, 41, 43, 47, 53, 59, 61, 67, 71, 73\}$ and $\beta_{2}\geqslant 1$. Since $\rm{ord}_{5}(3)=\rm{ord}_{5}(37)=\rm{ord}_{5}(43)=\rm{ord}_{5}(47)=\rm{ord}_{5}(53)=\rm{ord}_{5}(67)=\rm{ord}_{5}(73)=4, \rm{ord}_{5}(59)=2$ and ${\rm{ord}}_{25}(31)=\rm{ord}_{25}(41)={\rm{ord}}_{25}(61)={\rm{ord}}_{25}(71)=5$, we have $(31^{5}-1)\mid(31^{\alpha_{3}+1}-1)$,  $(41^{5}-1)\mid(41^{\alpha_{4}+1}-1), (61^{5}-1)\mid(61^{\alpha_{4}+1}-1)$ or $(71^{5}-1)\mid(71^{\alpha_{4}+1}-1)$. However, $11\mid {31^{5}-1}, 579281\mid {41^{5}-1}, 131\mid {61^{5}-1}$ and $7\mid {71^{5}-1}$, a contradiction.

{\bf Case 4.} $D\in\{37, 43\}$. Then $p_{4}=D$. Since $\rm{ord}_{5}(3)=$${\rm{ord}}_{5}(p_{4})=4, \rm{ord}_{25}(31)=5$, we have $5\mid (\alpha_{3}+1)$ and $(31^{5}-1)\mid(31^{\alpha_{3}+1}-1)$. However, $11\mid (31^{5}-1)$, a contradiction.

{\bf Case 5.} $D=\{41, 47\}$. Then $p_{4}=D$. Since $\rm{ord}_{3}(5)=$${\rm{ord}}_{3}(p_{4})=2, \rm{ord}_{9}(31)=3$, we have $3\mid (\alpha_{3}+1)$ and $(31^{3}-1)\mid(31^{\alpha_{3}+1}-1)$. However, $331\mid (31^{3}-1)$, a contradiction.

{\bf Case 6.} $D\in\{75, 81, 93, 111\}$. If $p_{4}\geqslant 41$, then
$$2=\frac{\sigma(n)}{n}+\frac{d}{n}<\frac{3}{2}\cdot \frac{5}{4}\cdot \frac{31}{30}\cdot \frac{41}{40}+\frac{1}{75}<2,$$
which is clearly false. Thus $p_{4}=37$. If $\alpha_{2}=2$, then
$$2=\frac{\sigma(n)}{n}+\frac{d}{n}<\frac{3}{2}\cdot \frac{5^{3}-1}{4\cdot 5^{2}}\cdot \frac{31}{30}\cdot \frac{37}{36}+\frac{1}{75}<2,$$
which is false. Thus $\alpha_{2}\geqslant 4$ and $\beta_{2}\geqslant 1$. Since $\rm{ord}_{5}(3)=\rm{ord}_{5}(37)=4$ and ${\rm{ord}}_{25}(31)=5$, we have $(31^{5}-1)\mid(31^{\alpha_{3}+1}-1)$. However, $11\mid (31^{5}-1)$, a contradiction.

This completes the proof of Lemma \ref{lem2.7}.
\end{proof}

\begin{lemma}\label{lem2.8}
There is no odd deficient-perfect number of the form $n=3^{\alpha_{1}}5^{\alpha_{2}}37^{\alpha_{3}}p_{4}^{\alpha_{4}}$ with $D\geqslant 15$.
\end{lemma}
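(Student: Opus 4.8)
The plan is to argue exactly as in Lemmas \ref{lem2.4}--\ref{lem2.7}, specializing (\ref{Eq1}) to $p_3=37$ to obtain the working identity
\[
\frac{3^{\alpha_1+1}-1}{2}\cdot\frac{5^{\alpha_2+1}-1}{4}\cdot\frac{37^{\alpha_3+1}-1}{36}\cdot\frac{p_4^{\alpha_4+1}-1}{p_4-1}=2\cdot3^{\alpha_1}5^{\alpha_2}37^{\alpha_3}p_4^{\alpha_4}-3^{\beta_1}5^{\beta_2}37^{\beta_3}p_4^{\beta_4},
\]
used alongside $2=\sigma(n)/n+1/D$. First I would pin down $\alpha_1$ and $D$. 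Since $p_4\geq 41$, the limiting bound $\sigma(n)/n<\frac32\cdot\frac54\cdot\frac{37}{36}\cdot\frac{41}{40}$ shows that $\alpha_1=2$ together with $D\geq 15$ already yields $2<\frac{3^3-1}{2\cdot3^2}\cdot\frac54\cdot\frac{37}{36}\cdot\frac{41}{40}+\frac1{15}<2$, a contradiction, so $\alpha_1\geq4$. The same bound gives $1/D>2-\frac32\cdot\frac54\cdot\frac{37}{36}\cdot\frac{41}{40}$, hence $D\leq 40$. As $p_4>40\geq D$ forces $p_4\nmid D$ and $D$ is built from $\{3,5,37\}$ only, this leaves $D\in\{15,25,27,37\}$.

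The key structural observation I would establish next is that $\gcd\!\big(\sigma(3^{\alpha_1}),\,5\cdot 37\big)=1$: indeed $\mathrm{ord}_5(3)=4$ and $\mathrm{ord}_{37}(3)=18$ are both even while $\alpha_1+1$ is odd, so neither $5$ nor $37$ divides $\frac{3^{\alpha_1+1}-1}{2}$. Consequently every prime factor of $\sigma(3^{\alpha_1})$ must divide $(2D-1)\,p_4^{\beta_4}$. For $\alpha_1=4$ we have $\sigma(3^4)=11^2$, and since $11\neq p_4$ this forces $121\mid(2D-1)$, i.e.\ $D\geq 61$, contradicting $D\leq 40$; for $\alpha_1=6$ we have $\sigma(3^6)=1093$, a prime $\neq p_4$, forcing $1093\mid(2D-1)$, again impossible. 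So in fact $\alpha_1\geq 8$.

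Then I would treat $D\in\{15,25,27,37\}$ as separate cases. For each, the inequality $\sigma(n)/n=2-1/D\leq\frac32\cdot\frac54\cdot\frac{37}{36}\cdot\frac{p_4}{p_4-1}$ bounds $p_4$ from above (roughly $p_4\leq 307,59,53,41$ respectively), and the matching lower bound using $\alpha_1\geq 8$ and $\alpha_2,\alpha_3\geq2$ sharpens the admissible exponents, typically pushing $\alpha_2,\alpha_3$ to small values and cutting $p_4$ to a short explicit list. Each surviving configuration is then eliminated as in the earlier lemmas: by a refined size inequality of the form $2<\cdots+\frac1D<2$, by an order argument (using $\mathrm{ord}_{p_i}(p_j)$ to force some $\beta_i=0$ and thereby a prime $q\mid(2D-1)$ that pushes $D$ above $40$), or by a Legendre-symbol contradiction. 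The case $D=37$ is the cleanest: it forces $\beta_1=\alpha_1$, $\beta_2=\alpha_2$, $\beta_4=\alpha_4$ and $p_4=41$, so $\sigma(n)=73\,d$ involves only the primes $\{3,5,37,41,73\}$; but one checks $\mathrm{ord}_{41}(3)=8$ and $\mathrm{ord}_{73}(3)=12$ are even, whence $\sigma(3^{\alpha_1})>1$ is coprime to all of $\{3,5,37,41,73\}$, an immediate contradiction.

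The hard part will be the case $D=15$, where $p_4$ ranges over many primes (up to about $307$) and $\alpha_1$ is only bounded below, so the residual configurations are numerous and each needs its own order or quadratic-residue computation. The delicate point throughout is to rule out large $\alpha_1$: a prime factor of $\sigma(3^{\alpha_1})$ could coincide with $p_4$, so the purely size-based argument of the second paragraph no longer closes the case, and one must invoke the exact divisibility condition $\mathrm{ord}_{p_4}(3)\mid(\alpha_1+1)$ to verify, configuration by configuration, that the remaining cofactor of $\sigma(3^{\alpha_1})$ cannot divide $2D-1\leq 79$.
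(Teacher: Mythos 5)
Your reduction to $D\in\{15,25,27,37\}$, your parity-of-order observation that $\sigma(3^{\alpha_{1}})$ is coprime to $3\cdot 5\cdot 37$, and your clean disposal of $D=37$ are all correct, and they are exactly the ingredients the paper's own proof is built from. The genuine gap is that you never close the main case $D=15$ (nor, as written, $D=25,27$): you defer it to ``numerous'' configuration-by-configuration order and quadratic-residue computations and correctly flag it as the hard part, but with only what you have established --- $\alpha_{1}\geqslant 8$, $\alpha_{2},\alpha_{3}\geqslant 2$, $41\leqslant p_{4}\leqslant 307$ --- there is no contradiction waiting to be found by size, and $\alpha_{1}$ is unbounded above, so the set of surviving configurations is infinite. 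Concretely, for $D=15$, $p_{4}=151$, $\alpha_{1}=8$, $\alpha_{2}=\alpha_{3}=\alpha_{4}=2$ one gets
$$\frac{3^{9}-1}{2\cdot 3^{8}}\cdot\frac{5^{3}-1}{4\cdot 5^{2}}\cdot\frac{37^{3}-1}{36\cdot 37^{2}}\cdot\frac{151^{3}-1}{150\cdot 151^{2}}+\frac{1}{15}\approx 1.991<2,$$
so the size test is silent, and no supplementary argument is supplied. (There is also a small ordering slip: you assert $\sigma(3^{6})=1093\neq p_{4}$ before any upper bound on $p_{4}$ is available; the uniform size bound $p_{4}\leqslant 307$, valid for every $D\geqslant 15$, must be derived first.)

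What is missing is to push your own structural observation harder, and to apply it to the factor $5$ as well --- this is precisely how the paper finishes, in a few lines and with no enumeration. First, $\alpha_{1}=8,10$ are impossible outright: $\sigma(3^{8})=13\cdot 757$ and $\sigma(3^{10})=23\cdot 3851$ each contain two distinct primes, and since every prime factor of $\sigma(3^{\alpha_{1}})$ must lie in $\{p_{4}\}\cup\{q:q\mid 2D-1\}$ with $2D-1\in\{29,49,53,73\}$ and $13,23<37<p_{4}$, this is absurd; hence $\alpha_{1}\geqslant 12$, not merely $\geqslant 8$. Second, the same argument applies to $\sigma(5^{\alpha_{2}})$, whose prime factors must lie in $\{3,37,p_{4}\}\cup\{q:q\mid 2D-1\}$: checking $\sigma(5^{2})=31$, $\sigma(5^{4})=11\cdot 71$, $\sigma(5^{6})=19531$ against these possibilities (using $p_{4}\leqslant 307$) rules out $\alpha_{2}\in\{2,4,6\}$, so $\alpha_{2}\geqslant 8$. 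Third, for $D=15$ one has $5\mid d\mid\sigma(n)$, and since $\mathrm{ord}_{5}(3)=\mathrm{ord}_{5}(37)=4$ are even while every $\alpha_{i}+1$ is odd, necessarily $5\mid\sigma(p_{4}^{\alpha_{4}})$, which forces $p_{4}\equiv 1\pmod{5}$ and hence $p_{4}\leqslant 281$. With $\alpha_{1}\geqslant 12$, $\alpha_{2}\geqslant 8$ and the per-case bound on $p_{4}$ (namely $281$, $59$, $53$, $41$), the single inequality
$$2=\frac{\sigma(n)}{n}+\frac{1}{D}\geqslant\frac{3^{13}-1}{2\cdot 3^{12}}\cdot\frac{5^{9}-1}{4\cdot 5^{8}}\cdot\frac{37^{3}-1}{36\cdot 37^{2}}\cdot\frac{p_{4}^{3}-1}{(p_{4}-1)\,p_{4}^{2}}+\frac{1}{D}>2$$
closes all four cases at once, leaving nothing to enumerate.
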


\begin{proof}
Assume that $n=3^{\alpha_{1}}5^{\alpha_{2}}37^{\alpha_{3}}p_{4}^{\alpha_{4}}$ is an odd deficient-perfect number with deficient divisor $d=3^{\beta_{1}}5^{\beta_{2}}37^{\beta_{3}}p_{4}^{\beta_{4}}$. If $D\geqslant 41$, then
$$2=\frac{\sigma(n)}{n}+\frac{d}{n}<\frac{3}{2}\cdot \frac{5}{4}\cdot \frac{37}{36}\cdot \frac{41}{40}+\frac{1}{41}<2,$$
which is false. Thus $D\in\{15, 25, 27, 37\}$. Now we divide into the following four cases.

{\bf Case 1.} $D=15$. If $p_{4}\geqslant 311$, then
$$2=\frac{\sigma(n)}{n}+\frac{d}{n}<\frac{3}{2}\cdot \frac{5}{4}\cdot \frac{37}{36}\cdot \frac{311}{310}+\frac{1}{15}<2,$$
which is false. Since $\rm{ord}_{5}(3)=\rm{ord}_{5}(37)=4$, we have $p_{4}\equiv 1\pmod {5}$. Thus $p_{4}\leqslant 281, \alpha_{1}\geqslant 12, \alpha_{2}\geqslant 8$ and
$$2=\frac{\sigma(n)}{n}+\frac{d}{n}\geqslant\frac{3^{13}-1}{2\cdot 3^{12}}\cdot \frac{5^{9}-1}{4\cdot 5^{8}}\cdot \frac{37^{3}-1}{36\cdot 37^{2}}\cdot \frac{281^{3}-1}{280\cdot 281^{2}}+\frac{1}{15}>2,$$
which is impossible.

{\bf Case 2.} $D=25$. If $p_{4}\geqslant 61$, then
$$2=\frac{\sigma(n)}{n}+\frac{d}{n}<\frac{3}{2}\cdot \frac{5}{4}\cdot \frac{37}{36}\cdot \frac{61}{60}+\frac{1}{25}<2,$$
which is clearly false. Thus $p_{4}\leqslant 59$. It follows that $\alpha_{1}\geqslant 12, \alpha_{2}\geqslant 8$ and
$$2=\frac{\sigma(n)}{n}+\frac{d}{n}\geqslant\frac{3^{13}-1}{2\cdot 3^{12}}\cdot \frac{5^{9}-1}{4\cdot 5^{8}}\cdot \frac{37^{3}-1}{36\cdot 37^{2}}\cdot \frac{59^{3}-1}{58\cdot 59^{2}}+\frac{1}{25}>2,$$
which is impossible.

{\bf Case 3.} $D=27$. If $p_{4}\geqslant 59$, then
$$2=\frac{\sigma(n)}{n}+\frac{d}{n}<\frac{3}{2}\cdot \frac{5}{4}\cdot \frac{37}{36}\cdot \frac{59}{58}+\frac{1}{25}<2,$$
which is clearly false. Thus $p_{4}\leqslant 53$. It follows that $\alpha_{1}\geqslant 12, \alpha_{2}\geqslant 8$ and
$$2=\frac{\sigma(n)}{n}+\frac{d}{n}\geqslant\frac{3^{13}-1}{2\cdot 3^{12}}\cdot \frac{5^{9}-1}{4\cdot 5^{8}}\cdot \frac{37^{3}-1}{36\cdot 37^{2}}\cdot \frac{53^{3}-1}{52\cdot 53^{2}}+\frac{1}{27}>2,$$
which is impossible.

{\bf Case 4.} $D=37$. If $p_{4}\geqslant 43$, then
$$2=\frac{\sigma(n)}{n}+\frac{d}{n}<\frac{3}{2}\cdot \frac{5}{4}\cdot \frac{37}{36}\cdot \frac{43}{42}+\frac{1}{27}<2,$$
which is clearly false. Thus $p_{4}=41$. It follows that $\alpha_{1}\geqslant 12, \alpha_{2}\geqslant 8$ and
$$2=\frac{\sigma(n)}{n}+\frac{d}{n}\geqslant\frac{3^{13}-1}{2\cdot 3^{12}}\cdot \frac{5^{9}-1}{4\cdot 5^{8}}\cdot \frac{37^{3}-1}{36\cdot 37^{2}}\cdot \frac{41^{3}-1}{40\cdot 41^{2}}+\frac{1}{37}>2,$$
which is impossible.

This completes the proof of Lemma \ref{lem2.8}.
\end{proof}

\begin{lemma}\label{lem2.9}
There is no odd deficient-perfect number of the form $n=3^{\alpha_{1}}5^{\alpha_{2}}41^{\alpha_{3}}p_{4}^{\alpha_{4}}$ with $D\geqslant 15$.
\end{lemma}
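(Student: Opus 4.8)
The plan is to follow the template of Lemmas \ref{lem2.1}--\ref{lem2.8}. Writing $n=3^{\alpha_{1}}5^{\alpha_{2}}41^{\alpha_{3}}p_{4}^{\alpha_{4}}$ and $d=3^{\beta_{1}}5^{\beta_{2}}41^{\beta_{3}}p_{4}^{\beta_{4}}$, I would first specialize (\ref{Eq1}) to these four primes, obtaining
$$\frac{3^{\alpha_{1}+1}-1}{2}\cdot\frac{5^{\alpha_{2}+1}-1}{4}\cdot\frac{41^{\alpha_{3}+1}-1}{40}\cdot\frac{p_{4}^{\alpha_{4}+1}-1}{p_{4}-1}=2\cdot 3^{\alpha_{1}}5^{\alpha_{2}}41^{\alpha_{3}}p_{4}^{\alpha_{4}}-3^{\beta_{1}}5^{\beta_{2}}41^{\beta_{3}}p_{4}^{\beta_{4}}.$$
Since $p_{4}>41$ we have $p_{4}\geqslant 43$. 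First I would eliminate $\alpha_{1}=2$: using $\frac{\sigma(n)}{n}<\frac{3^{3}-1}{2\cdot 3^{2}}\cdot\frac{5}{4}\cdot\frac{41}{40}\cdot\frac{43}{42}$ together with $\frac{1}{D}\leqslant\frac{1}{15}$ yields $\frac{\sigma(n)}{n}+\frac{1}{D}<2$, a contradiction, so $\alpha_{1}\geqslant 4$. Next, bounding every local factor by $\frac{p}{p-1}$ and using $p_{4}\geqslant 43$ gives $\frac{\sigma(n)}{n}<\frac{3}{2}\cdot\frac{5}{4}\cdot\frac{41}{40}\cdot\frac{43}{42}=\frac{5289}{2688}$, so $\frac{1}{D}=2-\frac{\sigma(n)}{n}>\frac{1}{31}$ and $D\leqslant 30$. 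Because $41$ and $p_{4}$ exceed $30$, their exponents in $D$ vanish, so $D$ is a product of powers of $3$ and $5$ in $[15,30]$, i.e. $D\in\{15,25,27\}$.

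For each $D$ I would bound $p_{4}$ from the equality $2-\frac{1}{D}=\frac{\sigma(n)}{n}<\frac{3}{2}\cdot\frac{5}{4}\cdot\frac{41}{40}\cdot\frac{p_{4}}{p_{4}-1}$. This leaves only $p_{4}\in\{43,47\}$ when $D\in\{25,27\}$, and $p_{4}\leqslant 167$ when $D=15$. Writing $\sigma(n)=(2D-1)d$ with $2D-1\in\{29,49,53\}$ respectively, each case is then closed by the residue/order machinery of the earlier lemmas. For $D=25$ (so $2D-1=7^{2}$) and $D=27$ (so $2D-1=53$), I would show that ${\rm ord}_{7}(p_{i})$, respectively ${\rm ord}_{53}(p_{i})$, is even for each $p_{i}\in\{3,5,41,43,47\}$ that is not congruent to $1$ modulo the relevant modulus; since every $\alpha_{i}$ is even, $\alpha_{i}+1$ is odd, so no such $\sigma(p_{i}^{\alpha_{i}})$ is divisible by $2D-1$, contradicting $(2D-1)\mid\sigma(n)$. (For instance, when $D=25$ and $p_{4}=47$ all four orders mod $7$ are even, giving an immediate contradiction.) The residual subcases, where an order is odd or some $p_{i}\equiv 1$ modulo $2D-1$, would be cleared exactly as in Subcases 9.5--9.11 of Lemma \ref{lem2.3}, by a Legendre-symbol computation, or by pushing some $\alpha_{i}$ up until the $\frac{3}{2}$-type lower bounds force $\frac{\sigma(n)}{n}+\frac{1}{D}>2$.

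The main obstacle is the case $D=15$, where $\beta_{3}=\alpha_{3}$, $\beta_{4}=\alpha_{4}$, and crucially $41\equiv 1\pmod 5$ so ${\rm ord}_{5}(41)=1$; the clean ``order even'' obstruction used at the prime $5$ in the other lemmas is therefore unavailable. Here I would reduce the specialized equation modulo $5$: since $\beta_{2}=\alpha_{2}-1\geqslant 1$ gives $5\mid d$, while $5\mid n$ and $\sigma(5^{\alpha_{2}})\equiv 1\pmod 5$, one obtains $5\mid\sigma(3^{\alpha_{1}})\sigma(41^{\alpha_{3}})\sigma(p_{4}^{\alpha_{4}})$; as $\alpha_{1}+1$ is odd and ${\rm ord}_{5}(3)=4$, in fact $5\nmid\sigma(3^{\alpha_{1}})$. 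Hence either $5\mid(\alpha_{3}+1)$, which forces $(41^{5}-1)\mid(41^{\alpha_{3}+1}-1)$ and so $579281\mid\sigma(n)=29d$, impossible; or $p_{4}\equiv 1\pmod 5$ with $5\mid(\alpha_{4}+1)$, so that $\sigma(p_{4}^{4})=\frac{p_{4}^{5}-1}{p_{4}-1}$ divides $29d$.

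Combined with the bound $p_{4}\leqslant 167$, the latter branch leaves only $p_{4}\in\{61,71,101,131,151\}$, and the delicate final step is the bookkeeping that verifies, in each of these cases, that $\sigma(p_{4}^{4})$ carries a prime factor outside $\{3,5,29,41,p_{4}\}$ — for example $131\mid\sigma(61^{4})$ and $11\mid\sigma(71^{4})$. Such a prime cannot divide $29d$, since the primes dividing $d$ are only $3,5,41,p_{4}$, which yields the desired contradiction and completes the proof.
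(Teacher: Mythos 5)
Your opening reduction is sound and coincides with the paper's: the upper bound $\frac{\sigma(n)}{n}<\frac{3}{2}\cdot\frac{5}{4}\cdot\frac{41}{40}\cdot\frac{43}{42}$ forces $D\in\{15,25,27\}$, and the per-case bounds $p_{4}\leqslant 167$ (for $D=15$) and $p_{4}\in\{43,47\}$ (for $D\in\{25,27\}$) are correct. Your mod-$5$ treatment of $D=15$ is also a workable (if different) route, since the needed factors exist for every residual prime: $131\mid 61^{5}-1$, $11\mid 71^{5}-1$, $31\mid 101^{5}-1$, $61\mid 131^{5}-1$, and $\sigma(151^{4})=5\cdot 104670301$ with $104670301$ neither a power of $41$ nor divisible by $5$. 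The genuine gap is in $D\in\{25,27\}$. Your central claim — that ${\rm ord}_{7}(p_{i})$, resp. ${\rm ord}_{53}(p_{i})$, is even for every relevant prime not congruent to $1$ — fails in exactly two subcases, and those are the only hard ones: for $D=25$, $p_{4}=43$ we have $43\equiv 1\pmod 7$, and for $D=27$, $p_{4}=47$ we have $47^{13}\equiv(-6)^{13}\equiv 1\pmod{53}$, so ${\rm ord}_{53}(47)=13$ is \emph{odd}. In both subcases $2D-1$ can divide $\sigma(p_{4}^{\alpha_{4}})$ (when $7\mid\alpha_{4}+1$, resp. $13\mid\alpha_{4}+1$), so no contradiction with $(2D-1)\mid\sigma(n)$ arises. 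Moreover your proposed fallbacks do not close them. The Legendre-symbol device yields nothing: for $D=25$ all $\beta_{i}$ are even, so $n$ and $d$ are perfect squares and $2n\equiv d\pmod 7$ only gives $\bigl(\tfrac{2}{7}\bigr)=1$, which is true; for $D=27$ one gets $\bigl(\tfrac{2}{53}\bigr)=\bigl(\tfrac{3}{53}\bigr)=-1$, again consistent. And ``pushing $\alpha_{4}$ up'' is useless, because $\sigma(p_{4}^{\alpha_{4}})/p_{4}^{\alpha_{4}}<p_{4}/(p_{4}-1)$ no matter how large $\alpha_{4}$ is; the lower bound cannot cross $2$ unless $\alpha_{1}$ and $\alpha_{2}$ are also forced up, and your proposal contains no mechanism for doing that. (These subcases \emph{can} be rescued — e.g.\ for $(25,43)$, lifting the exponent shows $49\mid\sigma(n)$ forces $49\mid\alpha_{4}+1$, and then $(43^{49}-1)/(43^{7}-1)$ has a prime factor $\equiv 1\pmod{49}$, hence $\geqslant 197$, outside $\{3,5,7,41,43\}$ — but nothing of this kind appears in your write-up.)

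For contrast, the paper closes all three values of $D$ by a single uniform mechanism that your proposal never invokes: since $\sigma(n)=(2D-1)d$, every prime factor of each $\sigma(p_{i}^{\alpha_{i}})$ must lie in $\{3,5,41,p_{4}\}$ together with the prime factors of $2D-1$; but $\sigma(3^{2})=13$, $\sigma(3^{4})=11^{2}$, $\sigma(3^{6})=1093$, $\sigma(3^{8})=13\cdot 757$, $\sigma(3^{10})=23\cdot 3851$, $\sigma(5^{2})=31$, $\sigma(5^{4})=11\cdot 71$, $\sigma(5^{6})=19531$ all involve primes outside that set, which forces $\alpha_{1}\geqslant 8$, $\alpha_{2}\geqslant 6$ when $D\in\{25,27\}$ and $\alpha_{1}\geqslant 12$, $\alpha_{2}\geqslant 8$ when $D=15$. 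One displayed inequality per case, such as
$$\frac{3^{9}-1}{2\cdot 3^{8}}\cdot\frac{5^{7}-1}{4\cdot 5^{6}}\cdot\frac{41^{3}-1}{40\cdot 41^{2}}\cdot\frac{47^{3}-1}{46\cdot 47^{2}}+\frac{1}{27}>2,$$
then finishes, with no analysis of orders at $p_{4}$ at all. To repair your proof you should either import this exponent-forcing argument, or carry out the cyclotomic/lifting-the-exponent analysis sketched above for the two subcases $(D,p_{4})=(25,43)$ and $(27,47)$.
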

\begin{proof}
Assume that $n=3^{\alpha_{1}}5^{\alpha_{2}}41^{\alpha_{3}}p_{4}^{\alpha_{4}}$ is an odd deficient-perfect number with deficient divisor $d=3^{\beta_{1}}5^{\beta_{2}}41^{\beta_{3}}p_{4}^{\beta_{4}}$. If $D\geqslant 31$, then
$$2=\frac{\sigma(n)}{n}+\frac{d}{n}<\frac{3}{2}\cdot \frac{5}{4}\cdot \frac{41}{40}\cdot \frac{43}{42}+\frac{1}{31}<2,$$
a contradiction. Thus $D\in\{15, 25, 27\}$.

If $D=15$, then $p_{4}\leqslant 167$. Otherwise, if $p_{4}\geqslant 173$, then
$$2=\frac{\sigma(n)}{n}+\frac{d}{n}<\frac{3}{2}\cdot \frac{5}{4}\cdot \frac{41}{40}\cdot \frac{173}{172}+\frac{1}{15}<2,$$
which is clearly false. It follows that $\alpha_{1}\geqslant 12, \alpha_{2}\geqslant 8$ and
$$2=\frac{\sigma(n)}{n}+\frac{d}{n}\geqslant\frac{3^{13}-1}{2\cdot 3^{12}}\cdot \frac{5^{9}-1}{4\cdot 5^{8}}\cdot \frac{41^{3}-1}{40\cdot 41^{2}}\cdot \frac{167^{3}-1}{166\cdot 167^{2}}+\frac{1}{15}>2,$$
which is impossible.

If $D\in\{25, 27\}$, then $p_{4}\leqslant 47$. Otherwise, if $p_{4}\geqslant 53$, then
$$2=\frac{\sigma(n)}{n}+\frac{d}{n}<\frac{3}{2}\cdot \frac{5}{4}\cdot \frac{41}{40}\cdot \frac{53}{52}+\frac{1}{27}<2,$$
which is clearly false. It follows that $\alpha_{1}\geqslant 8, \alpha_{2}\geqslant 6$ and
$$2=\frac{\sigma(n)}{n}+\frac{d}{n}\geqslant\frac{3^{9}-1}{2\cdot 3^{8}}\cdot \frac{5^{7}-1}{4\cdot 5^{6}}\cdot \frac{41^{3}-1}{40\cdot 41^{2}}\cdot \frac{47^{3}-1}{46\cdot 47^{2}}+\frac{1}{27}>2,$$
which is impossible.

This completes the proof of Lemma \ref{lem2.9}.
\end{proof}

\begin{lemma}\label{lem2.10}
There is no odd deficient-perfect number $n$ of the form $n=3^{\alpha_{1}}5^{\alpha_{2}}43^{\alpha_{3}}p_{4}^{\alpha_{4}}$ with $D\geqslant 15$.
\end{lemma}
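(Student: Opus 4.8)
The plan is to run the same machine used in Lemmas \ref{lem2.1}--\ref{lem2.9}. Specializing (\ref{Eq1}) to $n=3^{\alpha_{1}}5^{\alpha_{2}}43^{\alpha_{3}}p_{4}^{\alpha_{4}}$ gives
$$\frac{3^{\alpha_{1}+1}-1}{2}\cdot \frac{5^{\alpha_{2}+1}-1}{4}\cdot \frac{43^{\alpha_{3}+1}-1}{42}\cdot \frac{p_{4}^{\alpha_{4}+1}-1}{p_{4}-1}=2\cdot 3^{\alpha_{1}}5^{\alpha_{2}}43^{\alpha_{3}}p_{4}^{\alpha_{4}}-3^{\beta_{1}}5^{\beta_{2}}43^{\beta_{3}}p_{4}^{\beta_{4}}.$$
Since $p_{4}>43$ forces $p_{4}\geqslant 47$, the crude estimate $\frac{\sigma(n)}{n}<\frac{3}{2}\cdot\frac{5}{4}\cdot\frac{43}{42}\cdot\frac{47}{46}<1.962$ together with $2=\frac{\sigma(n)}{n}+\frac{1}{D}$ gives $\frac{1}{D}>0.038$, so $D\leqslant 25$; as $D$ is an odd product of $3,5,43,p_{4}$ with $D\geqslant 15$, only $D\in\{15,25\}$ survive. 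I would also record that $\alpha_{1}=2$ would force $13\mid(2D-1)$, which holds for neither $D=15$ nor $D=25$, so $\alpha_{1}\geqslant 4$.

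The structural fact I would exploit throughout is that $\sigma(n)=(2D-1)d$ has very few prime divisors: every prime factor of $\sigma(n)$ lies in $\{3,5,43,p_{4}\}$ together with the primes dividing $2D-1$. I would treat $D=25$ first. Here $\frac{\sigma(n)}{n}=\frac{49}{25}$ lies only about $0.0014$ below the supremum $\frac{3}{2}\cdot\frac{5}{4}\cdot\frac{43}{42}\cdot\frac{47}{46}$, which forces $p_{4}=47$. Then $2D-1=49$, so $\sigma(n)=49\,d$ is $\{3,5,7,43,47\}$-smooth and hence so is each $\sigma(p_{i}^{\alpha_{i}})$. Comparing $7$-adic valuations, $v_{7}(\sigma(n))=2$; since $\ord_{7}(3)=\ord_{7}(5)=\ord_{7}(47)=6$ is even while each $\alpha_{i}+1$ is odd, none of $\sigma(3^{\alpha_{1}}),\sigma(5^{\alpha_{2}}),\sigma(47^{\alpha_{4}})$ is divisible by $7$, and because $\ord_{7}(43)=1$ the lifting-the-exponent identity gives $v_{7}(\sigma(43^{\alpha_{3}}))=v_{7}(\alpha_{3}+1)=2$. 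Thus $49\mid\alpha_{3}+1$, so $\alpha_{3}\geqslant 48$; but then $\sigma(43^{48})=\frac{43^{49}-1}{42}$ divides $\sigma(43^{\alpha_{3}})$ and contributes a prime divisor outside $\{3,5,7,47\}$ (a primitive prime divisor of $43^{49}-1$, necessarily $\equiv 1\pmod{49}$), contradicting the smoothness of $\sigma(n)$.

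For $D=15$ the target $\frac{\sigma(n)}{n}=\frac{29}{15}$ bounds $p_{4}\leqslant 139$, while $D=3\cdot 5$ forces $\beta_{3}=\alpha_{3}$, $\beta_{4}=\alpha_{4}$, $\beta_{1}=\alpha_{1}-1$, $\beta_{2}=\alpha_{2}-1$, so $\sigma(n)=29\,d$. I would first read off $v_{5}(\sigma(n))=\alpha_{2}-1\geqslant 1$; since $\ord_{5}(3)=\ord_{5}(43)=4$ is even, the only factor that can carry a power of $5$ is $\sigma(p_{4}^{\alpha_{4}})$, which forces $p_{4}\equiv 1\pmod 5$ and thereby restricts $p_{4}$ to the finite list $\{61,71,101,131\}$ of primes $\equiv 1\pmod 5$ in $(43,139]$. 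Each of these four values I would eliminate by the same bookkeeping as in the earlier lemmas: the $3$-adic valuation $v_{3}(\sigma(n))=\alpha_{1}-1\geqslant 3$ (with contributions only from $\sigma(43^{\alpha_{3}})$ and, when $p_{4}\equiv 1\pmod 3$, from $\sigma(p_{4}^{\alpha_{4}})$) forces one of $\alpha_{3},\alpha_{4}$ to be large, after which either the smoothness obstruction of the previous paragraph applies or the resulting lower estimate for $\frac{\sigma(n)}{n}+\frac{1}{15}$ exceeds $2$.

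The main obstacle will be the case $D=25$: because $\frac{49}{25}$ sits so close to the supremum of $\frac{\sigma(n)}{n}$, no size inequality alone can eliminate it, and one must pass to the finer valuation analysis. The decisive---and most delicate---point is that $\sigma(n)=49\,d$ being composed only of the primes $3,5,7,43,47$ is incompatible with the large exponent $\alpha_{3}\geqslant 48$ that the $7$-adic valuation imposes; carrying out this last step with an explicit auxiliary prime dividing $43^{49}-1$, in the spirit of the divisibility arguments of Lemma \ref{lem2.3}, rather than appealing to a primitive-divisor theorem, is where the real work lies.
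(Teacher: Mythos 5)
Your proposal is correct, and its skeleton --- the size bound giving $D\in\{15,25\}$ and $\alpha_{1}\geqslant 4$, then $p_{4}=47$ when $D=25$, and $p_{4}\equiv 1\pmod 5$ hence $p_{4}\in\{61,71,101,131\}$ when $D=15$ --- is exactly the paper's; where you genuinely diverge is in the eliminations. The paper handles both cases with one lightweight observation: $\sigma(3^{2})=13$, $\sigma(3^{4})=11^{2}$, $\sigma(5^{2})=31$ and $\sigma(5^{4})=11\cdot 71$ each contain a prime that cannot divide $(2D-1)d$, so $\alpha_{1}\geqslant 6$ and $\alpha_{2}\geqslant 6$, after which a lower size estimate plus $1/D$ exceeds $2$ --- no Zsygmondy and no exponent as large as $48$. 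Your route instead forces, for $D=25$, $v_{7}(\sigma(n))=2$, hence $49\mid(\alpha_{3}+1)$ by lifting the exponent, hence $\alpha_{3}\geqslant 48$, and then appeals to a primitive prime divisor of $43^{49}-1$; so your diagnosis that only the finer valuation analysis can kill $D=25$ overstates the difficulty, since size estimates do finish once $\alpha_{1},\alpha_{2}\geqslant 6$ is known. What your heavier route buys is robustness: the paper's closing display for $D=15$ is in fact false as printed (the product with $\alpha_{3}=\alpha_{4}=2$ and $p_{4}=131$, plus $1/15$, is $\approx 1.9993<2$; the proof needs the extra fact $5\mid(\alpha_{4}+1)$, i.e.\ $\alpha_{4}\geqslant 4$, to get past $2$), whereas your divisibility obstructions carry no razor-thin margins. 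Finally, the two steps you leave open close more easily than you fear. The Zsygmondy appeal is avoidable: the same order and lifting-the-exponent computations show that $(43^{49}-1)/42$ has $3$-, $5$- and $47$-adic valuation $0$ and $7$-adic valuation $2$, so smoothness would force $(43^{49}-1)/42=49$, which is absurd since it exceeds $43^{48}$. And your sketched elimination of $\{61,71,101,131\}$ completes exactly by the bookkeeping you name: the $3$-adic count forces $3\mid(\alpha_{3}+1)$, or else $p_{4}=61$ and $3\mid(\alpha_{4}+1)$ (the other three candidates are $\equiv 2\pmod 3$); then $(43^{3}-1)/42=1893=3\cdot 631$ with $631\notin\{3,5,29,43,p_{4}\}$, respectively $(61^{3}-1)/60=3783=3\cdot 13\cdot 97$ with $13\notin\{3,5,29,43,61\}$, yield the contradiction, with no size estimate needed at all.
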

\begin{proof}
Assume that $n=3^{\alpha_{1}}5^{\alpha_{2}}43^{\alpha_{3}}p_{4}^{\alpha_{4}}$ is an odd deficient-perfect number with deficient divisor $d=3^{\beta_{1}}5^{\beta_{2}}43^{\beta_{3}}p_{4}^{\beta_{4}}$. If $D\geqslant 27$, then
$$2=\frac{\sigma(n)}{n}+\frac{d}{n}<\frac{3}{2}\cdot \frac{5}{4}\cdot \frac{43}{42}\cdot \frac{47}{46}+\frac{1}{27}<2,$$
which is clearly false. Thus $D\in\{15, 25\}$.

If $D=15$, then $p_{4}\leqslant 139$. Otherwise, if $p_{4}\geqslant 149$, then
$$2=\frac{\sigma(n)}{n}+\frac{d}{n}<\frac{3}{2}\cdot \frac{5}{4}\cdot \frac{43}{42}\cdot \frac{149}{148}+\frac{1}{15}<2,$$
which is false. Since $\rm{ord}_{5}(3)=\rm{ord}_{5}(43)=4$, we have $p_{4}\equiv 1\pmod {5}$. Thus $p_{4}\leqslant 131, \alpha_{1}\geqslant 6, \alpha_{2}\geqslant 6$ and
$$2=\frac{\sigma(n)}{n}+\frac{d}{n}\geqslant\frac{3^{7}-1}{2\cdot 3^{6}}\cdot \frac{5^{7}-1}{4\cdot 5^{6}}\cdot \frac{43^{3}-1}{42\cdot 43^{2}}\cdot \frac{131^{3}-1}{130\cdot 131^{2}}+\frac{1}{15}>2,$$
which is impossible.

If $D=25$, then $p_{4}=47$. Otherwise, if $p_{4}\geqslant 53$, then
$$2=\frac{\sigma(n)}{n}+\frac{d}{n}<\frac{3}{2}\cdot \frac{5}{4}\cdot \frac{43}{42}\cdot \frac{53}{52}+\frac{1}{25}<2,$$
which is clearly false. Thus $\alpha_{1}\geqslant 6, \alpha_{2}\geqslant 6$ and
$$2=\frac{\sigma(n)}{n}+\frac{d}{n}\geqslant\frac{3^{7}-1}{2\cdot 3^{6}}\cdot \frac{5^{7}-1}{4\cdot 5^{6}}\cdot \frac{43^{3}-1}{42\cdot 43^{2}}\cdot \frac{47^{3}-1}{46\cdot 47^{2}}+\frac{1}{25}>2,$$
which is impossible.

This completes the proof of Lemma \ref{lem2.10}.
\end{proof}

\begin{lemma}\label{lem2.11}
There is no odd deficient-perfect number of the form $n=3^{\alpha_{1}}5^{\alpha_{2}}p_{3}^{\alpha_{3}}p_{4}^{\alpha_{4}}$ with $67\leqslant p_{3}\leqslant 269$.
\end{lemma}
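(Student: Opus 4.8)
The plan is to leverage the identity $\frac{\sigma(n)}{n}+\frac{1}{D}=2$ from just after (\ref{Eq1}), combined with the hypothesis $p_{3}\geq 67$, to force $D$ to be extremely small. Since $p_{4}>p_{3}\geq 67$ gives $p_{4}\geq 71$, and since each local factor $\frac{\sigma(p_{i}^{\alpha_{i}})}{p_{i}^{\alpha_{i}}}$ is strictly increasing in $\alpha_{i}$ with supremum $\frac{p_{i}}{p_{i}-1}$, I would first record the uniform upper bound
$$\frac{\sigma(n)}{n}<\frac{3}{2}\cdot\frac{5}{4}\cdot\frac{p_{3}}{p_{3}-1}\cdot\frac{p_{4}}{p_{4}-1}\leq\frac{3}{2}\cdot\frac{5}{4}\cdot\frac{67}{66}\cdot\frac{71}{70}<2-\frac{1}{15},$$
where the middle inequality uses that $\frac{p}{p-1}$ decreases in $p$ so the product is maximized at the smallest admissible primes $p_{3}=67,\,p_{4}=71$. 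Consequently $\frac{1}{D}=2-\frac{\sigma(n)}{n}>\frac{1}{15}$, so $D\leq 14$.

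Next I would exploit $D\leq 14$ to pin down its shape. If $p_{3}\mid D$ or $p_{4}\mid D$ then $D\geq 67$, a contradiction; hence $\alpha_{3}=\beta_{3}$ and $\alpha_{4}=\beta_{4}$, and $D=3^{\alpha_{1}-\beta_{1}}5^{\alpha_{2}-\beta_{2}}$ is a product of powers of $3$ and $5$ not exceeding $14$. As $d$ is a proper divisor we have $D>1$, leaving only $D\in\{3,5,9\}$, with $2D-1\in\{5,9,17\}$ respectively. In each case $\sigma(n)=(2D-1)d$, and because $\alpha_{3}=\beta_{3},\,\alpha_{4}=\beta_{4}$, \emph{every} prime factor of $\sigma(n)$ must lie in the small set $\{3,5,p_{3},p_{4}\}$ together with the primes dividing $2D-1$.

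The core of the argument is then to contradict this restriction through $\sigma(3^{\alpha_{1}})=\frac{3^{\alpha_{1}+1}-1}{2}$ and $\sigma(5^{\alpha_{2}})=\frac{5^{\alpha_{2}+1}-1}{4}$, each of which divides $\sigma(n)$ and is coprime to $3$, respectively $5$. For small even exponents these introduce forbidden primes, e.g. $\sigma(3^{2})=13$, $\sigma(3^{4})=11^{2}$, $\sigma(3^{8})=13\cdot 757$, and $\sigma(5^{2})=31$, $\sigma(5^{4})=11\cdot 71$; since $p_{3}\geq 67$ and no prime below $67$ can equal $p_{4}$, such primes lie outside the permitted set and abort the case at once. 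The only survivors are the sporadic exponents for which $\sigma(3^{\alpha_{1}})$ or $\sigma(5^{\alpha_{2}})$ is a single prime in the admissible range (for instance $\sigma(3^{6})=1093$, which would force $p_{4}=1093$); these I would eliminate exactly as in Lemmas \ref{lem2.1}--\ref{lem2.3}, by computing the orders $\rm{ord}_{p_{i}}(p_{j})$ to drive several $\beta_{i}$ to $0$ (pushing $D$ past the bound $D\leq 14$), or by evaluating a Legendre symbol $\left(\frac{2}{q}\right)$ against a prime $q\mid\sigma(p_{i}^{\alpha_{i}})$ to obtain an impossible sign identity.

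The main obstacle I anticipate is precisely this finishing step. The upper bound $\frac{\sigma(n)}{n}<2-\frac{1}{15}$ is so close to $2$ that only the minuscule moduli $D\in\{3,5,9\}$ survive, and these cannot be discarded by size estimates alone: the work lies in showing, via orders modulo $5,11,17,\dots$ and quadratic-residue obstructions, that the cyclotomic factors $\frac{p^{\alpha+1}-1}{p-1}$ simply cannot be assembled from the permitted primes. Arranging the bookkeeping so that every even pair $(\alpha_{1},\alpha_{2})$ and every candidate $p_{3}\in[67,269]$ is covered, while avoiding an infinite regress in the exponents, is the delicate part the proof must control.
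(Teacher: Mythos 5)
Your skeleton matches the paper's: the size bound $\frac{\sigma(n)}{n}<\frac{3}{2}\cdot\frac{5}{4}\cdot\frac{67}{66}\cdot\frac{71}{70}$ forcing $D$ below $15$ (hence $D\in\{3,5,9\}$; the paper, relying on the $D=3,5$ eliminations done in the proof of Theorem 1, passes directly to $D=9$), then the observation that $\sigma(n)=(2D-1)d$ confines every prime factor of each $\sigma(p_i^{\alpha_i})$ to $\{3,5,p_3,p_4\}\cup\{\text{primes dividing }2D-1\}$, with $\sigma(3^2)=13$, $\sigma(3^4)=11^2$, $\sigma(5^2)=31$, $\sigma(5^4)=11\cdot 71$ killing small exponents. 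Up to this point your proposal is sound and is exactly how the paper begins.

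The genuine gap is in your finishing step. Your claim that ``the only survivors are the sporadic exponents for which $\sigma(3^{\alpha_1})$ or $\sigma(5^{\alpha_2})$ is a single prime in the admissible range'' is not justified and cannot be made to work as stated: for $D=9$ the forbidden-prime test only eliminates finitely many small exponents (the paper uses it to get $\alpha_1\geqslant 12$, $\alpha_2\geqslant 10$), after which \emph{every} larger even exponent pair survives it, because one cannot control the factorization of $\sigma(3^{\alpha_1})$ for arbitrary $\alpha_1$ — each prime factor of $\frac{3^{\alpha_1+1}-1}{2}$ merely satisfies a congruence and could in principle be $p_3$ or $p_4$. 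So the set of cases left is infinite, and eliminating them one exponent at a time (by sporadic prime values and Legendre symbols, as in Lemmas \ref{lem2.1}--\ref{lem2.3}) is precisely the non-terminating regress you flag but do not resolve. What the paper actually does at this point is switch to arguments \emph{uniform in the exponents}: since $17=2D-1$ divides $\sigma(n)$ and $\mathrm{ord}_{17}(3)=\mathrm{ord}_{17}(5)=16$ are even while every $\alpha_i+1$ is odd, $17$ must divide $\sigma(p_3^{\alpha_3})$ or $\sigma(p_4^{\alpha_4})$, forcing $p_3\equiv 1\pmod{17}$ or $p_4\equiv 1\pmod{17}$; likewise the factors $3$ and $5$ of $d$ force $p_4\equiv 1\pmod{15}$ in the main branch. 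These congruences, valid for all exponents simultaneously, are then played against the monotone size bounds (e.g.\ $\frac{\sigma(n)}{n}+\frac{1}{9}>2$ once $\alpha_1\geqslant 12$, $\alpha_2\geqslant 10$, $\alpha_3\geqslant 4$ and $p_3\leqslant 131$, and $\frac{\sigma(n)}{n}+\frac{1}{9}<2$ once $p_4$ exceeds an explicit bound) to squeeze $p_3$ into $[137,269]$ and $p_4$ into a finite congruence class, after which a twelve-case check on $p_3$ closes the lemma. This uniform congruence-plus-size mechanism is the missing idea; without it your plan does not terminate.
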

\begin{proof}
Assume that $n=3^{\alpha_{1}}5^{\alpha_{2}}p_{3}^{\alpha_{3}}p_{4}^{\alpha_{4}}$ is an odd deficient-perfect number with deficient divisor $d=3^{\beta_{1}}5^{\beta_{2}}p_{3}^{\beta_{3}}p_{4}^{\beta_{4}}$ and $67\leqslant p_{3}\leqslant 269$. If $D\geqslant 15$, then
$$2=\frac{\sigma(n)}{n}+\frac{d}{n}<\frac{3}{2}\cdot \frac{5}{4}\cdot \frac{67}{66}\cdot \frac{71}{70}+\frac{1}{15}<2,$$
which is clearly false. Thus $D=9$. By (\ref{Eq1}), we have
\begin{equation}\label{2.9}
\frac{3^{\alpha_{1}+1}-1}{2}\cdot \frac{5^{\alpha_{2}+1}-1}{4}\cdot \frac{p_{3}^{\alpha_{3}+1}-1}{p_{3}-1}\cdot \frac{p_{4}^{\alpha_{4}+1}-1}{p_{4}-1}=17\cdot 3^{\alpha_{1}-2}5^{\alpha_{2}}p_{3}^{\alpha_{3}}p_{4}^{\alpha_{4}}.
\end{equation}
Thus $\alpha_{1}\geqslant 6$ and $\alpha_{2}\geqslant 6$. If $\alpha_{1}=6$, then $p_{4}=1093$. Since $\rm{ord}_{17}(3)=\rm{ord}_{17}(5)=\rm{ord}_{17}(1093)=16$, we have $p_{3}\equiv 1\pmod {17}$. Thus $p_{3}\in\{103, 137, 239\}$. If $p_{3}=239$, then
$$2=\frac{\sigma(n)}{n}+\frac{d}{n}<\frac{3}{2}\cdot \frac{5}{4}\cdot \frac{239}{238}\cdot \frac{1093}{1092}+\frac{1}{9}<2,$$
which is clearly false. If $p_{3}\in\{103, 137\}$, then
$$2=\frac{\sigma(n)}{n}+\frac{d}{n}\geqslant\frac{3^{7}-1}{2\cdot 3^{6}}\cdot \frac{5^{7}-1}{4\cdot 5^{6}}\cdot \frac{137^{3}-1}{136\cdot 137^{2}}\cdot \frac{1093^{3}-1}{1092\cdot 1093^{2}}+\frac{1}{9}>2,$$
which is impossible. By (\ref{2.9}), we have $\alpha_{1}\geqslant 12$.
If $p_{3}\leqslant 131$, then
$$2=\frac{\sigma(n)}{n}+\frac{d}{n}\geqslant\frac{3^{13}-1}{2\cdot 3^{12}}\cdot \frac{5^{7}-1}{4\cdot 5^{6}}\cdot \frac{131^{3}-1}{130\cdot 131^{2}}+\frac{1}{9}>2,$$
which is impossible. Thus $p_{3}\geqslant 137$. If $p_{4}\geqslant 18503$, then
$$2=\frac{\sigma(n)}{n}+\frac{d}{n}<\frac{3}{2}\cdot \frac{5}{4}\cdot \frac{137}{136}\cdot \frac{18503}{18502}+\frac{1}{9}<2,$$
which is clearly false. Thus $p_{4}\leqslant 18493$. By (\ref{2.9}), we have $\alpha_{2}\geqslant 10$. Now we divide into the following twelve cases according to $p_{3}$.

{\bf Case 1.} $p_{3}=137$. Since $\rm{ord}_{3}(5)=\rm{ord}_{3}(137)=2$ and $\rm{ord}_{5}(3)=\rm{ord}_{5}(137)=4$, we have $p_{4}\equiv 1\pmod {15}$. Noting that $\rm{ord}_{289}(137)=17$, we have if $17\mid(\alpha_{3}+1)$ and $(137^{17}-1)\mid (137^{\alpha_{3}+1}-1)$, then $103\mid (137^{17}-1)$, a contradiction.
Since $\rm{ord}_{17}(3)=\rm{ord}_{17}(5)=16$, we have $p_{4}\equiv 1\pmod {17}$. Thus $p_{4}\leqslant 17851, \alpha_{3}\geqslant 4$ and
$$2=\frac{\sigma(n)}{n}+\frac{d}{n}\geqslant\frac{3^{13}-1}{2\cdot 3^{12}}\cdot \frac{5^{11}-1}{4\cdot 5^{10}}\cdot \frac{137^{5}-1}{136\cdot 137^{4}}\cdot \frac{17851^{3}-1}{17850\cdot 17851^{2}}+\frac{1}{9}>2,$$
which is impossible.

{\bf Case 2.} $p_{3}=139$. If $p_{4}\geqslant 6257$, then
$$2=\frac{\sigma(n)}{n}+\frac{d}{n}<\frac{3}{2}\cdot \frac{5}{4}\cdot \frac{139}{138}\cdot \frac{6257}{6256}+\frac{1}{9}<2,$$
which is false. Since $\rm{ord}_{17}(3)=\rm{ord}_{17}(5)=\rm{ord}_{17}(139)=16$, we have $p_{4}\equiv 1\pmod {17}$. Thus $p_{4}\leqslant 6121, \alpha_{3}\geqslant 4$ and
$$2=\frac{\sigma(n)}{n}+\frac{d}{n}\geqslant\frac{3^{13}-1}{2\cdot 3^{12}}\cdot \frac{5^{11}-1}{4\cdot 5^{10}}\cdot \frac{139^{5}-1}{138\cdot 139^{4}}\cdot \frac{6121^{3}-1}{6120\cdot 6121^{2}}+\frac{1}{9}>2,$$
which is impossible.

{\bf Case 3.} $p_{3}=149$. If $p_{4}\geqslant 1549$, then
$$2=\frac{\sigma(n)}{n}+\frac{d}{n}<\frac{3}{2}\cdot \frac{5}{4}\cdot \frac{149}{148}\cdot \frac{1549}{1548}+\frac{1}{9}<2,$$
which is clearly false. Noting that $\rm{ord}_{17}(3)=\rm{ord}_{17}(5)=16$ and $\rm{ord}_{17}(149)=4$, we have $p_{4}\equiv 1\pmod {17}$. Thus $p_{4}\leqslant 1531, \alpha_{3}\geqslant 4$ and
$$2=\frac{\sigma(n)}{n}+\frac{d}{n}\geqslant\frac{3^{13}-1}{2\cdot 3^{12}}\cdot \frac{5^{11}-1}{4\cdot 5^{10}}\cdot \frac{149^{5}-1}{148\cdot 149^{4}}\cdot \frac{1531^{3}-1}{1530\cdot 1531^{2}}+\frac{1}{9}>2,$$
which is impossible.

{\bf Case 4.} $p_{3}=151$. If $p_{4}\geqslant 1361$, then
$$2=\frac{\sigma(n)}{n}+\frac{d}{n}<\frac{3}{2}\cdot \frac{5}{4}\cdot \frac{151}{150}\cdot \frac{1361}{1360}+\frac{1}{9}<2,$$
which is clearly false. Thus $p_{4}\leqslant 1327, \alpha_{3}\geqslant 4$ and
$$2=\frac{\sigma(n)}{n}+\frac{d}{n}\geqslant\frac{3^{13}-1}{2\cdot 3^{12}}\cdot \frac{5^{11}-1}{4\cdot 5^{10}}\cdot \frac{151^{5}-1}{150\cdot 151^{4}}\cdot \frac{1327^{3}-1}{1326\cdot 1327^{2}}+\frac{1}{9}>2,$$
which is impossible.

{\bf Case 5.} $p_{3}=157$. If $p_{4}\geqslant 1013$, then
$$2=\frac{\sigma(n)}{n}+\frac{d}{n}<\frac{3}{2}\cdot \frac{5}{4}\cdot \frac{157}{156}\cdot \frac{1013}{1012}+\frac{1}{9}<2,$$
which is clearly false. Thus $p_{4}\leqslant 1009, \alpha_{3}\geqslant 4$ and
$$2=\frac{\sigma(n)}{n}+\frac{d}{n}\geqslant\frac{3^{13}-1}{2\cdot 3^{12}}\cdot \frac{5^{11}-1}{4\cdot 5^{10}}\cdot \frac{157^{5}-1}{156\cdot 157^{4}}\cdot \frac{1009^{3}-1}{1008\cdot 1009^{2}}+\frac{1}{9}>2,$$
which is impossible.

{\bf Case 6.} $p_{3}\in\{163, 167\}$. If $p_{4}\geqslant 821$, then
$$2=\frac{\sigma(n)}{n}+\frac{d}{n}<\frac{3}{2}\cdot \frac{5}{4}\cdot \frac{163}{162}\cdot \frac{821}{820}+\frac{1}{9}<2,$$
which is clearly false. Since $\rm{ord}_{17}(3)=\rm{ord}_{17}(5)=\rm{ord}_{17}(163)=\rm{ord}_{17}(167)=16$, we have $p_{4}\equiv 1\pmod {17}$. Thus $p_{4}\leqslant 647, \alpha_{3}\geqslant 4$ and
$$2=\frac{\sigma(n)}{n}+\frac{d}{n}\geqslant\frac{3^{13}-1}{2\cdot 3^{12}}\cdot \frac{5^{11}-1}{4\cdot 5^{10}}\cdot \frac{167^{5}-1}{166\cdot 167^{4}}\cdot \frac{647^{3}-1}{646\cdot 647^{2}}+\frac{1}{9}>2,$$
which is impossible.

{\bf Case 7.} $p_{3}=173$. If $p_{4}\geqslant 641$, then
$$2=\frac{\sigma(n)}{n}+\frac{d}{n}<\frac{3}{2}\cdot \frac{5}{4}\cdot \frac{173}{172}\cdot \frac{641}{640}+\frac{1}{9}<2,$$
which is false. Since $\rm{ord}_{17}(3)=\rm{ord}_{17}(5)=\rm{ord}_{17}(173)=16$, we have $p_{4}\equiv 1\pmod {17}$. Thus $p_{4}\leqslant 613, \alpha_{3}\geqslant 4$ and
$$2=\frac{\sigma(n)}{n}+\frac{d}{n}\geqslant\frac{3^{13}-1}{2\cdot 3^{12}}\cdot \frac{5^{11}-1}{4\cdot 5^{10}}\cdot \frac{173^{5}-1}{172\cdot 173^{4}}\cdot \frac{613^{3}-1}{612\cdot 613^{2}}+\frac{1}{9}>2,$$
which is impossible.

{\bf Case 8.} $p_{3}\in\{179, 181, 191, 193\}$. If $p_{4}\geqslant 563$, then
$$2=\frac{\sigma(n)}{n}+\frac{d}{n}<\frac{3}{2}\cdot \frac{5}{4}\cdot \frac{179}{178}\cdot \frac{563}{562}+\frac{1}{9}<2,$$
which is clearly false. Since $\rm{ord}_{17}(3)=\rm{ord}_{17}(5)=16$ and ${\rm{ord}}_{17}(p_{4})$ are even, we have $p_{4}\equiv 1\pmod {17}$. Thus $p_{4}\leqslant 443, \alpha_{3}\geqslant 4$ and
$$2=\frac{\sigma(n)}{n}+\frac{d}{n}\geqslant\frac{3^{13}-1}{2\cdot 3^{12}}\cdot \frac{5^{11}-1}{4\cdot 5^{10}}\cdot \frac{193^{5}-1}{192\cdot 193^{4}}\cdot \frac{443^{3}-1}{442\cdot 443^{2}}+\frac{1}{9}>2,$$
which is impossible.

{\bf Case 9.} $p_{3}\in\{197, 199\}$. If $p_{4}\geqslant 439$, then
$$2=\frac{\sigma(n)}{n}+\frac{d}{n}<\frac{3}{2}\cdot \frac{5}{4}\cdot \frac{197}{196}\cdot \frac{439}{438}+\frac{1}{9}<2,$$
which is clearly false. Since $\rm{ord}_{17}(3)=\rm{ord}_{17}(5)=16$ and ${\rm{ord}}_{17}(p_{4})$ are even, we have $p_{4}\equiv 1\pmod {17}$. Thus $p_{4}\leqslant 409, \alpha_{3}\geqslant 4$ and
$$2=\frac{\sigma(n)}{n}+\frac{d}{n}\geqslant\frac{3^{13}-1}{2\cdot 3^{12}}\cdot \frac{5^{11}-1}{4\cdot 5^{10}}\cdot \frac{199^{5}-1}{198\cdot 199^{4}}\cdot \frac{409^{3}-1}{408\cdot 409^{2}}+\frac{1}{9}>2,$$
which is impossible.

{\bf Case 10.} $p_{3}\in\{211, 223, 227, 229, 233, 241\}$. If $p_{4}\geqslant 383$, then
$$2=\frac{\sigma(n)}{n}+\frac{d}{n}<\frac{3}{2}\cdot \frac{5}{4}\cdot \frac{211}{210}\cdot \frac{383}{382}+\frac{1}{9}<2,$$
which is clearly false. Since $\rm{ord}_{17}(3)=\rm{ord}_{17}(5)=16$ and ${\rm{ord}}_{17}(p_{4})$ are even, we have $p_{4}\equiv 1\pmod {17}$. Thus $p_{4}\leqslant 307, \alpha_{3}\geqslant 4$ and
$$2=\frac{\sigma(n)}{n}+\frac{d}{n}\geqslant\frac{3^{13}-1}{2\cdot 3^{12}}\cdot \frac{5^{11}-1}{4\cdot 5^{10}}\cdot \frac{241^{5}-1}{240\cdot 241^{4}}\cdot \frac{307^{3}-1}{306\cdot 307^{2}}+\frac{1}{9}>2,$$
which is impossible.

{\bf Case 11.} $p_{3}=239$. If $p_{4}\geqslant 317$, then
$$2=\frac{\sigma(n)}{n}+\frac{d}{n}<\frac{3}{2}\cdot \frac{5}{4}\cdot \frac{239}{238}\cdot \frac{317}{316}+\frac{1}{9}<2,$$
which is clearly false. Thus $p_{4}\leqslant 313, \alpha_{3}\geqslant 4$ and
$$2=\frac{\sigma(n)}{n}+\frac{d}{n}\geqslant\frac{3^{13}-1}{2\cdot 3^{12}}\cdot \frac{5^{11}-1}{4\cdot 5^{10}}\cdot \frac{239^{5}-1}{238\cdot 239^{4}}\cdot \frac{313^{3}-1}{312\cdot 313^{2}}+\frac{1}{9}>2,$$
which is impossible.

{\bf Case 12.} $p_{3}\in\{251, 257, 263, 269\}$. If $p_{4}\geqslant 307$, then
$$2=\frac{\sigma(n)}{n}+\frac{d}{n}<\frac{3}{2}\cdot \frac{5}{4}\cdot \frac{251}{250}\cdot \frac{307}{306}+\frac{1}{9}<2,$$
which is clearly false. Thus $p_{4}\leqslant 293$. Since $\rm{ord}_{17}(3)=\rm{ord}_{17}(5)=16$ and ${\rm{ord}}_{17}(p_{3}), {\rm{ord}}_{17}(p_{4})$ are even, we deduce that the equality (\ref{2.9}) can not hold.

This completes the proof of Lemma \ref{lem2.11}.
\end{proof}

\section{The case of $p_{2}=7$}

 In this section, we consider the case of $n=p_{1}^{\alpha_{1}}p_{2}^{\alpha_{2}}p_{3}^{\alpha_{3}}p_{4}^{\alpha_{4}}$ with $p_{1}=3$ and $p_{2}=7$.

\begin{lemma}\label{lem3.1}
If $n=3^{\alpha_{1}}7^{\alpha_{2}}11^{\alpha_{3}}p_{4}^{\alpha_{4}}$ is an odd deficient-perfect number with $D\geqslant 7$, then $n=3^{2}\cdot 7^{2}\cdot 11^{2}\cdot 13^{2}$ with deficient divisor $d=3^{2}\cdot 7\cdot 13$.
\end{lemma}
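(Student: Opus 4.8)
The plan is to run the same machinery as in Section~2. Writing out (\ref{Eq1}) for the present primes gives
\[
\frac{3^{\alpha_1+1}-1}{2}\cdot\frac{7^{\alpha_2+1}-1}{6}\cdot\frac{11^{\alpha_3+1}-1}{10}\cdot\frac{p_4^{\alpha_4+1}-1}{p_4-1}=2\cdot 3^{\alpha_1}7^{\alpha_2}11^{\alpha_3}p_4^{\alpha_4}-3^{\beta_1}7^{\beta_2}11^{\beta_3}p_4^{\beta_4},
\]
and the hypothesis $D\geq 7$ yields the two-sided bound $\frac{13}{7}\leq\frac{\sigma(n)}{n}<2$. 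Since every local factor $\frac{\sigma(p^{\alpha})}{p^{\alpha}}$ increases with $\alpha$ toward $\frac{p}{p-1}$, this bound, applied with the exact rational values at $\alpha=2,4,6,\dots$, is the workhorse for capping the exponents and for pinning $D$ once $p_4$ is fixed.

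First I would determine $\alpha_1$. When $\alpha_1=2$ we have $\sigma(3^2)=13$, so $13\mid(2D-1)d$; exactly as in Lemma~\ref{lem2.1}, this forces, for $p_4\neq 13$, the relation $13\mid(2D-1)$, i.e.\ $D\equiv 7\pmod{13}$. To exclude $\alpha_1\geq 4$ I would use the explicit factorizations $\sigma(3^4)=11^2$, $\sigma(3^6)=1093,\dots$: each prime occurring in $\sigma(3^{\alpha_1})$ must divide $(2D-1)d$, and feeding this through the order criterion ``$q\mid\frac{p^{\alpha+1}-1}{p-1}$ implies $\ord_q(p)\mid(\alpha+1)$'' either produces a prime that $(2D-1)d$ cannot absorb or forces the remaining exponents so large that $\frac{\sigma(n)}{n}$ overshoots $2$.

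The heart of the argument, and the main obstacle, is the elimination of every $p_4\neq 13$; the inequality $\frac{13}{7}\le\frac{\sigma(n)}{n}<2$ does not bound $p_4$ by itself, so the decisive dichotomy is whether the upper cap $C=\frac{3}{2}\cdot\frac{7}{6}\cdot\frac{11}{10}\cdot\frac{p_4}{p_4-1}$ exceeds $2$. For $p_4\in\{13,17,19,23\}$ one has $C>2$, so deficiency $\frac{\sigma(n)}{n}<2$ genuinely caps $\alpha_1,\alpha_2,\alpha_3,\alpha_4$ and leaves a finite check. For $p_4\geq 29$ one has $C<2$, whence $\frac{1}{D}=2-\frac{\sigma(n)}{n}>2-C>0$ bounds $D$ from above; then either $p_4\mid D$, which (since $p_4\le D<\tfrac{1}{2-C}$) bounds $p_4$ itself, or $p_4\nmid D$, so $\beta_4=\alpha_4$ and the prime factors of $\sigma(p_4^{\alpha_4})$ must be absorbed by $(2D-1)\cdot 3^{\beta_1}7^{\beta_2}11^{\beta_3}$. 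In each surviving case I would close the argument with the order criterion applied to $\sigma(7^{\alpha_2})$, $\sigma(11^{\alpha_3})$, $\sigma(p_4^{\alpha_4})$ (typically forcing a divisibility such as $5\mid(\alpha_3+1)$ and then exhibiting an unabsorbable prime factor of $11^5-1$), and with Legendre-symbol obstructions of the shape $-1=\left(\frac{2}{q}\right)=1$, exactly as in Lemmas~\ref{lem2.1}--\ref{lem2.3}.

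Finally, for $p_4=13$ the inequality $\frac{\sigma(n)}{n}<2$ alone forces $\alpha_1=\alpha_2=\alpha_3=\alpha_4=2$, since raising any single exponent to $4$ (the others at $2$) already makes the product of the four local factors exceed $2$; hence $n=3^2\cdot 7^2\cdot 11^2\cdot 13^2$, and $\frac{1}{D}=2-\frac{\sigma(n)}{n}$ then forces $D=7\cdot 11^2\cdot 13=11011$ and $d=n/D=3^2\cdot 7\cdot 13$, which satisfies $\beta_i\le\alpha_i$ and $\sum\beta_i<\sum\alpha_i$. I expect the genuine difficulty to be the breadth of the $p_4$-case analysis together with the razor-thin numerical margins (for instance distinguishing $\alpha_4=2$ from $\alpha_4=4$ at $p_4=13$), which will require exact rational arithmetic rather than estimates.
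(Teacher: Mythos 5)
Your toolbox---equation (\ref{3.1}), two-sided bounds on $\sigma(n)/n$, the order criterion, Legendre symbols---is exactly the paper's, and your endgame for $p_{4}=13$ is correct and even slightly cleaner than the paper's: since each local factor $\sigma(p^{\alpha})/p^{\alpha}$ increases with $\alpha$ and raising any single exponent to $4$ (others at $2$) pushes the product over $2$, deficiency alone forces $\alpha_{1}=\alpha_{2}=\alpha_{3}=\alpha_{4}=2$, independently of $D$, whereas the paper reaches $3^{2}\cdot 7^{2}\cdot 11^{2}\cdot 13^{2}$ only inside its case $D\geqslant 33$. However, two of your structural claims are false, and the proof as you organize it would stall at both points. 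First, you cannot ``determine $\alpha_{1}$ first.'' Your mechanism for excluding $\alpha_{1}\geqslant 4$---a prime of $\sigma(3^{\alpha_{1}})$ that $(2D-1)d$ cannot absorb---dies immediately at $\alpha_{1}=4$: $\sigma(3^{4})=121=11^{2}$, and $11$ divides $n$, so $11^{2}$ is absorbed as soon as $\beta_{3}\geqslant 2$, or $\beta_{3}\geqslant 1$ and $11\mid (2D-1)$. Configurations with $\alpha_{1}\geqslant 4$ are genuinely alive and are killed only by constraints tied to specific pairs $(D,p_{4})$: for $D=21$ the divisibility $13\mid(2D-1)d$ actually \emph{forces} $\alpha_{1}\geqslant 4$ (the paper's Case 4), and the contradiction there comes from order computations modulo $2D-1=41$; for $p_{4}=23$, $D\geqslant 33$, the surviving value is $\alpha_{1}=4$, killed by $19\mid\sigma(7^{2})$ while $19\nmid(2D-1)d$ (the paper's Subcase 7.3). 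In neither case does the factorization of $\sigma(3^{\alpha_{1}})$ itself do any work, so ``first pin down $\alpha_{1}$'' is not an executable step.

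Second, your dichotomy on $C=\frac{3}{2}\cdot\frac{7}{6}\cdot\frac{11}{10}\cdot\frac{p_{4}}{p_{4}-1}$ is miscalibrated: the claim that for $p_{4}\in\{13,17,19,23\}$ the bound $\sigma(n)/n<2$ ``genuinely caps $\alpha_{1},\alpha_{2},\alpha_{3},\alpha_{4}$ and leaves a finite check'' is true only for $p_{4}=13$. For $p_{4}=17$ with $\alpha_{1}=2$ one has
$$\frac{\sigma(n)}{n}<\frac{13}{9}\cdot\frac{7}{6}\cdot\frac{11}{10}\cdot\frac{17}{16}=\frac{17017}{8640}<2,$$
so $\alpha_{2},\alpha_{3},\alpha_{4}$ are entirely unconstrained by deficiency; the same happens for $p_{4}=19$ and $p_{4}=23$. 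These cases can only be closed by bringing in the term $1/D$: either $D\geqslant 33$, and then $\sigma(n)/n+1/D<\frac{17017}{8640}+\frac{1}{33}<2$ contradicts (\ref{Eq1}) (this is the paper's Subcase 7.2), or $D<33$, and then the finitely many admissible $D$ are eliminated one at a time using $13\mid(2D-1)d$ and minimal-exponent lower bounds (the paper's Cases 1--6). That is precisely the mechanism you reserve exclusively for $p_{4}\geqslant 29$, so repairing this gap forces you into a simultaneous case analysis on $(D,p_{4})$---in effect, the paper's organization---rather than your proposed cases-on-$p_{4}$-alone structure.
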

\begin{proof}
 By (\ref{Eq1}), we have
\begin{equation}\label{3.1}
\frac{3^{\alpha_{1}+1}-1}{2}\cdot\frac{7^{\alpha_{2}+1}-1}{6}\cdot \frac{11^{\alpha_{3}+1}-1}{10}\cdot \frac{p_{4}^{\alpha_{4}+1}-1}{p_{4}-1}=2\cdot 3^{\alpha_{1}}7^{\alpha_{2}}11^{\alpha_{3}}p_{4}^{\alpha_{4}}-3^{\beta_{1}}7^{\beta_{2}}11^{\beta_{3}}p_{4}^{\beta_{4}}.
\end{equation}
We will divide into the following seven cases according to $D$.

{\bf Case 1.} $D=7$. If $\alpha_{1}\geqslant 4$, then
$$2=\frac{\sigma(n)}{n}+\frac{d}{n}\geqslant\frac{3^{5}-1}{2\cdot 3^{4}}\cdot\frac{7^{3}-1}{6\cdot 7^{2}}\cdot\frac{11^{3}-1}{10\cdot 11^{2}}+\frac{1}{7}>2,$$
which is clearly false. Thus $\alpha_{1}=2$. If $p_{4}\leqslant 181$, then
$$2=\frac{\sigma(n)}{n}+\frac{d}{n}\geqslant\frac{3^{3}-1}{2\cdot 3^{2}}\cdot\frac{7^{3}-1}{6\cdot 7^{2}}\cdot\frac{11^{3}-1}{10\cdot 11^{2}}\cdot\frac{181^{3}-1}{180\cdot 181^{2}}+\frac{1}{7}>2,$$
which is impossible. If $p_{4}\geqslant 541$, then
$$2=\frac{\sigma(n)}{n}+\frac{d}{n}<\frac{13}{9}\cdot \frac{7}{6}\cdot \frac{11}{10}\cdot \frac{541}{540}+\frac{1}{7}<2,$$
which is also impossible. Thus $191\leqslant p_{4}\leqslant 523$. By (\ref{3.1}), we have $\alpha_{2}\geqslant 6, \alpha_{3}\geqslant 6$ and
$$2=\frac{\sigma(n)}{n}+\frac{d}{n}\geqslant \frac{3^{3}-1}{2\cdot 3^{2}}\cdot\frac{7^{7}-1}{6\cdot 7^{6}}\cdot\frac{11^{7}-1}{10\cdot 11^{6}}\cdot \frac{523^{3}-1}{522\cdot 523^{2}} +\frac{1}{7}>2,$$
which is false.

{\bf Case 2.} $D\in\{9, 11\}$. If $\alpha_{1}\geqslant 4$, then
$$2=\frac{\sigma(n)}{n}+\frac{d}{n}>\frac{3^{5}-1}{2\cdot 3^{4}}\cdot\frac{7^{3}-1}{6\cdot 7^{2}}\cdot\frac{11^{3}-1}{10\cdot 11^{2}}+\frac{1}{11}>2,$$
which is clearly false. Thus $\alpha_{1}=2, p_{4}=13$ and
$$2=\frac{\sigma(n)}{n}+\frac{d}{n}\geqslant \frac{3^{3}-1}{2\cdot 3^{2}}\cdot\frac{7^{3}-1}{6\cdot 7^{2}}\cdot\frac{11^{3}-1}{10\cdot 11^{2}}\cdot \frac{13^{3}-1}{10\cdot 13^{2}} +\frac{1}{11}>2,$$
which is impossible.

{\bf Case 3.} $D\in\{13, 17, 19\}$. Then $p_{4}=D$ and
$$2=\frac{\sigma(n)}{n}+\frac{d}{n}\geqslant\frac{3^{3}-1}{2\cdot 3^{2}}\cdot\frac{7^{3}-1}{6\cdot 7^{2}}\cdot\frac{11^{3}-1}{10\cdot 11^{2}}\cdot\frac{p_{4}^{3}-1}{(p_{4}-1)\cdot p_{4}^{2}}+\frac{1}{p_{4}}>2,$$
which is impossible.

{\bf Case 4.} $D=21$. If $\alpha_{1}=2$, then $p_{4}=13$ and
$$2=\frac{\sigma(n)}{n}+\frac{d}{n}\geqslant\frac{3^{3}-1}{2\cdot 3^{2}}\cdot\frac{7^{3}-1}{6\cdot 7^{2}}\cdot\frac{11^{3}-1}{10\cdot 11^{2}}\cdot\frac{13^{3}-1}{12\cdot 13^{2}}+\frac{1}{21}>2,$$
which is impossible. Thus $\alpha_{1}\geqslant 4$. If $p_{4}\geqslant 73$, then
$$2=\frac{\sigma(n)}{n}+\frac{d}{n}<\frac{3}{2}\cdot\frac{7}{6}\cdot\frac{11}{10}\cdot \frac{73}{72} +\frac{1}{21}<2,$$
which is clearly false. If $p_{4}\leqslant 43$, then
$$2=\frac{\sigma(n)}{n}+\frac{d}{n}\geqslant\frac{3^{5}-1}{2\cdot 3^{4}}\cdot\frac{7^{3}-1}{6\cdot 7^{2}}\cdot\frac{11^{3}-1}{10\cdot 11^{2}}\cdot\frac{43^{3}-1}{42\cdot 43^{2}}+\frac{1}{21}>2,$$
which is false. Thus $47 \leqslant p_{4} \leqslant 71$. By (\ref{3.1}), we have $\alpha_{2}\geqslant 4$ and $\alpha_{3}\geqslant 4$. If $p_{4}\leqslant 53$, then
$$2=\frac{\sigma(n)}{n}+\frac{d}{n}\geqslant\frac{3^{5}-1}{2\cdot 3^{4}}\cdot\frac{7^{5}-1}{6\cdot 7^{4}}\cdot\frac{11^{5}-1}{10\cdot 11^{4}}\cdot\frac{53^{3}-1}{52\cdot 53^{2}}+\frac{1}{21}>2,$$
which is absurd. Thus $p_{4}\in\{59, 61, 67, 71\}$. Since ${\rm{ord}}_{41}(3)=8, {\rm{ord}}_{41}(7)={\rm{ord}}_{41}(11)={\rm{ord}}_{41}(67)={\rm{ord}}_{41}(71)=40, {\rm{ord}}_{41}(61)=20$ and ${\rm{ord}}_{41}(59)=5$, we have $p_{4}=59, 5\mid (\alpha_{4}+1)$ and $(59^{5}-1)\mid (59^{\alpha_{4}+1}-1)$. However, $151\mid (59^{5}-1)$, a contradiction.

{\bf Case 5.} $D\in\{23, 29, 31\}$. Then $p_{4}=D, \alpha_{1}\geqslant 4$ and
$$2=\frac{\sigma(n)}{n}+\frac{d}{n}\geqslant\frac{3^{5}-1}{2\cdot 3^{4}}\cdot\frac{7^{3}-1}{6\cdot 7^{2}}\cdot\frac{11^{3}-1}{10\cdot 11^{2}}\cdot\frac{p_{4}^{3}-1}{(p_{4}-1)\cdot p_{4}^{2}}+\frac{1}{p_{4}}>2,$$
which is impossible.

{\bf Case 6.} $D=27$. Then $\alpha_{1}\geqslant 4$. If $p_{4}\geqslant 53$, then
$$2=\frac{\sigma(n)}{n}+\frac{d}{n}<\frac{3}{2}\cdot\frac{7}{6}\cdot\frac{11}{10}\cdot \frac{53}{52} +\frac{1}{27}<2,$$
which is clearly false. If $p_{4}\leqslant 37$, then
$$2=\frac{\sigma(n)}{n}+\frac{d}{n}\geqslant\frac{3^{5}-1}{2\cdot 3^{4}}\cdot\frac{7^{3}-1}{6\cdot 7^{2}}\cdot\frac{11^{3}-1}{10\cdot 11^{2}}\cdot\frac{37^{3}-1}{36\cdot 37^{2}}+\frac{1}{27}>2,$$
which is false. Thus $p_{4}\in\{41, 43, 47\}$. Since $\rm{ord}_{7}(3)=\rm{ord}_{7}(47)=6, \rm{ord}_{7}(41)=2, \rm{ord}_{7}(11)=3$ and $ \rm{ord}_{7^{2}}(43)=7$, we have $3\mid (\alpha_{3}+1)$ and $(11^{3}-1)\mid (11^{\alpha_{3}+1}-1)$ or $7\mid (\alpha_{4}+1)$ and $(43^{7}-1)\mid (43^{\alpha_{4}+1}-1)$. However, $19\mid (11^{3}-1)$ and $5839\mid (43^{7}-1)$, a contradiction.

{\bf Case 7.} $D\geqslant 33$. If $p_{4}\geqslant 47$, then
$$2=\frac{\sigma(n)}{n}+\frac{d}{n}< \frac{3}{2}\cdot\frac{7}{6}\cdot\frac{11}{10}\cdot\frac{47}{46}+\frac{1}{33}<2,$$
which is clearly false. Thus $p_{4}\leqslant 43$.

{\bf Subcase 7.1 } $p_{4}=13$. If $\alpha_{1}\geqslant 4$, then
$$2=\frac{\sigma(n)}{n}+\frac{d}{n}>\frac{3^{5}-1}{2\cdot 3^{4}}\cdot \frac{7^{3}-1}{6\cdot 7^{2}}\cdot\frac{11^{3}-1}{10\cdot 11^{2}}\cdot\frac{13^{3}-1}{12\cdot 13^{2}}>2,$$
which is false. Thus $\alpha_{1}=2$. If $\alpha_{2}\geqslant 4$, then
$$2=\frac{\sigma(n)}{n}+\frac{d}{n}>\frac{3^{3}-1}{2\cdot 3^{2}}\cdot \frac{7^{5}-1}{6\cdot 7^{4}}\cdot\frac{11^{3}-1}{10\cdot 11^{2}}\cdot\frac{13^{3}-1}{12\cdot 13^{2}}>2,$$
which is absurd. Thus $\alpha_{2}=2$. If $\alpha_{3}\geqslant 4$, then
$$2=\frac{\sigma(n)}{n}+\frac{d}{n}>\frac{3^{3}-1}{2\cdot 3^{2}}\cdot \frac{7^{3}-1}{6\cdot 7^{2}}\cdot\frac{11^{5}-1}{10\cdot 11^{4}}\cdot\frac{13^{3}-1}{12\cdot 13^{2}}>2,$$
which is impossible. Thus $\alpha_{3}=2$. If $\alpha_{4}\geqslant 4$, then
$$2=\frac{\sigma(n)}{n}+\frac{d}{n}>\frac{3^{3}-1}{2\cdot 3^{2}}\cdot \frac{7^{3}-1}{6\cdot 7^{2}}\cdot\frac{11^{3}-1}{10\cdot 11^{2}}\cdot\frac{13^{5}-1}{12\cdot 13^{4}}>2,$$
which is false. Thus $\alpha_{4}=2$. By (\ref{3.1}), we have $n=3^{2}\cdot 7^{2} \cdot 11^{2}\cdot 13^{2}$ and $d=3^{2}\cdot 7\cdot 13$.

{\bf Subcase 7.2 } $p_{4}\in\{17, 19\}$. If $\alpha_{1}\geqslant 4$, then
$$2=\frac{\sigma(n)}{n}+\frac{d}{n}>\frac{3^{5}-1}{2\cdot 3^{4}}\cdot \frac{7^{3}-1}{6\cdot 7^{2}}\cdot\frac{11^{3}-1}{10\cdot 11^{2}}\cdot\frac{p_{4}^{3}-1}{(p_{4}-1)\cdot p_{4}^{2}}>2,$$
which is clearly false. Thus $\alpha_{1}=2$ and
$$2=\frac{\sigma(n)}{n}+\frac{d}{n}<\frac{3^{3}-1}{2\cdot 3^{2}}\cdot\frac{7}{6}\cdot\frac{11}{10}\cdot\frac{17}{16}+\frac{1}{33}<2,$$
which is impossible.

{\bf Subcase 7.3 } $p_{4}=23$. If $\alpha_{1}\geqslant 6$, then
$$2=\frac{\sigma(n)}{n}+\frac{d}{n}>\frac{3^{7}-1}{2\cdot 3^{6}}\cdot \frac{7^{3}-1}{6\cdot 7^{2}}\cdot\frac{11^{3}-1}{10\cdot 11^{2}}\cdot\frac{23^{3}-1}{22\cdot 23^{2}}>2,$$
which is clearly false. If $\alpha_{1}=2$, then
$$2=\frac{\sigma(n)}{n}+\frac{d}{n}<\frac{3^{3}-1}{2\cdot 3^{2}}\cdot\frac{7}{6}\cdot\frac{11}{10}\cdot\frac{23}{22}+\frac{1}{33}<2,$$
which is impossible. Thus $\alpha_{1}=4$. If $\alpha_{2}\geqslant 4$, then
$$2=\frac{\sigma(n)}{n}+\frac{d}{n}>\frac{3^{5}-1}{2\cdot 3^{4}}\cdot\frac{7^{5}-1}{6\cdot 7^{4}}\cdot\frac{11^{3}-1}{10\cdot 11^{2}}\cdot\frac{23^{3}-1}{22\cdot 23^{2}}>2,$$
which is absurd. Thus $\alpha_{2}=2$. If $D\leqslant 303$, then
$$2=\frac{\sigma(n)}{n}+\frac{d}{n}\geqslant \frac{3^{5}-1}{2\cdot 3^{4}}\cdot\frac{7^{3}-1}{6\cdot 7^{2}}\cdot\frac{11^{3}-1}{10\cdot 11^{2}}\cdot\frac{23^{3}-1}{22\cdot 23^{2}}+\frac{1}{303}>2,$$
which is a contradiction. If $D\geqslant 617$, then
$$2=\frac{\sigma(n)}{n}+\frac{d}{n}<\frac{3^{5}-1}{2\cdot 3^{4}}\cdot\frac{7^{3}-1}{6\cdot 7^{2}}\cdot\frac{11}{10}\cdot\frac{23}{22}+\frac{1}{617}<2,$$
which is also a contradiction. Thus $D\in\{363, 441, 483, 529, 539, 567\}$. However, $19\mid (7^{3}-1)$ and $19\nmid (2D-1)d$, which contradicts with (\ref{3.1}).

{\bf Subcase 7.4 } $p_{4}=29$. If $D\geqslant 161$, then
$$2=\frac{\sigma(n)}{n}+\frac{d}{n}<\frac{3}{2}\cdot \frac{7}{6}\cdot\frac{11}{10}\cdot\frac{29}{28}+\frac{1}{161}<2,$$
which is false. Thus $D\in\{33, 49, 63, 77, 81, 87, 99, 121, 147\}$. Since
$\rm{ord}_{29}(3)=\rm{ord}_{29}(11)=28$ and $\rm{ord}_{29}(7)=7$, we have $7\mid (\alpha_{2}+1)$ and $(7^{7}-1)\mid (7^{\alpha_{2}+1}-1)$. However, $4733\mid (7^{7}-1)$, a contradiction.

{\bf Subcase 7.5 } $p_{4}=31$. If $D\geqslant 93$, then
$$2=\frac{\sigma(n)}{n}+\frac{d}{n}<\frac{3}{2}\cdot \frac{7}{6}\cdot\frac{11}{10}\cdot\frac{31}{30}+\frac{1}{93}<2,$$
which is false. Thus $D\in\{33, 49, 63, 77, 81\}$. Since
$\rm{ord}_{31}(3)=\rm{ord}_{31}(11)=30$ and $\rm{ord}_{31}(7)=15$, we have $15\mid (\alpha_{2}+1)$ and $(7^{15}-1)\mid (7^{\alpha_{2}+1}-1)$. However, $2801\mid (7^{15}-1)$, a contradiction.

{\bf Subcase 7.6 } $p_{4}=37$. If $D\geqslant 47$, then
$$2=\frac{\sigma(n)}{n}+\frac{d}{n}<\frac{3}{2}\cdot \frac{7}{6}\cdot\frac{11}{10}\cdot\frac{37}{36}+\frac{1}{47}<2,$$
which is false. Thus $D\in\{33, 37\}$.  Noting that $\rm{ord}_{7}(3)=6, \rm{ord}_{7}(11)=\rm{ord}_{7}(37)=3$, we have $3\mid (\alpha_{3}+1)$ and $(11^{3}-1)\mid (11^{\alpha_{3}+1}-1)$ or $3\mid (\alpha_{4}+1)$ and $(37^{3}-1)\mid (37^{\alpha_{4}+1}-1)$. However, $19\mid (11^{3}-1)$ and $67\mid (37^{3}-1)$, a contradiction.

{\bf Subcase 7.7 } $p_{4}=41$. If $D\geqslant 39$, then
$$2=\frac{\sigma(n)}{n}+\frac{d}{n}<\frac{3}{2}\cdot \frac{7}{6}\cdot\frac{11}{10}\cdot\frac{41}{40}+\frac{1}{39}<2,$$
which is false. Thus $D=33$. Noting that $\rm{ord}_{7}(3)=6, \rm{ord}_{7}(41)=2$ and $\rm{ord}_{7}(11)=3$, we have $3\mid (\alpha_{3}+1)$ and $(11^{3}-1)\mid (11^{\alpha_{3}+1}-1)$. However, $19\mid (11^{3}-1)$, a contradiction.

This completes the proof of Lemma \ref{lem3.1}.
\end{proof}

\begin{lemma}\label{lem3.2}
There is no odd deficient-perfect number of the form $n=3^{\alpha_{1}}7^{\alpha_{2}}13^{\alpha_{3}}p_{4}^{\alpha_{4}}$ with $D\geqslant 7$.
\end{lemma}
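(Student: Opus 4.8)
The plan is to run the same two-sided squeeze used throughout Section~2 and in Lemma~\ref{lem3.1}, now specialized to $p_1=3,\ p_2=7,\ p_3=13$. Writing out (\ref{Eq1}) in this case gives
\[
\frac{3^{\alpha_1+1}-1}{2}\cdot\frac{7^{\alpha_2+1}-1}{6}\cdot\frac{13^{\alpha_3+1}-1}{12}\cdot\frac{p_4^{\alpha_4+1}-1}{p_4-1}=2\cdot 3^{\alpha_1}7^{\alpha_2}13^{\alpha_3}p_4^{\alpha_4}-3^{\beta_1}7^{\beta_2}13^{\beta_3}p_4^{\beta_4},
\]
which I will use together with the identity $2=\frac{\sigma(n)}{n}+\frac{1}{D}$. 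The controlling inequality is
\[
\frac{\sigma(n)}{n}<\frac{3}{2}\cdot\frac{7}{6}\cdot\frac{13}{12}\cdot\frac{p_4}{p_4-1}=\frac{273}{144}\cdot\frac{p_4}{p_4-1},
\]
obtained by letting each exponent grow, paired with the lower bound got by substituting the actual even exponents (each at least $2$ by the remarks following (\ref{Eq1})). Since $\frac{273}{144}\approx 1.896$, the requirement $\frac{273}{144}\cdot\frac{p_4}{p_4-1}>2-\frac{1}{D}$ already forces a finite window: a small $D$ admits only a bounded range of $p_4$, and conversely every fixed $p_4$ caps $D$ by an explicit bound. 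This is what makes the case analysis terminate.

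Concretely, I would first split on $D$, exactly as in Lemma~\ref{lem3.1}. For each small value of $D$ the lower bound with $\alpha_1\geqslant 4$ overshoots $2$, which pins $\alpha_1=2$; conversely, since $\frac{13}{9}\cdot\frac{7}{6}\cdot\frac{13}{12}\cdot\frac{p_4}{p_4-1}<2$ for every $p_4>13$, a large $D$ forces $\alpha_1\geqslant 4$. In either regime the squeeze confines $p_4$ to an explicit finite set and caps $\alpha_2,\alpha_3,\alpha_4$. The value $D=p_4$ (that is $\beta_4=\alpha_4-1$ with all other exponents saturated) is treated as its own family of subcases indexed by the admissible primes $p_4\in\{17,19,23,\dots\}$.

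When the inequalities alone do not close a case I would invoke the order arguments already used above. The useful auxiliary factorizations here are $\sigma(7^2)=57=3\cdot 19$ and $\sigma(13^2)=183=3\cdot 61$: if $\alpha_2=2$ and $p_4\neq 19$ then $19\mid\sigma(n)=(2D-1)d$ with $19\nmid d$, forcing $19\mid(2D-1)$ and hence $D\equiv 10\pmod{19}$, which collides with the ceiling on $D$; likewise $\alpha_3=2$ with $p_4\neq 61$ gives $61\mid(2D-1)$. More generally, choosing a prime $q$ with $\mathrm{ord}_q(p_i)$ forcing $q\mid\sigma(p_i^{\alpha_i})$ either inflates $D$ beyond its permitted size or pins some $\beta_j=0$ (via even orders $\mathrm{ord}_{p_j}(p_i)$), and both outcomes contradict the established bounds. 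For the few most resistant configurations I expect to need the Legendre-symbol argument of Lemma~\ref{lem2.3}, comparing $\left(\frac{2}{q}\right)$ against $\left(\frac{\sigma(n)}{q}\right)$ for a suitable prime $q\mid p_4^{\alpha_4+1}-1$.

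The main obstacle is precisely the small-$p_4$ regime $p_4\in\{17,19,23\}$: there $\frac{p_4}{p_4-1}$ is large enough that $D$ is only loosely bounded, so the crude inequalities do not finish, and one must combine the order and quadratic-residue arguments while carefully tracking which $\beta_j$ are forced to vanish. The remaining primes $p_4$ should succumb to the squeeze together with at most one order argument apiece.
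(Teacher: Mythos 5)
Your plan follows the same road map as the paper's proof (case split on $D$, two-sided bounds, then orders and Legendre symbols), but it is a road map rather than a proof, and its one load-bearing structural claim already fails at $D=9$. You assert that for each small $D$ the lower bound with $\alpha_1\geqslant 4$ exceeds $2$, pinning $\alpha_1=2$. For $D=7$ this is true, but for $D=9$ one has
$$\frac{3^{5}-1}{2\cdot 3^{4}}\cdot\frac{7^{3}-1}{6\cdot 7^{2}}\cdot\frac{13^{3}-1}{12\cdot 13^{2}}+\frac{1}{9}\approx 1.993<2,$$
so $\alpha_1\in\{4,6\}$ survives your squeeze, and your crude upper bound $\frac{273}{144}\cdot\frac{p_4}{p_4-1}$ places no ceiling on $p_4$ when $D\leqslant 9$. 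The step you are missing, and which the paper relies on repeatedly, is to exploit the specific factor $2D-1$ of $\sigma(n)$: for $D=9$, $17=2D-1$ must divide $\sigma(n)$, and since $\mathrm{ord}_{17}(3)=\mathrm{ord}_{17}(7)=16$ and $\mathrm{ord}_{17}(13)=4$ are even while every $\alpha_i+1$ is odd, the factor $17$ can only come from $\sigma(p_4^{\alpha_4})$, forcing $\mathrm{ord}_{17}(p_4)=1$, i.e.\ $p_4\equiv 1\pmod{17}$, hence $p_4\geqslant 103$; only then does the squeeze give $\alpha_1=2$ and a contradiction. The same device ($2D-1=25$ forces $p_4\equiv 1\pmod 5$ and $5\mid \alpha_4+1$, then $11\mid 31^{5}-1$, $579281\mid 41^{5}-1$, $131\mid 61^{5}-1$) is what disposes of $D=13$; your generic appeal to ``order arguments'' never identifies $2D-1$ as the source of these congruences.

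Beyond that, the cases you explicitly defer are precisely where the paper's work lies, so the lemma is not actually proved. For $D=7$ the squeeze leaves $p_4=53$, killed only by $\mathrm{ord}_7(53)=3$ and the prime $409\mid 53^{3}-1$, which cannot divide $(2D-1)d=13d$; for $D\geqslant 21$, the subcase $p_4=19$ requires forcing $\beta_2=0$, a split over $\alpha_1\in\{2,4,6\}$ and $\alpha_1\geqslant 8$ (using $121\mid 2D-1$ when $\alpha_1=4$), and Legendre-symbol evaluations at $1093$ and $61$; the subcase $p_4=17$ needs one more at $11$ after forcing $\beta_2=\beta_3=\beta_4=0$. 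Declaring that you ``expect to need'' such arguments while ``carefully tracking which $\beta_j$ vanish'' names the obstacle without overcoming it. (Your two auxiliary factorizations $19\mid\sigma(7^{2})$ and $61\mid\sigma(13^{2})$ are correct, but in the paper they settle only the single subcase $p_4=29$; they cannot carry the rest.)
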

\begin{proof}
Assume that $n=3^{\alpha_{1}}7^{\alpha_{2}}13^{\alpha_{3}}p_{4}^{\alpha_{4}}$ is an odd deficient-perfect number with deficient divisor $d=3^{\beta_{1}}7^{\beta_{2}}13^{\beta_{3}}p_{4}^{\beta_{4}}$. By (\ref{Eq1}), we have
\begin{equation}\label{3.2}
\frac{3^{\alpha_{1}+1}-1}{2}\cdot \frac{7^{\alpha_{2}+1}-1}{6}\cdot \frac{13^{\alpha_{3}+1}-1}{12}\cdot \frac{p_{4}^{\alpha_{4}+1}-1}{p_{4}-1}=2\cdot 3^{\alpha_{1}}7^{\alpha_{2}}13^{\alpha_{3}}p_{4}^{\alpha_{4}}-3^{\beta_{1}}7^{\beta_{2}}13^{\beta_{3}}p_{4}^{\beta_{4}}.
\end{equation}
We will divide into the following six cases according to $D$.

{\bf Case 1.} $D=7$. If $\alpha_{1}\geqslant 4$, then
$$2=\frac{\sigma(n)}{n}+\frac{d}{n}>\frac{3^{5}-1}{2\cdot 3^{4}}\cdot\frac{7^{3}-1}{6\cdot 7^{2}}\cdot\frac{13^{3}-1}{12\cdot 13^{2}}\cdot +\frac{1}{7}>2,$$
which is clearly false. Thus $\alpha_{1}=2$. If $p_{4}\geqslant 59$, then
$$2=\frac{\sigma(n)}{n}+\frac{d}{n}<\frac{3^{3}-1}{2\cdot 3^{2}}\cdot\frac{7}{6}\cdot\frac{13}{12}\cdot \frac{59}{58}+\frac{1}{7}<2,$$
which is a contradiction. If $p_{4}\leqslant 47$, then
$$2=\frac{\sigma(n)}{n}+\frac{d}{n}\geqslant \frac{3^{3}-1}{2\cdot 3^{2}}\cdot\frac{7^{3}-1}{6\cdot 7^{2}}\cdot\frac{13^{3}-1}{12\cdot 13^{2}}\cdot \frac{47^{3}-1}{46\cdot 47^{2}}+\frac{1}{7}>2,$$
which is also a contradiction. Thus $p_{4}=53$. Since $\rm{ord}_{7}(13)=2$ and $\rm{ord}_{7}(53)=3$, we have $3\mid (\alpha_{4}+1)$ and $(53^{3}-1)\mid (53^{\alpha_{4}+1}-1)$. However, $409\mid (53^{3}-1)$, a contradiction.

{\bf Case 2.} $D=9$. If $p_{4}\leqslant 23$, then
$$2=\frac{\sigma(n)}{n}+\frac{d}{n}\geqslant \frac{3^{3}-1}{2\cdot 3^{2}}\cdot\frac{7^{3}-1}{6\cdot 7^{2}}\cdot\frac{13^{3}-1}{12\cdot 13^{2}}\cdot \frac{23^{3}-1}{22\cdot 23^{2}}+\frac{1}{9}>2,$$
which is clearly false. Thus $p_{4}\geqslant 29$. If $\alpha_{1}\geqslant 8$, then
$$2=\frac{\sigma(n)}{n}+\frac{d}{n}>\frac{3^{9}-1}{2\cdot 3^{8}}\cdot\frac{7^{3}-1}{6\cdot 7^{2}}\cdot\frac{13^{3}-1}{12\cdot 13^{2}}+\frac{1}{9}>2,$$
which is impossible. Since $\rm{ord}_{17}(7)=16, \rm{ord}_{17}(13)=4$, we have $p_{4}\equiv 1\pmod {17}$. Thus $p_{4}\geqslant 103$ and $\alpha_{1}=2$.
However,
$$2=\frac{\sigma(n)}{n}+\frac{d}{n}<\frac{3^{3}-1}{2\cdot 3^{2}}\cdot\frac{7}{6}\cdot\frac{13}{12}\cdot\frac{103}{102}+\frac{1}{9}<2,$$
a contradiction.

{\bf Case 3.} $D=13$. If $p_{4}\geqslant 71$, then
$$2=\frac{\sigma(n)}{n}+\frac{d}{n}<\frac{3}{2}\cdot\frac{7}{6}\cdot\frac{13}{12}\cdot\frac{71}{70}+\frac{1}{13}<2,$$
which is false. Thus $p_{4}\leqslant 67$. Noting that $\rm{ord}_{5}(3)=\rm{ord}_{5}(7)=\rm{ord}_{5}(13)=4$, we have $p_{4}\in\{31, 41, 61\}, 5\mid(\alpha_{4}+1)$ and $(p_{4}^{5}-1)\mid (p_{4}^{\alpha_{4}+1}-1)$. However, $11\mid (31^{5}-1), 579281\mid (41^{5}-1), 131\mid (61^{5}-1)$, a contradiction.

{\bf Case 4.} $D=17$. Then $p_{4}=17$. Since $\rm{ord}_{17}(3)=\rm{ord}_{17}(7)=16$ and $\rm{ord}_{17}(13)=4$, we deduce that equality (\ref{3.2}) cannot hold.

{\bf Case 5.} $D=19$. Then $p_{4}=19$. Since $\rm{ord}_{37}(3)=18, \rm{ord}_{37}(13)=\rm{ord}_{37}(19)=36$ and $\rm{ord}_{37}(7)=9$, we have $9\mid (\alpha_{2}+1)$ and $(7^{9}-1)\mid (7^{\alpha_{2}+1}-1)$. However, $1063\mid (7^{9}-1)$, a contradiction.

{\bf Case 6.} $D\geqslant 21$. If $p_{4}\geqslant 37$, then
$$2=\frac{\sigma(n)}{n}+\frac{d}{n}< \frac{3}{2}\cdot\frac{7}{6}\cdot\frac{13}{12}\cdot\frac{37}{36}+\frac{1}{21}<2,$$
which is clearly false. Thus $p_{4}\leqslant 31$.

{\bf Subcase 6.1 } $p_{4}=17$. If $\alpha_{1}\geqslant 6$, then
$$2=\frac{\sigma(n)}{n}+\frac{d}{n}\geqslant \frac{3^{7}-1}{2\cdot 3^{6}}\cdot \frac{7^{3}-1}{6\cdot 7^{2}}\cdot\frac{13^{3}-1}{12\cdot 13^{2}}\cdot\frac{17^{3}-1}{16\cdot 17^{2}}>2,$$
which is clearly false. If $\alpha_{1}=2$, then
$$2=\frac{\sigma(n)}{n}+\frac{d}{n}< \frac{3^{3}-1}{2\cdot 3^{2}}\cdot\frac{7}{6}\cdot\frac{13}{12}\cdot\frac{17}{16}+\frac{1}{21}<2,$$
which is impossible. Thus $\alpha_{1}=4$. If $\alpha_{2}\geqslant 4$, then
$$2=\frac{\sigma(n)}{n}+\frac{d}{n}>\frac{3^{5}-1}{2\cdot 3^{4}}\cdot\frac{7^{5}-1}{6\cdot 7^{4}}\cdot\frac{13^{3}-1}{12\cdot 13^{2}}\cdot\frac{17^{3}-1}{16\cdot 17^{2}}>2,$$
which is absurd. Thus $\alpha_{2}=2$. Noting that $\rm{ord}_{7}(13)=2, \rm{ord}_{7}(17)=6, \rm{ord}_{13}(17)=6$ and $\rm{ord}_{17}(13)=16$, we have $\beta_{2}=\beta_{3}=\beta_{4}=0$ and
$$-1=\left(\frac{2}{11}\right)=\left(\frac{2\cdot 3^{4}\cdot 7^{2}\cdot 13^{\alpha_{3}}17^{\alpha_{4}}}{11}\right)=\left(\frac{3^{\beta_{1}}}{11}\right)=1,$$
which is a contradiction.

{\bf Subcase 6.2 } $p_{4}=19$. Noting that $\rm{ord}_{7}(3)=\rm{ord}_{7}(19)=6$ and $\rm{ord}_{7}(13)=2$, we have $\beta_{2}=0$ and $D\geqslant 49$.

If $\alpha_{1}=2$, then
$$2=\frac{\sigma(n)}{n}+\frac{d}{n}<\frac{3^{3}-1}{2\cdot 3^{2}}\cdot\frac{7}{6}\cdot\frac{13}{12}\cdot\frac{19}{18}+\frac{1}{49}<2,$$
which is clearly false. If $\alpha_{1}=4$, then $121\mid (2D-1)$. Thus $D>143$ and
$$2=\frac{\sigma(n)}{n}+\frac{d}{n}<\frac{3^{5}-1}{2\cdot 3^{4}}\cdot \frac{7}{6}\cdot\frac{13}{12}\cdot\frac{19}{18}+\frac{1}{143}<2,$$
which is absurd. If $\alpha_{1}=6$, then
$$-1=\left(\frac{2}{1093}\right)=\left(\frac{2\cdot 3^{6}\cdot 7^{\alpha_{2}}13^{\alpha_{3}}19^{\alpha_{4}}}{1093}\right)
=\left(\frac{3^{\beta_{1}}13^{\beta_{3}}19^{\beta_{4}}}{1093}\right)=1,$$
which is a contradiction. Thus $\alpha_{1}\geqslant 8$.

If $\alpha_{3}=2$, then
$$-1=\left(\frac{2}{61}\right)=\left(\frac{2\cdot 3^{\alpha_{1}}7^{\alpha_{2}}\cdot 13^{2}\cdot 19^{\alpha_{4}}}{61}\right)
=\left(\frac{3^{\beta_{1}}13^{\beta_{3}}19^{\beta_{4}}}{61}\right)=1,$$
which is a contradiction. Thus $\alpha_{3}\geqslant 4$.

If $\alpha_{2}=2$, then $D\leqslant 213$. Otherwise, if $D\geqslant 215$, then
$$2=\frac{\sigma(n)}{n}+\frac{d}{n}<\frac{3}{2}\cdot\frac{7^{3}-1}{6\cdot 7^{2}}\cdot\frac{13}{12}\cdot\frac{19}{18}+\frac{1}{215}<2,$$
which is clearly false. If $D\leqslant 197$, then
$$2=\frac{\sigma(n)}{n}+\frac{d}{n}\geqslant \frac{3^{9}-1}{2\cdot 3^{8}}\cdot\frac{7^{3}-1}{6\cdot 7^{2}}\cdot\frac{13^{5}-1}{12\cdot 13^{4}}\cdot\frac{19^{3}-1}{18\cdot 19^{2}}+\frac{1}{197}>2,$$
which is impossible. Thus $199\leqslant D\leqslant 213$. However, the case cannot hold since $49\mid D$. Thus $\alpha_{2}\geqslant 4$ and
$$2=\frac{\sigma(n)}{n}+\frac{d}{n}>\frac{3^{9}-1}{2\cdot 3^{8}}\cdot \frac{7^{5}-1}{6\cdot 7^{4}}\cdot\frac{13^{3}-1}{12\cdot 13^{2}}\cdot\frac{19^{3}-1}{18\cdot 19^{2}}>2,$$
which is absurd.

{\bf Subcase 6.3 } $p_{4}=23$. If $D\geqslant 57$, then
$$2=\frac{\sigma(n)}{n}+\frac{d}{n}<\frac{3}{2}\cdot\frac{7}{6}\cdot\frac{13}{12}\cdot\frac{23}{22}+\frac{1}{57}<2,$$
which is clearly false. Thus $D\in\{21, 23, 27, 39, 49\}$. Since $\rm{ord}_{7}(3)=6, \rm{ord}_{7}(13)=2$ and $\rm{ord}_{7}(23)=3$, we have if $7\mid (2D-1)d$, then $3\mid (\alpha_{4}+1)$ and $(23^{3}-1)\mid (23^{\alpha_{4}+1}-1)$. However, $79\mid (23^{3}-1)$, a contradiction. Thus $D=49, \alpha_{2}=2$ and
$$2=\frac{\sigma(n)}{n}+\frac{d}{n}<\frac{3}{2}\cdot \frac{7^{3}-1}{6\cdot 7^{2}}\cdot\frac{13}{12}\cdot\frac{23}{22}+\frac{1}{49}<2,$$
which is also a contradiction.

{\bf Subcase 6.4 } $p_{4}=29$. If $D\geqslant 29$, then
$$2=\frac{\sigma(n)}{n}+\frac{d}{n}<\frac{3}{2}\cdot \frac{7}{6}\cdot\frac{13}{12}\cdot\frac{29}{28}+\frac{1}{29}<2,$$
which is clearly false. Thus $D\in\{21, 27\}$. Since $\rm{ord}_{41}(3)=8$ and $\rm{ord}_{41}(7)=\rm{ord}_{41}(13)=\rm{ord}_{41}(29)=40$, we have $D=27$ and $\alpha_{1}\geqslant 4$. Since $\rm{ord}_{3}(29)=2$, we have $3\mid (\alpha_{2}+1)$ and $(7^{3}-1)\mid (7^{\alpha_{2}+1}-1)$ or $3\mid (\alpha_{3}+1)$ and $(13^{3}-1)\mid (13^{\alpha_{3}+1}-1)$. However, $19\mid (7^{3}-1)$ and $61\mid (13^{3}-1)$, a contradiction.

{\bf Subcase 6.5 } $p_{4}=31$. If $D\geqslant 25$, then
$$2=\frac{\sigma(n)}{n}+\frac{d}{n}<\frac{3}{2}\cdot \frac{7}{6}\cdot\frac{13}{12}\cdot\frac{31}{30}+\frac{1}{25}<2,$$
which is clearly false. Thus $D=21$. Noting that $\rm{ord}_{7}(3)=\rm{ord}_{7}(31)=6$ and $\rm{ord}_{7}(13)=2$, we deduce that the equality (\ref{3.2}) cannot hold.

This completes the proof of Lemma \ref{lem3.2}.
\end{proof}

\begin{lemma}\label{lem3.3}
There is no odd deficient-perfect number of the form $n=3^{\alpha_{1}}7^{\alpha_{2}}17^{\alpha_{3}}p_{4}^{\alpha_{4}}$ with $D\geqslant 7$.
\end{lemma}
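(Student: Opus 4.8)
The plan is to imitate the case analysis of Lemmas~\ref{lem3.1} and~\ref{lem3.2}. First I would specialize the identity (\ref{Eq1}) to $p_{1}=3$, $p_{2}=7$, $p_{3}=17$, namely
$$\frac{3^{\alpha_{1}+1}-1}{2}\cdot\frac{7^{\alpha_{2}+1}-1}{6}\cdot\frac{17^{\alpha_{3}+1}-1}{16}\cdot\frac{p_{4}^{\alpha_{4}+1}-1}{p_{4}-1}=2\cdot 3^{\alpha_{1}}7^{\alpha_{2}}17^{\alpha_{3}}p_{4}^{\alpha_{4}}-3^{\beta_{1}}7^{\beta_{2}}17^{\beta_{3}}p_{4}^{\beta_{4}}.$$
Each Euler factor $\frac{p^{\alpha+1}-1}{(p-1)p^{\alpha}}$ is strictly increasing in $\alpha$ and bounded above by $\frac{p}{p-1}$, so from $2=\frac{\sigma(n)}{n}+\frac1D$ together with $p_{4}\geqslant 19$ one gets $\frac1D>2-\frac32\cdot\frac76\cdot\frac{17}{16}\cdot\frac{p_{4}}{p_{4}-1}$. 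Since $\frac32\cdot\frac76\cdot\frac{17}{16}=\frac{357}{192}$, a short computation shows that, among $D\geqslant 7$ dividing $n$, only $D\in\{7,9,17,19,21\}$ are compatible with this inequality, and that---except when $D=7$---it confines $p_{4}$ to a bounded range (to the single value $p_{4}=19$ when $D\in\{19,21\}$). I would then treat these five cases one at a time.

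In each case the first tool is the usual pair of size estimates: replacing one exponent by its smallest admissible value $2$ bounds $\frac{\sigma(n)}{n}$ below, while replacing a factor by its limit $\frac{p}{p-1}$ bounds it above, and comparing the result with $2$ repeatedly pins down $\alpha_{1},\alpha_{2},\alpha_{3}$ and narrows $p_{4}$, exactly as in the subcases of Lemma~\ref{lem3.1}. Where the size estimates alone do not close a case---which already happens for $D=7$ with large $p_{4}$, since the crude lower bound there falls just short of $2$---I would fall back on the arithmetic of the displayed equation. The key observation is that every $\alpha_{i}+1$ is odd, so a prime $q$ can divide $\frac{p_{i}^{\alpha_{i}+1}-1}{p_{i}-1}$ only when ${\rm ord}_{q}(p_{i})$ is odd; conversely, if ${\rm ord}_{q}(p_{i})$ is odd and divides $\alpha_{i}+1$ then $q\mid\sigma(n)$. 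Applying this with well-chosen auxiliary primes (for instance ${\rm ord}_{17}(3)={\rm ord}_{17}(7)=16$ is even, so $17$ divides neither $\sigma(3^{\alpha_{1}})$ nor $\sigma(7^{\alpha_{2}})$, while $13=\sigma(3^{2})$ supplies the factor $2D-1=13$ precisely when $D=7$) forces a prime into $\sigma(n)=(2D-1)d$ that cannot divide the right-hand side, a contradiction. When even this fails I would use a Legendre-symbol obstruction via quadratic reciprocity, in the style of Subcases 9.5 through 9.12 of Lemma~\ref{lem2.3}.

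The main obstacle is not any single step but the bookkeeping of the residual possibilities in the two broad cases $D=7$ and $D=9$, where $p_{4}$ ranges over a genuine interval rather than a single value. For these one must, for each surviving $p_{4}$, locate an auxiliary prime $q$ whose order relations simultaneously impose a congruence on the exponents and produce a prime divisor of $\sigma(n)$ outside $\{3,7,17,p_{4}\}$; finding such a $q$ (or the right modulus for a quadratic-residue argument) in every subcase is the delicate part, and it is where the bulk of the work lies.
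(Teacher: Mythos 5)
Your opening reduction is correct --- indeed slightly sharper than the paper's, which carries $D=23$ into its second case before discarding it --- and you have isolated the right arithmetic tool: since every $\alpha_{i}+1$ is odd, since ${\rm ord}_{17}(3)={\rm ord}_{17}(7)=16$ are even, and since $\sigma(17^{\alpha_{3}})\equiv 1\pmod{17}$, the prime $17$ can divide $\sigma(n)$ only through $\sigma(p_{4}^{\alpha_{4}})$, and only if ${\rm ord}_{17}(p_{4})$ is odd, which in a cyclic group of order $16$ means $p_{4}\equiv 1\pmod{17}$. The genuine gap is that you never combine this with the one further fact that makes the lemma collapse: in \emph{every} surviving case $17\mid d$ (indeed $\alpha_{3}\geqslant 2$ while $17^{2}\nmid D$, so $\beta_{3}\geqslant 1$; for $D=9$ one can alternatively note $2D-1=17$), hence $17\mid 2n-d=\sigma(n)$. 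Lacking this, you defer $D=7$ and $D=9$ to an unexecuted, case-by-case hunt for auxiliary primes and Legendre-symbol obstructions and say yourself that this ``is where the bulk of the work lies''; at that point the proposal stops being a proof, and it also misplaces the difficulty, since no such hunt is needed.

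With the missing observation everything closes uniformly, which is exactly what the paper does. For $D\in\{9,17,19,21,23\}$ the size bound gives $p_{4}\leqslant 61$, and no prime in $[19,61]$ is $\equiv 1\pmod{17}$ (the least prime $\equiv 1\pmod{17}$ exceeding $17$ is $103$), so $17\nmid\sigma(n)$: an immediate contradiction, with no per-prime analysis and no quadratic reciprocity. For $D=7$ the same fact forces $p_{4}\equiv 1\pmod{17}$, hence $p_{4}\geqslant 103$; then $\sigma(n)=13d$ has no prime factor outside $\{3,7,13,17,p_{4}\}$, so $\alpha_{1}\neq 4$ (else $11\mid\sigma(3^{4})=121$ would divide $\sigma(n)$) and $\alpha_{2}\neq 2$ (else $19\mid\sigma(7^{2})=57$ would), while $\alpha_{1}=2$ is killed by the upper size bound $\frac{13}{9}\cdot\frac{7}{6}\cdot\frac{17}{16}\cdot\frac{103}{102}+\frac{1}{7}<2$; therefore $\alpha_{1}\geqslant 6$ and $\alpha_{2}\geqslant 4$, and the lower bound $\frac{3^{7}-1}{2\cdot 3^{6}}\cdot\frac{7^{5}-1}{6\cdot 7^{4}}\cdot\frac{17^{3}-1}{16\cdot 17^{2}}+\frac{1}{7}>2$ finishes. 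So your framework and reduction match the paper's, but the decisive step --- exploiting $17\mid d$ to run the order argument in all cases at once --- is absent from your plan, and the fallback machinery you budget the bulk of the work for is never needed.
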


\begin{proof}
Assume that $n=3^{\alpha_{1}}7^{\alpha_{2}}17^{\alpha_{3}}p_{4}^{\alpha_{4}}$ is an odd deficient-perfect number with deficient divisor $d=3^{\beta_{1}}7^{\beta_{2}}17^{\beta_{3}}p_{4}^{\beta_{4}}$. By (\ref{Eq1}), we have
\begin{equation}\label{3.3}
\frac{3^{\alpha_{1}+1}-1}{2}\cdot \frac{7^{\alpha_{2}+1}-1}{6}\cdot \frac{17^{\alpha_{3}+1}-1}{16}\cdot \frac{p_{4}^{\alpha_{4}+1}-1}{p_{4}-1}=2\cdot 3^{\alpha_{1}}7^{\alpha_{2}}17^{\alpha_{3}}p_{4}^{\alpha_{4}}-3^{\beta_{1}}7^{\beta_{2}}17^{\beta_{3}}p_{4}^{\beta_{4}}.
\end{equation}
If $D\geqslant 27$, then
$$2=\frac{\sigma(n)}{n}+\frac{d}{n}<\frac{3}{2}\cdot\frac{7}{6}\cdot\frac{17}{16}\cdot\frac{19}{18}+\frac{1}{27}<2,$$
which is impossible. Thus $D\in\{7, 9, 17, 19, 21, 23\}$.

{\bf Case 1.} $D=7$. Since $\rm{ord}_{17}(3)=\rm{ord}_{17}(7)=16$, we have $p_{4}\geqslant 103$. If $\alpha_{1}=2$, then
$$2=\frac{\sigma(n)}{n}+\frac{d}{n}<\frac{3^{3}-1}{2\cdot 3^{2}}\cdot\frac{7}{6}\cdot\frac{17}{16}\cdot\frac{103}{102}+\frac{1}{7}<2,$$
which is false. Thus $\alpha_{1}\geqslant 6, \alpha_{2}\geqslant 4$ and
$$2=\frac{\sigma(n)}{n}+\frac{d}{n}>\frac{3^{7}-1}{2\cdot 3^{6}}\cdot\frac{7^{5}-1}{6\cdot 7^{4}}\cdot\frac{17^{3}-1}{16\cdot 17^{2}}+\frac{1}{7}>2,$$
which is a contradiction.

{\bf Case 2.} $D\in\{9, 17, 19, 21, 23\}$. If $p_{4}\geqslant 67$, then
$$2=\frac{\sigma(n)}{n}+\frac{d}{n}<\frac{3}{2}\cdot\frac{7}{6}\cdot\frac{17}{16}\cdot\frac{67}{66}+\frac{1}{9}<2,$$
which is absurd. Thus $p_{4}\leqslant 61$.
Since $\rm{ord}_{17}(3)=\rm{ord}_{17}(7)=16$ and ${\rm{ord}}_{17}(p_{4})$ are all even, we deduce that the equality (\ref{3.3}) cannot hold.

This completes the proof of Lemma \ref{lem3.3}.
\end{proof}

\begin{lemma}\label{lem3.4}
There is no odd deficient-perfect number of the form $n=3^{\alpha_{1}}7^{\alpha_{2}}19^{\alpha_{3}}p_{4}^{\alpha_{4}}$ with $D\geqslant 7$.
\end{lemma}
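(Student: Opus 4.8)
The plan is to specialise the identity (\ref{Eq1}) to the primes $3,7,19,p_4$, so that
$$\frac{3^{\alpha_1+1}-1}{2}\cdot\frac{7^{\alpha_2+1}-1}{6}\cdot\frac{19^{\alpha_3+1}-1}{18}\cdot\frac{p_4^{\alpha_4+1}-1}{p_4-1}=2\cdot3^{\alpha_1}7^{\alpha_2}19^{\alpha_3}p_4^{\alpha_4}-3^{\beta_1}7^{\beta_2}19^{\beta_3}p_4^{\beta_4},$$
and first to restrict $D$. Since $p_4\geqslant23$, the Euler-product bound gives
$$\frac{\sigma(n)}{n}<\frac{3}{2}\cdot\frac{7}{6}\cdot\frac{19}{18}\cdot\frac{23}{22}<2-\frac{1}{15},$$
so $\frac1D=2-\frac{\sigma(n)}{n}>\frac1{15}$ and $D\leqslant14$. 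As $D$ is a product of powers of $3,7,19,p_4$ with $p_4\geqslant23$, the only possibilities are $D\in\{7,9\}$.

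For $D=9$ I would argue purely by divisibility. Here $2D-1=17$, so $17\mid\sigma(n)=\prod_i\sigma(p_i^{\alpha_i})$, and hence $17\mid\sigma(p_i^{\alpha_i})$ for some $i$. Since ${\rm ord}_{17}(3)={\rm ord}_{17}(7)=16$ and ${\rm ord}_{17}(19)=8$ are even while every $\alpha_i+1$ is odd (the $\alpha_i$ being even), the prime $17$ can divide none of $\sigma(3^{\alpha_1}),\sigma(7^{\alpha_2}),\sigma(19^{\alpha_3})$; thus $17\mid\sigma(p_4^{\alpha_4})$, forcing ${\rm ord}_{17}(p_4)\mid(\alpha_4+1)$ with ${\rm ord}_{17}(p_4)$ an odd divisor of $16$, i.e. ${\rm ord}_{17}(p_4)=1$ and $p_4\equiv1\pmod{17}$. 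Then $p_4\geqslant103$, which contradicts the bound $p_4\leqslant43$ coming from $\frac{17}{9}<\frac{3}{2}\cdot\frac{7}{6}\cdot\frac{19}{18}\cdot\frac{p_4}{p_4-1}$.

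The case $D=7$ is the heart of the matter. Now $2D-1=13$ and $d=3^{\alpha_1}7^{\alpha_2-1}19^{\alpha_3}p_4^{\alpha_4}$, so every prime factor of $\sigma(n)=13d$ lies in $\{3,7,13,19,p_4\}$. Because $\sigma(3^{\alpha_1})$ is coprime to $3$, and because ${\rm ord}_7(3)=6$ and ${\rm ord}_{19}(3)=18$ are even while $\alpha_1+1$ is odd, neither $7$ nor $19$ divides $\sigma(3^{\alpha_1})$; hence $\sigma(3^{\alpha_1})=13^a p_4^{b}$. This is incompatible with $\alpha_1\geqslant4$: already $\sigma(3^4)=11^2$ has $11\notin\{13,p_4\}$, and for larger even $\alpha_1$ a primitive prime divisor $q$ of $3^{\alpha_1+1}-1$ (which exists by Zsigmondy's theorem) has ${\rm ord}_q(3)=\alpha_1+1>3={\rm ord}_{13}(3)$, so $q\neq13$ and hence $q=p_4$; as $q\equiv1\pmod{\alpha_1+1}$ and $p_4\leqslant181$ (from $\frac{13}{7}<\frac{3}{2}\cdot\frac{7}{6}\cdot\frac{19}{18}\cdot\frac{p_4}{p_4-1}$), only finitely many $\alpha_1$ remain, each excluded by inspecting the factorisation of $\sigma(3^{\alpha_1})$. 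Thus $\alpha_1=2$, the $3$-factor equals $\frac{13}{9}$, and the requirement $\frac{7}{6}\cdot\frac{19}{18}\cdot\frac{p_4}{p_4-1}\geqslant\frac{9}{7}$ forces $p_4=23$.

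It remains to eliminate $\alpha_1=2,\ p_4=23$, which I would do by tracking the exponent of $7$. On the right of the specialised identity $v_7(\sigma(n))=\alpha_2-1\geqslant1$; since ${\rm ord}_7(19)=6$ is even we have $7\nmid\sigma(19^{\alpha_3})$, so $7\mid\sigma(23^{\alpha_4})$, and as ${\rm ord}_7(23)=3$ this gives $3\mid(\alpha_4+1)$, whence $23^3-1\mid23^{\alpha_4+1}-1$. Since $79\mid23^3-1$ while $79\nmid22$, we get $79\mid\sigma(23^{\alpha_4})\mid\sigma(n)$, contradicting $79\notin\{3,7,13,19,23\}$. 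The step I expect to be the main obstacle is the elimination of $\alpha_1\geqslant4$ in the case $D=7$: turning the constraint $\sigma(3^{\alpha_1})=13^a p_4^{b}$ with $p_4\leqslant181$ into a genuinely finite verification requires primitive-prime-divisor input rather than the crude size estimates that suffice everywhere else, and I would expect to spend most of the effort making that bound on $\alpha_1$ airtight.
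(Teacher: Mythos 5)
Your reduction to $D\in\{7,9\}$ is correct and matches the paper, and your two easy sub-cases are sound. In fact your $D=9$ argument is cleaner than the paper's: you note that $17\mid\sigma(n)$ forces $p_{4}\equiv 1\pmod{17}$, hence $p_{4}\geqslant 103$, while the size bound with $d/n=\tfrac19$ forces $p_{4}\leqslant 43$; the paper instead keeps the weaker bound $p_{4}\leqslant 181$, pins down $p_{4}=137$ using the additional mod-$7$ constraint, and then invokes $103\mid(137^{17}-1)$. Your treatment of $D=7$, $\alpha_{1}=2$ (forcing $p_{4}=23$, then $3\mid(\alpha_{4}+1)$ and $79\mid(23^{3}-1)$) is exactly the paper's.

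The genuine gap is the one you flagged yourself: eliminating $\alpha_{1}\geqslant 6$ when $D=7$. From $\sigma(3^{\alpha_{1}})=13^{a}p_{4}^{b}$ and Zsigmondy you get $p_{4}\equiv 1\pmod{\alpha_{1}+1}$ and hence $\alpha_{1}+1\leqslant 180$, but that leaves dozens of exponents, and ``inspecting the factorisation of $\sigma(3^{\alpha_{1}})$'' means factoring $3^{\alpha_{1}+1}-1$ for odd exponents up to $179$ — numbers of up to $86$ digits. Even after the natural refinement (each divisor $e>1$ of $\alpha_{1}+1$ yields a distinct primitive prime in $\{13,p_{4}\}$, so $\alpha_{1}+1$ must be a prime or the square of a prime), you would still need to show $(3^{r}-1)/2$ is not of the form $p_{4}^{b}$ for every prime $r\leqslant 179$; doing that honestly requires extra input (for instance: $b\geqslant 2$ would force $p_{4}^{2}\mid 3^{p_{4}-1}-1$, i.e. $p_{4}$ a base-$3$ Wieferich prime, which one can rule out for $p_{4}\leqslant 181$ by a finite check), none of which appears in your outline. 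As written, the step is an unexecuted case analysis, not a proof.

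The paper's proof shows the idea you are missing: it never bounds $\alpha_{1}$ from above. It excludes only $\alpha_{1}=4$ and $\alpha_{1}=6$ by your own structural argument ($\sigma(3^{4})=11^{2}$ and $\sigma(3^{6})=1093$, with $11,1093\notin\{13,p_{4}\}$ since $19<p_{4}\leqslant 181$), concluding $\alpha_{1}\geqslant 8$. It then feeds this \emph{lower} bound back into the size inequality: $\alpha_{1}\geqslant 8$ makes the factor $\sigma(3^{\alpha_{1}})/3^{\alpha_{1}}\geqslant(3^{9}-1)/(2\cdot 3^{8})$ large enough that $\sigma(n)/n+\tfrac17>2$ unless $p_{4}\geqslant 127$. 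Now $127\leqslant p_{4}\leqslant 181$, and since $7\mid\sigma(n)$ must come from $\sigma(p_{4}^{\alpha_{4}})$, one needs ${\rm ord}_{7}(p_{4})\in\{1,3\}$, leaving the short list $\{127,137,149,151,163\}$ (the paper incidentally overlooks $179$, which dies the same way since $179^{2}+179+1=7\cdot 4603$), each killed by a prime factor of $p_{4}^{3}-1$ or $p_{4}^{7}-1$ lying outside $\{3,7,13,19,p_{4}\}$. The burden is thus shifted from unboundedly many values of $\alpha_{1}$ onto an explicit handful of values of $p_{4}$; that pivot is what makes the argument genuinely finite, and it is the piece your proposal lacks.
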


\begin{proof}
Assume that $n=3^{\alpha_{1}}7^{\alpha_{2}}19^{\alpha_{3}}p_{4}^{\alpha_{4}}$ is an odd deficient-perfect number with deficient divisor $d=3^{\beta_{1}}7^{\beta_{2}}19^{\beta_{3}}p_{4}^{\beta_{4}}$. By (\ref{Eq1}), we have
\begin{equation}\label{3.4}
\frac{3^{\alpha_{1}+1}-1}{2}\cdot \frac{7^{\alpha_{2}+1}-1}{6}\cdot \frac{19^{\alpha_{3}+1}-1}{18}\cdot \frac{p_{4}^{\alpha_{4}+1}-1}{p_{4}-1}=2\cdot 3^{\alpha_{1}}7^{\alpha_{2}}19^{\alpha_{3}}p_{4}^{\alpha_{4}}-3^{\beta_{1}}7^{\beta_{2}}19^{\beta_{3}}p_{4}^{\beta_{4}}.
\end{equation}
If $p_{4}\geqslant 191$, then
$$2=\frac{\sigma(n)}{n}+\frac{d}{n}<\frac{3}{2}\cdot\frac{7}{6}\cdot\frac{19}{18}\cdot\frac{191}{190}+\frac{1}{7}<2,$$
which is clearly false. Thus $p_{4}\leqslant 181$. If $D\geqslant 15$, then $$2=\frac{\sigma(n)}{n}+\frac{d}{n}<\frac{3}{2}\cdot\frac{7}{6}\cdot\frac{19}{18}\cdot\frac{23}{22}+\frac{1}{15}<2,$$
which is impossible. Thus $D\in\{7, 9\}$.

{\bf Case 1.} $D=7$. If $\alpha_{1}=2$, then $p_{4}=23$. Otherwise, if $p_{4}\geqslant 29$, then
$$2=\frac{\sigma(n)}{n}+\frac{d}{n}<\frac{3^{3}-1}{2\cdot 3^{2}}\cdot\frac{7}{6}\cdot\frac{19}{18}\cdot\frac{29}{28}+\frac{1}{7}<2,$$
which is absurd. Since $\rm{ord}_{7}(19)=6$ and $\rm{ord}_{7}(23)=3$, we have $3\mid (\alpha_{4}+1)$ and $(23^{3}-1)\mid (23^{\alpha_{4}+1}-1)$. However, $79\mid (23^{3}-1)$, a contradiction. By (\ref{3.4}), we have $\alpha_{1}\geqslant 8$. If $p_{4}\leqslant 113$, then
$$2=\frac{\sigma(n)}{n}+\frac{d}{n}\geqslant \frac{3^{9}-1}{2\cdot 3^{8}} \cdot\frac{7^{3}-1}{6\cdot 7^{2}} \cdot\frac{19^{3}-1}{18\cdot 19^{2}}\cdot\frac{113^{3}-1}{112\cdot 113^{2}}+\frac{1}{7}>2,$$
which is false. Since $\rm{ord}_{7}(3)=\rm{ord}_{7}(19)=6$, we have $p_{4}\in\{127, 137, 149, 151, 163\}$.
If $p_{4}=127$, then $7\mid (\alpha_{4}+1)$ and $(127^{7}-1)\mid (127^{\alpha_{4}+1}-1)$, However, $43\mid (127^{7}-1)$, a contradiction.
If $p_{4}\in \{137, 149, 151, 163\}$, then $3\mid (\alpha_{4}+1)$ and $(p_{4}^{3}-1)\mid (p_{4}^{\alpha_{4}+1}-1)$. However,
$$37\mid (137^{3}-1), 31\mid (149^{3}-1), 1093\mid (151^{3}-1), 67\mid (163^{3}-1),$$
which is also a contradiction.

{\bf Case 2.} $D=9$. Since $\rm{ord}_{17}(3)=\rm{ord}_{17}(7)=16, \rm{ord}_{17}(19)=8$ and $\rm{ord}_{7}(3)=\rm{ord}_{7}(19)=6$, we have $p_{4}=137$. Thus $17\mid (\alpha_{4}+1)$ and $(137^{17}-1)\mid (137^{\alpha_{4}+1}-1)$. However, $103\mid (137^{17}-1)$, a contradiction.

This completes the proof of Lemma \ref{lem3.4}.
\end{proof}

\begin{lemma}\label{lem3.5}
There is no odd deficient-perfect number of the form $n=3^{\alpha_{1}}7^{\alpha_{2}}23^{\alpha_{3}}p_{4}^{\alpha_{4}}$ with $D\geqslant 7$.
\end{lemma}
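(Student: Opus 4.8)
The plan is to start from (\ref{Eq1}) and write down the analogue of (\ref{3.4}),
$$\frac{3^{\alpha_{1}+1}-1}{2}\cdot \frac{7^{\alpha_{2}+1}-1}{6}\cdot \frac{23^{\alpha_{3}+1}-1}{22}\cdot \frac{p_{4}^{\alpha_{4}+1}-1}{p_{4}-1}=2\cdot 3^{\alpha_{1}}7^{\alpha_{2}}23^{\alpha_{3}}p_{4}^{\alpha_{4}}-3^{\beta_{1}}7^{\beta_{2}}23^{\beta_{3}}p_{4}^{\beta_{4}},$$
and first to pin down $D$. Since $p_{4}>23$ forces $p_{4}\geqslant 29$, the identity $2=\frac{\sigma(n)}{n}+\frac{1}{D}$ combined with $\frac{\sigma(n)}{n}<\frac{3}{2}\cdot\frac{7}{6}\cdot\frac{23}{22}\cdot\frac{29}{28}$ yields $\frac{1}{D}>2-\frac{3}{2}\cdot\frac{7}{6}\cdot\frac{23}{22}\cdot\frac{29}{28}$, hence $D<10$. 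As $D\geqslant 7$ and the prime divisors of $D$ lie in $\{3,7,23,p_{4}\}$, only $D\in\{7,9\}$ survive, and I would treat these separately.

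For $D=9$ I expect a quick kill. Here $2D-1=17$ must divide $\sigma(n)=(2D-1)d$, while the inequality above (with $2-\frac{1}{9}$ in place of $2$) forces $p_{4}\leqslant 31$, so $p_{4}\in\{29,31\}$. I would then check that $\mathrm{ord}_{17}(3)=\mathrm{ord}_{17}(7)=\mathrm{ord}_{17}(23)=\mathrm{ord}_{17}(29)=\mathrm{ord}_{17}(31)=16$; since every $\alpha_{i}+1$ is odd, none of these orders divides $\alpha_{i}+1$, so $17\nmid \sigma(p_{i}^{\alpha_{i}})$ for each $i$ and therefore $17\nmid \sigma(n)$, a contradiction.

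For $D=7$ one has $\frac{\sigma(n)}{n}=\frac{13}{7}$ and $\sigma(n)=13d$, so every factor $\sigma(p_{i}^{\alpha_{i}})$ must be composed only of primes in $\{3,7,13,23,p_{4}\}$. Taking $\alpha_{1}=2$ gives $\frac{\sigma(n)}{n}<\frac{13}{9}\cdot\frac{7}{6}\cdot\frac{23}{22}\cdot\frac{29}{28}<\frac{13}{7}$, so $\alpha_{1}\geqslant 4$; then $\sigma(3^{4})=121=11^{2}$ and $\sigma(3^{6})=1093$ both contain a prime outside the admissible set (as $p_{4}\leqslant 67$), ruling out $\alpha_{1}\in\{4,6\}$ and leaving $\alpha_{1}\geqslant 8$. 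Using $\frac{\sigma(3^{\alpha_{1}})}{3^{\alpha_{1}}}\geqslant \frac{3^{9}-1}{2\cdot 3^{8}}$ and $\alpha_{2},\alpha_{3}\geqslant 2$, the quotient $\frac{\sigma(p_{4}^{\alpha_{4}})}{p_{4}^{\alpha_{4}}}=\frac{13/7}{\prod_{i\leqslant 3}\sigma(p_{i}^{\alpha_{i}})/p_{i}^{\alpha_{i}}}$ is forced below $1+\frac{1}{53}+\frac{1}{53^{2}}$; since this quotient is at least $1+\frac{1}{p_{4}}+\frac{1}{p_{4}^{2}}$, I get $p_{4}\geqslant 59$, so $p_{4}\in\{59,61,67\}$. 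A divisibility sieve then finishes the case: because $\alpha_{2}\geqslant 2$ we have $7\mid d\mid \sigma(n)$, and checking orders modulo $7$ shows $7$ can only enter $\sigma(n)$ through $\sigma(23^{\alpha_{3}})$ (which needs $3\mid \alpha_{3}+1$ and then introduces $79=\sigma(23^{2})/7$) or through $\sigma(p_{4}^{\alpha_{4}})$ (which needs $\mathrm{ord}_{7}(p_{4})\in\{1,3\}$, i.e. $p_{4}\equiv 1,2,4\pmod 7$); of $\{59,61,67\}$ only $p_{4}=67$ satisfies the latter, and there $3\mid \alpha_{4}+1$ forces $31=\sigma(67^{2})/(3\cdot 7^{2})$ to divide $\sigma(n)$, a prime outside $\{3,7,13,23,67\}$.

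The main obstacle will be the $D=7$ case: because $23$ contributes so little to $\frac{\sigma(n)}{n}$, the crude bounds leave the entire band $29\leqslant p_{4}\leqslant 67$ open, and the case closes only after the sharper squeeze (exploiting $\alpha_{1}\geqslant 8$) is combined with the order-of-$7$ bookkeeping to isolate $p_{4}=67$. Verifying the factorizations $\sigma(23^{2})=7\cdot 79$ and $\sigma(67^{2})=3\cdot 7^{2}\cdot 31$, and confirming that no admissible exponent pattern escapes these obstructions, is the delicate part I would have to execute with care.
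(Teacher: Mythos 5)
Your proposal is correct in substance, but its endgame for $D=7$ is genuinely different from the paper's. The skeleton agrees: both derive the analogue of the displayed equation, both use the crude bound with $p_{4}\geqslant 29$ to force $D\in\{7,9\}$, and for $D=9$ both kill the case by noting that $17=2D-1$ must divide $\sigma(n)$ while $\mathrm{ord}_{17}(3)=\mathrm{ord}_{17}(7)=\mathrm{ord}_{17}(23)=16$ and $\mathrm{ord}_{17}(p_{4})$ is even, which is incompatible with all $\alpha_{i}+1$ being odd (your preliminary reduction to $p_{4}\in\{29,31\}$ is harmless but unnecessary; the order argument works for every admissible $p_{4}$). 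For $D=7$, however, the paper pushes the divisibility constraints much further: besides $\sigma(3^{4})=11^{2}$ and $\sigma(3^{6})=1093$ it also uses $\sigma(3^{8})=13\cdot 757$ and $\sigma(3^{10})=23\cdot 3851$ to get $\alpha_{1}\geqslant 12$, plus $\sigma(7^{2})=3\cdot 19$ and $\sigma(23^{2})=7\cdot 79$ to get $\alpha_{2},\alpha_{3}\geqslant 4$, and then closes every $p_{4}\leqslant 67$ at once with the single inequality
$$\frac{3^{13}-1}{2\cdot 3^{12}}\cdot\frac{7^{5}-1}{6\cdot 7^{4}}\cdot\frac{23^{5}-1}{22\cdot 23^{4}}\cdot\frac{67^{3}-1}{66\cdot 67^{2}}+\frac{1}{7}>2,$$
whose margin is only about $5\times 10^{-6}$. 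You stop at $\alpha_{1}\geqslant 8$, use the size squeeze to pin $p_{4}\in\{59,61,67\}$, and finish with the mod-$7$ sieve: since $\alpha_{2}\geqslant 2$ gives $7\mid 13d=\sigma(n)$, the prime $7$ must enter through $\sigma(23^{\alpha_{3}})$ (forcing $79\mid\sigma(n)$ via $\sigma(23^{2})=7\cdot 79$) or through $\sigma(67^{\alpha_{4}})$ (forcing $31\mid\sigma(n)$ via $\sigma(67^{2})=3\cdot 7^{2}\cdot 31$), both impossible. Your route trades two extra factorizations of $\sigma(3^{\alpha_{1}})$ and a razor-thin inequality for an arithmetic obstruction; it is numerically more robust, and I have checked that your key estimates ($13/7$ divided by the product of the three lower bounds is about $1.01819<1+1/53+1/53^{2}$) and order computations are all correct.

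One step you use but never derive: the bound $p_{4}\leqslant 67$, which you invoke both to rule out $\alpha_{1}=6$ (you need $p_{4}\neq 1093$) and, implicitly, to conclude that $79$ and $31$ lie outside the admissible prime set. It follows from the same crude estimate you use elsewhere: for $D=7$ one has $\frac{13}{7}=\frac{\sigma(n)}{n}<\frac{3}{2}\cdot\frac{7}{6}\cdot\frac{23}{22}\cdot\frac{p_{4}}{p_{4}-1}$, and the right side drops below $\frac{13}{7}$ once $p_{4}\geqslant 71$. Insert that one line (it is exactly how the paper opens the lemma) and your argument is complete.
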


\begin{proof}
Assume that $n=3^{\alpha_{1}}7^{\alpha_{2}}23^{\alpha_{3}}p_{4}^{\alpha_{4}}$ is an odd deficient-perfect number with deficient divisor $d=3^{\beta_{1}}7^{\beta_{2}}23^{\beta_{3}}p_{4}^{\beta_{4}}$. By (\ref{Eq1}), we have
\begin{equation}\label{3.5}
\frac{3^{\alpha_{1}+1}-1}{2}\cdot \frac{7^{\alpha_{2}+1}-1}{6}\cdot \frac{23^{\alpha_{3}+1}-1}{22}\cdot \frac{p_{4}^{\alpha_{4}+1}-1}{p_{4}-1}=2\cdot 3^{\alpha_{1}}7^{\alpha_{2}}23^{\alpha_{3}}p_{4}^{\alpha_{4}}-3^{\beta_{1}}7^{\beta_{2}}23^{\beta_{3}}p_{4}^{\beta_{4}}.
\end{equation}
If $p_{4}\geqslant 71$, then
$$2=\frac{\sigma(n)}{n}+\frac{d}{n}<\frac{3}{2}\cdot\frac{7}{6}\cdot\frac{23}{22}\cdot\frac{71}{70}+\frac{1}{7}<2,$$
which is false. Thus $p_{4}\in\{29, 31, 37, 41, 43, 47, 53, 59, 61, 67\}$. If $D\geqslant 11$, then $$2=\frac{\sigma(n)}{n}+\frac{d}{n}<\frac{3}{2}\cdot\frac{7}{6}\cdot\frac{23}{22}\cdot\frac{29}{28}+\frac{1}{11}<2,$$
which is impossible. Thus $D\in\{7, 9\}$.

{\bf Case 1.} $D=7$. If $\alpha_{1}=2$, then
$$2=\frac{\sigma(n)}{n}+\frac{d}{n}<\frac{3^{3}-1}{2\cdot 3^{2}}\cdot\frac{7}{6}\cdot\frac{23}{22}\cdot\frac{29}{28}+\frac{1}{7}<2,$$
which is absurd. By (\ref{3.5}), we have $\alpha_{1}\geqslant 12, \alpha_{2}\geqslant 4, \alpha_{3}\geqslant 4$ and
$$2=\frac{\sigma(n)}{n}+\frac{d}{n}\geqslant \frac{3^{13}-1}{2\cdot 3^{12}}\cdot\frac{7^{5}-1}{6\cdot 7^{4}} \cdot\frac{23^{5}-1}{22\cdot 23^{4}} \cdot\frac{67^{3}-1}{66\cdot 67^{2}}+\frac{1}{7}>2,$$
which is a contradiction.

{\bf Case 2.} $D=9$. Since $\rm{ord}_{17}(3)=\rm{ord}_{17}(7)=\rm{ord}_{17}(23)=16$ and ${\rm{ord}}_{17}(p_{4})$ are all even, we deduce that the equality (\ref{3.5}) cannot hold.

This completes the proof of Lemma \ref{lem3.5}.
\end{proof}

\begin{lemma}\label{lem3.6}
There is no odd deficient-perfect number of the form $n=3^{\alpha_{1}}7^{\alpha_{2}}29^{\alpha_{3}}p_{4}^{\alpha_{4}}$ with $D\geqslant 7$.
\end{lemma}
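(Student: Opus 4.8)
The plan is to mirror the structure of Lemmas~\ref{lem3.1}--\ref{lem3.5}: first squeeze $p_4$ and $D$ into a tiny list by size estimates, then annihilate the survivors with order and divisibility arguments. Specialising (\ref{Eq1}) to $n=3^{\alpha_1}7^{\alpha_2}29^{\alpha_3}p_4^{\alpha_4}$ gives
\begin{equation}\label{3.6}
\frac{3^{\alpha_{1}+1}-1}{2}\cdot \frac{7^{\alpha_{2}+1}-1}{6}\cdot \frac{29^{\alpha_{3}+1}-1}{28}\cdot \frac{p_{4}^{\alpha_{4}+1}-1}{p_{4}-1}=2\cdot 3^{\alpha_{1}}7^{\alpha_{2}}29^{\alpha_{3}}p_{4}^{\alpha_{4}}-3^{\beta_{1}}7^{\beta_{2}}29^{\beta_{3}}p_{4}^{\beta_{4}}.
\end{equation}
Since $\frac{\sigma(n)}{n}<\frac{3}{2}\cdot\frac{7}{6}\cdot\frac{29}{28}\cdot\frac{p_{4}}{p_{4}-1}=\frac{29}{16}\cdot\frac{p_{4}}{p_{4}-1}$ and, by hypothesis, $\frac{1}{D}\leqslant\frac{1}{7}$, the identity $2=\frac{\sigma(n)}{n}+\frac{1}{D}$ forces $\frac{29}{16}\cdot\frac{p_4}{p_4-1}>\frac{13}{7}$, hence $p_4\leqslant 41$ and $p_4\in\{31,37,41\}$. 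A second application of the same bound then yields $\frac{1}{D}>\frac{1}{8}$ in each of the three cases, so $D=7$ throughout. Consequently $\frac{\sigma(n)}{n}=\frac{13}{7}$, i.e. $7\sigma(n)=13n$; in particular every prime divisor of $\sigma(n)$ lies in $\{3,7,13,29,p_4\}$, and (since all $\alpha_i$ are even) $n$ is a perfect square.

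The crux is that $13\mid\sigma(n)$ while $13\notin\{3,7,29,p_4\}$, so $13\mid\sigma(p_i^{\alpha_i})$ for some $i$. I would record the orders $\ord_{13}(3)=\ord_{13}(29)=3$, $\ord_{13}(7)=12$, and $\ord_{13}(31)=4$, $\ord_{13}(37)=\ord_{13}(41)=12$. Because each $\alpha_i+1$ is odd, an even order cannot divide it; thus $13\mid\sigma(p_i^{\alpha_i})$ is possible only for $i=1$ (forcing $3\mid\alpha_1+1$) or $i=3$ (forcing $3\mid\alpha_3+1$). It therefore suffices to derive a contradiction from each of these two divisibilities.

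The case $3\mid\alpha_3+1$ collapses immediately: from $\sigma(29^{2})=871=13\cdot 67$ one reads off $\ord_{67}(29)=3$, so $3\mid\alpha_3+1$ yields $67\mid\sigma(29^{\alpha_3})\mid\sigma(n)$, contradicting $67\notin\{3,7,13,29,p_4\}$. For $3\mid\alpha_1+1$ (so $\alpha_1\in\{2,8,14,\dots\}$) I split on the size of $\alpha_1$: when $\alpha_1=2$ we have $\tfrac{\sigma(3^2)}{3^2}=\tfrac{13}{9}$, forcing the three remaining local factors to multiply to $\tfrac97$, which is impossible since $\frac{7}{6}\cdot\frac{29}{28}\cdot\frac{41}{40}<\frac97$; and when $\alpha_1\geqslant 8$ I would invoke the Bang--Zsigmondy primitive prime divisor theorem to produce a prime $q\mid 3^{\alpha_1+1}-1$ with $\ord_q(3)=\alpha_1+1$, so that $q\equiv 1\pmod{\alpha_1+1}$ forces $q$ outside $\{3,7,13,29,p_4\}$ (a one-line check disposes of the finitely many primes $\leqslant 41$ that are $\equiv1$ modulo $\alpha_1+1$), while $q\mid\sigma(3^{\alpha_1})\mid\sigma(n)$ --- again a contradiction. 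Both alternatives failing, no such $n$ exists.

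I expect the main obstacle to be the branch $\alpha_1\geqslant 8$: here the size estimate is genuinely too weak (the product $\tfrac32\cdot\tfrac{7}{6}\cdot\tfrac{29}{28}\cdot\tfrac{41}{40}$ only just exceeds $\tfrac{13}{7}$), so one must replace inequalities by an arithmetic input, and a little care is needed to guarantee that the primitive prime divisor does not accidentally coincide with $p_4\in\{37,41\}$.
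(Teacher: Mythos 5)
Your proposal is correct, and although its opening coincides with the paper's (both squeeze $p_{4}\in\{31,37,41\}$ and then $D=7$ by the same size estimates, arriving at the identity $\sigma(n)=13\cdot 3^{\alpha_{1}}7^{\alpha_{2}-1}29^{\alpha_{3}}p_{4}^{\alpha_{4}}$), the endgame is genuinely different. The paper stays elementary: from this identity it rules out $\alpha_{1}\in\{4,6,8,10\}$, $\alpha_{2}=2$ and $\alpha_{3}=2$ by factoring the relevant divisor sums ($\sigma(3^{4})=11^{2}$, $\sigma(3^{6})=1093$, $\sigma(3^{8})=13\cdot 757$, $\sigma(3^{10})=23\cdot 3851$, $\sigma(7^{2})=3\cdot 19$, $\sigma(29^{2})=13\cdot 67$, each producing a prime outside $\{3,7,13,29,p_{4}\}$), thereby forcing $\alpha_{1}\geqslant 12$, $\alpha_{2}\geqslant 4$, $\alpha_{3}\geqslant 4$, and then closes with the razor-thin estimate $\frac{3^{13}-1}{2\cdot 3^{12}}\cdot\frac{7^{5}-1}{6\cdot 7^{4}}\cdot\frac{29^{5}-1}{28\cdot 29^{4}}\cdot\frac{41^{3}-1}{40\cdot 41^{2}}+\frac{1}{7}>2$. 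You instead track where the factor $13$ of $\sigma(n)$ comes from, use orders modulo $13$ to reduce to $3\mid(\alpha_{1}+1)$ or $3\mid(\alpha_{3}+1)$, kill the second branch with $67\mid\sigma(29^{\alpha_{3}})$ (the same arithmetic fact the paper uses to get $\alpha_{3}\geqslant 4$), and kill the first with a primitive-prime-divisor argument. What your route buys is the complete avoidance of the delicate closing inequality and of the case-by-case factorizations of $\sigma(3^{\alpha_{1}})$; what it costs is the invocation of Bang--Zsigmondy, a heavier tool than anything used in the paper, plus the coincidence checks you flag.

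Two small slips, neither fatal. First, in the branch $\alpha_{1}=2$ the worst case is $p_{4}=31$, so the bound you need is $\frac{7}{6}\cdot\frac{29}{28}\cdot\frac{31}{30}=\frac{6293}{5040}<\frac{9}{7}$; your $\frac{41}{40}$ is only a valid bound when $p_{4}=41$, though the corrected inequality still holds. Second, in the Zsigmondy branch the coincidences to rule out are not $p_{4}\in\{37,41\}$: since $\alpha_{1}+1$ is odd, divisible by $3$ and at least $9$, the only primes in $\{3,7,13,29,p_{4}\}$ that are congruent to $1$ modulo $\alpha_{1}+1$ are $37$ (when $\alpha_{1}+1=9$) and $31$ (when $\alpha_{1}+1=15$), while $41$ never occurs because $40$ has no odd divisor $\geqslant 9$. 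Both survivors are disposed of exactly as you indicate, because $\ord_{37}(3)=18$ and $\ord_{31}(3)=30$ are even and hence cannot equal the odd number $\alpha_{1}+1$, so the primitive prime divisor of $3^{\alpha_{1}+1}-1$ indeed lies outside the admissible set and contradicts $\sigma(n)=13\cdot 3^{\alpha_{1}}7^{\alpha_{2}-1}29^{\alpha_{3}}p_{4}^{\alpha_{4}}$.
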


\begin{proof}
Assume that $n=3^{\alpha_{1}}7^{\alpha_{2}}29^{\alpha_{3}}p_{4}^{\alpha_{4}}$ is an odd deficient-perfect number with deficient divisor $d=3^{\beta_{1}}7^{\beta_{2}}29^{\beta_{3}}p_{4}^{\beta_{4}}$. If $p_{4}\geqslant 43$, then
$$2=\frac{\sigma(n)}{n}+\frac{d}{n}<\frac{3}{2}\cdot\frac{7}{6}\cdot\frac{29}{28}\cdot\frac{43}{42}+\frac{1}{7}<2,$$
which is false. Thus $p_{4}\in\{31, 37, 41\}$. If $D\geqslant 9$, then $$2=\frac{\sigma(n)}{n}+\frac{d}{n}<\frac{3}{2}\cdot\frac{7}{6}\cdot\frac{29}{28}\cdot\frac{31}{30}+\frac{1}{9}<2,$$
which is impossible. Thus $D=7$. By (\ref{Eq1}), we have
\begin{equation}\label{3.6}
\frac{3^{\alpha_{2}+1}-1}{2}\cdot\frac{7^{\alpha_{2}+1}-1}{6}\cdot \frac{29^{\alpha_{3}+1}-1}{28}\cdot \frac{p_{4}^{\alpha_{4}+1}-1}{p_{4}-1}=13\cdot 3^{\alpha_{1}}7^{\alpha_{2}-1}29^{\alpha_{3}}p_{4}^{\alpha_{4}}.
\end{equation}
If $\alpha_{1}=2$, then
$$2=\frac{\sigma(n)}{n}+\frac{d}{n}<\frac{3^{3}-1}{2\cdot 3^{2}}\cdot\frac{7}{6}\cdot\frac{29}{28}\cdot\frac{31}{30}+\frac{1}{7}<2,$$
which is absurd. By (\ref{3.6}), we have $\alpha_{1}\geqslant 12, \alpha_{2}\geqslant 4, \alpha_{3}\geqslant 4$ and
$$2=\frac{\sigma(n)}{n}+\frac{d}{n}\geqslant \frac{3^{13}-1}{2\cdot 3^{12}}\cdot\frac{7^{5}-1}{6\cdot 7^{4}} \cdot\frac{29^{5}-1}{28\cdot 29^{4}} \cdot\frac{41^{3}-1}{40\cdot 41^{2}}+\frac{1}{7}>2,$$
which is a contradiction.

This completes the proof of Lemma \ref{lem3.6}.
\end{proof}

\begin{lemma}\label{lem3.7}
There is no odd deficient-perfect number of the form $n=3^{\alpha_{1}}7^{\alpha_{2}}31^{\alpha_{3}}p_{4}^{\alpha_{4}}$ with $D\geqslant 7$.
\end{lemma}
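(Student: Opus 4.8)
The plan is to mimic the structure of Lemmas~\ref{lem3.1}--\ref{lem3.6}: use the identity $\frac{\sigma(n)}{n}+\frac{1}{D}=2$ first to trap $p_4$ and $D$ in a tiny set, and then turn the \emph{exact} equation (\ref{Eq1}) into a contradiction. Since $p_4>31$, I begin by combining $\frac{\sigma(p^{\alpha})}{p^{\alpha}}<\frac{p}{p-1}$ with the hypothesis $D\geqslant 7$ to obtain
$$2=\frac{\sigma(n)}{n}+\frac{1}{D}<\frac{3}{2}\cdot\frac{7}{6}\cdot\frac{31}{30}\cdot\frac{p_4}{p_4-1}+\frac{1}{7}.$$
The right-hand side is already $<2$ once $p_4\geqslant 41$, so $p_4=37$. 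Feeding $p_4=37$ back in gives $\frac{\sigma(n)}{n}<\frac{3}{2}\cdot\frac{7}{6}\cdot\frac{31}{30}\cdot\frac{37}{36}=\frac{8029}{4320}$, hence $\frac{1}{D}>\frac{611}{4320}$ and $D\leqslant 7$; with $D\geqslant 7$ this forces $D=7$.

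With $D=7$, equation (\ref{Eq1}) becomes $\sigma(n)=(2D-1)d=13d$, so $\frac{\sigma(n)}{n}=\frac{13}{7}$ and every prime factor of $\sigma(n)=13\cdot 3^{\alpha_1}7^{\alpha_2-1}31^{\alpha_3}37^{\alpha_4}$ must lie in $\{3,7,13,31,37\}$. This is the lever for a forbidden-prime step: if $\alpha_2=2$ then $\sigma(7^{2})=57=3\cdot 19$ divides $\sigma(n)$, which is impossible since $19\notin\{3,7,13,31,37\}$; thus $\alpha_2\geqslant 4$. A crude size bound then forces $\alpha_1\geqslant 6$, because $\alpha_1\leqslant 4$ would give $\frac{\sigma(n)}{n}<\frac{121}{81}\cdot\frac{7}{6}\cdot\frac{31}{30}\cdot\frac{37}{36}<\frac{13}{7}$. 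Finally, with $\alpha_1\geqslant 6$, $\alpha_2\geqslant 4$ and $\alpha_3,\alpha_4\geqslant 2$, the monotonicity of $\frac{\sigma(p^{\alpha})}{p^{\alpha}}$ in $\alpha$ yields the reverse estimate
$$\frac{\sigma(n)}{n}\geqslant\frac{3^{7}-1}{2\cdot 3^{6}}\cdot\frac{7^{5}-1}{6\cdot 7^{4}}\cdot\frac{31^{3}-1}{30\cdot 31^{2}}\cdot\frac{37^{3}-1}{36\cdot 37^{2}}>\frac{13}{7},$$
contradicting $\frac{\sigma(n)}{n}=\frac{13}{7}$ and completing the proof.

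The delicate point is this last squeeze. The target $\frac{13}{7}=1.85714\ldots$ sits only about $3.6\cdot 10^{-4}$ below the minimal attainable value $\approx 1.8575$ of $\frac{\sigma(n)}{n}$ on the range $\alpha_1\geqslant 6,\ \alpha_2\geqslant 4,\ \alpha_3,\alpha_4\geqslant 2$, so both bracketing inequalities around $\frac{13}{7}$ must be verified with exact fractions rather than rounded decimals. Should the single step $\alpha_2\geqslant 4$ prove too weak to clear this margin, the safe fallback is to sharpen the lower bound with further forbidden-prime arguments: $\sigma(31^{2})=3\cdot 331$ and $\sigma(37^{2})=3\cdot 7\cdot 67$ force $\alpha_3\geqslant 4$ and $\alpha_4\geqslant 4$, while $\sigma(7^{4})=2801$ (a prime outside the allowed set) and $\sigma(7^{6})=137257=29\cdot 4733$ push $\alpha_2$ still higher. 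Each such increment only raises the left-hand side of the displayed estimate and therefore reinforces the contradiction.
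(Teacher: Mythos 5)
Your proof is correct, and after the common opening it follows a genuinely different route from the paper's. Both arguments pin down $p_4=37$ and $D=7$ in exactly the same way (the bounds with $\frac{41}{40}$ and then $\frac{37}{36}$), so that $\sigma(n)=13d=13\cdot 3^{\alpha_1}7^{\alpha_2-1}31^{\alpha_3}37^{\alpha_4}$ and every prime factor of $\sigma(n)$ must lie in $\{3,7,13,31,37\}$. The paper finishes with a multiplicative-order argument: $7$ divides the right-hand side while $\sigma(7^{\alpha_2})\equiv 1\pmod 7$, and since $\mathrm{ord}_7(3)=\mathrm{ord}_7(31)=6$ cannot divide the odd numbers $\alpha_1+1$, $\alpha_3+1$, the factor $7$ must come from $\sigma(37^{\alpha_4})$; hence $3\mid(\alpha_4+1)$, so $67\mid(37^{3}-1)\mid\sigma(n)$, a forbidden prime, with no delicate estimates anywhere. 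You instead force $\alpha_2\geqslant 4$ via the forbidden prime $19$ in $\sigma(7^2)=3\cdot 19$, force $\alpha_1\geqslant 6$ by a size bound (using that $\alpha_1$ is even, a standing assumption both proofs rely on), and then squeeze against the exact value $\sigma(n)/n=13/7$. That squeeze is valid but razor-thin: in exact terms it is the comparison $7\cdot 1093\cdot 2801\cdot 993\cdot 1407=29{,}941{,}576{,}045{,}101>29{,}935{,}731{,}609{,}693=13\cdot 729\cdot 2401\cdot 961\cdot 1369$, a relative margin of roughly $2\times 10^{-4}$, while the companion bound $7\cdot 121\cdot 7\cdot 31\cdot 37=6{,}800{,}563<6{,}823{,}440=13\cdot 81\cdot 6\cdot 30\cdot 36$ justifies $\alpha_1\geqslant 6$; both check out, so your flagged "delicate point" survives exact verification. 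The trade-off: the paper's order argument buys robustness (no tight numerics and no dependence on a hairline inequality), while yours buys elementarity, needing only the factorization of $\sigma(7^2)$ rather than order computations; moreover your fallback ($331\mid\sigma(31^2)$, $67\mid\sigma(37^2)$, and $\sigma(7^4)=2801$ prime, forcing $\alpha_3,\alpha_4\geqslant 4$ and $\alpha_2\geqslant 6$) is also correct and would replace the hairline margin by a comfortable one.
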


\begin{proof}
Assume that $n=3^{\alpha_{1}}7^{\alpha_{2}}31^{\alpha_{3}}p_{4}^{\alpha_{4}}$ is an odd deficient-perfect number with deficient divisor $d=3^{\beta_{1}}7^{\beta_{2}}31^{\beta_{3}}p_{4}^{\beta_{4}}$.
If $p_{4}\geqslant 41$, then
$$2=\frac{\sigma(n)}{n}+\frac{d}{n}<\frac{3}{2}\cdot\frac{7}{6}\cdot\frac{31}{30}\cdot\frac{41}{40}+\frac{1}{7}<2,$$
which is false. Thus $p_{4}=37$. If $D\geqslant 9$, then $$2=\frac{\sigma(n)}{n}+\frac{d}{n}<\frac{3}{2}\cdot\frac{7}{6}\cdot\frac{31}{30}\cdot\frac{37}{36}+\frac{1}{9}<2,$$
which is impossible. Thus $D=7$. By (\ref{Eq1}), we have
$$\frac{3^{\alpha_{1}+1}-1}{2}\cdot\frac{7^{\alpha_{2}+1}-1}{6}\cdot \frac{31^{\alpha_{3}+1}-1}{30}\cdot \frac{37^{\alpha_{4}+1}-1}{36}=13\cdot 3^{\alpha_{1}}\cdot 7^{\alpha_{2}-1}\cdot 31^{\alpha_{3}}\cdot37^{\alpha_{4}}.$$
Since $\rm{ord}_{7}(3)=\rm{ord}_{7}(31)=6$ and $\rm{ord}_{7}(37)=3$, we have $3\mid (\alpha_{4}+1)$ and $(37^{3}-1)\mid (37^{\alpha_{4}+1}-1)$. However,
$67\mid (37^{\alpha_{4}+1}-1)$, a contradiction.

This completes the proof of Lemma \ref{lem3.7}.
\end{proof}

\section{The case of $p_{2}\in\{11, 13\}$}

 In this section, we study the case of $n=p_{1}^{\alpha_{1}}p_{2}^{\alpha_{2}}p_{3}^{\alpha_{3}}p_{4}^{\alpha_{4}}$ with $p_{1}=3$ and $p_{2}\in\{11, 13\}$.

\begin{lemma}\label{lem4.1}
There is no odd deficient-perfect number of the form $n=3^{\alpha_{1}}11^{\alpha_{2}}p_{3}^{\alpha_{3}}p_{4}^{\alpha_{4}}$.
\end{lemma}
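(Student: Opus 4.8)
The plan is to drive everything from the identity $2=\frac{\sigma(n)}{n}+\frac{1}{D}$ of (\ref{Eq1}), using that here $p_{1}=3$, $p_{2}=11$, and therefore $p_{3}\geqslant 13$, $p_{4}\geqslant 17$. Since each local factor obeys $\frac{p^{\alpha+1}-1}{(p-1)p^{\alpha}}<\frac{p}{p-1}$, one has, independently of the exponents,
$$\frac{\sigma(n)}{n}<\frac{3}{2}\cdot\frac{11}{10}\cdot\frac{13}{12}\cdot\frac{17}{16}<2.$$
Hence $\frac{1}{D}=2-\frac{\sigma(n)}{n}$ exceeds the resulting gap, so $D\leqslant 9$. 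As every prime divisor of $n$ other than $3$ is at least $11>9$, the divisor $D$ must be a power of $3$; since $D>1$ we get $D\in\{3,9\}$. In either case $\beta_{i}=\alpha_{i}$ for $i\in\{2,3,4\}$, so $d=3^{\alpha_{1}-k}11^{\alpha_{2}}p_{3}^{\alpha_{3}}p_{4}^{\alpha_{4}}$ with $D=3^{k}$, and (\ref{Eq1}) becomes $\sigma(n)=(2D-1)d$ with $2D-1\in\{5,17\}$.

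First I would dispose of $D=9$, where $\frac{\sigma(n)}{n}=\frac{17}{9}$. Refining the size estimate, the inequality $\frac{17}{9}<\frac{3}{2}\cdot\frac{11}{10}\cdot\frac{p_{3}}{p_{3}-1}\cdot\frac{p_{4}}{p_{4}-1}$ together with $13\leqslant p_{3}<p_{4}$ forces $(p_{3},p_{4})=(13,17)$, every larger pair already failing. Thus $n=3^{\alpha_{1}}11^{\alpha_{2}}13^{\alpha_{3}}17^{\alpha_{4}}$ and $17\mid\sigma(n)$. Since $\sigma(17^{\alpha_{4}})$ is coprime to $17$, the factor $17$ would have to divide one of $\sigma(3^{\alpha_{1}})$, $\sigma(11^{\alpha_{2}})$, $\sigma(13^{\alpha_{3}})$; but ${\rm ord}_{17}(3)={\rm ord}_{17}(11)=16$ and ${\rm ord}_{17}(13)=4$ are all even, while each $\alpha_{i}+1$ is odd, so none of these orders divides $\alpha_{i}+1$. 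Hence $17\nmid\sigma(n)$, a contradiction.

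The substantive case is $D=3$, where $2D-1=5$ and the size inequality by itself no longer bounds $p_{3},p_{4}$. The structural key is that $\sigma(n)=5\cdot 3^{\alpha_{1}-1}11^{\alpha_{2}}p_{3}^{\alpha_{3}}p_{4}^{\alpha_{4}}$ is $\{3,5,11,p_{3},p_{4}\}$-smooth with $v_{5}(\sigma(n))=1$. I would first record that, for an even exponent $\alpha$, $5\mid\sigma(p^{\alpha})$ holds only when $p\equiv 1\pmod 5$ and $5\mid\alpha+1$ (the residues $2,3,4\bmod 5$ give ${\rm ord}_{5}(p)\in\{2,4\}$, incompatible with the odd number $\alpha+1$); in particular $5\nmid\sigma(3^{\alpha_{1}})$. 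Thus the unique factor $5$ comes from exactly one $p\in\{11,p_{3},p_{4}\}$ with $p\equiv 1\pmod 5$ and $5\mid\alpha+1$, whence $\frac{p^{5}-1}{p-1}\mid\sigma(n)$. Writing $\frac{p^{5}-1}{p-1}=5M$ with $\gcd(M,5)=1$, the cofactor $M$ (coprime to $p$, of size about $p^{4}/5$) must be $\{3,11,p_{3},p_{4}\}$-smooth: for $p=11$ this forces the prime $3221\mid 11^{5}-1$ to be $p_{3}$ or $p_{4}$, and for $p\in\{p_{3},p_{4}\}$ it forces $M$ to absorb the remaining large prime. In parallel, $\sigma(3^{\alpha_{1}})$ must itself be $\{11,p_{3},p_{4}\}$-smooth, and inspecting $\sigma(3^{2})=13$, $\sigma(3^{4})=11^{2}$, $\sigma(3^{6})=1093$, $\sigma(3^{8})=13\cdot 757,\dots$ pins down $\alpha_{1}$ and identifies one of $p_{3},p_{4}$ with a specific prime.

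I expect the $D=3$ analysis to be the main obstacle: unlike $D=9$, it does not collapse under a single size estimate, and one must combine the smoothness of $\sigma(3^{\alpha_{1}})$, the origin of the forced factor $5$, and the smoothness of the cofactor $M$ to pin $p_{3}$ and $p_{4}$ to a short list of explicit values. Each resulting configuration is then eliminated either by substituting back into $\frac{\sigma(n)}{n}=\frac{5}{3}$, which the now-constrained exponents violate, or by a Legendre-symbol/order argument modulo an auxiliary prime dividing some $p_{i}^{\alpha_{i}+1}-1$, exactly as in the earlier $p_{2}=5$ and $p_{2}=7$ lemmas. Running through the finitely many sub-possibilities then completes the proof.
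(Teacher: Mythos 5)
Your reduction to $D\in\{3,9\}$ is correct and is in fact a cleaner organization than the paper's (the paper reaches the same dichotomy indirectly, by splitting into $p_{3}=13$, where it forces $D=9$, and $p_{3}\geqslant 17$, where it forces $D=3$). Your $D=9$ case is also complete and matches the paper's Case 1: the size estimate $\frac{17}{9}<\frac{3}{2}\cdot\frac{11}{10}\cdot\frac{p_{3}}{p_{3}-1}\cdot\frac{p_{4}}{p_{4}-1}$ does isolate $(p_{3},p_{4})=(13,17)$, and the parity argument with ${\rm ord}_{17}(3)={\rm ord}_{17}(11)=16$, ${\rm ord}_{17}(13)=4$ against the odd numbers $\alpha_{i}+1$ is exactly the paper's contradiction.

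The genuine gap is the $D=3$ case, which is where essentially all of the work lies (the paper's Case 2, subcases 2.1--2.18, i.e.\ most of the proof of Lemma \ref{lem4.1}), and which you only sketch. Two concrete problems. First, your claim that ``the size inequality by itself no longer bounds $p_{3},p_{4}$'' is false and discards the step that makes the problem finite: with $D=3$ one needs $\frac{5}{3}<\frac{3}{2}\cdot\frac{11}{10}\cdot\frac{p_{3}}{p_{3}-1}\cdot\frac{p_{4}}{p_{4}-1}$, which already forces $p_{3}\leqslant 197$ (this is the paper's opening estimate); without that bound, your smoothness constraints alone do not yield a finite list of configurations. Second, your assertion that inspecting $\sigma(3^{\alpha_{1}})$ ``pins down $\alpha_{1}$ and identifies one of $p_{3},p_{4}$'' breaks down precisely at $\alpha_{1}=4$, since $\sigma(3^{4})=11^{2}$ is $\{11\}$-smooth and imposes no constraint whatsoever on $p_{3},p_{4}$; the paper has to eliminate $\alpha_{1}=4$ by separate size arguments for each $p_{3}\in\{73,\ldots,137\}$ (and only for $p_{3}\geqslant 139$ does a single estimate kill it). More generally, after the correct ingredients you list (smoothness of $\sigma(3^{\alpha_{1}})$, the origin of the single factor $5$, the prime $3221\mid 11^{5}-1$), one is still left with roughly eighteen residual values of $p_{3}$, each needing individualized lower/upper bounds on $\alpha_{1},\alpha_{2},\alpha_{3}$ and ad hoc order or Legendre-symbol contradictions; ``running through the finitely many sub-possibilities'' is not a proof but a promissory note for the hardest part of the lemma, and nothing in your sketch shows the enumeration terminates, let alone that every branch dies.
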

\begin{proof}
Assume that $n=3^{\alpha_{1}}11^{\alpha_{2}}p_{3}^{\alpha_{3}}p_{4}^{\alpha_{4}}$ is an odd deficient-perfect number with deficient divisor $d=3^{\beta_{1}}11^{\beta_{2}}p_{3}^{\beta_{3}}p_{4}^{\beta_{4}}$. By (\ref{Eq1}), we have
\begin{equation}\label{4.1}
\frac{3^{\alpha_{1}+1}-1}{2}\cdot \frac{11^{\alpha_{2}+1}-1}{10}\cdot \frac{p_{3}^{\alpha_{3}+1}-1}{p_{3}-1}\cdot \frac{p_{4}^{\alpha_{4}+1}-1}{p_{4}-1}=2\cdot 3^{\alpha_{1}}11^{\alpha_{2}}p_{3}^{\alpha_{3}}p_{4}^{\alpha_{4}}-3^{\beta_{1}}11^{\beta_{2}}p_{3}^{\beta_{3}}p_{4}^{\beta_{4}}.
\end{equation}
If $p_{3}\geqslant 199$, then
$$2=\frac{\sigma(n)}{n}+\frac{d}{n}<\frac{3}{2}\cdot \frac{11}{10}\cdot \frac{199}{198}\cdot \frac{211}{210}+\frac{1}{3}<2,$$
which is clearly false. Thus $p_{3}\leqslant 197$.

{\bf Case 1.} $p_{3}=13$. If $D\geqslant 11$, then
$$2=\frac{\sigma(n)}{n}+\frac{d}{n}<\frac{3}{2}\cdot \frac{11}{10}\cdot \frac{13}{12}\cdot \frac{17}{16}+\frac{1}{11}<2,$$
which is false. If $D=3$, then
$$2=\frac{\sigma(n)}{n}+\frac{d}{n}>\frac{3^{3}-1}{2\cdot 3^{2}}\cdot \frac{11^{3}-1}{10\cdot 11^{2}}\cdot \frac{13^{3}-1}{12\cdot 13^{2}}+\frac{1}{3}>2,$$
which is absurd. Thus $D=9$. If $p_{4}\geqslant 19$, then
$$2=\frac{\sigma(n)}{n}+\frac{d}{n}<\frac{3}{2}\cdot \frac{11}{10}\cdot \frac{13}{12}\cdot \frac{19}{18}+\frac{1}{9}<2,$$
which is impossible Thus $p_{4}=17$. Since $\rm{ord}_{17}(3)=\rm{ord}_{17}(11)=16$ and $\rm{ord}_{17}(13)=4$, we know that the equality (\ref{4.1}) cannot hold.

{\bf Case 2.} $p_{3}\geqslant 17$. If $D\geqslant 9$, then
$$2=\frac{\sigma(n)}{n}+\frac{d}{n}<\frac{3}{2}\cdot \frac{11}{10}\cdot \frac{17}{16}\cdot \frac{19}{18}+\frac{1}{9}<2,$$
which is impossible. Thus $D=3$. By (\ref{4.1}), we have $\alpha_{1}\geqslant 4$ and $\alpha_{2}\geqslant 4$. If $p_{3}\leqslant 71$, then
$$2=\frac{\sigma(n)}{n}+\frac{d}{n}>\frac{3^{5}-1}{2\cdot 3^{4}}\cdot \frac{11^{5}-1}{10\cdot 11^{4}}\cdot \frac{71^{3}-1}{70\cdot 71^{2}}+\frac{1}{3}>2,$$
which is clearly false. Thus $p_{3}\geqslant 73$.

If $\alpha_{1}=6$, then $p_{4}=1093$ and $p_{3}\leqslant 103$. Otherwise, if $p_{3}\geqslant 107$, then
$$2=\frac{\sigma(n)}{n}+\frac{d}{n}<\frac{3^{7}-1}{2\cdot 3^{6}}\cdot \frac{11}{10}\cdot \frac{107}{106}\cdot \frac{1093}{1092}+\frac{1}{3}<2,$$
which is a contradiction. However,
$$2=\frac{\sigma(n)}{n}+\frac{d}{n}\geqslant \frac{3^{7}-1}{2\cdot 3^{6}}\cdot \frac{11^{5}-1}{10\cdot 11^{4}}\cdot \frac{103^{3}-1}{102\cdot 103^{2}}\cdot \frac{1093^{3}-1}{1092\cdot 1093^{2}}+\frac{1}{3}>2,$$
which is also a contradiction. Thus $\alpha_{1}\neq 6$. By (\ref{4.1}), we have $\alpha_{1}\neq 8$ and $\alpha_{1}\neq 10$. If $p_{3}\geqslant 139$, then $\alpha_{1}\neq 4$. Otherwise, if $\alpha_{1}=4$, then
$$2=\frac{\sigma(n)}{n}+\frac{d}{n}<\frac{3^{5}-1}{2\cdot 3^{4}}\cdot \frac{11}{10}\cdot \frac{139}{138}\cdot \frac{149}{148}+\frac{1}{3}<2,$$
which is impossible.

{\bf Subcase 2.1 } $p_{3}\in\{73, 79, 83, 89, 97\}$. If $\alpha_{1}\geqslant 12$, then
$$2=\frac{\sigma(n)}{n}+\frac{d}{n}\geqslant \frac{3^{13}-1}{2\cdot 3^{12}}\cdot \frac{11^{5}-1}{10\cdot 11^{4}}\cdot \frac{97^{3}-1}{96\cdot 97^{2}}+\frac{1}{3}>2,$$
which is clearly false. Thus $\alpha_{1}=4$.

If $p_{3}=73$, then $p_{4}\leqslant 2621$. Otherwise, if $p_{4}\geqslant 2633$, then
$$2=\frac{\sigma(n)}{n}+\frac{d}{n}<\frac{3^{5}-1}{2\cdot 3^{4}}\cdot \frac{11}{10}\cdot \frac{73}{72}\cdot \frac{2633}{2632}+\frac{1}{3}<2,$$
which is impossible. By (\ref{4.1}), we have $\alpha_{2}\geqslant 6$. If $\alpha_{3}=2$, then $p_{4}=1801$ and
$$2=\frac{\sigma(n)}{n}+\frac{d}{n}\geqslant \frac{3^{5}-1}{2\cdot 3^{4}}\cdot \frac{11^{7}-1}{10\cdot 11^{6}}\cdot \frac{73^{3}-1}{72\cdot 73^{2}}\cdot \frac{1801^{3}-1}{1800\cdot 1801^{2}}+\frac{1}{3}>2,$$
which is a contradiction. Thus $\alpha_{3}\geqslant 4$ and
$$2=\frac{\sigma(n)}{n}+\frac{d}{n}\geqslant \frac{3^{5}-1}{2\cdot 3^{4}}\cdot \frac{11^{7}-1}{10\cdot 11^{6}}\cdot \frac{73^{5}-1}{72\cdot 73^{4}}\cdot \frac{2621^{3}-1}{2620\cdot 2621^{2}}+\frac{1}{3}>2,$$
which is also a contradiction.

If $p_{3}=79$, then $p_{4}\leqslant 691$. Otherwise, if $p_{4}\geqslant 701$, then
$$2=\frac{\sigma(n)}{n}+\frac{d}{n}<\frac{3^{5}-1}{2\cdot 3^{4}}\cdot \frac{11}{10}\cdot \frac{79}{78}\cdot \frac{701}{700}+\frac{1}{3}<2,$$
which is impossible. By (\ref{4.1}), we have $\alpha_{2}\geqslant 6$ and
$$2=\frac{\sigma(n)}{n}+\frac{d}{n}\geqslant \frac{3^{5}-1}{2\cdot 3^{4}}\cdot \frac{11^{7}-1}{10\cdot 11^{6}}\cdot \frac{79^{3}-1}{78\cdot 79^{2}}\cdot \frac{691^{3}-1}{690\cdot 691^{2}}+\frac{1}{3}>2,$$
which is a contradiction.

If $p_{3}=83$, then $p_{4}\leqslant 487$. Otherwise, if $p_{4}\geqslant 491$, then
$$2=\frac{\sigma(n)}{n}+\frac{d}{n}<\frac{3^{5}-1}{2\cdot 3^{4}}\cdot \frac{11}{10}\cdot \frac{83}{82}\cdot \frac{491}{490}+\frac{1}{3}<2,$$
which is impossible. By (\ref{4.1}), we have $\alpha_{2}\geqslant 6$ and
$$2=\frac{\sigma(n)}{n}+\frac{d}{n}\geqslant \frac{3^{5}-1}{2\cdot 3^{4}}\cdot \frac{11^{7}-1}{10\cdot 11^{6}}\cdot \frac{83^{3}-1}{82\cdot 83^{2}}\cdot \frac{487^{3}-1}{486\cdot 487^{2}}+\frac{1}{3}>2,$$
which is a contradiction.

If $p_{3}=89$, then $p_{4}\leqslant 347$. Otherwise, if $p_{4}\geqslant 349$, then
$$2=\frac{\sigma(n)}{n}+\frac{d}{n}<\frac{3^{5}-1}{2\cdot 3^{4}}\cdot \frac{11}{10}\cdot \frac{89}{88}\cdot \frac{349}{348}+\frac{1}{3}<2,$$
which is impossible. By (\ref{4.1}), we have $\alpha_{2}\geqslant 6$ and
$$2=\frac{\sigma(n)}{n}+\frac{d}{n}\geqslant \frac{3^{5}-1}{2\cdot 3^{4}}\cdot \frac{11^{7}-1}{10\cdot 11^{6}}\cdot \frac{89^{3}-1}{88\cdot 89^{2}}\cdot \frac{347^{3}-1}{346\cdot 347^{2}}+\frac{1}{3}>2,$$
which is a contradiction.

If $p_{3}=97$, then $p_{4}\leqslant 257$. Otherwise, if $p_{4}\geqslant 263$, then
$$2=\frac{\sigma(n)}{n}+\frac{d}{n}<\frac{3^{5}-1}{2\cdot 3^{4}}\cdot \frac{11}{10}\cdot \frac{97}{96}\cdot \frac{263}{262}+\frac{1}{3}<2,$$
which is impossible. It follows that
 $$2=\frac{\sigma(n)}{n}+\frac{d}{n}\geqslant \frac{3^{5}-1}{2\cdot 3^{4}}\cdot \frac{11^{5}-1}{10\cdot 11^{4}}\cdot \frac{97^{3}-1}{96\cdot 97^{2}}\cdot \frac{257^{3}-1}{256\cdot 257^{2}}+\frac{1}{3}>2,$$
which is a contradiction.

{\bf Subcase 2.2 } $p_{3}=101$. If $p_{4}\geqslant 10009$, then
$$2=\frac{\sigma(n)}{n}+\frac{d}{n}<\frac{3}{2}\cdot \frac{11}{10}\cdot \frac{101}{100}\cdot \frac{10009}{10008}+\frac{1}{3}<2,$$
which is clearly false. Thus $p_{4}\leqslant 9929$.

If $\alpha_{1}=4$, then $p_{4}\leqslant 233$. Otherwise, if $p_{4}\geqslant 239$, then
$$2=\frac{\sigma(n)}{n}+\frac{d}{n}<\frac{3^{5}-1}{2\cdot 3^{4}}\cdot \frac{11}{10}\cdot \frac{101}{100}\cdot \frac{239}{238}+\frac{1}{3}<2,$$
which is impossible. It follows that
 $$2=\frac{\sigma(n)}{n}+\frac{d}{n}\geqslant \frac{3^{5}-1}{2\cdot 3^{4}}\cdot \frac{11^{5}-1}{10\cdot 11^{4}}\cdot \frac{101^{3}-1}{100\cdot 101^{2}}\cdot \frac{233^{3}-1}{232\cdot 233^{2}}+\frac{1}{3}>2,$$
which is a contradiction. By (\ref{4.1}), we have $\alpha_{1}\geqslant 16$. If $\alpha_{2}=4$, then $p_{4}=3221$ and
$$2=\frac{\sigma(n)}{n}+\frac{d}{n}\geqslant \frac{3^{17}-1}{2\cdot 3^{16}}\cdot \frac{11^{5}-1}{10\cdot 11^{4}}\cdot \frac{101^{3}-1}{100\cdot 101^{2}}\cdot \frac{3221^{3}-1}{3220\cdot 3221^{2}}+\frac{1}{3}>2,$$
which is absurd. Thus $\alpha_{2}\geqslant 6$. Noting that $10303\mid (101^{3}-1)$, we have $\alpha_{3}\geqslant 4$ and
$$2=\frac{\sigma(n)}{n}+\frac{d}{n}\geqslant \frac{3^{17}-1}{2\cdot 3^{16}}\cdot \frac{11^{7}-1}{10\cdot 11^{6}}\cdot \frac{101^{5}-1}{100\cdot 101^{4}}\cdot \frac{9929^{3}-1}{9928\cdot 9929^{2}}+\frac{1}{3}>2,$$
a contradiction.

{\bf Subcase 2.3 } $p_{3}=103$. If $p_{4}\geqslant 3407$, then
$$2=\frac{\sigma(n)}{n}+\frac{d}{n}<\frac{3}{2}\cdot \frac{11}{10}\cdot \frac{103}{102}\cdot \frac{3407}{3406}+\frac{1}{3}<2,$$
which is clearly false. Thus $p_{4}\leqslant 3391$.

If $\alpha_{1}=4$, then $p_{4}\leqslant 223$. Otherwise, if $p_{4}\geqslant 227$, then
$$2=\frac{\sigma(n)}{n}+\frac{d}{n}<\frac{3^{5}-1}{2\cdot 3^{4}}\cdot \frac{11}{10}\cdot \frac{103}{102}\cdot \frac{227}{226}+\frac{1}{3}<2,$$
which is impossible. Noting that $\rm{ord}_{5}(3)=\rm{ord}_{5}(103)=4$ and $3221\mid (11^{5}-1)$, we have $\alpha_{2}\geqslant 6$ and $p_{4}\equiv 1\pmod 5$.
Thus $p_{4}\leqslant 211$ and
$$2=\frac{\sigma(n)}{n}+\frac{d}{n}\geqslant \frac{3^{5}-1}{2\cdot 3^{4}}\cdot \frac{11^{7}-1}{10\cdot 11^{6}}\cdot \frac{103^{3}-1}{102\cdot 103^{2}}\cdot \frac{211^{3}-1}{210\cdot 211^{2}}+\frac{1}{3}>2,$$
 which is absurd. By (\ref{4.1}), we have $\alpha_{1}\geqslant 16$ and $\alpha_{3}\geqslant 4$. If $\alpha_{2}=4$, then $p_{4}=3221$ and
$$2=\frac{\sigma(n)}{n}+\frac{d}{n}\geqslant \frac{3^{17}-1}{2\cdot 3^{16}}\cdot \frac{11^{5}-1}{10\cdot 11^{4}}\cdot \frac{103^{5}-1}{102\cdot 103^{4}}\cdot \frac{3221^{3}-1}{3220\cdot 3221^{2}}+\frac{1}{3}>2,$$
which is a contradiction. Thus $\alpha_{2}\geqslant 6$ and
$$2=\frac{\sigma(n)}{n}+\frac{d}{n}\geqslant \frac{3^{17}-1}{2\cdot 3^{16}}\cdot \frac{11^{7}-1}{10\cdot 11^{6}}\cdot \frac{103^{5}-1}{102\cdot 103^{4}}\cdot \frac{3391^{3}-1}{3390\cdot 3391^{2}}+\frac{1}{3}>2,$$
which is also a contradiction.

{\bf Subcase 2.4 } $p_{3}=107$. If $p_{4}\geqslant 1523$, then
$$2=\frac{\sigma(n)}{n}+\frac{d}{n}<\frac{3}{2}\cdot \frac{11}{10}\cdot \frac{107}{106}\cdot \frac{1523}{1522}+\frac{1}{3}<2,$$
which is clearly false. Thus $p_{4}\leqslant 1511$.

If $\alpha_{1}=4$, then $p_{4}\leqslant 199$. Otherwise, if $p_{4}\geqslant 211$, then
$$2=\frac{\sigma(n)}{n}+\frac{d}{n}<\frac{3^{5}-1}{2\cdot 3^{4}}\cdot \frac{11}{10}\cdot \frac{107}{106}\cdot \frac{211}{210}+\frac{1}{3}<2,$$
which is impossible. It follows that
 $$2=\frac{\sigma(n)}{n}+\frac{d}{n}\geqslant \frac{3^{5}-1}{2\cdot 3^{4}}\cdot \frac{11^{5}-1}{10\cdot 11^{4}}\cdot \frac{107^{3}-1}{106\cdot 107^{2}}\cdot \frac{199^{3}-1}{198\cdot 199^{2}}+\frac{1}{3}>2,$$
which is absurd. By (\ref{4.1}), we have $\alpha_{1}\geqslant 16, \alpha_{2}\geqslant 6$ and
$$2=\frac{\sigma(n)}{n}+\frac{d}{n}\geqslant \frac{3^{17}-1}{2\cdot 3^{16}}\cdot \frac{11^{7}-1}{10\cdot 11^{6}}\cdot \frac{107^{3}-1}{106\cdot 107^{2}}\cdot \frac{1511^{3}-1}{1510\cdot 1511^{2}}+\frac{1}{3}>2,$$
which is a contradiction.

{\bf Subcase 2.5 } $p_{3}=109$. If $p_{4}\geqslant 1201$, then
$$2=\frac{\sigma(n)}{n}+\frac{d}{n}<\frac{3}{2}\cdot \frac{11}{10}\cdot \frac{109}{108}\cdot \frac{1201}{1200}+\frac{1}{3}<2,$$
which is clearly false. Thus $p_{4}\leqslant 1193$.

If $\alpha_{1}=4$, then $p_{4}\leqslant 199$. Otherwise, if $p_{4}\geqslant 211$, then
$$2=\frac{\sigma(n)}{n}+\frac{d}{n}<\frac{3^{5}-1}{2\cdot 3^{4}}\cdot \frac{11}{10}\cdot \frac{109}{108}\cdot \frac{211}{210}+\frac{1}{3}<2,$$
which is impossible. It follows that
 $$2=\frac{\sigma(n)}{n}+\frac{d}{n}\geqslant \frac{3^{5}-1}{2\cdot 3^{4}}\cdot \frac{11^{5}-1}{10\cdot 11^{4}}\cdot \frac{109^{3}-1}{108\cdot 109^{2}}\cdot \frac{199^{3}-1}{198\cdot 199^{2}}+\frac{1}{3}>2,$$
which is absurd. By (\ref{4.1}), we have $\alpha_{1}\geqslant 16, \alpha_{2}\geqslant 6$ and
$$2=\frac{\sigma(n)}{n}+\frac{d}{n}\geqslant \frac{3^{17}-1}{2\cdot 3^{16}}\cdot \frac{11^{7}-1}{10\cdot 11^{6}}\cdot \frac{109^{3}-1}{108\cdot 109^{2}}\cdot \frac{1193^{3}-1}{1192\cdot 1193^{2}}+\frac{1}{3}>2,$$
which is a contradiction.

{\bf Subcase 2.6 } $p_{3}=113$. If $p_{4}\geqslant 863$, then
$$2=\frac{\sigma(n)}{n}+\frac{d}{n}<\frac{3}{2}\cdot \frac{11}{10}\cdot \frac{113}{112}\cdot \frac{863}{862}+\frac{1}{3}<2,$$
which is clearly false. Thus $p_{4}\leqslant 859$.

If $\alpha_{1}=4$, then $p_{4}\leqslant 181$. Otherwise, if $p_{4}\geqslant 191$, then
$$2=\frac{\sigma(n)}{n}+\frac{d}{n}<\frac{3^{5}-1}{2\cdot 3^{4}}\cdot \frac{11}{10}\cdot \frac{113}{112}\cdot \frac{191}{190}+\frac{1}{3}<2,$$
which is impossible. It follows that
 $$2=\frac{\sigma(n)}{n}+\frac{d}{n}\geqslant \frac{3^{5}-1}{2\cdot 3^{4}}\cdot \frac{11^{5}-1}{10\cdot 11^{4}}\cdot \frac{113^{3}-1}{112\cdot 113^{2}}\cdot \frac{181^{3}-1}{180\cdot 181^{2}}+\frac{1}{3}>2,$$
which is absurd. By (\ref{4.1}), we have $\alpha_{1}\geqslant 16, \alpha_{2}\geqslant 6$ and
$$2=\frac{\sigma(n)}{n}+\frac{d}{n}\geqslant \frac{3^{17}-1}{2\cdot 3^{16}}\cdot \frac{11^{7}-1}{10\cdot 11^{6}}\cdot \frac{113^{3}-1}{112\cdot 113^{2}}\cdot \frac{859^{3}-1}{858\cdot 859^{2}}+\frac{1}{3}>2,$$
which is a contradiction.

{\bf Subcase 2.7 } $p_{3}=127$. If $p_{4}\geqslant 467$, then
$$2=\frac{\sigma(n)}{n}+\frac{d}{n}<\frac{3}{2}\cdot \frac{11}{10}\cdot \frac{127}{126}\cdot \frac{467}{466}+\frac{1}{3}<2,$$
which is clearly false. Thus $p_{4}\leqslant 463$.

If $\alpha_{1}=4$, then $p_{4}\leqslant 157$. Otherwise, if $p_{4}\geqslant 163$, then
$$2=\frac{\sigma(n)}{n}+\frac{d}{n}<\frac{3^{5}-1}{2\cdot 3^{4}}\cdot \frac{11}{10}\cdot \frac{127}{126}\cdot \frac{163}{162}+\frac{1}{3}<2,$$
which is impossible. It follows that
 $$2=\frac{\sigma(n)}{n}+\frac{d}{n}\geqslant \frac{3^{5}-1}{2\cdot 3^{4}}\cdot \frac{11^{5}-1}{10\cdot 11^{4}}\cdot \frac{127^{3}-1}{126\cdot 127^{2}}\cdot \frac{157^{3}-1}{156\cdot 157^{2}}+\frac{1}{3}>2,$$
which is absurd. By (\ref{4.1}), we have $\alpha_{1}\geqslant 16, \alpha_{2}\geqslant 6$ and
$$2=\frac{\sigma(n)}{n}+\frac{d}{n}\geqslant \frac{3^{17}-1}{2\cdot 3^{16}}\cdot \frac{11^{7}-1}{10\cdot 11^{6}}\cdot \frac{127^{3}-1}{126\cdot 127^{2}}\cdot \frac{463^{3}-1}{462\cdot 463^{2}}+\frac{1}{3}>2,$$
which is a contradiction.

{\bf Subcase 2.8 } $p_{3}=131$. If $p_{4}\geqslant 421$, then
$$2=\frac{\sigma(n)}{n}+\frac{d}{n}<\frac{3}{2}\cdot \frac{11}{10}\cdot \frac{131}{130}\cdot \frac{421}{420}+\frac{1}{3}<2,$$
which is clearly false. Thus $p_{4}\leqslant 419$.

If $\alpha_{1}=4$, then $p_{4}\leqslant 151$. Otherwise, if $p_{4}\geqslant 157$, then
$$2=\frac{\sigma(n)}{n}+\frac{d}{n}<\frac{3^{5}-1}{2\cdot 3^{4}}\cdot \frac{11}{10}\cdot \frac{131}{130}\cdot \frac{157}{156}+\frac{1}{3}<2,$$
which is impossible. It follows that
 $$2=\frac{\sigma(n)}{n}+\frac{d}{n}\geqslant \frac{3^{5}-1}{2\cdot 3^{4}}\cdot \frac{11^{5}-1}{10\cdot 11^{4}}\cdot \frac{131^{3}-1}{130\cdot 131^{2}}\cdot \frac{151^{3}-1}{150\cdot 151^{2}}+\frac{1}{3}>2,$$
which is absurd. By (\ref{4.1}), we have $\alpha_{1}\geqslant 16, \alpha_{2}\geqslant 6$ and
$$2=\frac{\sigma(n)}{n}+\frac{d}{n}\geqslant \frac{3^{17}-1}{2\cdot 3^{16}}\cdot \frac{11^{7}-1}{10\cdot 11^{6}}\cdot \frac{131^{3}-1}{130\cdot 131^{2}}\cdot \frac{419^{3}-1}{418\cdot 419^{2}}+\frac{1}{3}>2,$$
which is a contradiction.

{\bf Subcase 2.9 } $p_{3}=137$. If $p_{4}\geqslant 373$, then
$$2=\frac{\sigma(n)}{n}+\frac{d}{n}<\frac{3}{2}\cdot \frac{11}{10}\cdot \frac{137}{136}\cdot \frac{373}{372}+\frac{1}{3}<2,$$
which is clearly false. Thus $p_{4}\leqslant 367$.

If $\alpha_{1}=4$, then $p_{4}=139$. Otherwise, if $p_{4}\geqslant 149$, then
$$2=\frac{\sigma(n)}{n}+\frac{d}{n}<\frac{3^{5}-1}{2\cdot 3^{4}}\cdot \frac{11}{10}\cdot \frac{137}{136}\cdot \frac{149}{148}+\frac{1}{3}<2,$$
which is impossible.It follows that
 $$2=\frac{\sigma(n)}{n}+\frac{d}{n}\geqslant \frac{3^{5}-1}{2\cdot 3^{4}}\cdot \frac{11^{5}-1}{10\cdot 11^{4}}\cdot \frac{137^{3}-1}{136\cdot 137^{2}}\cdot \frac{139^{3}-1}{138\cdot 139^{2}}+\frac{1}{3}>2,$$
which is absurd. By (\ref{4.1}), we have $\alpha_{1}\geqslant 16, \alpha_{2}\geqslant 6$ and
$$2=\frac{\sigma(n)}{n}+\frac{d}{n}\geqslant \frac{3^{17}-1}{2\cdot 3^{16}}\cdot \frac{11^{7}-1}{10\cdot 11^{6}}\cdot \frac{137^{3}-1}{136\cdot 137^{2}}\cdot \frac{367^{3}-1}{366\cdot 367^{2}}+\frac{1}{3}>2,$$
which is a contradiction.

{\bf Subcase 2.10 } $p_{3}=139$. If $p_{4}\geqslant 359$, then
$$2=\frac{\sigma(n)}{n}+\frac{d}{n}<\frac{3}{2}\cdot \frac{11}{10}\cdot \frac{139}{138}\cdot \frac{359}{358}+\frac{1}{3}<2,$$
which is clearly false. Thus $p_{4}\leqslant 353$. By (\ref{4.1}), we have $\alpha_{1}\geqslant 16, \alpha_{2}\geqslant 6$ and
$$2=\frac{\sigma(n)}{n}+\frac{d}{n}\geqslant \frac{3^{17}-1}{2\cdot 3^{16}}\cdot \frac{11^{7}-1}{10\cdot 11^{6}}\cdot \frac{139^{3}-1}{138\cdot 139^{2}}\cdot \frac{353^{3}-1}{352\cdot 353^{2}}+\frac{1}{3}>2,$$
which is a contradiction.

{\bf Subcase 2.11 } $p_{3}\in\{149, 151\}$. If $p_{4}\geqslant 307$, then
$$2=\frac{\sigma(n)}{n}+\frac{d}{n}<\frac{3}{2}\cdot \frac{11}{10}\cdot \frac{149}{148}\cdot \frac{307}{306}+\frac{1}{3}<2,$$
which is clearly false. Thus $p_{4}\leqslant 293$. By (\ref{4.1}), we have $\alpha_{1}\geqslant 16, \alpha_{2}\geqslant 6$ and
$$2=\frac{\sigma(n)}{n}+\frac{d}{n}\geqslant \frac{3^{17}-1}{2\cdot 3^{16}}\cdot \frac{11^{7}-1}{10\cdot 11^{6}}\cdot \frac{151^{3}-1}{150\cdot 151^{2}}\cdot \frac{293^{3}-1}{292\cdot 293^{2}}+\frac{1}{3}>2,$$
which is a contradiction.

{\bf Subcase 2.12 } $p_{3}=157$. If $p_{4}\geqslant 277$, then
$$2=\frac{\sigma(n)}{n}+\frac{d}{n}<\frac{3}{2}\cdot \frac{11}{10}\cdot \frac{157}{156}\cdot \frac{277}{276}+\frac{1}{3}<2,$$
which is clearly false. Thus $p_{4}\leqslant 271$. By (\ref{4.1}), we have $\alpha_{1}\geqslant 16, \alpha_{2}\geqslant 6$ and
$$2=\frac{\sigma(n)}{n}+\frac{d}{n}\geqslant \frac{3^{17}-1}{2\cdot 3^{16}}\cdot \frac{11^{7}-1}{10\cdot 11^{6}}\cdot \frac{157^{3}-1}{156\cdot 157^{2}}\cdot \frac{271^{3}-1}{270\cdot 271^{2}}+\frac{1}{3}>2,$$
which is a contradiction.

{\bf Subcase 2.13 } $p_{3}=163$. If $p_{4}\geqslant 263$, then
$$2=\frac{\sigma(n)}{n}+\frac{d}{n}<\frac{3}{2}\cdot \frac{11}{10}\cdot \frac{163}{162}\cdot \frac{263}{262}+\frac{1}{3}<2,$$
which is clearly false. Thus $p_{4}\leqslant 257$. By (\ref{4.1}), we have $\alpha_{1}\geqslant 16, \alpha_{2}\geqslant 6$ and
$$2=\frac{\sigma(n)}{n}+\frac{d}{n}\geqslant \frac{3^{17}-1}{2\cdot 3^{16}}\cdot \frac{11^{7}-1}{10\cdot 11^{6}}\cdot \frac{163^{3}-1}{162\cdot 163^{2}}\cdot \frac{257^{3}-1}{256\cdot 257^{2}}+\frac{1}{3}>2,$$
which is a contradiction.

{\bf Subcase 2.14 } $p_{3}=167$. If $p_{4}\geqslant 251$, then
$$2=\frac{\sigma(n)}{n}+\frac{d}{n}<\frac{3}{2}\cdot \frac{11}{10}\cdot \frac{167}{166}\cdot \frac{251}{250}+\frac{1}{3}<2,$$
which is clearly false. Thus $p_{4}\leqslant 241$. By (\ref{4.1}), we have $\alpha_{1}\geqslant 16, \alpha_{2}\geqslant 6$ and
$$2=\frac{\sigma(n)}{n}+\frac{d}{n}\geqslant \frac{3^{17}-1}{2\cdot 3^{16}}\cdot \frac{11^{7}-1}{10\cdot 11^{6}}\cdot \frac{167^{3}-1}{166\cdot 167^{2}}\cdot \frac{241^{3}-1}{240\cdot 241^{2}}+\frac{1}{3}>2,$$
which is a contradiction.

{\bf Subcase 2.15 } $p_{3}=173$. If $p_{4}\geqslant 239$, then
$$2=\frac{\sigma(n)}{n}+\frac{d}{n}<\frac{3}{2}\cdot \frac{11}{10}\cdot \frac{173}{172}\cdot \frac{239}{238}+\frac{1}{3}<2,$$
which is clearly false. Thus $p_{4}\leqslant 233$. By (\ref{4.1}), we have $\alpha_{1}\geqslant 16, \alpha_{2}\geqslant 6$ and
$$2=\frac{\sigma(n)}{n}+\frac{d}{n}\geqslant \frac{3^{17}-1}{2\cdot 3^{16}}\cdot \frac{11^{7}-1}{10\cdot 11^{6}}\cdot \frac{173^{3}-1}{172\cdot 173^{2}}\cdot \frac{233^{3}-1}{232\cdot 233^{2}}+\frac{1}{3}>2,$$
which is a contradiction.

{\bf Subcase 2.16 } $p_{3}=179$. If $p_{4}\geqslant 227$, then
$$2=\frac{\sigma(n)}{n}+\frac{d}{n}<\frac{3}{2}\cdot \frac{11}{10}\cdot \frac{179}{178}\cdot \frac{227}{226}+\frac{1}{3}<2,$$
which is clearly false. Thus $p_{4}\leqslant 223$. By (\ref{4.1}), we have $\alpha_{1}\geqslant 16, \alpha_{2}\geqslant 6$ and
$$2=\frac{\sigma(n)}{n}+\frac{d}{n}\geqslant \frac{3^{17}-1}{2\cdot 3^{16}}\cdot \frac{11^{7}-1}{10\cdot 11^{6}}\cdot \frac{179^{3}-1}{178\cdot 179^{2}}\cdot \frac{223^{3}-1}{222\cdot 223^{2}}+\frac{1}{3}>2,$$
which is a contradiction.

{\bf Subcase 2.17 } $p_{3}=181$. If $p_{4}\geqslant 223$, then
$$2=\frac{\sigma(n)}{n}+\frac{d}{n}<\frac{3}{2}\cdot \frac{11}{10}\cdot \frac{181}{180}\cdot \frac{223}{222}+\frac{1}{3}<2,$$
which is clearly false. Thus $p_{4}\leqslant 211$. By (\ref{4.1}), we have $\alpha_{1}\geqslant 16, \alpha_{2}\geqslant 6$ and
$$2=\frac{\sigma(n)}{n}+\frac{d}{n}\geqslant \frac{3^{17}-1}{2\cdot 3^{16}}\cdot \frac{11^{7}-1}{10\cdot 11^{6}}\cdot \frac{181^{3}-1}{180\cdot 181^{2}}\cdot \frac{211^{3}-1}{210\cdot 211^{2}}+\frac{1}{3}>2,$$
which is a contradiction.

{\bf Subcase 2.18 } $p_{3}\in\{191, 193, 197\}$. If $p_{4}\geqslant 211$, then
$$2=\frac{\sigma(n)}{n}+\frac{d}{n}<\frac{3}{2}\cdot \frac{11}{10}\cdot \frac{191}{190}\cdot \frac{211}{210}+\frac{1}{3}<2,$$
which is clearly false. Thus $p_{4}\leqslant 199$. Noting that $\rm{ord}_{5}(3)=4$ and ${\rm{ord}}_{5}(p_{4})$ are all even, we have $5\mid (\alpha_{2}+1)$ and $(11^{5}-1)\mid(11^{\alpha_{2}+1}-1)$ or $5\mid (\alpha_{3}+1)$ and $(191^{5}-1)\mid(191^{\alpha_{3}+1}-1)$. However, $3221\mid (11^{5}-1)$ and $1871\mid (191^{5}-1)$, a contradiction.

This completes the proof of Lemma \ref{lem4.1}.
\end{proof}

\begin{lemma}\label{lem4.2}
There is no odd deficient perfect number of the form $n=3^{\alpha_{1}}13^{\alpha_{2}}p_{3}^{\alpha_{3}}p_{4}^{\alpha_{4}}$.
\end{lemma}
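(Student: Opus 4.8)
The plan is to run the same machine as in Lemma~\ref{lem4.1}, the decisive simplification being that $13\equiv 3\pmod 5$, so (unlike the prime $11$) the factor $13$ can never absorb a factor of $5$ coming from $\sigma(n)$. First I specialize (\ref{Eq1}) to
$$\frac{3^{\alpha_{1}+1}-1}{2}\cdot\frac{13^{\alpha_{2}+1}-1}{12}\cdot\frac{p_{3}^{\alpha_{3}+1}-1}{p_{3}-1}\cdot\frac{p_{4}^{\alpha_{4}+1}-1}{p_{4}-1}=2\cdot 3^{\alpha_{1}}13^{\alpha_{2}}p_{3}^{\alpha_{3}}p_{4}^{\alpha_{4}}-3^{\beta_{1}}13^{\beta_{2}}p_{3}^{\beta_{3}}p_{4}^{\beta_{4}},$$
noting $p_{3}\geqslant 17$. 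Since $13,p_{3},p_{4}>8$, any value of $D$ below $9$ must be a power of $3$, hence equal to $3$; and $D\geqslant 9$ is impossible because
$$2=\frac{\sigma(n)}{n}+\frac1D<\frac32\cdot\frac{13}{12}\cdot\frac{17}{16}\cdot\frac{19}{18}+\frac19<2.$$
Thus $D=3$, so $\sigma(n)=\frac{5}{3}n$ and the right-hand side above is $5\cdot 3^{\alpha_{1}-1}13^{\alpha_{2}}p_{3}^{\alpha_{3}}p_{4}^{\alpha_{4}}$; the estimate $\frac32\cdot\frac{13}{12}\cdot\frac{79}{78}\cdot\frac{83}{82}+\frac13<2$ then forces $17\leqslant p_{3}\leqslant 73$.

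Next comes the arithmetic lever: $5\mid\sigma(n)$. Because ${\rm ord}_{5}(3)={\rm ord}_{5}(13)=4$ while all $\alpha_{i}$ are even (so $\alpha_{1}+1$ and $\alpha_{2}+1$ are odd, hence not divisible by $4$), neither $\frac{3^{\alpha_{1}+1}-1}{2}$ nor $\frac{13^{\alpha_{2}+1}-1}{12}$ is divisible by $5$. Thus the factor $5$ must be supplied by $p_{3}$ or $p_{4}$, which is possible only when that prime is $\equiv 1\pmod 5$ and its exponent satisfies $5\mid(\alpha+1)$; in particular $\alpha\geqslant 4$ and $\frac{p^{5}-1}{p-1}$ divides $\sigma(n)$.

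This settles at once every $p_{3}\not\equiv 1\pmod 5$. For such $p_{3}$ the $5$ must come from $p_{4}$, and, since $\frac{p_{4}^{5}-1}{p_{4}-1}$ is coprime to $p_{4}$, each of its prime factors lies in $\{3,5,13,p_{3}\}$. But a prime $q\neq 5$ divides $\frac{p_{4}^{5}-1}{p_{4}-1}$ only when ${\rm ord}_{q}(p_{4})=5$, i.e. $q\equiv 1\pmod 5$, and none of $3,13,p_{3}$ is $\equiv 1\pmod 5$; moreover a direct computation gives $\frac{p_{4}^{5}-1}{p_{4}-1}\equiv 5\pmod{25}$. Hence $\frac{p_{4}^{5}-1}{p_{4}-1}=5$, contradicting $\frac{p_{4}^{5}-1}{p_{4}-1}>5$. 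No bound on $p_{4}$ is needed here, which is the main gain from $p_{2}=13$.

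The real work, and the main obstacle, is the four primes $p_{3}\in\{31,41,61,71\}$ with $p_{3}\equiv 1\pmod 5$, where the $5$ may come from either $p_{3}$ or $p_{4}$ and the relevant cyclotomic value may now be divisible by $p_{3}$ too. When the $5$ comes from $p_{3}$, I would factor $\frac{p_{3}^{5}-1}{p_{3}-1}$ explicitly (as in Section~2, e.g. $11\mid 31^{5}-1$ and $131\mid 61^{5}-1$): every prime factor $\equiv 1\pmod 5$ must equal $p_{4}$, which is contradictory when two such primes occur or one is $<p_{3}$, and otherwise pins $p_{4}$ to a single large value incompatible with $\frac{\sigma(3^{\alpha_{1}})}{3^{\alpha_{1}}}\cdot\frac{\sigma(13^{\alpha_{2}})}{13^{\alpha_{2}}}<\frac{13}{8}$. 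When the $5$ comes from $p_{4}$, this same inequality bounds $p_{4}$ for $p_{3}\in\{41,61,71\}$, leaving finitely many values to clear by factorization; for the stubborn $p_{3}=31$, where no size estimate limits $p_{4}$, I would instead split on $\alpha_{1}$ and $\alpha_{2}$ and read off the primes introduced by $\sigma(3^{\alpha_{1}})$ and $\sigma(13^{\alpha_{2}})$—for instance $\sigma(3^{4})=11^{2}$, $\sigma(3^{6})=1093$, $\sigma(13^{2})=3\cdot 61$—each of which must coincide with $p_{4}$, forcing $p_{4}$ to a single candidate and then closing the branch by a fresh factorization or by the Legendre-symbol computations used in Lemmas~\ref{lem2.3} and \ref{lem3.2}.
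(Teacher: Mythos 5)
Your central lever is sound and, for most of the range, genuinely slicker than the paper's argument. After the (correct) reduction to $D=3$, $\sigma(n)=\frac{5}{3}n$ and $17\leqslant p_{3}\leqslant 73$, your observation that the factor $5$ on the left-hand side can only be supplied by $\sigma(p_{3}^{\alpha_{3}})$ or $\sigma(p_{4}^{\alpha_{4}})$, and only when that prime is $\equiv 1\pmod 5$ with $5\mid(\alpha+1)$, is exactly right; and the consequence that every prime factor of $\frac{p^{5}-1}{p-1}$ is $5$ or $\equiv 1\pmod 5$, together with $\frac{p^{5}-1}{p-1}\equiv 5\pmod{25}$, kills all eleven primes $p_{3}\not\equiv 1\pmod 5$ in one stroke, since $\frac{p_{4}^{5}-1}{p_{4}-1}$ would have to equal $5$. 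The paper instead disposes of these primes one at a time (its Cases 1, 3, 5 and 7) with size estimates plus ad hoc order computations, so here your route is a real simplification. Your ``$5$ from $p_{3}$'' branch also works: $\frac{31^{5}-1}{30}=5\cdot 11\cdot 17351$, $\frac{61^{5}-1}{60}=5\cdot 131\cdot 21491$ and $\frac{71^{5}-1}{70}=5\cdot 11\cdot 211\cdot 2221$ each contain two primes $\equiv 1\pmod 5$, while $\frac{41^{5}-1}{40}=5\cdot 579281$ pins $p_{4}=579281>1600$, contradicting $\frac{13}{8}\cdot\frac{41}{40}\cdot\frac{p_{4}}{p_{4}-1}>\frac{5}{3}$. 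Likewise the ``$5$ from $p_{4}$'' branch for $p_{3}\in\{61,71\}$: for $71$ the size bound leaves no prime $\equiv 1\pmod 5$ in $(71,91)$ at all, and for $61$ only $p_{4}\in\{71,101\}$, both cleared by factorization.

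The gap is the branch you yourself flag: $p_{3}=31$ with the $5$ supplied by $p_{4}$, where one must refute $\frac{p_{4}^{5}-1}{p_{4}-1}=5\cdot 31^{b}$ with no a priori bound on $p_{4}$. Your remedy --- split on $\alpha_{1}$ and $\alpha_{2}$ and read off the primes of $\sigma(3^{\alpha_{1}})$ and $\sigma(13^{\alpha_{2}})$ --- is an infinite case analysis: nothing in your argument bounds these exponents, and the cases do not die for free. For instance $\sigma(3^{12})=797161$ and $\sigma(13^{4})=30941$ are primes $\equiv 1\pmod 5$, so each is a legitimate candidate for $p_{4}$ that then needs its own numerical refutation against $5\cdot 31^{b}$, and the same recurs for infinitely many even $\alpha_{1},\alpha_{2}$. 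The missing idea is that size pins $\alpha_{1}$ here, which is exactly how the paper eliminates $p_{3}\in\{31,37\}$ without any cyclotomic work: $\alpha_{1}=4$ is impossible because $\sigma(3^{4})=121$ would force $11\mid n$; $\alpha_{1}=2$ forces $\frac{\sigma(n)}{n}<\frac{13}{9}\cdot\frac{13}{12}\cdot\frac{31}{30}\cdot\frac{37}{36}<\frac{5}{3}$ (note $p_{4}\geqslant 37$); and $\alpha_{1}\geqslant 6$ forces $\frac{\sigma(n)}{n}>\frac{1093}{729}\cdot\frac{183}{169}\cdot\frac{993}{961}>\frac{5}{3}$. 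With that insertion (and with the finitely many equations $\frac{p_{4}^{5}-1}{p_{4}-1}=5\cdot 41^{b}$, $41<p_{4}<1600$, actually enumerated and checked in your $p_{3}=41$ sub-case rather than waved at), your proof closes; as written it does not terminate.
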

\begin{proof}
Assume that $n=3^{\alpha_{1}}13^{\alpha_{2}}p_{3}^{\alpha_{3}}p_{4}^{\alpha_{4}}$ is an odd deficient-perfect number with deficient divisor $d=3^{\beta_{1}}13^{\beta_{2}}p_{3}^{\beta_{3}}p_{4}^{\beta_{4}}$. If $p_{3}\geqslant 79$, then
$$2=\frac{\sigma(n)}{n}+\frac{d}{n}<\frac{3}{2}\cdot \frac{13}{12}\cdot \frac{79}{78}\cdot \frac{83}{82}+\frac{1}{3}<2,$$
which is false. Thus $p_{3}\leqslant 73$. If $D\geqslant 9$, then
$$2=\frac{\sigma(n)}{n}+\frac{d}{n}<\frac{3}{2}\cdot \frac{13}{12}\cdot \frac{17}{16}\cdot \frac{19}{18}+\frac{1}{9}<2,$$
which is absurd. Thus $D=3$. By (\ref{Eq1}), we have
\begin{equation}\label{4.2}
\frac{3^{\alpha_{1}+1}-1}{2}\cdot \frac{13^{\alpha_{2}+1}-1}{12}\cdot \frac{p_{3}^{\alpha_{3}+1}-1}{p_{3}-1}\cdot \frac{p_{4}^{\alpha_{4}+1}-1}{p_{4}-1}=5\cdot 3^{\alpha_{1}-1}13^{\alpha_{2}}p_{3}^{\alpha_{3}}p_{4}^{\alpha_{4}}.
\end{equation}
Since $11\mid (3^{5}-1)$, we have $\alpha_{1}\neq 4$. If $p_{3}\geqslant 31$, then $\alpha_{1}\geqslant 6$. Otherwise, if $\alpha_{1}=2$, then
$$2=\frac{\sigma(n)}{n}+\frac{d}{n}<\frac{3^{3}-1}{2\cdot 3^{2}}\cdot \frac{13}{12}\cdot \frac{31}{30}\cdot \frac{37}{36}+\frac{1}{3}<2,$$
which is impossible.  If $p_{3}\geqslant 43$, then $p_{4}\leqslant 557$. Otherwise, if $p_{4}\geqslant 563$, then
$$2=\frac{\sigma(n)}{n}+\frac{d}{n}<\frac{3}{2}\cdot \frac{13}{12}\cdot \frac{43}{42}\cdot \frac{563}{562}+\frac{1}{3}<2,$$
which is a contradiction. Now we divide into the following seven cases according to $p_{3}$.

{\bf Case 1.} $p_{3}\in\{17, 19, 23, 29, 31, 37\}$. If $\alpha_{1}\geqslant 6$, then
$$2=\frac{\sigma(n)}{n}+\frac{d}{n}>\frac{3^{7}-1}{2\cdot 3^{6}}\cdot \frac{13^{3}-1}{12\cdot 13^{2}}\cdot \frac{37^{3}-1}{36\cdot 37^{2}}+\frac{1}{3}>2,$$
which is impossible. Thus $\alpha_{1}=2$ and $p_{3}\in\{17, 19, 23, 29\}$.

If $p_{3}=17$, then $p_{4}\leqslant 409$. Otherwise, if $p_{4}\geqslant 419$, then
$$2=\frac{\sigma(n)}{n}+\frac{d}{n}<\frac{3^{3}-1}{2\cdot 3^{2}}\cdot \frac{13}{12}\cdot \frac{17}{16}\cdot \frac{419}{418}+\frac{1}{3}<2,$$
which is impossible. Since $\rm{ord}_{17}(13)=4$ and $\rm{ord}_{5}(13)=\rm{ord}_{5}(17)=4$, we have $p_{4}\equiv 1\pmod{85}$. Thus $p_{4}\geqslant 1021$, a contradiction.

If $p_{3}=19$, then $p_{4}\leqslant 109$. Otherwise, if $p_{4}\geqslant 113$, then
$$2=\frac{\sigma(n)}{n}+\frac{d}{n}<\frac{3^{3}-1}{2\cdot 3^{2}}\cdot \frac{13}{12}\cdot \frac{19}{18}\cdot \frac{113}{112}+\frac{1}{3}<2,$$
which is false. Since $\rm{ord}_{5}(13)=4$ and $\rm{ord}_{5}(19)=2$, we have $p_{4}\leqslant 101$ and
 $$2=\frac{\sigma(n)}{n}+\frac{d}{n}\geqslant \frac{3^{3}-1}{2\cdot 3^{2}}\cdot \frac{13^{3}-1}{12\cdot 13^{2}}\cdot \frac{19^{3}-1}{18\cdot 19^{2}}\cdot \frac{101^{3}-1}{100\cdot 101^{2}}+\frac{1}{3}>2,$$
which is impossible.

If $p_{3}=23$, then $p_{4}\leqslant 53$. Otherwise, if $p_{4}\geqslant 59$, then
$$2=\frac{\sigma(n)}{n}+\frac{d}{n}<\frac{3^{3}-1}{2\cdot 3^{2}}\cdot \frac{13}{12}\cdot \frac{23}{22}\cdot \frac{59}{58}+\frac{1}{3}<2,$$
which is false. Since $\rm{ord}_{5}(13)=\rm{ord}_{5}(23)=4$, we have $p_{4}\leqslant 41$ and
 $$2=\frac{\sigma(n)}{n}+\frac{d}{n}\geqslant \frac{3^{3}-1}{2\cdot 3^{2}}\cdot \frac{13^{3}-1}{12\cdot 13^{2}}\cdot \frac{23^{3}-1}{22\cdot 23^{2}}\cdot \frac{41^{3}-1}{40\cdot 41^{2}}+\frac{1}{3}>2,$$
which is impossible.

If $p_{3}=29$, then $p_{4}=31$. Otherwise, if $p_{4}\geqslant 37$, then
$$2=\frac{\sigma(n)}{n}+\frac{d}{n}<\frac{3^{3}-1}{2\cdot 3^{2}}\cdot \frac{13}{12}\cdot \frac{29}{28}\cdot \frac{37}{36}+\frac{1}{3}<2,$$
which is a contradiction. However,
 $$2=\frac{\sigma(n)}{n}+\frac{d}{n}\geqslant \frac{3^{3}-1}{2\cdot 3^{2}}\cdot \frac{13^{3}-1}{12\cdot 13^{2}}\cdot \frac{29^{3}-1}{28\cdot 29^{2}}\cdot \frac{31^{3}-1}{30\cdot 31^{2}}+\frac{1}{3}>2,$$
which is also a contradiction.

{\bf Case 2.} $p_{3}=41$. If $p_{4}\geqslant 1601$, then
$$2=\frac{\sigma(n)}{n}+\frac{d}{n}<\frac{3}{2}\cdot \frac{13}{12}\cdot \frac{41}{40}\cdot \frac{1601}{1600}+\frac{1}{3}<2,$$
which is impossible.  Since $\rm{ord}_{5}(3)=\rm{ord}_{5}(13)=4$ and $579281\mid (41^{5}-1)$, we have $p_{4}\equiv 1\pmod 5$ and $5\mid (\alpha_{4}+1)$. Thus $p_{4}\leqslant 1571$ and $\alpha_{3}\neq 4$. By (\ref{4.2}), we have $\alpha_{3}\geqslant 6$ and $\alpha_{1}\geqslant 12$. If $\alpha_{2}=2$, then $p_{4}=61$ and $(61^{5}-1)\mid (61^{\alpha_{4}+1}-1)$. However, $131\mid (61^{5}-1)$, a contradiction. Thus $\alpha_{2}\geqslant 4$ and
$$2=\frac{\sigma(n)}{n}+\frac{d}{n}\geqslant \frac{3^{13}-1}{2\cdot 3^{12}}\cdot \frac{13^{5}-1}{12\cdot 13^{4}}\cdot \frac{41^{7}-1}{40\cdot 41^{6}}\cdot\frac{1571^{5}-1}{1570\cdot 1571^{4}}+\frac{1}{3}>2,$$
which is also a contradiction.

{\bf Case 3.} $p_{3}\in \{43, 47, 53, 59\}$. Since $\rm{ord}_{5}(3)=\rm{ord}_{5}(13)$ and ${\rm{ord}}_{5}(p_{3})$ are all even, we have $p_{4}\equiv 1\pmod 5$ and $5\mid (\alpha_{4}+1)$. By (\ref{4.2}), we have $\alpha_{3}\geqslant 4$ and $\alpha_{1}\geqslant 12$. If $\alpha_{2}=2$, then $p_{4}=61$ and $(61^{5}-1)\mid (61^{\alpha_{4}+1}-1)$. However, $131\mid (61^{5}-1)$, a contradiction. Thus $\alpha_{2}\geqslant 4$.

If $p_{3}=43$, then $p_{4}\leqslant 541$. Otherwise, if $p_{4}\geqslant 563$, then
$$2=\frac{\sigma(n)}{n}+\frac{d}{n}<\frac{3}{2}\cdot \frac{13}{12}\cdot \frac{43}{42}\cdot \frac{563}{562}+\frac{1}{3}<2,$$
which is a contradiction. However,
$$2=\frac{\sigma(n)}{n}+\frac{d}{n}\geqslant \frac{3^{13}-1}{2\cdot 3^{12}}\cdot \frac{13^{5}-1}{12\cdot 13^{4}}\cdot \frac{43^{5}-1}{42\cdot 43^{4}}\cdot\frac{541^{5}-1}{540\cdot 541^{4}}+\frac{1}{3}>2,$$
which is also a contradiction.

If $p_{3}=47$, then $p_{4}\leqslant 251$. Otherwise, if $p_{4}\geqslant 263$, then
$$2=\frac{\sigma(n)}{n}+\frac{d}{n}<\frac{3}{2}\cdot \frac{13}{12}\cdot \frac{47}{46}\cdot \frac{263}{262}+\frac{1}{3}<2,$$
which is a contradiction. However,
$$2=\frac{\sigma(n)}{n}+\frac{d}{n}\geqslant \frac{3^{13}-1}{2\cdot 3^{12}}\cdot \frac{13^{5}-1}{12\cdot 13^{4}}\cdot \frac{47^{5}-1}{46\cdot 47^{4}}\cdot\frac{251^{5}-1}{250\cdot 251^{4}}+\frac{1}{3}>2,$$
which is also a contradiction.

If $p_{3}=53$, then $p_{4}\leqslant 151$. Otherwise, if $p_{4}\geqslant 163$, then
$$2=\frac{\sigma(n)}{n}+\frac{d}{n}<\frac{3}{2}\cdot \frac{13}{12}\cdot \frac{53}{52}\cdot \frac{163}{162}+\frac{1}{3}<2,$$
which is a contradiction. However,
$$2=\frac{\sigma(n)}{n}+\frac{d}{n}\geqslant \frac{3^{13}-1}{2\cdot 3^{12}}\cdot \frac{13^{5}-1}{12\cdot 13^{4}}\cdot \frac{53^{5}-1}{52\cdot 53^{4}}\cdot\frac{151^{5}-1}{150\cdot 151^{4}}+\frac{1}{3}>2,$$
which is also a contradiction.

If $p_{3}=59$, then $p_{4}\leqslant 101$. Otherwise, if $p_{4}\geqslant 127$, then
$$2=\frac{\sigma(n)}{n}+\frac{d}{n}<\frac{3}{2}\cdot \frac{13}{12}\cdot \frac{59}{58}\cdot \frac{127}{126}+\frac{1}{3}<2,$$
which is a contradiction. However,
$$2=\frac{\sigma(n)}{n}+\frac{d}{n}\geqslant \frac{3^{13}-1}{2\cdot 3^{12}}\cdot \frac{13^{5}-1}{12\cdot 13^{4}}\cdot \frac{59^{5}-1}{58\cdot 59^{4}}\cdot\frac{101^{5}-1}{100\cdot 101^{4}}+\frac{1}{3}>2,$$
which is also a contradiction.

{\bf Case 4.} $p_{3}=61$. If $p_{4}\geqslant 127$, then
$$2=\frac{\sigma(n)}{n}+\frac{d}{n}<\frac{3}{2}\cdot \frac{13}{12}\cdot \frac{61}{60}\cdot \frac{127}{126}+\frac{1}{3}<2,$$
which is false. Thus $p_{4}\leqslant 113$. Since $\rm{ord}_{5}(3)=\rm{ord}_{5}(13)=4, \rm{ord}_{25}(61)=\rm{ord}_{25}(71)=\rm{ord}_{25}(101)=5,$
we have $(61^{5}-1)\mid (61^{\alpha_{3}+1}-1)$ or $(71^{5}-1)\mid (71^{\alpha_{4}+1}-1)$ or $(101^{5}-1)\mid (101^{\alpha_{4}+1}-1)$. However,
$131\mid (61^{5}-1), 11\mid (71^{5}-1), 31\mid (101^{5}-1)$, a contradiction.

{\bf Case 5.} $p_{3}=67$. If $p_{4}\geqslant 101$, then
$$2=\frac{\sigma(n)}{n}+\frac{d}{n}<\frac{3}{2}\cdot \frac{13}{12}\cdot \frac{67}{66}\cdot \frac{101}{100}+\frac{1}{3}<2,$$
which is false. Thus $p_{4}\leqslant 97$. Observing that $\rm{ord}_{5}(3)=\rm{ord}_{5}(13)=\rm{ord}_{5}(67)=4$, we have $p_{4}\equiv 1\pmod 5$ and $5\mid(\alpha_{4}+1)$. Thus $p_{4}=71$ and $(71^{5}-1)\mid (71^{\alpha_{4}+1}-1)$. However, $11\mid (71^{5}-1)$, a contradiction.

{\bf Case 6.} $p_{3}=71$. If $p_{4}\geqslant 97$, then
$$2=\frac{\sigma(n)}{n}+\frac{d}{n}<\frac{3}{2}\cdot \frac{13}{12}\cdot \frac{71}{70}\cdot \frac{97}{96}+\frac{1}{3}<2,$$
which is false. Thus $p_{4}\leqslant 89$. Since $\rm{ord}_{5}(3)=\rm{ord}_{5}(13)=4$ and ${\rm{ord}}_{5}(p_{4})$ are all even, we have $5\mid(\alpha_{3}+1)$ and $(71^{5}-1)\mid (71^{\alpha_{3}+1}-1)$. However, $11\mid (71^{5}-1)$, a contradiction.

{\bf Case 7.} $p_{3}=73$. If $p_{4}\geqslant 89$, then
$$2=\frac{\sigma(n)}{n}+\frac{d}{n}<\frac{3}{2}\cdot \frac{13}{12}\cdot \frac{73}{72}\cdot \frac{89}{88}+\frac{1}{3}<2,$$
a contradiction. Thus $p_{4}\in\{79, 83\}$. Observing that $\rm{ord}_{5}(3)=\rm{ord}_{5}(13)=\rm{ord}_{5}(73)=4$ and ${\rm{ord}}_{5}(p_{4})$ are all even, we see that the case cannot hold.

This completes the proof of Lemma \ref{lem4.2}.
\end{proof}

\section{Proofs}

\begin{proof}[Proof of Theorem 1] Assume that $n=p_{1}^{\alpha_{1}}p_{2}^{\alpha_{2}}p_{3}^{\alpha_{3}}p_{4}^{\alpha_{4}}$ is an odd deficient-perfect number with deficient divisor $d=p_{1}^{\beta_{1}}p_{2}^{\beta_{2}}p_{3}^{\beta_{3}}p_{4}^{\beta_{4}}$. By \cite{M}, we need to consider $p_{1}=3$ and $p_{2}\in\{5, 7, 11, 13, 17\}$.

{\bf Case 1.} $p_{2}=5$. If $D=3$, then
$$2=\frac{\sigma(n)}{n}+\frac{d}{n}>\frac{3^{3}-1}{2\cdot 3^{2}}\cdot \frac{5^{3}-1}{4\cdot 5^{2}}+\frac{1}{3}>2,$$
which is clearly false. If $D=5$, then by (\ref{Eq1}), we have
$$\frac{3^{\alpha_{1}+1}-1}{2}\cdot\frac{5^{\alpha_{2}+1}-1}{4}\cdot \frac{p_{3}^{\alpha_{3}+1}-1}{p_{3}-1}\cdot \frac{p_{4}^{\alpha_{4}+1}-1}{p_{4}-1}= 3^{\alpha_{1}+2}5^{\alpha_{2}-1}p_{3}^{\alpha_{3}}p_{4}^{\alpha_{4}}.$$
If $\alpha_{1}\geqslant 4$, then
$$2=\frac{\sigma(n)}{n}+\frac{d}{n}>\frac{3^{5}-1}{2\cdot 3^{4}}\cdot \frac{5^{3}-1}{4\cdot 5^{2}}+\frac{1}{5}>2,$$
which is absurd. Thus $\alpha_{1}=2$. If $\alpha_{2}\geqslant 4$, then
$$2=\frac{\sigma(n)}{n}+\frac{d}{n}>\frac{3^{3}-1}{2\cdot 3^{2}}\cdot \frac{5^{5}-1}{4\cdot 5^{4}}+\frac{1}{5}>2,$$
which is impossible. Thus $\alpha_{2}=2, p_{3}=13$ and $p_{4}=31$. However,
$$2=\frac{\sigma(n)}{n}+\frac{d}{n}\geqslant \frac{3^{3}-1}{2\cdot 3^{2}}\cdot \frac{5^{3}-1}{4\cdot 5^{2}}\cdot \frac{13^{3}-1}{12\cdot 13^{2}}\cdot \frac{31^{3}-1}{30\cdot 31^{2}}+\frac{1}{5}>2,$$
a contradiction. If $p_{3}=7$, then
$$2=\frac{\sigma(n)}{n}+\frac{d}{n}>\frac{3^{3}-1}{2\cdot 3^{2}}\cdot \frac{5^{3}-1}{4\cdot 5^{2}}\cdot \frac{7^{3}-1}{6\cdot 7^{2}}>2,$$
which is a contradiction. Thus $D\geqslant 9$. If $p_{3}\geqslant 271$, then
$$2=\frac{\sigma(n)}{n}+\frac{d}{n}<\frac{3}{2}\cdot \frac{5}{4}\cdot \frac{271}{270}\cdot \frac{277}{276}+\frac{1}{9}<2,$$
which is false. Thus $11\leqslant p_{3}\leqslant 269$. If $19\leqslant p_{3}\leqslant 61$, then $D\geqslant 15$. Otherwise, if $D=9$, then $\alpha_{1}\geqslant 6$ and
$$2=\frac{\sigma(n)}{n}+\frac{d}{n}>\frac{3^{7}-1}{2\cdot 3^{6}}\cdot \frac{5^{3}-1}{4\cdot 5^{2}}\cdot \frac{61^{3}-1}{60\cdot 61^{2}}+\frac{1}{9}>2,$$
which is impossible. Now we divide into the following five subcases according to $p_{3}$.

{\bf Subcase 1.1 } $p_{3}\in \{11, 13, 17\}$. By Lemma \ref{lem2.1}-Lemma \ref{lem2.3}, there is no
odd deficient-perfect number of the form $n=3^{\alpha_{1}}5^{\alpha_{2}}p_{3}^{\alpha_{3}}p_{4}^{\alpha_{4}}$.

{\bf Subcase 1.2 } $p_{3}\in \{19, 23, 29\}$. If $D=15$, then $\alpha_{1}\geqslant 6$. If $p_{4}=31$, then
$$2=\frac{\sigma(n)}{n}+\frac{d}{n}>\frac{3^{7}-1}{2\cdot 3^{6}}\cdot \frac{5^{3}-1}{4\cdot 5^{2}}\cdot \frac{29^{3}-1}{28\cdot 29^{2}}\cdot \frac{31^{3}-1}{30\cdot 31^{2}}+\frac{1}{15}>2,$$
which is a contradiction. If $p_{4}>31$, then $\alpha_{2}\geqslant 6$ and
$$2=\frac{\sigma(n)}{n}+\frac{d}{n}>\frac{3^{7}-1}{2\cdot 3^{6}}\cdot \frac{5^{7}-1}{4\cdot 5^{6}}\cdot \frac{29^{3}-1}{28\cdot 29^{2}}+\frac{1}{15}>2,$$
which is also a contradiction. Thus $D>15$. By Lemma \ref{lem2.4}-Lemma \ref{lem2.6}, there is no
odd deficient-perfect number of the form $n=3^{\alpha_{1}}5^{\alpha_{2}}p_{3}^{\alpha_{3}}p_{4}^{\alpha_{4}}$.

{\bf Subcase 1.3 } $p_{3}\in\{31, 37, 41, 43\}$. By Lemma \ref{lem2.7}-Lemma \ref{lem2.10}, there is no
odd deficient-perfect number of the form $n=3^{\alpha_{1}}5^{\alpha_{2}}p_{3}^{\alpha_{3}}p_{4}^{\alpha_{4}}$.

{\bf Subcase 1.4 } $p_{3}\in \{47, 53, 59, 61\}$. If $D\geqslant 22$, then
$$2=\frac{\sigma(n)}{n}+\frac{d}{n}<\frac{3}{2}\cdot \frac{5}{4}\cdot \frac{47}{46}\cdot \frac{53}{52}+\frac{1}{22}<2,$$
which is clearly false. Thus $D=15, \alpha_{1}\geqslant 6$ and $\alpha_{2}\geqslant 6$.

If $p_{3}=47$, then $p_{4}\leqslant 109$. Otherwise, if $p_{4}\geqslant 113$, then
$$2=\frac{\sigma(n)}{n}+\frac{d}{n}<\frac{3}{2}\cdot \frac{5}{4}\cdot \frac{47}{46}\cdot \frac{113}{112}+\frac{1}{15}<2,$$
which is false. Since $\rm{ord}_{5}(3)=\rm{ord}_{5}(47)=4$, we have $p_{4}\leqslant 101$ and
$$2=\frac{\sigma(n)}{n}+\frac{d}{n}\geqslant\frac{3^{7}-1}{2\cdot 3^{6}}\cdot \frac{5^{7}-1}{4\cdot 5^{6}}\cdot \frac{47^{3}-1}{46\cdot 47^{2}}\cdot \frac{101^{3}-1}{100\cdot 101^{2}}+\frac{1}{15}>2,$$
which is impossible.

If $p_{3}=53$, then $p_{4}\leqslant 83$. Otherwise, if $p_{4}\geqslant 89$, then
$$2=\frac{\sigma(n)}{n}+\frac{d}{n}<\frac{3}{2}\cdot \frac{5}{4}\cdot \frac{53}{52}\cdot \frac{89}{88}+\frac{1}{15}<2,$$
which is false. Since $\rm{ord}_{5}(3)=\rm{ord}_{5}(53)=4$, we have $p_{4}\leqslant 71$ and
$$2=\frac{\sigma(n)}{n}+\frac{d}{n}\geqslant\frac{3^{7}-1}{2\cdot 3^{6}}\cdot \frac{5^{7}-1}{4\cdot 5^{6}}\cdot \frac{53^{3}-1}{52\cdot 53^{2}}\cdot \frac{71^{3}-1}{70\cdot 71^{2}}+\frac{1}{15}>2,$$
which is impossible.

If $p_{3}=59$, then $p_{4}\leqslant 73$. Otherwise, if $p_{4}\geqslant 79$, then
$$2=\frac{\sigma(n)}{n}+\frac{d}{n}<\frac{3}{2}\cdot \frac{5}{4}\cdot \frac{59}{58}\cdot \frac{79}{78}+\frac{1}{15}<2,$$
which is false. Since $\rm{ord}_{5}(3)=4, \rm{ord}_{5}(59)=2$, we have $p_{4}\leqslant 71$ and
$$2=\frac{\sigma(n)}{n}+\frac{d}{n}\geqslant\frac{3^{7}-1}{2\cdot 3^{6}}\cdot \frac{5^{7}-1}{4\cdot 5^{6}}\cdot \frac{59^{3}-1}{58\cdot 59^{2}}\cdot \frac{71^{3}-1}{70\cdot 71^{2}}+\frac{1}{15}>2,$$
which is impossible.

If $p_{3}=61$, then $p_{4}\leqslant 71$. Otherwise, if $p_{4}\geqslant 73$, then
$$2=\frac{\sigma(n)}{n}+\frac{d}{n}<\frac{3}{2}\cdot \frac{5}{4}\cdot \frac{61}{60}\cdot \frac{73}{72}+\frac{1}{15}<2,$$
which is clearly false. However,
$$2=\frac{\sigma(n)}{n}+\frac{d}{n}\geqslant\frac{3^{7}-1}{2\cdot 3^{6}}\cdot \frac{5^{7}-1}{4\cdot 5^{6}}\cdot \frac{61^{3}-1}{60\cdot 61^{2}}\cdot \frac{71^{3}-1}{70\cdot 71^{2}}+\frac{1}{15}>2,$$
which is impossible.

{\bf Subcase 1.5 } $67\leqslant p_{3}\leqslant 269$. By Lemma \ref{lem2.11}, there is no
odd deficient-perfect number of the form $n=3^{\alpha_{1}}5^{\alpha_{2}}p_{3}^{\alpha_{3}}p_{4}^{\alpha_{4}}$.

{\bf Case 2.} $p_{2}=7$. If $D=3$, then
$$2=\frac{\sigma(n)}{n}+\frac{d}{n}>\frac{3^{3}-1}{2\cdot 3^{2}}\cdot\frac{7^{3}-1}{6\cdot 7^{2}}+\frac{1}{3}>2,$$
which is clearly false. Thus $D\geqslant 7$. If $p_{3}\geqslant 37$, then
$$2=\frac{\sigma(n)}{n}+\frac{d}{n}<\frac{3}{2}\cdot\frac{7}{6}\cdot\frac{37}{36}\cdot \frac{41}{40}+\frac{1}{7}<2,$$
which is impossible. Thus $p_{3}\leqslant 31$. By Lemma 3.1-Lemma 3.7, we have $n=3^{2}\cdot 7^{2}\cdot 11^{2}\cdot 13^{2}$ with deficient divisor $d=3^{2}\cdot 7\cdot 13$.

{\bf Case 3.} $p_{2}\in\{11, 13\}$. By Lemma \ref{lem4.1}-Lemma \ref{lem4.2}, there is no
odd deficient-perfect number of the form $n=3^{\alpha_{1}}p_{2}^{\alpha_{2}}p_{3}^{\alpha_{3}}p_{4}^{\alpha_{4}}$.

{\bf Case 4.} $p_{2}=17$. If $D\geqslant 9$, then
$$2=\frac{\sigma(n)}{n}+\frac{d}{n}<\frac{3}{2}\cdot \frac{17}{16}\cdot \frac{19}{18}\cdot \frac{23}{22}+\frac{1}{9}<2,$$
which is absurd. Thus $D=3$. If $p_{3}\geqslant 47$, then
$$2=\frac{\sigma(n)}{n}+\frac{d}{n}<\frac{3}{2}\cdot \frac{17}{16}\cdot \frac{47}{46}\cdot \frac{53}{52}+\frac{1}{3}<2,$$
which is false. If $p_{3}=19$, then $\alpha_{1}\geqslant 6$ and
$$2=\frac{\sigma(n)}{n}+\frac{d}{n}\geqslant \frac{3^{7}-1}{2\cdot 3^{6}}\cdot \frac{17^{3}-1}{16\cdot 17^{2}}\cdot \frac{19^{3}-1}{18\cdot 19^{2}}+\frac{1}{3}>2,$$
which is impossible. Thus $p_{3}\in\{23, 29, 31, 37, 41, 43\}$. By (\ref{Eq1}), we have
$$\frac{3^{\alpha_{1}+1}-1}{2}\cdot \frac{17^{\alpha_{2}+1}-1}{16}\cdot \frac{p_{3}^{\alpha_{3}+1}-1}{p_{3}-1}\cdot \frac{p_{4}^{\alpha_{4}+1}-1}{p_{4}-1}=5\cdot 3^{\alpha_{1}-1}17^{\alpha_{2}}p_{3}^{\alpha_{3}}p_{4}^{\alpha_{4}}$$
and $\alpha_{1}\geqslant 6$. Since $\rm{ord}_{17}(3)=16$ and ${\rm{ord}}_{17}(p_{3})$ are all even, we have $p_{4}\equiv 1\pmod{17}$.

If $p_{3}=23$, then $p_{4}\leqslant 3517$. Otherwise, if $p_{4}\geqslant 3527$, then
$$2=\frac{\sigma(n)}{n}+\frac{d}{n}<\frac{3}{2}\cdot \frac{17}{16}\cdot \frac{23}{22}\cdot \frac{3527}{3526}+\frac{1}{3}<2,$$
which is false.  Noting that
$\rm{ord}_{5}(3)=\rm{ord}_{5}(17)=\rm{ord}_{5}(23)=4$, we have $p_{4}\equiv 1\pmod{5}$ and $\alpha_{4}\geqslant 4$. Noting that $\rm{ord}_{3}(17)=\rm{ord}_{3}(23)=2$, we have $p_{4}\equiv 1\pmod{3}$. Thus $p_{4}\in\{1021, 1531, 2551, 3061\}$. It follows that $\alpha_{2}\geqslant 12, \alpha_{2}\geqslant 4$ and $\alpha_{3}\geqslant 4$. However,
$$2=\frac{\sigma(n)}{n}+\frac{d}{n}\geqslant \frac{3^{13}-1}{2\cdot 3^{12}}\cdot \frac{17^{5}-1}{16\cdot 17^{4}}\cdot \frac{23^{5}-1}{22\cdot 23^{4}}\cdot \frac{3061^{5}-1}{3060\cdot 3061^{4}}+\frac{1}{3}>2,$$
which is impossible.

If $p_{3}\in\{29, 31, 37, 41, 43\}$, then $p_{4}\leqslant 103$. Otherwise, if $p_{4}\geqslant 107$, then
$$2=\frac{\sigma(n)}{n}+\frac{d}{n}<\frac{3}{2}\cdot \frac{17}{16}\cdot \frac{29}{28}\cdot \frac{107}{106}+\frac{1}{3}<2,$$
which is false. Thus $p_{4}=103$ and $p_{3}=29$. Since $\rm{ord}_{5}(3)=\rm{ord}_{5}(17)=\rm{ord}_{5}(103)=4$ and $\rm{ord}_{5}(29)=2$, we know that the case can not occur.

This completes the proof of Theorem 1.
\end{proof}

\end{document}